\newcommand{\ep}{\varepsilon}
\newtheorem{theorem}{Theorem}[section]
\newtheorem{Def}[theorem]{Definition}
\newtheorem{notation}[theorem]{Notation}
\newtheorem{thm}[theorem]{Theorem}
\newtheorem{prop}[theorem]{Proposition}
\newtheorem{cor}[theorem]{Corollary}
\newtheorem{lemma}[theorem]{Lemma}
\theoremstyle{remark}
\newtheorem{remark}[theorem]{Remark}
\newtheorem{hyp}[theorem]{Hypothesis}
\numberwithin{equation}{section}
\newcommand{\id}{\text{Id}}
\def\RR{\mathbb{R}}
\def\BB{\mathbb{B}}
\def\NN{\mathbb{N}}
\def\mE{\mathbb{E}}
\def\ZZ{\mathbb{Z}}
\def\XX {\mathbb{X}}
\def\YY {\mathbb{Y}}
\newcommand{\ce}{{\mathcal E}}
\newcommand{\ch}{{\mathcal H}}
\newcommand{\cl}{{\mathcal L}}
\newcommand{\cs}{{\mathcal S}}
\newcommand{\cv}{{\mathcal V}}
\newcommand{\cx}{{\mathcal X}}
\def\al{{\alpha}}
\def\be{{\beta}}
\def\ga{{\gamma}}
 \newcommand{\vp}{\varphi}
 \newcommand{\ka}{\kappa}
\newcommand{\lp}{\left(}
\newcommand{\rp}{\right)}
\newcommand{\lc}{\left[}
\newcommand{\rc}{\right]}
\def \eref#1{\hbox{(\ref{#1})}}
\def\ll{\llbracket}
\def\rr{\rrbracket}
\def\wt{\tilde}
\def \eref#1{\hbox{(\ref{#1})}}
\begin{document}
\title[Euler scheme for fBm driven SDEs]
{First-order Euler scheme for SDEs driven\\
 by   fractional Brownian motions: the rough case}
\date{}   

\author[Y. Liu \and S. Tindel]{Yanghui Liu \and Samy Tindel} 

\keywords{Rough paths, Discrete sewing lemma,  Fractional Brownian motion,  Stochastic differential equations,  Euler scheme,      Asymptotic error distributions. }
 
 \address{Yanghui Liu, Samy Tindel: Department of Mathematics,
Purdue University,
150 N. University Street,
W. Lafayette, IN 47907,
USA.}
\email{liu2048@purdue.edu, stindel@purdue.edu}

\thanks{S. Tindel is supported by the NSF grant  DMS-1613163}

 \begin{abstract}
 In this article, we consider the so-called modified Euler   scheme   for stochastic differential equations (SDEs)   driven by fractional Brownian motions (fBm) with Hurst parameter $\frac13<H<\frac12$.
This is a first-order time-discrete numerical approximation scheme, and has   been introduced in   \cite{HLN1} recently  in order to generalize the classical Euler scheme for It\^o SDEs to  the case  $H>\frac12$. 
The current contribution generalizes the modified Euler scheme to the rough case $\frac13<H<\frac12$. Namely,  we show a convergence rate of order $n^{\frac12-2H}$ for the scheme, and we argue that this rate is exact. We also derive a central limit theorem for the renormalized error of the scheme, thanks to  some new techniques for asymptotics of weighted random sums. Our main idea is based on the following observation: the triple of processes obtained by considering the fBm, the  scheme process and the normalized error process, can be lifted to a new rough path. In addition, the H\"older norm of  this new rough path has  an estimate  which is independent of the step-size of the scheme.
\end{abstract}

\maketitle

{
\hypersetup{linkcolor=black}
\setcounter{tocdepth}{1}
 \tableofcontents 
}

\section{Introduction} 
This note is concerned with the following differential equation driven by a $m$-dimensional fractional Brownian motion (fBm in the sequel) $B$ with Hurst parameter $ \frac13 <   H < \frac12$:
 \begin{eqnarray}\label{e1.1}
dy_{t} &=&b(y_{t})dt + {V}(y_{t}) d B_{t}\,, \quad t\in [0,T],
\\
y_{0}&=&y \in \RR^{d}.
\nonumber
\end{eqnarray}
Assuming that the collection of vector fields $b = (b^{i})_{1\leq i \leq d}$ belongs to $C^{2}_{b} (\RR^{d}, \RR^{d})  $ and ${V} = (V^{i}_{j})_{1\leq i\leq d, 1\leq j\leq m}$ sits in $C^{3}_{b} (\RR^{d}, \cl (\RR^{m}, \RR^{d})) $,  the theory of rough paths   gives a framework allowing to get existence and uniqueness results for  equation \eref{e1.1}. In addition, the unique solution $y$ in the rough paths sense has $\ga$-H\"older continuity for all $\ga<H$.   The reader is referred to~\cite{FH, FV, G}   for further details. 

In this paper, we are interested in the numerical approximation of equation \eref{e1.1} based on a  discretization of the time parameter $t$. For simplicity, we are considering a finite time interval $[0,T]$ and we take  the uniform partition $\pi: 0=t_{0}<t_{1}<\cdots<t_{n}=T$ on $[0,T]$. Specifically, for $k=0,\ldots,n$ we have $t_{k} =  k h$, where we denote $h = \frac{T}{n}$. Our generic approximation is called $y^{n}$, and it starts from the initial condition $y^{n}_{0} = y$. In order to introduce our numerical schemes, we shall also use the following notation:

\begin{notation}\label{not:iterated-vector-field}
Let $U=(U^{1},\dots, U^{d})$ and $V=(V^{1},\dots, V^{d})$ be two smooth vector fields defined on $\RR^{d}$. We denote by $\partial$ the vector $\partial = (\partial_{x_{1}},\dots, \partial_{x_{d}})$, that is, for $x\in\RR^{d}$ we have $[\partial U(x)]^{kl}=\partial_{x_{l}}U^{k}(x)$. With the same matrix convention, the vector field $\partial U V$ is   defined as  
$
[\partial UV(x)]^{k} = \sum_{l=1}^{d} \partial_{x_{l}}U^{k}(x) \, V^{l}(x)$.  
\end{notation}

With those preliminaries in mind, the most classical numerical scheme for stochastic equations is the so-called Euler scheme (or first-order Taylor scheme), which is recursively defined as follows on the uniform partition:
\begin{eqnarray*}
y^{n}_{t_{k+1}} &=& y^{n}_{t_{k}} + b(y^{n}_{t_{k}}) h + V(y^{n}_{t_{k}}) \delta B_{t_{k}t_{k+1}},
\end{eqnarray*}
where $\delta f_{st} $ is defined as $ f_{t} - f_{s} $ for a function $f$. 
However, it is easily seen (see e.g \cite{DNT} for details) that the Euler scheme  is divergent when the Hurst parameter $H$ is less than $\frac12$.  To obtain a convergent numerical approximation in this rough situation,  higher-order  terms from the Taylor expansion need to be included in the scheme.  Having the rough paths construction in mind, the simplest  method of this kind is the Milstein scheme, or second-order Taylor scheme. It can be expressed recursively as:
\begin{equation}\label{eq:second-order-taylor-scheme}
y^{n}_{t_{k+1}} = y^{n}_{t_{k}} + b(y^{n}_{t_{k}}) h+ V(y^{n}_{t_{k}}) \delta B_{t_{k}t_{k+1}} 
+\sum_{i,j=1}^{m}   \partial V_{i} V_{j} (y^{n}_{t_{k}}) \BB^{ij}_{t_{k}t_{k+1}},
\end{equation}
where we have used Notation \ref{not:iterated-vector-field} and where $\BB$ designates the second-order iterated integral of $B$ (See Section \ref{section3.1} for a proper definition). This numerical approximation 
  has first    been considered in \cite{D}, and has been shown to be convergent as long as $H>\frac13$, with an almost sure convergence rate $n^{-(3H-1)+\kappa}$.  Here and in the following $\kappa >0$ represents an arbitrarily  small constant. An extension of the result to $n$th-order Taylor schemes and to an abstract rough path with arbitrary regularity is contained in \cite{FV}; see also~\cite{HLN} for the optimized $n$th-order Taylor scheme when $H>\frac12$. 
  
    The $n$th-order Taylor schemes of the form \eqref{eq:second-order-taylor-scheme} are, however,    not  implementable in general.   This is due to the fact that when $i\neq j$ the terms  $\BB^{ij}_{t_{k}t_{k+1}}$ cannot be simulated exactly and have to be approximated on their own. We now mention some contributions giving implementable versions of \eqref{eq:second-order-taylor-scheme} for    stochastic differential equation \eref{e1.1}.   They all rely on some cancellation of the randomness in the error process $y-y^{n}$ related to our standing equation.
    
\noindent\emph{(i)}
The first second-order implementable scheme for \eref{e1.1} has been introduced in \cite{DNT}. It can be expressed in the following form:
  \begin{equation}\label{e2i}
y^{n}_{t_{k+1}} = y^{n}_{t_{k}} + b(y^{n}_{t_{k}}) h + V(y^{n}_{t_{k}}) \delta B_{t_{k}t_{k+1}} 
+\frac12 \sum_{i,j=1}^{m}\partial V_{i} V_{j} (y^{n}_{t_{k}}) \delta B^{i}_{t_{k}t_{k+1}}
\delta B^{j}_{t_{k}t_{k+1}}   .
\end{equation}
This scheme has been shown to have convergence rate of order $n^{-(H-\frac13)+\kappa}$,  and the proof relies on the fact that \eqref{e2i} is the 2nd-order Taylor scheme for the Wong-Zakai approximation of equation \eref{e1.1}.  The approximation \eqref{e2i} has been  extended in \cite{BFRS, FR} to a third-order scheme defined as follows:
\begin{multline}\label{e3}
y^{n}_{t_{k+1}} = y^{n}_{t_{k}} + b(y^{n}_{t_{k}}) h + V(y^{n}_{t_{k}}) \delta B_{t_{k}t_{k+1}} 
+\frac12 \sum_{i,j=1}^{m}\partial V_{i} V_{j} (y^{n}_{t_{k}})    \delta B^{i}_{t_{k}t_{k+1}}\delta B^{j}_{t_{k}t_{k+1}}
\\
+\frac16 \sum_{i,j=1}^{m}\partial (\partial V_{i} V_{j} )V_{k}(y^{n}_{t_{k}})  \delta B^{i}_{t_{k}t_{k+1}}\delta B^{j}_{t_{k}t_{k+1}}   \delta B^{k}_{t_{k}t_{k+1}} 
\, .
\end{multline}
Thanks to    a thorough analysis of   differences of    iterated integrals between two Gaussian processes, a convergence rate $n^{-(2H-\frac12) +\kappa}$ has been achieved for the scheme \eref{e3}. One should also notice that \cite{FR} handles in fact very general Gaussian processes, as long as their covariance function is regular enough in the $p$-variation sense.

\noindent\emph{(ii)}
A different direction has been considered in \cite{HLN1}, where the following first-order (meaning first-order with respect to the increments of $B$) scheme  has been introduced:
   \begin{equation}\label{e4}
y^{n}_{t_{k+1}} = y^{n}_{t_{k}} + b(y^{n}_{t_{k}}) h+ V(y^{n}_{t_{k}}) \delta B_{t_{k}t_{k+1}} 
+\frac12 \sum_{ j=1}^{m}\partial V_{j} V_{j} (y^{n}_{t_{k}}) h^{2H}.
\end{equation}
This approximation is called modified Euler scheme in \cite{HLN1}. As has been explained in \cite{HLN1},   the modified Euler scheme is a natural generalization of the classical Euler scheme of the Stratonovich SDE to the rough SDE \eref{e1.1}. For this reason, we will call \eref{e4} the Euler scheme from now on.  As the reader might also see from relation~\eqref{e4}, one gets the   Euler scheme from the second-order Taylor scheme \eqref{eq:second-order-taylor-scheme} by changing the terms $\BB^{ij}_{t_{k}t_{k+1}}$ into their respective expected values. 
Note that  since the   Euler scheme does not involve products of increments of the underlying fBm, its computation cost is much lower than those of~\eref{e2i} and~\eref{e3}. 
In spite of this cost reduction, an exact rate of convergence   $n^{-(  2H - \frac12)}$ has been achieved in~\cite{HLN1}. The asymptotic error distributions and the weak convergence of the scheme have also been considered, and those results heavily hinge on Malliavin calculus considerations. Notice however that the results in \cite{HLN1} are restricted to the case $H >\frac12$.

Having recalled those previous results, the aim of the current paper is quite simple: we wish to extend the results concerning the   Euler scheme~\eref{e4} to a truly rough situation. Namely, we will consider equation \eqref{e1.1} driven by a fractional Brownian motion $B$ with $\frac13 < H < \frac12$. For this equation, we show that the   Euler scheme maintains the   rate of convergence $n^{-(2H-\frac12)}$, which is the   same as the third-order implementable scheme in \eref{e3}.   
We also  obtain some asymptotic results for the error distribution of the numerical scheme~\eref{e4}, which generalize the corresponding results in \cite{HLN1, Kurtz} to the case $H<\frac12$.
More specifically, we will prove the following results (see Theorems \ref{thm7.3} and \ref{thm9.1} for more precise statements).
\begin{theorem}\label{thm:cvgce-scheme}
Let $y$ be the solution of equation \eref{e1.1}, and consider the Euler approximation scheme $y^{n}$ defined in \eref{e4}. Then

\noindent\emph{(i)} 
For any arbitrarily small $\kappa>0$, the following almost sure convergence holds true:
\begin{eqnarray*}
n^{2H-\frac12 -\kappa}  \sup_{t\in [0,T]} |y_{t}-y^{n}_{t}| \rightarrow 0 \quad \text{ as } n\rightarrow \infty;
\end{eqnarray*}

\noindent\emph{(ii)} 
The sequence of processes $n^{2H-\frac12} (y-y^{n})$ converges weakly in $D([0,T])$ to a process $U$ which solves the following equation:
\begin{eqnarray}\label{eq:dyn-U}
U_{t} &=& \int_{0}^{t} \partial b(y_{s}) U_{s}ds + \sum_{j=1}^{m} \int_{0}^{t} \partial V_{j} (y_{s}) U_{s} dB^{j}_{s} + \sum_{i,j=1}^{m} \int_{0}^{t} \partial V_{i}V_{j } (y_{s})   d W^{ij}_{s},
\end{eqnarray}
where $W = (W^{ij})$ is a $\RR^{m\times m}$-valued Brownian motion with correlated components, independent of $B$.
\end{theorem}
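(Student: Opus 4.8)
Following the principle announced in the introduction, I would realise the renormalized error as one coordinate of an enlarged rough path whose $\ga$-H\"older norm is bounded \emph{uniformly in the step size} $h=T/n$, and then transfer information through the continuity of the It\^o--Lyons map. Fix $\ga\in(\tfrac13,H)$, let $\mathbf B=(B,\BB)$ be the canonical rough path lift of the fBm, and recall that the solution $y$ of \eref{e1.1} is a controlled path satisfying
\begin{equation*}
\delta y_{st}=b(y_s)(t-s)+V(y_s)\,\delta B_{st}+\sum_{i,j=1}^m\partial V_iV_j(y_s)\,\BB^{ij}_{st}+y^\natural_{st},
\end{equation*}
where $y^\natural$ is a remainder of order $3\ga$, together with the a priori bound on $\|y\|_\ga$ in terms of $\|\mathbf B\|_\ga$ furnished by rough path theory.

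\textbf{Step 1: the error equation.} Subtracting the defining relation \eref{e4} of the scheme from the above expansion on each cell $[t_k,t_{k+1}]$, and writing $\BB^{ij}=\widehat{\BB}^{ij}+\EE[\BB^{ij}]$ with $\EE[\BB^{ij}_{st}]=\tfrac12\delta_{ij}(t-s)^{2H}$, one obtains for the error $e:=y-y^n$ a discrete equation whose driving ``source'' is the weighted sum of the compensated second-level increments $\sum_{i,j}\partial V_iV_j(y_{t_k})\,\widehat{\BB}^{ij}_{t_kt_{k+1}}$; the remaining contributions are either linear in $e$ --- with coefficients involving $h$, $\delta B_{t_kt_{k+1}}$ or $h^{2H}$ --- or of genuinely higher order once summed over the grid. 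I emphasize that the naive cell-by-cell bounds on several of these terms, in particular on $y^\natural$ and on the $h^{2H}$-weighted contributions, are \emph{not} sharp enough to reach the target rate $n^{-(2H-\frac12)}$; this is precisely why the rough path machinery below is needed in place of a plain discrete Gronwall argument.

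\textbf{Step 2: uniform bounds and the enlarged rough path (the crux).} Set $U^n:=n^{2H-\frac12}e$ and, for smooth bounded $g$, consider the renormalized weighted sums $A^{g,ij}_{t_kt_\ell}:=n^{2H-\frac12}\sum_{t_k\le t_j<t_\ell}g(y_{t_j})\,\widehat{\BB}^{ij}_{t_jt_{j+1}}$, so that in particular $W^{n,ij}_{t_\ell}:=A^{1,ij}_{0t_\ell}$ plays the role of the discrete driving noise for $U^n$. Since $\widehat{\BB}^{ij}$ lives in the second Wiener chaos, moments of $A^{g,ij}$ reduce by hypercontractivity to covariance computations, and the decisive point --- of Breuer--Major type --- is a sharp estimate on the covariance of $\widehat{\BB}^{ij}_{t_jt_{j+1}}$ over non-adjacent cells, which shows that the variance of $\sum_{t_k\le t_j<t_\ell}\widehat{\BB}^{ij}_{t_jt_{j+1}}$ scales like $(t_\ell-t_k)\,n^{1-4H}$, exactly compensating the prefactor $n^{2(2H-\frac12)}$ and giving $A^{g,ij}\in C^{\frac12-\kappa}$ with $L^p$-bounds uniform in $n$. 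The passage from one-cell estimates to increments over arbitrary sub-intervals, and the treatment of the $y_{t_k}$-dependent weights, are carried out via a \emph{discrete sewing lemma}; the same tool handles the higher-order remainders of Step~1. One then lifts the triple $(B,y^n,U^n)$ --- equivalently $(B,y^n,U^n,W^n)$ --- to a rough path $\mathbf Z^n$ and, combining these estimates with a fixed-point/Gronwall argument in the rough path metric, proves $\EE[\|\mathbf Z^n\|_\ga^p]\le C_p$ uniformly in $n$; in particular $\|U^n\|_\infty$ is bounded in every $L^p$, uniformly in $n$. This uniform control --- and especially that of the \emph{second level} of $\mathbf Z^n$, which must not deteriorate as $h\to0$ --- is the main obstacle of the whole argument; the rest is comparatively routine.

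\textbf{Step 3: conclusions.} Assertion \emph{(i)} is then immediate: $\EE[\|n^{-\kappa}U^n\|_\infty^p]\le C_p\,n^{-\kappa p}$ is summable for $p$ large, so $n^{-\kappa}U^n\to0$ almost surely by Borel--Cantelli, i.e.\ $n^{2H-\frac12-\kappa}\sup_t|y_t-y^n_t|\to0$. For assertion \emph{(ii)} it remains to identify the limit: one establishes the joint weak convergence $(B,W^n)\Rightarrow(B,W)$, where $W=(W^{ij})$ is an $\RR^{m\times m}$-valued Brownian motion \emph{independent of $B$}, with covariance read off from the limiting covariance of the sums in Step~2, via a fourth-moment central limit theorem in the second chaos together with an asymptotic-independence argument for the joint law with $B$. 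This upgrades to convergence of the enlarged rough paths $\mathbf Z^n\Rightarrow\mathbf Z$, where $\mathbf Z$ is the rough path associated to $(B,y,U,W)$; the continuity of the It\^o--Lyons map yields $U^n\Rightarrow U$ in $D([0,T])$, and passing to the limit in the equation of Step~1 produces exactly \eref{eq:dyn-U}.
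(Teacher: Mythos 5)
Your overall architecture matches the paper's announced strategy (lift $(B,y^{n},\text{error})$ to a rough path with step-size--independent bounds, then read off the rate and the CLT), and the ingredients you list for the compensated second-level sums — hypercontractivity, the Breuer--Major-type covariance bound giving variance $\sim(t_\ell-t_k)n^{1-4H}$, and the discrete sewing lemma for the weights — are exactly the ones the paper uses (Lemma \ref{lem4.1}, Proposition \ref{prop3.4}, Proposition \ref{prop3.6}). But there are two genuine gaps. The first is in Step 2: you claim that a single ``fixed-point/Gronwall argument in the rough path metric'' yields a uniform bound on the lift of $(B,y^{n},n^{2H-\frac12}e)$. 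One pass through the error equation does not reach the critical exponent. After inverting the linearization via the discrete Jacobian $\Phi^{n}$, the error splits as $\hat\ep+\wt\ep$, where $\hat\ep$ is the explicit $F$-weighted sum of size $n^{\frac12-2H}$, but the remainder $\wt\ep$ is only $O(n^{-(3\ga-1)})$ on a first pass — and $3\ga-1<2H-\frac12$ for \emph{every} $H<\frac12$, with the gap worsening as $H\downarrow\frac13$. The paper must therefore bootstrap: each application of Propositions \ref{prop7.1} and \ref{prop6.7} improves the exponent by $3\ga-1$, and the number of iterations needed depends on where $H$ sits relative to the thresholds $H_{k}=\frac{2k-1}{6k-4}$ (see the proof of Theorem \ref{thm7.3}). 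Moreover the uniform rough-path bound is only ever established for $n^{\al}\ep$ with $\al$ strictly below $2H-\frac12$; your assertion at the critical rate is not what is (or needs to be) proved. Without the iteration, your argument delivers the rate $n^{-(3\ga-1)}$ of Proposition \ref{prop5.3}, not the optimal one.

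The second gap is in Step 3(ii). Passing from the f.d.d.\ convergence $(B,W^{n})\Rightarrow(B,W)$ to ``convergence of the enlarged rough paths $\mathbf Z^{n}\Rightarrow\mathbf Z$'' and then invoking continuity of the It\^o--Lyons map requires weak convergence of the second-level objects $\int\delta B\otimes dW^{n}$, $\int\delta W^{n}\otimes dB$, $\int\delta y^{n}\otimes dW^{n}$ and $\int\delta W^{n}\otimes dW^{n}$, jointly with the first level. This does not follow from the first-level f.d.d.\ convergence and is nowhere established. The paper avoids this entirely: it reduces (ii) to the convergence of the single weighted sum $\Phi_{\eta(\cdot)}\hat\ep_{\eta(\cdot)}$ and proves a bespoke limit theorem for such sums (Theorem \ref{prop9.2}) by a coarse-graining argument, whose only second-level input is the \emph{upper bound} \eref{eq9.9} on the discrete areas $\sum_{l}\zeta^{n}_{l}$ (verified in Lemma \ref{lem11.4}), not their convergence. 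If you want to keep your route, you would have to supply the convergence of all these cross-areas, which is a substantial additional piece of work; otherwise you should replace the It\^o--Lyons step by a direct argument of the coarse-graining type.
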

 Let us highlight the fact that our
  approach does  not rely on the special structure of the numerical scheme and does not require the analysis of the Wong-Zakai approximation. In fact, it   provides a general procedure for studying   time-discrete numerical approximations of RDEs, including the  implementable schemes \eref{e2i} and \eref{e3} we just mentioned, the backward Euler scheme, the Crank-Nicolson scheme and its modifications, Taylor schemes and their modifications introduced in \cite{HLN} and so on.  Let us now explain briefly our strategy in order to prove Theorem \ref{thm:cvgce-scheme}:

\noindent $(a)$
The fact that we are replacing the terms $\BB^{ij}_{t_{k}t_{k+1}}$ in \eqref{eq:second-order-taylor-scheme} by their expectations forces us to thoroughly analyze some discrete weighted sums in the second chaos of $B$. This will be a substantial part of our computations.

\noindent $(b)$
Then we have to get some uniform bounds (in $n$) for the numerical scheme \eqref{e4} and the related linear RDEs like \eqref{eq:dyn-U}. This will be done thanks to some discrete time rough paths type techniques, which replace  the Malliavin calculus arguments invoked in \cite{HLN1} . A special attention has obviously to be paid in order to handle the terms $V_{j} V_{j} (y^{n}_{t_{k}}) h^{2H}$ in~\eqref{e4}. Also note that the numerical scheme \eref{e4} will be reduced to a differential equation driven by a process in the second chaos (see \eref{e.10}). Hence we cannot rely on the integrability of  the Malliavin derivative of $y^{n}$, which is a linear RDEs driven by the scheme process.

\noindent $(c)$
One of the main ingredient of our proofs is to consider $(B, y^{n}, n^{\al} ( y-y^{n}) )$ as a rough path, where $\al>0$ is a constant less than $2H-\frac12$. This property will be proved by analyzing some crossed integrals between the 3 components of the process $(B, y^{n}, n^{\al} ( y-y^{n}) )$.

\noindent $(d)$
Eventually, our limit theorems are obtained by considering the asymptotic behavior of a rough linear equation describing the evolution of the error $y-y^{n}$, the center of which is  a weighted-variation term, or in our point of view, a ``discrete'' rough integral.  
 
 \noindent
If we compare the above ingredients with those in \cite{HLN1},
one can see that the gap between the rough and the non-rough situations is substantial. New ideas are required for every step (a) to (d).   This corresponds to the steps where the gap between the rough and the non rough situations is particularly large.

Among the ingredients we have alluded to above, the asymptotic behavior of weighted variations has been received a lot of attention in recent works; see e.g. \cite{LL, Nourdin, NN, NNT, NP1, NR}.  
   Our approach to this problem   relies on a combination of rough paths and Malliavin calculus tools, and might have an interest in its own right; see Theorem~\ref{prop9.2} for the precise statement. Indeed, with respect to the aforementioned results, it seems that we can reach a more general class of weights.   We are also able to consider the variations for  multi-dimensional fBms, thanks to a simple approximation argument on the simplex.

Eventually let us stress the fact that, though we have restricted our analysis to equations driven by a fractional Brownian motion here for sake of simplicity, we believe that our results can be extended to a general class of Gaussian processes whose covariance function satisfies reasonable assumptions (such as the ones exhibited in \cite{CHLT,FGGR}). In this case, if $X$ denotes the centered Gaussian process at stake, we expect the numerical scheme \eqref{e4} to become:
\begin{eqnarray*}
y^{n}_{t_{k+1}} = y^{n}_{t_{k}} + b(y^{n}_{t_{k}}) h+ V(y^{n}_{t_{k}}) \delta X_{t_{k}t_{k+1}} 
+
\frac12 \sum_{ j=1}^{m}\partial V_{j} V_{j} (y^{n}_{t_{k}}) R(h),
\end{eqnarray*}
where $R(h)$ is a deterministic constant defined by     $\mE(|X_{h}|^{2}  )$ and where we have assumed that $X$ has stationary increments. Notice however that (similar to what is done in \cite{DNT}) one can prove that in general the Euler scheme \eqref{e4} is divergent when one considers an equation driven by a fractional Brownian motion with Hurst parameter $\frac14 < H \le \frac13$. Our analysis is thus restricted to a rough signal with H\"older regularity greater than $\frac13$.

 Here is how our paper is structured: In Section \ref{section2}, we recall some results from the theory of rough paths, and prove a discrete version of the sewing map lemma.  In Section \ref{section3}, we consider the fractional Brownian motion as a rough path and derive   some elementary results.
 In Section \ref{section4i} we first develop some useful upper-bound estimates,   and then we introduce a general limit theorem on the asymptotic behavior of weighted random sums. 
  In Section \ref{section4} we consider the couple $(y^{n}, B)$  as a rough path and show that it is uniformly bounded in $n$. In Section~\ref{section5} we show that the Euler scheme $y^{n}$ is convergent, and we derive our first result on the rate of strong convergence of $y^{n}$. We also derive some estimates on the error process $y-y^{n}$.  Section \ref{section6} is devoted to an elaboration of the estimates for the error process under some new conditions. This leads, in Section \ref{section7}, to consider the rate of strong convergence of the Euler scheme again, improving the results obtained in Section \ref{section5} up to an optimal rate. In Section  \ref{section9} we prove our main result on the asymptotic error distribution of $y^{n}$. 
   In the Appendix we   prove some auxiliary results.
 
\paragraph{\textbf{Notation}}
Let $\pi:0=t_{0}<t_{1}<\cdots<t_{n}=T$ be a partition on $[0,T]$. Take $s,t\in [0,T]$. We write $\ll s,t \rr$ for the discrete interval that consists of   $t_{k} $'s such that   $ t_{k}\in [s,t] $.  We denote by $\cs_{k}([s,t])$ the simplex $\{ (t_{1},\dots, t_{k}) \in [s,t] ;\, t_{1}\leq \cdots\leq t_{k}  \}$. In contrast, whenever we deal with a discrete interval, we set $\cs_{k}(\ll s,t\rr)=
\{ (t_{1},\dots, t_{k}) \in \ll s,t\rr ;\, t_{1}< \cdots< t_{k}  \}$. For $t=t_{k}$ we   denote $t-:=t_{k-1}$, $t+:= t_{k+1}$.

Throughout the paper we work on a  probability space $(\Omega, \mathscr{F}, P)$. If $X$ is a random variable, we denote by $\| X \|_{p}  $ the $L_{p}$-norm of $X$.
The letter $K$ stands for a constant which can change from line to line, and $\lfloor a \rfloor $ denotes the integer part of   $a$.

\section{Elements of rough paths theory}\label{section2}

This section is devoted to introduce the main rough paths notations which will be used in the sequel. We refer to \cite{FH,FV} for further details. We shall also state and prove a discrete sewing lemma which is a simplified version of an analogous result contained in \cite{DPT}.

\subsection{H\"older continuous rough paths and rough differential equations} 

In this subsection, we introduce some basic concepts of the rough paths theory. Let $\frac13<\ga\leq \frac12$, and call $T>0$ a fixed finite time horizon. The following notation will prevail until the end of the paper: for a finite dimensional vector space $\cv$ and two functions $f\in C([0,T],\cv)$ and $g\in C(\cs_{2}([0,T]),\cv)$ we set 
\begin{equation}\label{eq:def-delta}
\delta f_{st} = f_{t}-f_{s},
\quad\text{and}\quad
\delta g_{sut} = g_{st}-g_{su}-g_{ut}.
\end{equation}

We start with the definition of some H\"older semi-norms:  consider here two paths $x \in C([0,T], \RR^{m})$ and $\XX \in C(\cs_{2}([0,T]), (\RR^{m})^{\otimes 2})$. Then we denote
\begin{equation}\label{eq:def-holder-seminorms}
\|x\|_{[s,t], \ga}:=\sup_{(u,v)\in\cs_{2}([s,t])}\frac{|\delta x_{uv}|}{|v-u|^{\ga}} , 
\qquad\qquad 
\|\XX\|_{[s,t],  2\ga}:= \sup_{(u,v)\in\cs_{2}([s,t])}\frac{|  \XX_{uv}| }{|v-u|^{ 2\ga}} ,
\end{equation}
where we stress the fact that the regularity of $\XX$ is measured in terms of $|t-s|$. When the semi-norms in \eqref{eq:def-holder-seminorms} are finite we say that $x$ and $\XX$ are respectively in $C^{\ga}([0,T], \RR^{m})$ and $C^{2\ga}(\cs_{2}([0,T]), (\RR^{m})^{\otimes 2})$.
For convenience, we denote $ \|x\|_{\ga}:= \|x\|_{[0,T], \ga} $ and $ \|\XX \|_{2\ga }:= \|\XX\|_{[0,T], 2\ga } $.

With this preliminary notation in hand, we can now turn to the definition of rough path.

\begin{Def}\label{def:rough-path}
Let $x \in C([0,T], \RR^{m})$, $\XX \in C(\cs_{2}([0,T]), (\RR^{m})^{\otimes 2})$, and $\frac13<\ga\leq \frac12$. 
We call $ S_{2}(x):=(x, \XX)  $ a (second-order) $\ga$-rough path if $ \|x\|_{\ga} <\infty $ and $\|\XX\|_{2\ga}<\infty$, and the following algebraic relation holds true: 
\begin{equation*}
\delta \XX_{sut}:=\XX_{st} - \XX_{su} - \XX_{ut} = x_{su}\otimes x_{ut} ,
\end{equation*}
where we have invoked \eqref{eq:def-delta} for the definition of $\delta\XX$. For a $\ga$-rough path $S_{2}(x)$, we define a $\ga$-H\"older semi-norm as follows:
\begin{equation}\label{eq:def-norm-rp}
\|S_{2}(x)\|_{\ga} := \|x\|_{\ga}+ \|\XX\|_{2\ga}^{\frac12}\,. 
\end{equation}
An important subclass of rough paths are the so-called \emph{geometric $\ga$-H\"older rough paths}.  A geometric $\ga$-H\"older rough path is a  rough path $  (x, \XX)$  such that  there exists a sequence of smooth $\RR^{d}$-valued paths $(x^{n}, \XX^{n})$ verifying:
\begin{eqnarray}\label{eq:cvgce-for-geom-rp}
\| x-x^{n}\|_{\ga} + \|\XX-\XX^{n}\|_{2\ga}  \rightarrow 0 \quad \text{ as } n \rightarrow \infty . 
\end{eqnarray}
We will mainly consider geometric rough paths in the remainder of the article.
\end{Def}

\noindent
In relation to \eqref{eq:cvgce-for-geom-rp}, notice that  when $x$ is a smooth $\RR^{d}$-valued path, we can choose \begin{eqnarray}\label{e2.2}
\XX_{st} &=& \int_{s}^{t}\int_{s}^{u} dx_{v} \otimes dx_{u}.
\end{eqnarray}
It is then easily verified  that $S_{2}(x) = (x, \XX)$, with $\XX$ defined in \eref{e2.2}, is a $\ga$-rough path with $\ga=1$.  In fact, this is also the unique way to lift a smooth path to a $\ga$-rough path.

Recall now that we interpret equation \eqref{e1.1} in the rough paths sense. That is, we shall consider the following general rough differential equation (RDE):
\begin{eqnarray}\label{e2.1}
dy_{t}&=&b(y_{t})dt+ V(y_{t})dx_{t}\,,
\nonumber
\\
y_{0} &=& y,
\end{eqnarray}
where $b$ and $V$ are smooth enough coefficients and $x$ is a rough path as given in Definition~\ref{def:rough-path}. We shall interpret equation \eqref{e2.1} in a way introduced by Davie in \cite{D}, which is conveniently compatible with numerical approximations.

 \begin{Def}\label{def:diff-eq-davie}
 We say that $y$ is a solution of \eref{e2.1} on $[0,T]$ if $y_{0} = y$ and there exists a constant $K>0$ and $\mu>1$  such that 
\begin{equation}\label{eq:dcp-Davie}
\Big| \delta y_{st} -  \int_{s}^{t} b(y_{u}) \, du - V(y_{s}) \delta x_{st} 
- \sum_{i,j=1}^{m} \partial V_{i}V_{j} (y_{s} ) \XX^{ij}_{st} \Big| 
\leq 
K |t-s|^{\mu}
\end{equation}
for all $(s,t) \in \mathcal{S}_{2}([0,T])$, where we recall that $\delta y$ is defined by~\eref{eq:def-delta}. 
\end{Def}

\noindent
Notice that if $y$ solves \eqref{e2.1} according to Definition \ref{def:diff-eq-davie}, then it is also a controlled process as defined in \cite{FH,G}. Namely, if $y$ satisfies  relation \eqref{eq:dcp-Davie}, then we also have:
\begin{equation}\label{eq:dcp-controlled-process}
\delta y_{st} = V(y_{s}) \delta x_{st} + r_{st}^{y},
\end{equation}
where $r^{y}\in C^{2\ga}(\cs_{2}([0,T]))$. We can thus define iterated integrals of $y$ with respect to itself thanks to the sewing map; see   Proposition 1 in \cite{G}. This yields the following decomposition:
\begin{eqnarray*}
\Big| 
\int_{s}^{t}   y_{u}^{i} d y^{j}_{u} - y^{i}_{s} \delta y^{j}_{st} -  \sum_{i', j'=1}^{m}  V^{i}_{i'}    V^{j}_{j'}(y_{s}) \XX_{st}^{i'j'}
\Big|&\leq& K(t-s)^{3\ga},
\end{eqnarray*}
for all $(s,t) \in \mathcal{S}_{2}([0,T])$ and $i,j=1,\ldots,n$. In other words, the signature type path $S_{2}(y) = (y, \YY )$ defines a rough path according to Definition \ref{def:rough-path}.

We can now state an existence and uniqueness result for rough differential equations. The reader is referred again to \cite{FH,G} for further details.
\begin{thm}\label{thm 3.3}
Assume that  $V= (V_j)_{1\leq j\leq m}$ is a collection of  $C^{\lfloor 1/\ga \rfloor+1}_{b}$-vector fields on $\RR^d$.
 Then there exists a unique   RDE solution to equation \eref{e2.1}, understood as in Definition~\ref{def:diff-eq-davie}. In addition, the unique solution $y$ satisfies the following estimate:
 \begin{eqnarray*}
|  S_{2}(y)_{st}| &\leq & K (1\vee \| S_{2}(x) \|_{\ga, [s,t] }^{1/\ga}) (t-s)^{\ga}.
\end{eqnarray*}
Whenever   $V = (V_j)_{1\leq j\leq m} $ is a collection of linear vector fields, the existence and uniqueness results still hold, and we have the estimate:
    \begin{eqnarray*}
|S_{2}(y)_{st}| &\leq& K_{1} \|S_{2}(x)\|_{\ga, [s,t]} \exp(K_{2} \|S_{2}(x)\|_{\ga }^{1/\ga}  ) (t-s)^{\ga}.
\end{eqnarray*}

 \end{thm}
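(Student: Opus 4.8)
The plan is to follow the classical two-step route for rough differential equations: first solve \eref{e2.1} locally by a Banach fixed point in a space of controlled paths, using the sewing lemma (see \cite{FH,G} and Section~\ref{section2}) to make sense of the rough integral $\int V(y)\,dx$; then globalise and extract the two quantitative bounds by a greedy-partition argument attached to a superadditive control built from $S_{2}(x)$. The drift $\int_{s}^{t}b(y_{u})\,du$ is a lower-order Young/Lebesgue-type term and is incorporated into the fixed-point map without difficulty, so I will not dwell on it.

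\emph{Local well-posedness.} On an interval $[s,t]\subset[0,T]$ introduce the complete metric space $\mathcal Q_{s,t}$ of controlled paths $(z,z')$ with $z_{s}=y_{s}$, $z'=V(z)$ and $\delta z_{uv}=z'_{u}\,\delta x_{uv}+r^{z}_{uv}$ with $r^{z}\in C^{2\ga}(\cs_{2}([s,t]))$, equipped with the norm $\|z'\|_{[s,t],\ga}+\|r^{z}\|_{[s,t],2\ga}$. For $(z,z')\in\mathcal Q_{s,t}$ the sewing lemma yields a well-defined increment $I(z)_{uv}=V(z_{u})\,\delta x_{uv}+\sum_{i,j}\partial V_{i}V_{j}(z_{u})\,\XX^{ij}_{uv}+O(|v-u|^{3\ga})$, and we set $\Gamma(z,z')_{u}=y_{s}+I(z)_{su}$. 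Using $V\in C^{\lfloor1/\ga\rfloor+1}_{b}$ together with the standard estimates for composing a regular function with a controlled path, one checks that $\Gamma$ preserves a closed ball of $\mathcal Q_{s,t}$ and is a strict contraction on it as soon as $\|S_{2}(x)\|_{\ga,[s,t]}^{1/\ga}\,|t-s|$ lies below a threshold depending only on the $C^{\lfloor1/\ga\rfloor+1}_{b}$-norm of $V$. Its fixed point solves \eref{e2.1} in the sense of Definition~\ref{def:diff-eq-davie}; conversely, as observed just after that definition, any Davie solution is a controlled path and hence coincides with it. This gives existence and uniqueness on every sufficiently short subinterval.

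\emph{A priori estimate and globalisation.} Set $\omega(u,v):=\|S_{2}(x)\|_{\ga,[u,v]}^{1/\ga}(v-u)$; since $\|S_{2}(x)\|_{\ga,[\cdot,\cdot]}$ is monotone in the interval, $\omega$ is a continuous superadditive control. Given $[s,t]$, fix a small $\lambda>0$ for which the previous step applies whenever $\omega\le\lambda$, and define the greedy partition $\tau_{0}=s$, $\tau_{j+1}=\inf\{\tau>\tau_{j}:\omega(\tau_{j},\tau)\ge\lambda\}\wedge t$. Superadditivity forces the number $N=N_{[s,t]}$ of these intervals to satisfy $N\le1+\lambda^{-1}\omega(s,t)$. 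On each $[\tau_{j},\tau_{j+1}]$ the fixed-point bound controls $\delta y_{\tau_{j}\tau_{j+1}}$ and the second level $\YY_{\tau_{j}\tau_{j+1}}$ by a fixed multiple of $\lambda^{\ga}$; chaining along the partition by additivity of $\delta y$ and the multiplicativity $\delta\YY_{\tau_{j}\tau_{j+1}\tau_{j+2}}=\delta y_{\tau_{j}\tau_{j+1}}\otimes\delta y_{\tau_{j+1}\tau_{j+2}}$, and keeping track of the scaling as in \cite{D,FV} (using $(v-u)^{1-\ga}\le T^{1-\ga}$ and $\|S_{2}(x)\|\le 1\vee\|S_{2}(x)\|^{1/\ga}$ to turn the factor $\omega(u,v)$ coming from the interval count into $(v-u)^{\ga}$), leads to $|S_{2}(y)_{st}|\le K(1\vee\|S_{2}(x)\|_{\ga,[s,t]}^{1/\ga})(t-s)^{\ga}$. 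In particular the solution cannot explode, so the local pieces patch into a unique global solution on $[0,T]$.

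\emph{Linear vector fields, and the main obstacle.} When the $V_{j}$ are linear the ball restriction in the first step is unnecessary, and on a greedy interval $[\tau_{j},\tau_{j+1}]$ the controlled-path estimate becomes linear in the datum, $|\delta y_{\tau_{j}\tau_{j+1}}|+|\YY_{\tau_{j}\tau_{j+1}}|^{1/2}\le K\lambda^{\ga}(1+|y_{\tau_{j}}|)$, hence $|y_{\tau_{j+1}}|\le(1+K\lambda^{\ga})|y_{\tau_{j}}|$. Iterating over the $N_{[0,T]}$ greedy intervals gives $\sup_{u\in[0,T]}|y_{u}|\le(1+K\lambda^{\ga})^{N_{[0,T]}}|y|\le K\exp(K_{2}\|S_{2}(x)\|_{\ga}^{1/\ga})|y|$; inserting this back into the one-step estimate and summing once more along the greedy partition of $[s,t]$ produces $|S_{2}(y)_{st}|\le K_{1}\|S_{2}(x)\|_{\ga,[s,t]}\exp(K_{2}\|S_{2}(x)\|_{\ga}^{1/\ga})(t-s)^{\ga}$, the prefactor being \emph{linear} in $\|S_{2}(x)\|_{\ga,[s,t]}$ because $\delta y$ and $\YY$ are controlled linearly by $\|S_{2}(x)\|_{\ga,[\tau_{j},\tau_{j+1}]}$ on a single greedy interval. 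The delicate point throughout is the contraction estimate in the first step: one must show that $\Gamma$ has a uniform contraction ratio and that the admissible interval length genuinely scales like $\|S_{2}(x)\|_{\ga,[s,t]}^{-1/\ga}$, which requires a careful accounting of the $C^{3\ga}$ remainders produced by the sewing lemma and of the Lipschitz dependence on the controlled-path norm of $z\mapsto V(z)$ and $z\mapsto\partial V_{i}V_{j}(z)$ composed with controlled paths. Once this is in place, the greedy-partition counting and the patching arguments are essentially routine, as in \cite{D,FV}.
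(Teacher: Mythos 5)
The paper does not prove this theorem: it is quoted from the literature, with the reader referred to \cite{FH,G} (and \cite{FV}) for details. Your sketch is a correct rendering of exactly the standard argument in those references — local solvability by a Banach fixed point in the space of controlled paths via the sewing lemma, identification of Davie solutions with controlled-path solutions for uniqueness, and the greedy-partition/superadditive-control counting (with $\omega(u,v)=\|S_{2}(x)\|_{\ga,[u,v]}^{1/\ga}(v-u)$, so that $N\le 1+\lambda^{-1}\omega(s,t)$) to globalise and to produce the polynomial bound in the bounded case and the exponential bound, linear in $\|S_{2}(x)\|_{\ga,[s,t]}$, in the linear case. I see no gap; the points you flag as delicate (the scaling of the contraction interval like $\|S_{2}(x)\|_{\ga}^{-1/\ga}$ and the Lipschitz estimates for composition of $V$ and $\partial V_iV_j$ with controlled paths) are indeed where the work lies, and they are handled as you describe in the cited sources.
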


\subsection{A discrete-time sewing map lemma }\label{section2.2}

In this subsection we derive a discrete version of the sewing map lemma which will play a prominent role in the analysis of our numerical scheme.  Let  $\pi : 0=t_{0}< t_{1}<\cdots <t_{n-1}< t_{n} =T $ be a generic partition of the interval $[0,T]$ for $n \in \NN$. For $0\leq s < t \leq T$, we denote by $ \llbracket  s, t \rrbracket  $ the discrete interval  $\{t_{k} : s\leq t_{k} \leq t \}$. We also label the following definition for further use.

\begin{Def}\label{def:discrete-increments}
We denote by  $\mathcal{C}_{2} ( \pi   ,\mathcal{X})$  the collection of functions    $R$   on $\cs_{2}(\llbracket 0,T \rrbracket)$ with values in a Banach space $(\mathcal{X}, |\cdot|)$ such that $R_{t_{k} t_{k+1}} = 0$ for $k=0,1,\dots, n-1$.   Similarly to the continuous case (relations~\eqref{eq:def-delta} and~\eqref{eq:def-holder-seminorms}), we define the operator $\delta$ and some H\"older semi-norms on $ \mathcal{C}_{2} ( \pi   ,\mathcal{X}) $   as follows:
\begin{equation*} 
\delta R_{sut} = R_{st}-R_{su}-R_{ut},
\quad\text{and}\quad
\|R\|_{\mu} = \sup_{ (u, v) \in \cs_{2}(\llbracket 0,T \rrbracket)}  \frac{|R_{uv}|}{|u-v|^{\mu}}\,.
\end{equation*}
For $R\in \mathcal{C}_{2} ( \pi   ,\mathcal{X})$ and $\mu>0$, we also  set
\begin{equation}\label{e1}
\|\delta R\|_{\mu} = \sup_{(s,u,t) \in  \cs_{3}(\llbracket 0,T \rrbracket )} 
\frac{|\delta R_{sut}|}{|t-s|^{\mu}}\, .
\end{equation}
The space of functions $R\in \mathcal{C}_{2} ( \pi   ,\mathcal{X})$ such that $\|\delta R\|_{\mu}<\infty$ is denoted by $\mathcal{C}_{2}^{\mu} ( \pi   ,\mathcal{X})$
\end{Def}

We can now turn to our discrete version of the sewing map lemma. This result is inspired by \cite{DPT}, but is included here since our situation is simpler and leads to a straightforward proof.
 
 \begin{lemma}\label{lem2.4}
 For a Banach space $\cx$, an exponent $\mu>1$ and $R \in \mathcal{C}_{2}^{\mu} ( \pi   ,\mathcal{X})$ as in Definition~\ref{def:discrete-increments}, the following relation holds true:
 \begin{equation*}
\|R\|_{\mu} \leq K_{\mu} \|\delta R\|_{\mu}\,,
\quad\text{where}\quad
K_{\mu} = 2^{\mu} \, \sum_{l=1}^{\infty} l^{-\mu}.
\end{equation*}
\end{lemma}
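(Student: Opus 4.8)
The plan is to mimic the continuous-time sewing lemma proof, but exploit the discrete structure which makes the argument much simpler: since $R_{t_k t_{k+1}}=0$ on consecutive partition points, there is no limiting procedure and no need to introduce an auxiliary two-parameter object; we can work directly with $R$ itself. Fix $(s,t)\in\cs_2(\ll 0,T\rr)$, say $s=t_p$ and $t=t_q$ with $p\le q$, and set $N=q-p$. The aim is to bound $|R_{st}|$ by $K_\mu\|\delta R\|_\mu |t-s|^\mu$.

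\medskip

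\textbf{Step 1: A dyadic-type telescoping.} The key identity is that, for any intermediate point $t_j$ with $p<j<q$, the relation $\delta R_{t_p t_j t_q}=R_{t_p t_q}-R_{t_p t_j}-R_{t_j t_q}$ lets us express $R_{st}$ in terms of $R$ over two strictly shorter discrete subintervals plus an error controlled by $\|\delta R\|_\mu$. I would iterate this by always splitting a discrete interval $\ll t_a,t_b\rr$ at a (near-)midpoint $t_c$, i.e. $c=\lfloor (a+b)/2\rfloor$. After at most $\lceil \log_2 N\rceil$ levels of splitting, every interval that appears has the form $\ll t_k,t_{k+1}\rr$, on which $R$ vanishes by definition. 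Summing up all the $\delta R$ corrections collected along the way gives
\begin{equation*}
R_{st}=\sum_{\text{splits } (a,c,b)} \delta R_{t_a t_c t_b}.
\end{equation*}

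\textbf{Step 2: Counting and estimating the corrections.} At the $l$-th level of the recursion there are at most $2^{l-1}$ subintervals, each of discrete length at most $2^{-(l-1)}N$ (roughly), so each correction term at level $l$ is bounded by $\|\delta R\|_\mu (2^{-(l-1)}N)^\mu\, h^\mu$ where $h$ is the mesh; more cleanly, writing everything in terms of the actual lengths $|t_b-t_a|\le 2^{-(l-1)}|t-s|$, the level-$l$ contribution to $|R_{st}|$ is at most $2^{l-1}\cdot\|\delta R\|_\mu\,(2^{-(l-1)}|t-s|)^\mu = 2^{(1-\mu)(l-1)}\,\|\delta R\|_\mu\,|t-s|^\mu$. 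Summing the geometric-type series over $l\ge 1$ and using $\mu>1$ produces a finite constant. To land exactly on $K_\mu=2^\mu\sum_{l\ge1} l^{-\mu}$ one should instead follow the classical bookkeeping: order the corrections by the length $\ell$ of the interval being split, note that for each integer length $\ell$ there are boundedly many splits producing intervals of that length within $\ll s,t\rr$, and bound $\sum_\ell \#\{\text{splits at length }\ell\}\cdot \ell^\mu$ against $N^\mu\sum_l l^{-\mu}$ after normalizing $\ell = N/l$-type terms; the factor $2^\mu$ absorbs the discrepancy between near-midpoints $\lfloor(a+b)/2\rfloor$ and exact halves. Converting the discrete count $N^\mu h^\mu$ back to $|t-s|^\mu$ closes the estimate.

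\medskip

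\textbf{Main obstacle.} The genuine content is entirely in Step 2: getting the \emph{explicit} constant $K_\mu=2^\mu\sum_{l\ge1}l^{-\mu}$ rather than just \emph{some} constant. This requires choosing the splitting carefully and being honest about the asymmetry introduced by the integer part $\lfloor(a+b)/2\rfloor$ when $N$ is not a power of $2$. A convenient way to sidestep delicate bookkeeping is the "greedy" variant: among the interior points $t_{p+1},\dots,t_{q-1}$, at each stage remove the point $t_j$ that \emph{minimizes} $|t_{j-1}-t_{j+1}|$ among the currently remaining points, apply $\delta R$ at that point, and recurse on the list with $t_j$ deleted. A standard lemma shows the removed gap satisfies $|t_{j-1}-t_{j+1}|\le \tfrac{2}{r}|t-s|$ when $r$ points remain, so after deleting the points one by one the total error is bounded by $\sum_{r\ge 2}\|\delta R\|_\mu(\tfrac{2}{r}|t-s|)^\mu \le 2^\mu\big(\sum_{r\ge1}r^{-\mu}\big)\|\delta R\|_\mu|t-s|^\mu$, which is precisely $K_\mu\|\delta R\|_\mu|t-s|^\mu$. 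I would present this greedy-removal argument, since it yields the stated constant cleanly and the termination (no limit needed) is immediate from the finiteness of the partition.
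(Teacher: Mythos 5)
Your final ``greedy'' point-removal argument is exactly the paper's proof: the paper successively deletes, from the current partition of $\ll t_i,t_j\rr$ with $l$ subintervals, an interior point whose surrounding gap is at most $2(t_j-t_i)/(l-1)$, identifies each difference of Riemann sums with a single $\delta R$ term, and sums $2^\mu\sum_l l^{-\mu}$ to get $K_\mu$. The dyadic splitting in your Steps 1--2 is an unnecessary detour (and, as you note, does not directly yield the stated constant), but since you explicitly adopt the greedy variant, the proposal is correct and essentially identical to the paper's argument.
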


\begin{proof}  
Take $t_{i}, t_{j} \in \pi $.  Let
$\pi_{l}   $, $ l=1,\dots, j-i $ 
be partitions  on $\ll 0,T \rr$  defined recursively as follows: Set $\pi_{1} = \{ t_{i}, t_{j} \} $ and $\pi_{j-i} = \llbracket t_{i}, t_{j} \rrbracket \cap \pi  $. Given a partition $\pi_{ l}   = \{t_{i} = t^{l}_{0}<\cdots< t^{l}_{l}=t_{j}\} $ on $\llbracket t_{i}, t_{j} \rrbracket$, $l=2,\dots, j-i $,  we can find $ t^{l}_{k_{ l}} \in \pi_{l  }  \setminus \{t_{i}, t_{j}\} $ such that  
 \begin{eqnarray}\label{e1ii}
t^{l}_{k_{l}+1} - t^{l}_{k_{l}-1} \leq \frac{2(t_{j}-t_{i})}{ l-1}  .
\end{eqnarray}
 We denote
 by $\pi_{ l-1}$ the partition $\pi_{ l } \setminus \{t^{l}_{k_{l}}\}$. For $l=1,\ldots,j-i$, we also set 
 \begin{equation*}
R^{\pi_{l}} = \sum_{k=0}^{l-1} R_{t^{l}_{k}  t^{l}_{k+1}},
\end{equation*}
and we observe that
$
R^{\pi_{1}} = R_{t_{i} t_{j}}$,
and
$R^{\pi_{j-i}} = \sum_{   {k} ={i} }^{{j-1}}  R_{t_{k} t_{k+1}}=0$,
where the last relation is due to the fact that $R \in \mathcal{C}_{2} ( \pi   ,\mathcal{X})$.

With those preliminaries in hand, we can decompose $R_{t_{i} t_{j}}$ as follows: we write
\begin{eqnarray}\label{e1i}
R_{t_{i} t_{j}} = R_{t_{i} t_{j}}  - \sum_{   {k} ={i} }^{{j-1}}  R_{t_{k}  t_{k+1}}   = \sum_{l=2}^{j-i}(R^{\pi_{l-1}} - R^{\pi_{l }}).
\end{eqnarray}
Now, according to the definition of $\pi_{l}$, we have:
 \begin{eqnarray*}
\left|R^{\pi_{l-1} } -R^{\pi_{l }}\right|=\left| \delta R_{t^{l}_{k_{l} -1}  t^{l}_{k_{l} }  t^{l}_{k_{l} +1}  }  \right|
 \leq  \|\delta R \|_{\mu} \lp  t^{l}_{k_{l} +1}  -  t^{l}_{k_{l} -1} \rp^{\mu}
 \leq   \|\delta R \|_{\mu}   \frac{2^{\mu}(t_{j}-t_{i})^{\mu}}{ (l-1)^{\mu}}    ,
\end{eqnarray*}
where the first inequality follows from \eref{e1} and the second   from \eref{e1ii}. Applying the above estimate of $\left|R^{\pi_{l-1} } -R^{\pi_{l }}\right|$ to \eref{e1i}, we obtain
\begin{eqnarray*}
|R_{t_{i} t_{j}}| \leq  2^{\mu}(t_{j}-t_{i})^{\mu} \|\delta R \|_{\mu}  \sum_{l=1}^{j-i-1}  \frac{1}{ l^{\mu}}  
\leq
 K_{\mu} (t_{j}-t_{i})^{\mu} \|\delta R \|_{\mu}\,. 
\end{eqnarray*}
Dividing both sides of the above inequality by $ (t_{j}-t_{i})^{\mu}  $ and taking supremum over $(t_{i}, t_{j})$ in $\cs_{2}(\llbracket 0, T \rrbracket)$, we obtain the desired estimate. 
\end{proof}

\section{Elements of Fractional Brownian motions} \label{section3}

In this section we briefly recall the construction of a rough path above our fBm $B$. The reader is referred to \cite{FV} for further details. In the second part of the section, we turn to some  estimates for the L\'evy area of $B$ on a discrete grid, which are essential in the analysis of our scheme.  

\subsection{Enhanced fractional Brownian motion}\label{section3.1}
 
 Let $B=(B^1,\dots, B^m)$ be a standard $m$-dimensional fBm  on $[0,T]$ with Hurst parameter $H \in (\frac13, \frac12)  $.  Recall that the covariance function of each coordinate of $B$ is defined on $\mathcal{S}_{2}([0,T])$ by:
\begin{equation}\label{eq:cov-fbm}
R(s,t) = \frac{1}{2} \lc |s|^{2H} + |t|^{2H} - |t-s|^{2H}  \rc .
\end{equation}
We start by reviewing some properties of the covariance function of $B$ considered as a function on $(\mathcal{S}_{2}([0,T]))^{2}$. Namely, take $u,v,s,t$ in $[0,T]$ and set 
\begin{eqnarray}\label{eq3.1}
R\Big(\begin{matrix} u & v \\ s & t \end{matrix}  \Big) 
&=& \mE\lc \delta B^{j}_{uv} \delta B^{j}_{st} \rc.
\end{eqnarray}
Then, whenever $H>1/4$, it can be shown that the integral $\int R \, dR$ is well-defined as a Young integral in the plane. Furthermore, if   intervals $[u,v]$ and $[s,t]$ are disjoint, we have 
\begin{equation}\label{eq:covariance-disjoint-intv}
R \Big(\begin{matrix} u & v \\ s & t \end{matrix}  \Big)  
= 
 \int_{u}^{v}\int_{s}^{t} \mu( dr'dr) .
\end{equation}
 Here and in the following, we denote  \begin{eqnarray}\label{eq:def-msr-mu}
\mu(dr'dr) &=& - H(1-2H) |r-r'|^{2H-2} dr'dr.
\end{eqnarray}

Using the elementary properties above, it is shown in \cite[Chapter 15]{FV} that for any piecewise linear or mollifier approximation $ B^{n} $ to $B$, the geometric rough path $S_{2}(B^{n})$ converges in the $\ga$-H\"older semi-norm to a $\ga$-geometric rough path $S_{2}(B):=(B, \BB)$ (given as in Definition \ref{def:rough-path}) for $\frac13 <\ga <H$.   In addition, for $i\neq j$ the covariance of $\BB^{ij}$ can be expressed in terms of a $2$-dimensional Young integral: 
\begin{eqnarray}\label{e3.1}
\mE(\BB^{ij}_{u v} \BB^{ij}_{s t})  &=&  \int_{u}^{v} \int_{s}^{t} R\Big(\begin{matrix} u & r \\ s & r' \end{matrix}  \Big)   d  R(r',r)
.
\end{eqnarray}
It is also established in \cite[Chapter 15]{FV} that $S_{2}(B)$ enjoys the following integrability property.
\begin{prop}\label{prop:integrability-signature}
Let $S_{2}(B):=(B, \BB)$ be the rough path above $B$, and $\ga\in(\frac13, H)$. Then there exists a random variable $L_{\ga}\in \cap_{p\ge 1}L^{p}(\Omega)$ such that $\|S_{2}(B)\|_{\ga}\le L_{\ga}$, where $\| \cdot \|_{\ga}$ is defined by \eref{eq:def-norm-rp}.
\end{prop}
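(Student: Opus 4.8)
The plan is to combine sharp moment estimates for the two levels of $S_{2}(B)=(B,\BB)$ with the equivalence of $L^{p}$-norms on a fixed Wiener chaos, and then to convert these moment bounds into the pathwise statement via a Kolmogorov-type continuity criterion for rough paths (equivalently, a Garsia--Rodemich--Rumsey argument exploiting Chen's relation).

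First I would record the first-level estimate. Each coordinate $B^{j}$ has stationary increments with $\mE|\delta B^{j}_{st}|^{2}=|t-s|^{2H}$, and $\delta B_{st}$ is a Gaussian vector, so that $\mE|\delta B_{st}|^{p}\le C_{p}\,|t-s|^{pH}$ for every $p\ge 1$. Next comes the second level. For the diagonal entries, the geometric character of $S_{2}(B)$ forces the symmetric part of $\BB_{st}$ to equal $\tfrac12\,\delta B_{st}\otimes\delta B_{st}$, hence $\BB^{ii}_{st}=\tfrac12(\delta B^{i}_{st})^{2}$ and $\mE|\BB^{ii}_{st}|^{2}\le C|t-s|^{4H}$. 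For the off-diagonal entries $i\neq j$ I would start from the representation \eqref{e3.1} of $\mE(\BB^{ij}_{st})^{2}$ as a two-dimensional Young integral $\int\!\int R\,dR$ over $[s,t]^{2}$, and bound it using the planar Young estimates together with the controls \eqref{eq:covariance-disjoint-intv}--\eqref{eq:def-msr-mu} on the covariance; the integrability of the singular kernel $|r-r'|^{2H-2}$ (which is where $H>\tfrac14$ enters) makes the integral convergent, and a scaling argument gives $\mE|\BB^{ij}_{st}|^{2}\le C|t-s|^{4H}$. Since for each fixed $(s,t)$ the entries of $\BB_{st}$ lie in a fixed finite sum of Wiener chaoses of $B$, hypercontractivity (equivalently the Kahane--Khintchine inequalities on a fixed chaos) upgrades these $L^{2}$-bounds to $\mE|\BB_{st}|^{p}\le C_{p}\,|t-s|^{2pH}$ for all $p\ge 1$.

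Finally, I would feed these moment bounds into a Kolmogorov continuity theorem for rough paths, as in \cite{FV}. Given $\gamma\in(\tfrac13,H)$, choosing $p$ large enough that $\gamma<H-\tfrac2p$, the bounds $\|\delta B_{st}\|_{p}\lesssim |t-s|^{H}$ and $\|\BB_{st}\|_{p/2}\lesssim|t-s|^{2H}$, combined with Chen's relation $\delta\BB_{sut}=\delta B_{su}\otimes\delta B_{ut}$, produce a random variable $L_{\gamma}$ with $\|B\|_{\gamma}+\|\BB\|_{2\gamma}^{1/2}\le L_{\gamma}$ and $L_{\gamma}\in L^{q}(\Omega)$ for every finite $q$; since $p$ may be taken arbitrarily large this yields $L_{\gamma}\in\cap_{p\ge1}L^{p}(\Omega)$, which is the claim. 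Alternatively one can bypass this black box and argue directly: the Garsia--Rodemich--Rumsey inequality applied to $B$ controls the first level, while a dyadic estimate based on Chen's relation controls $\BB$.

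I expect the main obstacle to be the off-diagonal second-moment estimate $\mE|\BB^{ij}_{st}|^{2}\le C|t-s|^{4H}$: extracting the correct $|t-s|^{4H}$ scaling out of the planar Young integral $\int\!\int R\,dR$ requires the two-dimensional Young integration machinery and a careful treatment of the singularity $|r-r'|^{2H-2}$, and this is precisely the step where the regularity restriction on $H$ is genuinely used. The remaining ingredients --- Gaussian moment computations, chaos hypercontractivity, and the Kolmogorov/GRR criterion --- are standard.
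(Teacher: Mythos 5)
Your proposal is correct and follows essentially the same route as the source the paper relies on: the paper gives no proof of Proposition \ref{prop:integrability-signature}, deferring entirely to \cite[Chapter 15]{FV}, and the argument there is exactly your combination of the second-moment bound $\mE|\BB^{ij}_{st}|^{2}\le C|t-s|^{4H}$ obtained from the two-dimensional Young integral of the covariance, hypercontractivity on the second chaos, and the Kolmogorov--Chen (or GRR) continuity criterion for rough paths. The only remark worth adding is that for fBm the scaling $|t-s|^{4H}$ can be read off directly from self-similarity and stationarity of increments once one knows $\mE|\BB^{ij}_{01}|^{2}<\infty$, which slightly shortens the step you flag as the main obstacle.
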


We now specialize \eqref{e3.1} to a situation where $(u,v)$ and $(s,t)$ are disjoint intervals  such that $u<v<s<t$. In this case relation \eqref{eq:covariance-disjoint-intv} enables to write:
\begin{eqnarray}\label{e3.3}
\mE(\BB^{ij}_{u v} \BB^{ij}_{s t})  &=&
  \int_{u}^{v} \int_{s}^{t} 
\int_{u}^{r}\int_{s}^{r'} \mu(dw'dw)\mu( dr'dr),
\end{eqnarray}
where $\mu$ is the measure given by \eqref{eq:def-msr-mu}. Note that the left-hand side of \eref{e3.3} converges to $\mE(\BB^{ij}_{u v} \BB^{ij}_{s t}) $
as $v\rightarrow s$. Therefore,  the quadruple integral in \eref{e3.3} converges as $v\rightarrow s$. This implies that  the quadruple integral exists   and   identity \eref{e3.3} still holds when $s=v$. 
 
 Having relation \eqref{e3.3} in mind, let us label the following definition for further use.

\begin{Def}\label{def3.2}
 Denote by $\mathcal{E}_{[a,b]}$ the set of step functions on an interval $[a,b] \subset [0,T]$. We call $\mathcal{H}_{[a,b]}$ the Hilbert space defined as  the closure of $ \mathcal{E}_{[a,b]} $ 
with respect to the scalar product
\begin{eqnarray*}
\langle \mathbf{1}_{[u,v]}, \mathbf{1}_{[s,t]} \rangle_{\mathcal{H}_{[a,b]}} &=&  R\Big(\begin{matrix} u & v \\ s & t \end{matrix}  \Big) .
\end{eqnarray*}
In order to alleviate notations, we will still write $\mathcal{H}=\mathcal{H}_{[a,b]}$ when $[a,b]=[0,T]$. 
Notice that the mapping $\mathbf{1}_{[s,t]} \rightarrow \delta B_{st}$ can be extended to an isometry between $\mathcal{H}_{[a,b]}$ and the Gaussian space associated with $\{ B_{t}, t\in [a,b]\}$. 
 We denote this isometry by $h \rightarrow \int_{a}^{b} h\delta B $. The random variable $\int_{a}^{b} h\delta B$   is   called the (first-order) Wiener integral and is also denoted by $I_{1}(h)$. 
 \end{Def}
 
Owing to the fact that $H<1/2$ throughout this article,
we have the following identity:
\begin{eqnarray}\label{e3.5i}
\|h\|_{\mathcal{H}_{[a,b]}} &=& \Big\| d_{H} s^{\frac12 - H} ( D^{\frac12 -H}_{T-} u^{H-\frac12} \phi (u) ) (s) \Big\|_{L^{2}([a,b])},
\end{eqnarray}
where $d_{H} $ is a constant depending on $H$ and $D^{\frac12 -H}_{T-}$ is the right-sided fractional differentiation operator; see (5.31) in \cite{N}. With the help of     \eref{e3.5i} it is easy to derive 
 the following relation  for $1>\kappa >0$ and $\ga>\frac12 - H$:
\begin{eqnarray}\label{e3.5}
K_{1}\|h\|_{L^{2-\kappa}([a,b])} \leq \|h\|_{\mathcal{H}_{[a,b]}} \leq K_{2}(\sup_{t\in [a,b]}h(t) +\|h\|_{C^{\ga} ([a,b])})\, .
\end{eqnarray} 
Indeed, the lower-bound inequality can be obtained by  the Hardy-Littlewood inequality, while the upper-bound estimate follows from the definition of the   fractional derivative.

In order to generalize relations \eqref{eq:covariance-disjoint-intv}   to a more general situation,
recall that for  $h \in \mathcal{H}_{[a,b]}$ we have  $h_{n} \in \mathcal{E}_{[a,b]}$ such that $h_{n} \rightarrow h$ in $\mathcal{H}_{[a,b]}$. We denote by $h^{e}_{n}$ the extension of $h_{n}$   on $[0,T]$ such that $h_{n}^{e}=h_{n}$ on $[a,b]$ and $h_{n}^{e} =0$ on $[0,T] \setminus [a,b]$. 
Then $\int_{0}^{T} h_{n}^{e}dB = \int_{[a,b]}  h_{n}  dB   $  is a Cauchy sequence in $L^{2}(\Omega)$, and thus so is $h_{n}^{e}$  in $\mathcal{H}$. We denote the limit of $h^{e}_{n}$ by $h^{e} $.
 It is easy to see that 
$h^{e} \in \mathcal{H}$ satisfies
$h^{e}|_{[a,b]} = h$, $h^{e}|_{[a,b]^{c}} = 0$, and
 $\|h^{e}\|_{\mathcal{H}} = \|h\|_{\mathcal{H}_{[a,b]}}$. 
 \begin{lemma}\label{lem3.1}
Take $f \in \mathcal{H}_{[a,b]}$ and $g \in \mathcal{H}_{[c,d]}$, where $[a,b]$ and $[c,d]$ are disjoint subintervals of $[0,T]$ such that  $a<b<c<d$. Then  the following identity holds true:
\begin{eqnarray}\label{e3.4iii}
\mE\Big(  \int_{[a,b]} f \delta B \int_{[c,d]} g \delta B  \Big) =\mE\Big(  \int_{0}^{T} f^{e} \delta B \int_{0}^{T} g^{e} \delta B  \Big) = \int_{[a,b]} \int_{[c,d]} f_{t} g_{s} \mu(ds\, dt),
\end{eqnarray}
where $\mu$ is the measure defined by \eqref{eq:def-msr-mu}.
\end{lemma}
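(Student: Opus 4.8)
The plan is to establish \eqref{e3.4iii} by a density argument: verify the claimed identity for indicator functions, extend it to step functions by bilinearity, and then pass to the limit in $\mathcal{H}_{[a,b]}\times\mathcal{H}_{[c,d]}$. The first equality in \eqref{e3.4iii} requires essentially no work once one recalls the construction of $f^{e}$ from Section~\ref{section3.1}: if $f_{n}\in\mathcal{E}_{[a,b]}$ approximates $f$ in $\mathcal{H}_{[a,b]}$, then $\int_{0}^{T}f^{e}\delta B$ is by definition the $L^{2}(\Omega)$-limit of $\int_{0}^{T}f_{n}^{e}\delta B=\int_{[a,b]}f_{n}\delta B$, which also converges to $\int_{[a,b]}f\delta B$; the analogous statement holds for $g$, and taking a product and an expectation (justified by Cauchy--Schwarz) yields the first identity. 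So the real content is the second equality, and I would organize its proof in two stages.

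First, I would check the second equality for indicators. If $f=\mathbf{1}_{[u_{1},v_{1}]}$ with $[u_{1},v_{1}]\subset[a,b]$ and $g=\mathbf{1}_{[u_{2},v_{2}]}$ with $[u_{2},v_{2}]\subset[c,d]$, then $\int_{[a,b]}f\delta B=\delta B_{u_{1}v_{1}}$ and $\int_{[c,d]}g\delta B=\delta B_{u_{2}v_{2}}$, the two intervals are disjoint, and \eqref{eq:covariance-disjoint-intv} together with Definition~\ref{def3.2} gives $\mE(\delta B_{u_{1}v_{1}}\delta B_{u_{2}v_{2}})=\int_{u_{1}}^{v_{1}}\int_{u_{2}}^{v_{2}}\mu(dr'dr)$, which is exactly the right-hand side of \eqref{e3.4iii} for this pair. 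Bilinearity in $f$ and $g$ then extends the identity to all $f\in\mathcal{E}_{[a,b]}$ and $g\in\mathcal{E}_{[c,d]}$.

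The only delicate point --- and the step I expect to be the main obstacle --- is the passage to the limit, since one must first make sense of $\int_{[a,b]}\int_{[c,d]}f_{t}g_{s}\,\mu(ds\,dt)$ for generic $f,g$ and then show it is continuous in the $\mathcal{H}$-topology. The key observation is that, because $[a,b]$ and $[c,d]$ are disjoint with $b<c$, one has $|r-r'|\geq c-b>0$ on $[a,b]\times[c,d]$, so the kernel $-H(1-2H)|r-r'|^{2H-2}$ of $\mu$ is bounded there and $\mu$ restricts to a finite signed measure on the rectangle $[a,b]\times[c,d]$. Combined with the lower bound in \eqref{e3.5}, which shows that convergence in $\mathcal{H}_{[a,b]}$ (resp. $\mathcal{H}_{[c,d]}$) forces convergence in $L^{2-\kappa}([a,b])$, hence in $L^{1}([a,b])$ (resp. $L^{1}([c,d])$), this makes both sides of \eqref{e3.4iii} well defined for $f\in\mathcal{H}_{[a,b]}$, $g\in\mathcal{H}_{[c,d]}$. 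Taking $f_{n}\in\mathcal{E}_{[a,b]}$ and $g_{n}\in\mathcal{E}_{[c,d]}$ with $f_{n}\to f$ and $g_{n}\to g$, the left-hand side of \eqref{e3.4iii} for $(f_{n},g_{n})$ converges to the one for $(f,g)$ by the $L^{2}(\Omega)$-convergence of the Wiener integrals and Cauchy--Schwarz, while for the right-hand side I would write $f_{n}g_{n}-fg=(f_{n}-f)g_{n}+f(g_{n}-g)$ and bound the resulting integral by a constant (depending only on $H$ and $c-b$) times $\|f_{n}-f\|_{L^{1}([a,b])}\|g_{n}\|_{L^{1}([c,d])}+\|f\|_{L^{1}([a,b])}\|g_{n}-g\|_{L^{1}([c,d])}$, which tends to $0$ since $\|g_{n}\|_{L^{1}([c,d])}$ stays bounded. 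Letting $n\to\infty$ in \eqref{e3.4iii} written for $(f_{n},g_{n})$ then gives the identity for $(f,g)$ and finishes the proof.
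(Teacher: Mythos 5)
Your proposal is correct and follows essentially the same route as the paper: verify the identity on step functions, bound the double integral against $\mu$ on the rectangle $[a,b]\times[c,d]$ using the uniform lower bound on $|r-r'|$ there, deduce continuity in $L^{1}$ via the lower bound in \eqref{e3.5}, and pass to the limit through the decomposition $(f_{n}-f)g_{n}+f(g_{n}-g)$. The only cosmetic difference is that you spell out the indicator-function case and the first equality explicitly, which the paper takes as given from the preceding construction.
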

\begin{proof}
 Take $f_{n}  \in \mathcal{E}_{[a,b]} $ and $g_{n} \in \mathcal{E}_{[c,d]}$ such that $f_{n} \rightarrow f$ in $\mathcal{H}_{[a,b]}$ and $g_{n} \rightarrow g $ in $\mathcal{H}_{[c,d]}$. Then we have 
\begin{eqnarray}\label{e3.4ii}
\langle f_{n}^{e}, g_{n}^{e} \rangle_{\mathcal{H}}
&\rightarrow&  \langle f^{e}, g^{e} \rangle_{\mathcal{H}} \quad \text{as} \quad n\rightarrow \infty.
\end{eqnarray}
On the other hand, take  $d([a,b],[c,d])>\kappa>0$, then owing to \eref{eq:def-msr-mu} we have
   \begin{eqnarray}\label{e3.4i}
-\int_{[a,b]}\int_{[c,d]} |f_{t} | \cdot |g_{s}| \mu(ds\,dt) &\leq& H(1-2H)\kappa^{2H-2}  \|f^{e}\|_{L^{1}([0,T])}  \|g^{e}\|_{L^{1}([0,T])}  .
\end{eqnarray}
In particular, the left hand side of \eref{e3.4i} is finite. Since 
\begin{eqnarray*}
\langle f_{n}^{e}, g_{n}^{e} \rangle_{\mathcal{H}}
 &=& \int_{[a,b]}\int_{[c,d]} f_{n}(t) g_{n}(s) \mu(ds\,dt), 
\end{eqnarray*}
we can write
\begin{eqnarray}\label{e3.9ii}
&&
\langle f_{n}^{e}, g_{n}^{e} \rangle_{\mathcal{H}}
- \int_{[a,b]}\int_{[c,d]} f (t) g (s) \mu(ds\,dt)
\nonumber
\\
&&=  \int_{[a,b]}\int_{[c,d]}( f_{n}  -f  )(t) g_{n} (s)\mu(ds\,dt) +  \int_{[a,b]}\int_{[c,d]} f (t) (g_{n} -g)(s) \mu(ds\,dt).
\end{eqnarray}
Applying \eref{e3.4i} to the right-hand side of \eref{e3.9ii}  and taking into account that $\|f_{n} - f\|_{L^{1}([a,b])}\rightarrow 0$ and $\|g_{n}-g\|_{L^{1}([c,d])}\rightarrow 0$ as $n\rightarrow \infty$, which follow from \eref{e3.5}, we obtain
\begin{eqnarray}\label{e3.6}
\langle f_{n}^{e}, g_{n}^{e} \rangle_{\mathcal{H}}
&\rightarrow&  
\int_{[a,b]}\int_{[c,d]} f_{t} g_{s} \mu(ds\,dt) 
 \quad \text{as} \quad n\rightarrow \infty.
\end{eqnarray}
The identity \eref{e3.4iii} then follows from   \eref{e3.4ii} and \eref{e3.6} and the uniqueness of the limit of $\langle f_{n}^{e}, g_{n}^{e} \rangle_{\mathcal{H}}$. 
\end{proof}

\subsection{Upper-bound estimates for a L\'evy area type process}
Let $(B, \BB)$ be an enhanced  fractional Brownian motion as in the previous subsection. We now go back to the discrete interval $\ll 0,T \rr$ considered in Section \ref{section2.2}.
We denote by $F^{ij}_{t} $ the process on $\ll 0,T \rr $ such that 
\begin{eqnarray}\label{e4.1}
F^{ij}_{0}=0,
\quad\quad
F^{ij}_{t} &=& \begin{cases}
  \sum\limits_{t_{k}=0}^{t_{-}} \BB^{ij}_{t_{k},t_{k+1}}  \quad & i\neq j  
\vspace{.2cm} \\
    \sum\limits_{t_{k}=0}^{t_{-}}  \left( \BB^{ii}_{t_{k}t_{k+1}}- \frac12 h^{2H}\right)   \quad & i= j  
 \end{cases} 
\end{eqnarray}
 for $ t >0$, where $t_{-}=t_{j-1}$ if $t=t_{j}$ and where we recall that $h=t_{j}-t_{j-1}=\frac{T}{n}$.  
 
\begin{lemma}\label{lem4.1}
For $F^{ij} $ defined as in \eref{e4.1}, we have the following estimate
\begin{eqnarray}\label{e4.2}
\| \delta F^{ij}_{s t}\|_{p} &\leq& K_{p} n^{\frac12 -2H} (t-s)^{\frac12}, \quad \quad (s,t) \in \cs_{2} (\ll0,T\rr),
\end{eqnarray}
where $K_{p} $ is a constant depending on $p$, $H$ and $T$, and $\|\cdot\|_{p}$ denotes the $L_{p}(\Omega)$-norm.
\end{lemma}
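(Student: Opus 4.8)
The plan is to estimate $\|\delta F^{ij}_{st}\|_p$ by controlling its second moment (the higher moments follow from hypercontractivity in the second chaos of $B$) and then applying the discrete sewing lemma of Lemma \ref{lem2.4}. Write $\delta F^{ij}_{st} = \sum_{t_k \in \ll s,t\rr, t_k<t} Z^{ij}_{t_k t_{k+1}}$, where $Z^{ij}_{uv} = \BB^{ij}_{uv}$ for $i\neq j$ and $Z^{ii}_{uv}=\BB^{ii}_{uv}-\tfrac12 h^{2H}$; in either case $\mE[Z^{ij}_{uv}]=0$. Since $Z^{ij}$ lives in the second Wiener chaos, we have $\|\delta F^{ij}_{st}\|_p \leq K_p \|\delta F^{ij}_{st}\|_2$, so it suffices to show $\mE[|\delta F^{ij}_{st}|^2] \leq K n^{1-4H}(t-s)$.

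To bound the $L^2$ norm I would expand the square as a double sum $\sum_{k,l} \mE[Z^{ij}_{t_k t_{k+1}} Z^{ij}_{t_l t_{l+1}}]$. The diagonal terms $k=l$ contribute $\mE[|Z^{ij}_{t_k t_{k+1}}|^2] \leq K h^{4H} = K n^{-4H}$ each, and there are at most $(t-s)/h = n(t-s)/T$ of them, giving a total of order $n^{1-4H}(t-s)$, which is exactly the target bound. For the off-diagonal terms $k\neq l$, I would use the covariance formula \eqref{e3.3}, which for disjoint intervals expresses $\mE[\BB^{ij}_{t_k t_{k+1}}\BB^{ij}_{t_l t_{l+1}}]$ (and similarly for the centered $i=j$ case, since the deterministic shift does not affect cross terms) as a quadruple integral against the kernel $\mu(dw'dw)\mu(dr'dr)$ with $\mu(du\,dv) = -H(1-2H)|u-v|^{2H-2}du\,dv$. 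Carrying out the two inner integrals produces a factor controlled by $h^{2H}\cdot h^{2H}$ times a power of $|t_k-t_l|/h$; more precisely one gets a bound of the form $K n^{-4H} |k-l|^{4H-2}$ (possibly up to a logarithmic factor when $4H-2$ is near an integer, which one absorbs by slightly changing exponents). Summing $\sum_{k\neq l} n^{-4H}|k-l|^{4H-2}$ over the at most $n(t-s)/T$ values of each index: since $4H-2 \in (-\tfrac23, 0)$, the series $\sum_{m\geq 1} m^{4H-2}$ diverges only mildly and $\sum_{|k-l|\leq N} |k-l|^{4H-2} \leq K N^{4H-1}$, so the off-diagonal contribution is at most $K n^{-4H}\cdot n(t-s)\cdot (n(t-s))^{4H-1} = K (t-s)^{4H}$, which is $\leq K(t-s)$ on $[0,T]$ up to a constant, hence absorbed. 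Combining, $\mE[|\delta F^{ij}_{st}|^2] \leq K n^{1-4H}(t-s)$, i.e. $\|\delta F^{ij}_{st}\|_2 \leq K n^{1/2-2H}(t-s)^{1/2}$.

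An alternative, cleaner route that I would actually prefer, avoiding the delicate summation, is to apply the discrete sewing lemma directly to $R_{st} := \delta F^{ij}_{st}$ viewed as an element of $\mathcal{C}_2(\pi,\mathcal{X})$ with $\mathcal{X}=L^p(\Omega)$: indeed $R$ vanishes on consecutive points $R_{t_k t_{k+1}}=0$ by construction, and the additivity defect is $\delta R_{sut} = \delta(\delta F^{ij})_{sut}$. Using the algebraic relation $\delta \BB_{sut}=\delta B_{su}\otimes \delta B_{ut}$ for the enhanced fBm, one computes $\delta R_{sut} = -\,\delta B^{i}_{su}\,\delta F^{j}$-type cross terms — more precisely $\delta F^{ij}$ fails to be additive exactly through a sum of products of an fBm increment over $[s,u]$ with a Riemann-type sum of increments over $[u,t]$ (and vice versa). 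One then estimates $\|\delta R_{sut}\|_p$ using Proposition \ref{prop:integrability-signature} and the standard moment bounds for fBm increments to get $\|\delta R_{sut}\|_p \leq K_p n^{1/2-2H}|t-s|^{1/2+\varepsilon}$ for some $\varepsilon>0$ (the extra regularity coming from the fact that a sum of $m$ cancelling-in-expectation second-chaos increments of size $h^{2H}$ has $L^2$ norm of order $m^{1/2}h^{2H} \sim (|t-s|/h)^{1/2}h^{2H}$, with the crucial gain of the square root). Then Lemma \ref{lem2.4} with $\mu=1/2+\varepsilon>1$... wait — here $\mu$ must exceed $1$, so this naive application fails; one must instead run the sewing argument on the rescaled quantity or observe that the relevant exponent after extracting the $n^{1/2-2H}$ prefactor genuinely exceeds $1$.

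The main obstacle, and the step requiring the most care, is precisely this bookkeeping of exponents: the raw increment $\delta F^{ij}_{st}$ has Hölder regularity exponent $\tfrac12$ in $|t-s|$, which is \emph{not} $>1$, so Lemma \ref{lem2.4} cannot be applied to $\delta F^{ij}$ itself. One must therefore go through the direct second-moment computation sketched above (diagonal plus off-diagonal via the $\mu\otimes\mu$ quadruple integral), where the genuine difficulty is obtaining the sharp decay $\mE[Z^{ij}_{t_k t_{k+1}} Z^{ij}_{t_l t_{l+1}}] \lesssim n^{-4H}|k-l|^{4H-2}$ uniformly, and checking that the resulting series sums to something $O((t-s))$ rather than blowing up — this is where the constraint $H>\tfrac13$ (equivalently $4H-2>-\tfrac23$, so that the off-diagonal sum stays subdominant) enters, and it is consistent with the remark in the introduction that the scheme diverges for $H\leq\tfrac13$. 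I would present the direct $L^2$ estimate as the core of the proof, upgrade to $L^p$ by hypercontractivity, and only invoke the discrete sewing lemma in the companion estimates (bounding $F$ itself and the genuinely rough remainder terms that appear later), not for Lemma \ref{lem4.1} directly.
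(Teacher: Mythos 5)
Your overall skeleton (hypercontractivity to reduce to $p=2$, then a diagonal/off-diagonal decomposition of the variance) is the right one, and it is essentially an unpacking of the Breuer--Major bound that the paper simply cites ($\|\sum_k H_2(B_{k,k+1})\|_2\le K\sqrt{n}(t-s)^{1/2}$, see \cite{BM} or Theorem 7.4.1 in \cite{NP}, after writing $\BB^{ii}_{t_kt_{k+1}}-\tfrac12 h^{2H}=\tfrac12 h^{2H}H_2(B_{k,k+1})$ in law). Your diagonal estimate is correct. However, the off-diagonal step contains a genuine error that breaks the argument. The covariance of the second-chaos blocks decays like the \emph{square} of the increment covariance: for $i=j$ one has $\mE[H_2(B_{k,k+1})H_2(B_{l,l+1})]=2\rho(k-l)^2$ with $\rho(m)\sim K|m|^{2H-2}$, and for $i\neq j$ the quadruple integral \eqref{e3.3} carries \emph{two} factors of $\mu$, each contributing $h^{2H}|k-l|^{2H-2}$. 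The correct decay is therefore $\mE[Z^{ij}_{t_kt_{k+1}}Z^{ij}_{t_lt_{l+1}}]\lesssim n^{-4H}|k-l|^{4H-4}$, not $n^{-4H}|k-l|^{4H-2}$ as you claim. Since $4H-4<-1$ for all $H<\tfrac34$, the series $\sum_m|m|^{4H-4}$ converges, the off-diagonal sum is $O(n^{-4H}\cdot N)=O(n^{1-4H}(t-s))$, i.e.\ of the same order as the diagonal, and the lemma follows. (This summability threshold $H<\tfrac34$ is the Breuer--Major condition; it has nothing to do with $H>\tfrac13$, which enters the paper only through the rough-path machinery.)

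With your incorrect rate $|k-l|^{4H-2}$ the argument does not close: you obtain an off-diagonal contribution $(t-s)^{4H}$ with \emph{no} factor of $n^{1-4H}$, and your claim that this is ``absorbed'' because it is $\le K(t-s)$ is a non sequitur --- the target bound is $Kn^{1-4H}(t-s)$, which tends to $0$ as $n\to\infty$ for fixed $t-s$, so a bound of order $(t-s)^{4H}$ uniform in $n$ would actually contradict the lemma rather than prove it. You are right, on the other hand, that the discrete sewing Lemma \ref{lem2.4} cannot be applied directly here because the exponent $\tfrac12$ is not greater than $1$; the paper indeed does not use it for this lemma.
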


\begin{proof} We only consider   the case $i=j$. The case $i\neq j$ can be considered similarly. Since $\delta F^{ij}_{s t}$ is a random variable in the second chaos of $B$, some hyper-contractivity arguments show that it suffices to consider the case   $p=2$ in \eref{e4.2}.  On the other hand, it is clear that $ \BB^{ii}_{t_{k}t_{k+1}} - \frac12 h^{2H} $ is equal to $\frac12 h^{2H}H_{2}(B_{k,k+1})$ in distribution, where $H_{2}(x)=x^{2}-1$. So we are reduced to estimate the following quantity:
\begin{eqnarray}\label{eq3.16}
\|F^{ij}_{t}\|_{2}= \frac12 h^{2H} \big\|\sum_{t_{k}=0}^{t-}H_{2}(B_{k,k+1}) \big\|_{2}\,.
\end{eqnarray}
A classical result in \cite{BM} shows that $\big\|\sum_{t_{k}=0}^{t-}H_{2}(B_{k,k+1}) \big\|_{2} \leq K(t-s)^{\frac12}\sqrt{n}$; see also Theorem 7.4.1 in \cite{NP}. Applying this relation to \eref{eq3.16} we obtain the estimate \eref{e4.2}.
\end{proof}

The following result provides a way to find a uniform almost sure upper-bound estimate for   a sequence of  stochastic processes. 
\begin{lemma}\label{lem4.2}
Let $\{X^{n}; \, n \in \NN\}$ be a sequence of stochastic processes such that
\begin{eqnarray}\label{e4.3}
\|X^{n}_{t} - X^{n}_{s}\|_{p} &\leq& K_{p} n^{-\al} (t-s)^{\beta}
\end{eqnarray}
for all $p\geq 1$, where $K_{p}$ is a constant depending on $p$.
 Then for $0<\ga<\beta$ and $\kappa>0$, we can find an integrable random variable $G_{\ga, \kappa}$ independent of $n$ such that
\begin{eqnarray*}
 \|X^{n}\|_{\ga} &\leq& G_{\ga,\kappa}n^{-\al+\kappa}.
\end{eqnarray*}
\end{lemma}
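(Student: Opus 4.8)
The plan is to deduce Lemma~\ref{lem4.2} from the classical Garsia--Rodemich--Rumsey (GRR) inequality applied to each $X^{n}$, combined with a Borel--Cantelli argument to remove the dependence on $n$. First I would recall the GRR estimate: for a continuous path $f$ on $[0,T]$, a continuous increasing $\Psi$ with $\Psi(0)=0$, and $p\geq 1$,
\begin{equation*}
|f_{t}-f_{s}|^{p} \leq C_{p,\ga} |t-s|^{\ga p - 1} \int_{0}^{T}\int_{0}^{T} \frac{|f_{u}-f_{v}|^{p}}{|u-v|^{\ga p + 1}} \, du \, dv ,
\end{equation*}
so that, writing $\Lambda_{n} := \int_{0}^{T}\int_{0}^{T} \frac{|X^{n}_{u}-X^{n}_{v}|^{p}}{|u-v|^{\ga p + 1}} \, du\, dv$, one gets $\|X^{n}\|_{\ga} \leq C_{p,\ga}\, \Lambda_{n}^{1/p}$ pointwise in $\omega$. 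Taking expectations and using Fubini together with hypothesis~\eqref{e4.3} in the form $\mE|X^{n}_{u}-X^{n}_{v}|^{p}\leq K_{p}^{p} n^{-\al p}|u-v|^{\be p}$, I obtain
\begin{equation*}
\mE\big[\Lambda_{n}\big] \leq K_{p}^{p}\, n^{-\al p} \int_{0}^{T}\int_{0}^{T} |u-v|^{(\be-\ga)p - 1} \, du\, dv =: \tilde K_{p}\, n^{-\al p},
\end{equation*}
where the double integral is finite precisely because $\ga<\be$ (choose $p$ large enough that $(\be-\ga)p>0$, which is automatic, but in fact one only needs $(\be-\ga)p-1>-2$, i.e. $p\geq 1$ suffices after fixing $\ga<\be$; to be safe I will take $p$ large). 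Hence $\mE\|X^{n}\|_{\ga}^{p} \leq C_{p,\ga}^{p}\,\mE[\Lambda_{n}] \leq C\, n^{-\al p}$.

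Next I would upgrade this $L^{p}$ bound to the almost sure statement. Fix $\kappa>0$ and choose $p$ large enough that $p(\al-\kappa) \cdot \tfrac{1}{?}$ — more precisely, that $\al p - \kappa p > 1$, i.e. $p > 1/\kappa$ after replacing $\kappa$ by a slightly smaller value; concretely pick $p$ with $p\kappa > 1$. Then by Markov's inequality,
\begin{equation*}
\sum_{n\geq 1} P\big( \|X^{n}\|_{\ga} > n^{-\al+\kappa} \big) \leq \sum_{n\geq 1} \frac{\mE\|X^{n}\|_{\ga}^{p}}{n^{(-\al+\kappa)p}} \leq C \sum_{n\geq 1} n^{-\al p - (-\al+\kappa)p} = C\sum_{n\geq 1} n^{-\kappa p} < \infty,
\end{equation*}
since $\kappa p > 1$. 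By Borel--Cantelli, almost surely $\|X^{n}\|_{\ga} \leq n^{-\al+\kappa}$ for all $n$ large enough, say $n\geq N(\omega)$. Setting
\begin{equation*}
G_{\ga,\kappa} := \sup_{n\geq 1} \big( n^{\al-\kappa} \|X^{n}\|_{\ga} \big)
\end{equation*}
then gives $\|X^{n}\|_{\ga}\leq G_{\ga,\kappa}\, n^{-\al+\kappa}$ for every $n$ by construction.

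Finally I must check that $G_{\ga,\kappa}$ is integrable, not merely finite. For this I would bound it by a sum: $G_{\ga,\kappa}^{p} \leq \sum_{n\geq 1} n^{(\al-\kappa)p}\|X^{n}\|_{\ga}^{p}$, hence by the $L^{p}$ estimate above,
\begin{equation*}
\mE\big[ G_{\ga,\kappa}^{p} \big] \leq \sum_{n\geq 1} n^{(\al-\kappa)p}\, \mE\|X^{n}\|_{\ga}^{p} \leq C \sum_{n\geq 1} n^{(\al-\kappa)p} n^{-\al p} = C\sum_{n\geq 1} n^{-\kappa p} < \infty,
\end{equation*}
again because $\kappa p>1$. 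Thus $G_{\ga,\kappa}\in L^{p}(\Omega)\subset L^{1}(\Omega)$, which is the claimed integrability. The main obstacle, such as it is, is purely bookkeeping: one has to choose the integrability exponent $p$ (which, crucially, is allowed by hypothesis~\eqref{e4.3} to be arbitrary) large enough so that simultaneously the GRR double integral converges and the series $\sum n^{-\kappa p}$ converges; everything else is a routine application of GRR plus Borel--Cantelli. One should also note that strictly speaking the processes $X^{n}$ are assumed continuous here; if they are only defined on the grid $\ll 0,T\rr$ one applies the discrete analogue of GRR, or equivalently uses Lemma~\ref{lem2.4}-type reasoning, but in the present setting the $X^{n}$ of interest (the scheme and its Gaussian corrections) are continuous interpolations, so the continuous GRR inequality applies directly.
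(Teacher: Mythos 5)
Your proof is correct and follows essentially the same route as the paper: apply Garsia--Rodemich--Rumsey to get $\mE\|X^{n}\|_{\ga}^{p}\le C n^{-\al p}$, then bound $\sup_{n} n^{(\al-\kappa)p}\|X^{n}\|_{\ga}^{p}$ by the corresponding sum over $n$ with $p\kappa>1$. Two small remarks: the Borel--Cantelli detour is redundant since your final $L^{p}$ bound on $G_{\ga,\kappa}$ already gives everything, and the version of GRR you quote actually yields the H\"older exponent $\ga-1/p$ rather than $\ga$, so one must take $p$ large enough that $\ga+1/p<\beta$ (precisely the condition $\ga<\beta-1/p$ imposed in the paper's proof) --- but since you explicitly allow yourself to take $p$ large, this is only a bookkeeping point.
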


\begin{proof} 
Take $p\geq 1$ such that $0<\ga<\beta-1/p$. The Garsia-Rodemich-Rumsey  lemma (see~\cite{GRR}) implies that 
\begin{eqnarray*}
\|   X^{n}  \|_{\ga}^{p} &\leq& K_{p}     \int_{0}^{T}\int_{0}^{T} \frac{|X^{n}_{u} - X^{n}_{v} |^{p}}{|u-v|^{2+p\ga}} dudv  . 
\end{eqnarray*}
Taking expectation on both sides  and taking into account the inequality \eref{e4.3}  we obtain
\begin{eqnarray*}
\mE  \left[ \|   X^{n}  \|_{\ga}^{p}\right] &\leq& K_{p}  \int_{0}^{T}\int_{0}^{T} \frac{ \mE \left[ |X^{n}_{u} - X^{n}_{v} |^{p}\right]  }{|u-v|^{2+p\ga}} dudv  
\\
&\leq& K_{p} n^{-p \al },
\end{eqnarray*}
and the last inequality can be recast as:
\begin{eqnarray*}
\mE \left[ \| n^{\al-\kappa}  X^{n}  \|_{\ga}^{p} \right]&\leq& K_{p} n^{-p\kappa}.
\end{eqnarray*}
We now choose $p$ such that $p>1/\kappa$. Therefore the above estimate implies that:
\begin{equation*}
\mE\Big[ \sup_{n \in \NN}   \| n^{\al-\kappa}  X^{n}  \|_{\ga}^{p} \Big]
\leq
\mE\Big[ \sum_{n \in \NN}    \| n^{\al-\kappa}  X^{n}  \|_{\ga}^{p} \Big]
\\
\leq
K_{p}\sum_{n \in \NN}   n^{-p\kappa}<\infty.
\end{equation*}
In partitular, we obtain that $\sup_{n \in \NN}   \| n^{\al-\kappa}  X^{n}  \|_{\ga}^{p}$ is an integrable random variable. By taking $G_{\ga, \kappa} =\sup_{n \in \NN} \| n^{\al-\kappa}  X^{n}  \|_{\ga}$, we obtain the desired estimate for $\|X^{n}\|_{\ga}$.
\end{proof}

\begin{remark}\label{remark3.6}
Let $\ga$ be a parameter such that $\frac13<\ga<H$. Starting from relation \eqref{e4.2} and taking into account the fact that $|t-s|\ge \frac{T}{n}$ for all $(s,t )\in \mathcal{S}_{2}(\ll 0,T \rr)$, it is readily checked that the increment $F $ introduced in  Lemma \ref{lem4.1} satisfies 
\begin{equation}\label{eq:bnd-Lp-F-scale-beta}
\| \delta F_{st}\|_{p}\leq K_{p}n^{\beta -2H} (t-s)^{\beta},
\end{equation}
for all $2\ga < \beta<2H$ and $(s,t) \in \cs_{2}(  \ll 0,T \rr) $.  By considering  the linear interpolation of $F $ on $[0,T]$, inequality~\eqref{eq:bnd-Lp-F-scale-beta}  also holds for all $ (s,t) \in \cs_{2}( [0,T]) $.
Owing to Lemma \ref{lem4.2},      we can thus find an integrable random variable $G_{\ga}  $ such that for any $\ga:\frac13<\ga<H$ we have
\begin{eqnarray}\label{e4.21}
|   \delta F^{ij}_{st}| &\leq &   G_{\ga}    (t-s)^{ 2\gamma }  \quad a.s.
\end{eqnarray}
 \end{remark}

  \section{Weighted random sums via the rough path approach}\label{section4i}
  
In this section, we   derive some useful upper-bound estimates for weighted random sums related to $B$. In the second part of the section, we   prove a general limit theorem, which is  our   main result of this section. 
\subsection{Upper-bound estimates for weighted random sums}
We now derive some estimates for weighted random sums. As has been mentioned in the introduction,  these results only require the weight function to satisfy some proper regularity conditions. 
 \begin{prop}\label{prop3.4}
 Let $f$ and $g$ be processes on $\ll0,T\rr$  such that $|\delta f_{st}| \leq G(t-s)^{\al}$ and $|\delta g_{st}| \leq G(t-s)^{\beta}$, where $\al +\beta>1$. We define an increment $R$ on $\cs_{2}(\ll0,T\rr)$ by:
 \begin{eqnarray*}
R_{st} &=& \sum_{t_{k} = s}^{t-} \delta f_{st_{k}} \delta g_{t_{k}t_{k+1}}.
\end{eqnarray*}
 Then the following estimate holds true:
 \begin{eqnarray*}
| R_{st}  | &\leq& G (t-s)^{\al+\beta}, 
\quad\text{for all}\quad (s,t)\in \cs_{2}(\ll0,T\rr).
\end{eqnarray*}

 \end{prop}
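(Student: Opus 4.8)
The plan is to apply the discrete sewing lemma (Lemma~\ref{lem2.4}) to the increment $R$. The key point is that $R$ already belongs to $\mathcal{C}_{2}(\pi,\mathcal{X})$: indeed, when $t = t_{k+1}$ and $s = t_{k}$ the defining sum is empty, so $R_{t_{k}t_{k+1}} = 0$. Hence it suffices to estimate $\delta R_{sut}$ for $(s,u,t)\in\cs_{3}(\ll 0,T\rr)$ and check that $\|\delta R\|_{\al+\beta}<\infty$, after which Lemma~\ref{lem2.4} gives $\|R\|_{\al+\beta}\leq K_{\al+\beta}\|\delta R\|_{\al+\beta}$, which is exactly the claimed bound (with $G$ absorbing the universal constant $K_{\al+\beta}$).

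First I would compute $\delta R_{sut}$ explicitly. Writing out $R_{st} = \sum_{t_{k}=s}^{t-}\delta f_{st_{k}}\,\delta g_{t_{k}t_{k+1}}$, $R_{su} = \sum_{t_{k}=s}^{u-}\delta f_{st_{k}}\,\delta g_{t_{k}t_{k+1}}$, and $R_{ut} = \sum_{t_{k}=u}^{t-}\delta f_{ut_{k}}\,\delta g_{t_{k}t_{k+1}}$, the sum over $[s,u)$ cancels between $R_{st}$ and $R_{su}$, leaving the sum over $t_{k}\in\ll u,t-\rr$, where $\delta f_{st_{k}} - \delta f_{ut_{k}} = \delta f_{su}$ (a telescoping identity for increments). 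Therefore
\begin{eqnarray*}
\delta R_{sut} &=& \sum_{t_{k}=u}^{t-} \left( \delta f_{st_{k}} - \delta f_{ut_{k}} \right) \delta g_{t_{k}t_{k+1}}
= \delta f_{su} \sum_{t_{k}=u}^{t-} \delta g_{t_{k}t_{k+1}}
= \delta f_{su}\, \delta g_{ut},
\end{eqnarray*}
the last step because the increments of $g$ telescope. This is the crucial simplification, and it is clean precisely because $R$ is a "left Riemann sum'' type object.

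Then the estimate is immediate: using the hypotheses $|\delta f_{su}|\leq G(u-s)^{\al}$ and $|\delta g_{ut}|\leq G(t-u)^{\beta}$, together with $(u-s)\leq(t-s)$ and $(t-u)\leq(t-s)$ and the condition $\al+\beta>1$,
\begin{eqnarray*}
|\delta R_{sut}| \;\leq\; G^{2}(u-s)^{\al}(t-u)^{\beta} \;\leq\; G^{2}(t-s)^{\al+\beta},
\end{eqnarray*}
so $\|\delta R\|_{\al+\beta}\leq G^{2}$ (and in particular this is finite, since the grid is finite). Lemma~\ref{lem2.4} applies with $\mu = \al+\beta>1$ and yields $|R_{st}|\leq K_{\al+\beta}\,G^{2}(t-s)^{\al+\beta}$ for all $(s,t)\in\cs_{2}(\ll 0,T\rr)$, which is the assertion after renaming the constant. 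I do not expect a genuine obstacle here; the only thing to be careful about is the bookkeeping of the summation ranges in the computation of $\delta R_{sut}$ (endpoints $t_{k}=u$ versus $t_{k}=u+$, and the convention $t-=t_{j-1}$), but once the telescoping identity $\delta f_{st_{k}}-\delta f_{ut_{k}}=\delta f_{su}$ is in place the rest is routine.
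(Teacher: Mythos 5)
Your proposal is correct and follows exactly the paper's own argument: observe $R_{t_{k}t_{k+1}}=0$, compute $\delta R_{sut}=\delta f_{su}\,\delta g_{ut}$, bound it by $G^{2}(t-s)^{\al+\beta}$, and invoke the discrete sewing Lemma~\ref{lem2.4}. (The only cosmetic quibble: with the paper's summation convention $R_{t_{k}t_{k+1}}$ is a one-term sum containing the factor $\delta f_{t_{k}t_{k}}=0$ rather than an empty sum, but the conclusion is the same.)
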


 \begin{proof} 
It is clear that $R_{t_{k}t_{k+1}} = \delta f_{t_{k}t_{k}} \delta g_{t_{k}t_{k+1}} = 0 $. In addition, the following relation is readily checked, where we recall that $\delta R$ is defined by \eqref{eq:def-delta}:
\begin{eqnarray*}
\delta R_{sut} &=& \delta f_{su} \, \delta g_{ut},
\quad\text{for all}\quad (s,u,t)\in \cs_{3}(\ll0,T\rr)
\,.
\end{eqnarray*}
Therefore we have $|\delta R_{sut}| \le G |t-s|^{\al+\beta}$. Since we have assumed $\al+\beta>1$,
we can invoke the discrete sewing map Lemma \ref{lem2.4}, which yields:
\begin{eqnarray*}
\|R\|_{\al+\beta} \leq K\| \delta R \|_{\al+\beta} \leq G. 
\end{eqnarray*}
The proposition then follows immediately. 
\end{proof}
 \begin{remark}\label{remark3.8}
 The Riemann-Stieltjes  sum $ \sum_{t_{k} = s}^{t-} \delta f_{st_{k}} \delta g_{t_{k}t_{k+1}}$ in Proposition \ref{prop3.4} can be thought of as a $\RR$-valued ``discrete'' Young integral. One can also consider $L_{p}$-valued ``discrete'' Young integrals in a similar way by viewing $f$ and $g$   as functions with values in $L_{p}$. This will lead us to an $L_{p}$-estimate of $\sum_{t_{k} = s}^{t-} \delta f_{st_{k}} \delta g_{t_{k}t_{k+1}}$. Precisely, suppose that $f$ and $g$ are processes such that $\|\delta f_{st}\|_{p} \leq K (t-s)^{\al}$ and $ \|\delta g_{st}\|_{p} \leq K (t-s)^{\beta} $ for all $p\geq 1$. Then we have
 \begin{eqnarray*}
\Big\|\sum_{t_{k} = s}^{t-} \delta f_{st_{k}} \delta g_{t_{k}t_{k+1}}\Big\|_{p} &\leq& K (t-s)^{\al+\beta}.
\end{eqnarray*}
 
 \end{remark}
 In the sequel, we consider an application of Proposition  \ref{prop3.4} to third-order terms in our Taylor expansion for equation \eqref{e1.1}. Towards this aim, we first need the following estimate in $L_{p}(\Omega)$. They are somehow reminiscent of the estimates for triple integrals in~\cite{FR}, though our main focus here is on cumulative sums of triple integrals. 
 
 \begin{lemma} \label{lem11.2}
 Let $B$ be an $\RR^{m}$-valued fractional Brownian motion with Hurst parameter $H>\frac14$.
For a fixed set of coordinates $i,j,l \in\{1,\dots, m\}$, we define two increments $\zeta=\zeta^{ijl}$ and $\delta g=\delta g^{ijl}$on $\cs_{2}(\ll 0,T\rr)$ as follows:
\begin{equation}\label{eq:def-zeta-and-g}
\zeta_{st}^{ijl}  =  
\int_{ s}^{ t }  \int_{ s}^{u} \int_{ s}^{v} d B^{i}_{r}   d B^{j}_{v}   d B^{l}_{u},
\quad\text{and}\quad
\delta g_{st} = \sum_{t_{k}=s}^{t-}   \zeta_{t_{k} t_{k+1}}\,.
\end{equation}
 Then the following estimate is valid for $(s,t)\in\cs_{2}(\ll 0,T\rr)$:
\begin{eqnarray}\label{e11.0}
\| \delta g_{st} \|_{p} &\leq& K n^{\frac12 -3H} (t-s)^{\frac12}. 
\end{eqnarray}
 \end{lemma}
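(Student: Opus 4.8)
The plan is to treat $\delta g$ as a discrete rough integral and apply the discrete sewing lemma (Lemma~\ref{lem2.4}) in the $L_p(\Omega)$-valued setting, exactly in the spirit of Proposition~\ref{prop3.4} and Remark~\ref{remark3.8}. First I would compute $\delta(\delta g)_{sut}$ for $(s,u,t)\in\cs_3(\ll 0,T\rr)$. Writing $\delta g_{st}=\sum_{t_k=s}^{t-}\zeta_{t_kt_{k+1}}$ and using the additivity of the triple integral together with Chen-type relations for $\zeta$, one gets a decomposition of $\delta(\delta g)_{sut}$ into a sum over $t_k\in\ll u,t\rr$ of terms that pair lower-order increments of $B$ (an increment $\delta B^i_{su}$ or a double integral $\int_s^u\!\!\int_s^\cdot dB^idB^j$ over $[s,u]$) with the corresponding increments/iterated integrals over $[t_k,t_{k+1}]$. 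Concretely, one expects something like
\begin{eqnarray*}
\delta(\delta g)_{sut} = \sum_{t_k=u}^{t-} \Big( \delta B^i_{su}\, \zeta^{(2)}_{t_kt_{k+1}} + \Big(\textstyle\int_s^u\!\!\int_s^v dB^i_rdB^j_v\Big)\,\delta B^l_{t_kt_{k+1}} \Big) + (\text{similar terms}),
\end{eqnarray*}
where $\zeta^{(2)}$ denotes the relevant second-order iterated integral. Each of these is itself a weighted random sum of the type handled earlier, so its $L_p$-norm can be estimated by the cumulative-sum bounds already established: the key inputs are Lemma~\ref{lem4.1} (giving $n^{1/2-2H}(t-u)^{1/2}$ for sums of second-chaos increments) and the elementary $L_p$ bound $\|\delta B_{su}\|_p\le K(u-s)^H$, $\|\int_s^u\!\!\int dB\,dB\|_p\le K(u-s)^{2H}$, plus Remark~\ref{remark3.8} applied to the sum over $t_k\in\ll u,t\rr$ of the relevant single/double increments, which contributes a factor $(t-u)^{H}$ or $n^{1/2-2H}(t-u)^{1/2}$ respectively.

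Carrying this out, I expect the dominant contribution to be of the form $(t-s)^{H}\cdot n^{1/2-2H}(t-s)^{1/2} = n^{1/2-2H}(t-s)^{1/2+H}$, so that $\|\delta(\delta g)_{sut}\|_p \le K_p\, n^{1/2-2H}(t-s)^{\mu}$ with $\mu = \frac12+H$. Since $H>\frac13$ we have $\mu>1$, which is precisely the hypothesis needed to invoke the $L_p$-valued version of the discrete sewing Lemma~\ref{lem2.4} (viewing $\delta g$ as an element of $\mathcal{C}_2(\pi,L_p(\Omega))$, noting $\delta g_{t_kt_{k+1}}=\zeta_{t_kt_{k+1}}-\zeta_{t_kt_{k+1}}$... careful: here the "remainder" is $\delta g$ itself, which does \emph{not} vanish on $\ll t_k,t_{k+1}\rr$, so I must instead set $R_{st}=\delta g_{st}-\sum\zeta_{t_kt_{k+1}}$, which \emph{is} zero there, and observe $\delta R = \delta(\delta g)$). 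Then Lemma~\ref{lem2.4} yields $\|R\|_\mu \le K_\mu \|\delta R\|_\mu \le K_p n^{1/2-2H}$, and hence $\|\delta g_{st}\|_p = \|R_{st}\|_p \le K_p n^{1/2-2H}(t-s)^{\mu}$. Finally, using $(t-s)\ge T/n$ on the discrete simplex to trade the extra exponent $\mu - \frac12 = H$ for powers of $n$ — more precisely, estimating $(t-s)^{\mu}=(t-s)^{1/2}(t-s)^{H}\le (t-s)^{1/2}T^H$ trivially, or keeping track so that the bound reads $n^{1/2-3H}(t-s)^{1/2}$ after absorbing one factor — gives the claimed estimate \eqref{e11.0}. (The passage from exponent $\frac12+H$ and scale $n^{1/2-2H}$ to exponent $\frac12$ and scale $n^{1/2-3H}$ is exactly the rescaling $|t-s|^{H}\le (T/n)^{?}\cdots$; since $|t-s|\le T$ one loses nothing in the scale, but to \emph{gain} the extra $n^{-H}$ one uses $|t-s|^{H}\ge (T/n)^{H}$ is the wrong direction — so the correct reading is that the natural output is $n^{1/2-2H}(t-s)^{1/2+H}$, and \eqref{e11.0} follows because $(t-s)^{1/2+H}\le T^{H}(t-s)^{1/2}$ and one re-optimizes the chaos estimate at the level of $p=2$ to extract the sharper power; I would double-check the bookkeeping here.)

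The main obstacle I anticipate is \textbf{getting the exponents sharp}, i.e.\ genuinely obtaining the scale $n^{1/2-3H}$ rather than a weaker $n^{1/2-2H}$. The point is that $\zeta_{t_kt_{k+1}}$ lives in the \emph{third} chaos and has size $h^{3H}=(T/n)^{3H}$, and the sum of $n$ such near-independent terms over $[s,t]$ should scale like $\sqrt{n}\cdot h^{3H}\cdot(\text{something})$; making this rigorous requires either a direct second-chaos/third-chaos variance computation for $\|\delta g_{st}\|_2$ — controlling covariances $\mathbb{E}[\zeta_{t_kt_{k+1}}\zeta_{t_lt_{l+1}}]$ via the planar Young integral representation and the measure $\mu$ from \eqref{eq:def-msr-mu}, in the spirit of \eqref{e3.3} and the triple-integral estimates of \cite{FR} — followed by hypercontractivity to pass to general $p$, or a careful sewing argument in which the increments $\delta B_{su}$ appearing as weights are kept as the "correct" chaos-graded pieces so no regularity is wasted. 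I would likely do the $p=2$ variance estimate directly (splitting the double sum into diagonal-type terms $|t_k-t_l|\lesssim h$, where Cauchy-Schwarz and $\|\zeta_{t_kt_{k+1}}\|_2\lesssim h^{3H}$ give a contribution $\lesssim n\cdot h^{6H} = n^{1-6H}(t-s)$ hence norm $\lesssim n^{1/2-3H}(t-s)^{1/2}$, and off-diagonal terms where $\mathbb{E}[\zeta_{t_kt_{k+1}}\zeta_{t_lt_{l+1}}]$ is controlled by the decay $|t_k-t_l|^{6H-2}h^{2}$-type bounds coming from iterating $\mu$, whose summation is finite precisely because $H>\frac13$ gives $6H-2>0$... again $6H-2>0$ iff $H>\frac13$, consistent with the paper's standing assumption), and then invoke hypercontractivity exactly as in the proof of Lemma~\ref{lem4.1}. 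The sewing lemma can then serve as a clean alternative or a cross-check, but the honest estimate of the second moment of the cumulative triple integral is where the real work lies.
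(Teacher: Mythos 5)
Your final paragraph is, in substance, the paper's proof: reduce to $p=2$ by equivalence of $L^{p}$-norms on a fixed Wiener chaos, use stationarity and self-similarity to normalize, split the double sum over $(k,k')$ into near-diagonal and off-diagonal parts, bound the near-diagonal part by $\|\zeta_{k,k+1}\|_{2}\leq Kh^{3H}$ and Cauchy--Schwarz, and control the off-diagonal covariances through the planar representation \eqref{e3.3} built from the measure $\mu$ of \eqref{eq:def-msr-mu}. Your opening sewing-lemma route, on the other hand, should simply be discarded: as you yourself half-concede, it can only produce a bound of order $n^{\frac12-2H}(t-s)^{\frac12+H}$, and since $t-s\geq T/n$ on the discrete simplex the inequality $(t-s)^{H}\geq (T/n)^{H}$ points the wrong way, so no bookkeeping converts this into $n^{\frac12-3H}(t-s)^{\frac12}$. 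The paper does not use sewing for this lemma at all; the second-moment computation is the whole proof, not a cross-check.

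Two concrete problems remain in the part you would actually execute. First, your off-diagonal estimate is misstated: applying Lemma \ref{lem3.1} three times (for distinct $i,j,l$ and $|k-k'|>1$) gives $|\mE[\zeta_{t_{k}t_{k+1}}\zeta_{t_{k'}t_{k'+1}}]|\leq K h^{6H}|k-k'|^{6H-6}$, not ``$|t_{k}-t_{l}|^{6H-2}h^{2}$'', and summability over $k'$ requires the exponent to be $<-1$, i.e.\ $6H-6<-1$ (true for every $H<\frac56$), whereas your stated condition ``$6H-2>0$'' would make the sum \emph{diverge}. With the corrected exponent your absolute-summability argument does close, but the paper's route is cleaner and avoids the decay estimate altogether: since $\mu$ is a negative measure for $H<\frac12$, the full off-diagonal sum $\sum_{|k-k'|>1}\mE[\zeta_{k,k+1}\zeta_{k',k'+1}]$ is a sextuple integral against $\mu\otimes\mu\otimes\mu$ and is therefore $\leq 0$. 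Second, your plan only covers pairwise distinct indices: when $i=j\neq l$ (or $i\neq j$, $i=l$) the components are not independent, the triple-$\mu$ factorization fails, and one must first decompose $\zeta_{k,k+1}=\frac12\int_{k}^{k+1}(\delta B^{i}_{ku})^{2}dB^{l}_{u}$ into the second-chaos piece $\int_{k}^{k+1}[(\delta B^{i}_{ku})^{2}-(u-k)^{2H}]dB^{l}_{u}$ and the Wiener-integral piece $\int_{k}^{k+1}(u-k)^{2H}dB^{l}_{u}$ before running the same diagonal/off-diagonal argument on each; the case $i=j=l$ reduces to $\frac16(\delta B^{i})^{3}$ and the classical Breuer--Major bound. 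Without these cases the lemma is not proved.
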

 
 \begin{proof}  When $i=j=l$ we have $\zeta_{st} = \frac16 ( \delta B^{i}_{st})^{3}$ and  the estimate \eref{e11.0} follows from the classical results in \cite{BM, GS}. In the following we consider the case when $i,j$,  $l$ are not all equal.
 
Let us further reduce our problem. First, since the fBm has stationary increment it suffices to prove the lemma for $s=0$.  Furthermore, by self-similarity of the fBm, we have the following equation in distribution:
 \begin{eqnarray*}
\delta g_{0t}     & {=}& T^{3H} n^{-3H} \sum_{ {k}=0}^{\frac{nt}{T} -1}    \zeta_{k,k+1} \, .
\end{eqnarray*}
As a last preliminary step, note that $\zeta$ takes values in the third chaos of $B$, on which all $L^{p}$-norms are equivalent. Hence our claim \eqref{e11.0} boils down to prove: 
\begin{eqnarray}\label{e11.1}
\Big\|   \sum_{ {k}=0}^{\frac{nt}{T} -1} \zeta _{k,k+1}   \Big\|_{2}^{2} &\leq& Knt .
\end{eqnarray}
We now focus on this inequality.

We first consider the case when $i, j$ and $ l$ are different from each other. In order to prove relation \eqref{e11.1}, write
 \begin{eqnarray}\label{e11.1i}
\Big\|   \sum_{ {k}=0}^{\frac{nt}{T} -1} \zeta_{k,k+1}   \Big\|_{2}^{2} &=&  \sum_{ |{k}- k'|\leq 1} \mE (  \zeta_{k,k+1}   \zeta_{k' ,k'+1})+    \sum_{ |{k}- k'|>1} \mE (  \zeta_{k,k+1}   \zeta_{k' ,k'+1}) \,.
\end{eqnarray}
For the sum $\sum_{ |{k}- k'|>1}$ above,
thanks to the independence of $B^{i}$, $B^{j}$ and $B^{l}$, one can apply   Lemma \ref{lem3.1} twice in order to get:   
\begin{eqnarray*}
\mE( \zeta_{k,k+1}   \zeta_{k',k'+1}) 
&=&   \int_{k}^{k+1} \int_{k'}^{k'+1} 
\int_{k}^{u} \int_{k'}^{u'} \int_{k}^{v} \int_{k'}^{v'}  \mu( dr'dr) \mu( dv'dv) \mu( d u'du).
\end{eqnarray*}
 In particular, since \eqref{eq:def-msr-mu} reveals that $\mu$ is a negative measure whenever $H<1/2$, we have
 \begin{eqnarray}\label{e11.2}
\sum_{ |{k}- k'|>1} \mE( \zeta_{k,k+1}   \zeta_{k',k'+1}) &\leq&0.
\end{eqnarray}
 Moreover,  since $  \|     \zeta_{k, k+1}    \|_{2} =  \|     \zeta_{0,1}    \|_{2}  $ and $\mE (  \zeta_{k,k+1}   \zeta_{k+1 ,k+2}) = \mE (  \zeta_{0,1}   \zeta_{1 ,2})$\,,   the following bound is easily checked:
 \begin{eqnarray}\label{e11.3}
  \sum_{ |{k}- k'|\leq 1} | \mE (  \zeta_{k,k+1}   \zeta_{k' ,k'+1}) | &\leq& Knt.
\end{eqnarray}
Applying \eref{e11.2} and \eref{e11.3} to the right-hand side of \eref{e11.1i}, we obtain \eref{e11.1}. 

Assume now that $i=j\neq l$. Then 
\begin{equation*}
\zeta_{k,k+1} =\frac12 \int_{k}^{k+1}(\delta B^{i}_{ku})^{2} dB^{l}_{u}
=
\frac12\lp \zeta^{1,il}_{k,k+1} +  \zeta^{2,l}_{k,k+1} \rp,
\end{equation*}
where we have set 
\begin{equation}\label{eq:def-zeta1-zeta2}
\zeta^{1,il}_{k,k+1}
=
\int_{k}^{k+1}  [  (\delta B^{i}_{ku})^{2} - (u-k)^{2H} ]dB^{l}_{u},
\quad\text{and}\quad
\zeta^{2,l}_{k,k+1}
=
\int_{k}^{k+1}    (u-k)^{2H} dB^{l}_{u}.
\end{equation}
We now treat the terms $\zeta^{1,il}$ and $\zeta^{2,l}$ similarly to the case of different indices $i,j,l$. Namely, we decompose $\|   \sum_{ {k}=0}^{\frac{nt}{T} -1} \zeta_{k,k+1}^{2,l}   \|_{2}^{2}$ as in 
\eref{e11.1i}. Then in the same way as for \eqref{e11.2} and \eqref{e11.3}, we can show that:
\begin{eqnarray*} 
\sum_{ |{k}- k'| > 1}\mE( \zeta_{k,k+1}^{2,l}   \zeta_{k',k'+1}^{2,l}) \leq 0,
\quad\text{and}\quad
\sum_{ |{k}- k'|\leq 1} \mE (  \zeta^{2,l}_{k,k+1}   \zeta^{2,l}_{k' ,k'+1}) &\leq& Knt.
\end{eqnarray*}
One can thus easily show that    $\zeta^{2,l}_{k,k+1}$ satisfies the inequality
\eref{e11.1}.  The same argument can be applied to $ \zeta^{1,il}_{k,k+1}$, which yields \eref{e11.1}   for the case  $i=j\neq l$. We let the patient reader check that the same inequality holds true in the case $i\neq j, i =l$, resorting to a stochastic Fubini lemma. This completes the proof.
\end{proof}
 
 \begin{remark}\label{remark3.9}
 The estimate of $\zeta^{2,l}_{k,k+1}$ obtained in the proof of Lemma \ref{lem11.2} implies that
 \begin{eqnarray*}
\Big\|
\sum_{t_{k}=s}^{t-} \int_{t_{k}}^{t_{k+1}} (u-t_{k})^{2H} dB_{u} 
\Big\|_{p} 
&\leq & K n^{\frac12 -3H} (t-s)^{\frac12}.
\end{eqnarray*}
This inequality will be used below in order to prove Lemma  \ref{cor3.13}.
 \end{remark}

 We can now 
 deliver a path-wise bound on
  weighted sums of the process $\zeta$.
 
 \begin{lemma}\label{lem3.11}
Consider the increment $\zeta$ defined by \eqref{eq:def-zeta-and-g}. Let $f$ be a process on $[0,T]$ such that,   for any $  \ga< H$, there exists a random variable $G$ such that $  \|   f  \|_{\ga} \leq G  $. 
  Then for any $\kappa>0$ we have the estimate 
\begin{eqnarray}\label{e7.2}
\Big| \sum_{t_{k}=s}^{t-} f_{t_{k}}  \zeta_{t_{k}t_{k+1}}\Big| &\leq& G n^{1-4\ga+2\kappa }(t-s)^{1-\ga} ,
\quad\text{for all}\quad
(s,t)\in \cs_{2}(\ll 0,T \rr),
\end{eqnarray}
where $G$ is an integrable random variable independent of $n$.
\end{lemma}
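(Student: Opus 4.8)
The plan is to write $\sum_{t_k=s}^{t-} f_{t_k}\zeta_{t_kt_{k+1}}$ as a ``discrete rough integral'' and bound it via the discrete sewing Lemma \ref{lem2.4}, using Lemma \ref{lem11.2} to control the compensated sums of $\zeta$. Concretely, set
\begin{equation*}
A_{st} = \sum_{t_k=s}^{t-} f_{t_k}\zeta_{t_kt_{k+1}},
\qquad
R_{st} = A_{st} - f_s\, \delta g_{st},
\end{equation*}
where $\delta g$ is the cumulative sum of $\zeta$ defined in \eqref{eq:def-zeta-and-g}. One checks that $R_{t_kt_{k+1}}=0$ and that
\begin{equation*}
\delta R_{sut} = \delta A_{sut} - \delta(f\,\delta g)_{sut}
= -\,\delta f_{su}\,\delta g_{ut},
\end{equation*}
since $\delta A_{sut} = f_u\,\delta g_{ut} - f_s\,\delta g_{ut}$ (the weighted sums telescope with the weight frozen at the left endpoint of each block), and $\delta(f\,\delta g)_{sut} = -\delta f_{su}\,\delta g_{ut}$ by the standard Leibniz-type identity for $\delta$. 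Hence $|\delta R_{sut}| \le \|f\|_\ga |u-s|^\ga\, |\delta g_{ut}|$.

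Next I would feed in the pathwise bound on $\delta g$. By Lemma \ref{lem11.2}, $\|\delta g_{st}\|_p \le K n^{1/2-3H}(t-s)^{1/2}$ for all $p$; since $|t-s|\ge T/n$ on $\cs_2(\ll 0,T\rr)$, for any $\beta$ with $1/2 < \beta < 3H$ this upgrades to $\|\delta g_{st}\|_p \le K n^{\beta-3H}(t-s)^\beta$, and then Lemma \ref{lem4.2} (applied to the linear interpolation of $g$, exactly as in Remark \ref{remark3.6}) yields an integrable $G$, independent of $n$, with $|\delta g_{st}|\le G\, n^{\beta-3H+\kappa}(t-s)^\beta$ almost surely, for any $\beta<3H$ and any $\kappa>0$. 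Choosing $\beta$ close to $3H$ one may as well write $|\delta g_{st}|\le G\, n^{-3\ga+\kappa}(t-s)^{2\ga+\text{something}}$; more carefully, for the exponents to match the target I would pick $\beta$ so that $\ga+\beta>1$ and track $\beta - 3H$. With such a bound, $|\delta R_{sut}| \le G\, n^{\beta-3H+\kappa}|t-s|^{\ga+\beta}$, and since $\ga+\beta>1$ the discrete sewing Lemma \ref{lem2.4} gives $|R_{st}| \le K G\, n^{\beta-3H+\kappa}(t-s)^{\ga+\beta}$.

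Finally I would assemble: $|A_{st}| \le |f_s|\,|\delta g_{st}| + |R_{st}| \le G\,n^{\beta-3H+\kappa}\big((t-s)^\beta + (t-s)^{\ga+\beta}\big) \le G\, n^{\beta-3H+\kappa}(t-s)^\beta$ on a bounded interval. It then remains to choose $\beta$ to recover the stated exponents $n^{1-4\ga+2\kappa}$ and $(t-s)^{1-\ga}$: taking $\beta$ near $3H$ and using $\ga<H$ one has $\beta-3H$ close to $0$, which is better than $1-4\ga$ when $\ga$ is close to $H$; the cleanest route is to relax $\beta$ down to the value dictated by the claim, namely to note that $1-4\ga < \beta - 3H$ can be arranged, i.e. it suffices to take $\beta > 1-\ga$ (possible since $1-\ga < 2\ga < 3H$ for $\ga>1/3$), giving $n^{\beta-3H}\le n^{1-\ga-3H}\le n^{1-4\ga}$ using $\ga<H$, and $(t-s)^\beta$ with $\beta>1-\ga$ dominated by $(t-s)^{1-\ga}$ on $[0,T]$; absorbing the $2\kappa$ as in Lemma \ref{lem4.2}. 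The main obstacle is not the sewing argument, which is routine given the earlier lemmas, but getting the exponent bookkeeping exactly right — in particular verifying that the constraints $\ga+\beta>1$ (needed for sewing), $\beta<3H$ (needed for the $\delta g$ bound), and $\beta>1-\ga$ (needed to beat $n^{1-4\ga}$) are simultaneously satisfiable for every $\ga\in(1/3,H)$, which uses $\ga>1/3$ and $\ga<H<1/2$ in an essential way.
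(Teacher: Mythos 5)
Your proposal is correct and follows essentially the same route as the paper: there the sum is split into $f_{s}\,\delta g_{st}$ plus $\sum_{t_{k}=s}^{t-}\delta f_{st_{k}}\zeta_{t_{k}t_{k+1}}$, the increment $\delta g$ is controlled pathwise via Lemma \ref{lem11.2} together with Lemma \ref{lem4.2}, and the remainder is handled by the discrete sewing argument (packaged there as Proposition \ref{prop3.4}). One minor algebraic point: your $A$ is additive in the interval, so $\delta A_{sut}=0$ and hence $\delta R_{sut}=+\,\delta f_{su}\,\delta g_{ut}$ — this only affects signs, not the magnitude bound — and your exponent bookkeeping does close (take $\beta=1-\ga+\epsilon$ and absorb $\epsilon$ into the $2\kappa$, rather than the momentary claim $n^{\beta-3H}\le n^{1-\ga-3H}$, which would require $\beta\le 1-\ga$).
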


\begin{proof}
Consider $(s,t)\in \cs_{2}(\ll 0,T \rr)$, and observe that the following decomposition holds true:
\begin{eqnarray}\label{e3.11i}
\Big| \sum_{t_{k}=s}^{t-} f_{t_{k}}  \zeta_{t_{k}t_{k+1}}\Big| &\leq& 
\Big| \sum_{t_{k}=s}^{t-} \delta f_{s t_{k}}  \zeta_{t_{k}t_{k+1}}\Big| 
+ |f_{s}|\cdot  \left| \delta g_{st}\right|,
\end{eqnarray}
where   increment $g$ has been defined by \eqref{eq:def-zeta-and-g}.
In addition, thanks to Lemma \ref{lem11.2}, we obtain:
\begin{eqnarray*}
\| \delta g_{st} \|_{p}   \leq Kn^{\frac12 - 3H} (t-s)^{\frac12} \leq K n^{1-4\ga+\kappa} (t-s)^{1-\ga+\kappa},
\end{eqnarray*} 
where the last inequality is due to the fact that $\frac{T}{n}\le t-s$. Here $\ga<H$ and $\kappa>0$. Applying Lemma \ref{lem4.2} to $g$ we thus get:
\begin{eqnarray}\label{e3.10}
| \delta g_{st}|  &\leq& G_{1} n^{1-4\ga+2\kappa} (t-s)^{1-\ga},
\end{eqnarray}
where   $G_{1}$ is a  random variable independent of $n$.  
Now observe that a direct application of Proposition \ref{prop3.4}  (notice that $\zeta_{t_{k}t_{k+1}}=\delta g_{t_{k}t_{k+1}}$ in the relation below)   enables to write:
\begin{equation}\label{e3.10.1}
\Big| \sum_{t_{k}=s}^{t-} \delta  f_{st_{k}}  \zeta_{t_{k}t_{k+1}}\Big|
\le
G_{2} n^{1-4\ga+2\kappa } (t-s) ,
\end{equation}
where $G_{2}$ is another integrable random variable independent of $n$.
Plugging \eref{e3.10} and  \eref{e3.10.1} into the right-hand side of  \eref{e3.11i}, we obtain the desired estimate \eqref{e7.2}.
\end{proof}
 
 We now consider the case of a weighted sum involving a Wiener integral with respect to~$B$.
 
 \begin{lemma}\label{cor3.13}
 Let   $f$ be as in Lemma \ref{lem3.11} and $\ga<H$. Then   the  following estimate holds true:
 \begin{eqnarray*} 
\Big\| \sum_{t_{k}=s}^{t-} \delta f_{st_{k}}\otimes \int_{t_{k}}^{t_{k+1} } (u-t_{k})^{2H} d B_{u} \Big\|_{p} &\leq& Kn^{1 - 4\ga-2\kappa} (t-s) ,
\quad\quad 
(s,t) \in \cs_{2}(\ll 0,T \rr)
.
\end{eqnarray*}
\end{lemma}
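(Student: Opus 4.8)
The plan is to recognize the left-hand side as a ``discrete Young integral'' of the increment $\delta f$ against the cumulative process of Wiener integrals, and then to reproduce the argument in the proof of Lemma~\ref{lem3.11}, with Remark~\ref{remark3.9} playing the role of Lemma~\ref{lem11.2}. Concretely, set $\xi_{t_{k}t_{k+1}}=\int_{t_{k}}^{t_{k+1}}(u-t_{k})^{2H}\,dB_{u}$ and let $\hat G$ be the process on $\ll 0,T\rr$ with $\hat G_{0}=0$ and $\delta\hat G_{st}=\sum_{t_{k}=s}^{t-}\xi_{t_{k}t_{k+1}}$, so that $\delta\hat G_{t_{k}t_{k+1}}=\xi_{t_{k}t_{k+1}}$. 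With this notation Remark~\ref{remark3.9} reads $\|\delta\hat G_{st}\|_{p}\le Kn^{\frac12-3H}(t-s)^{\frac12}$ for $(s,t)\in\cs_{2}(\ll 0,T\rr)$.

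Applying Proposition~\ref{prop3.4}, or its $L_{p}$-valued counterpart in Remark~\ref{remark3.8}, directly to $\delta f$ and $\delta\hat G$ would require $\ga+\tfrac12>1$, which fails because $\ga<H<\tfrac12$. The fix is the scale adjustment already used in Remark~\ref{remark3.6}: on the grid $t-s\ge T/n$, so for every $\beta\ge\tfrac12$ and $s<t$ in $\ll 0,T\rr$ one has $(t-s)^{\frac12}=(t-s)^{\beta}(t-s)^{\frac12-\beta}\le (n/T)^{\beta-\frac12}(t-s)^{\beta}$, whence $\|\delta\hat G_{st}\|_{p}\le K'n^{\beta-3H}(t-s)^{\beta}$. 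I will choose $\beta$ slightly larger than $1-\ga$, so that $\mu:=\ga+\beta>1$ while $\beta$ stays as close to $1-\ga$ as needed.

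Now I set $R_{st}=\sum_{t_{k}=s}^{t-}\delta f_{st_{k}}\otimes\delta\hat G_{t_{k}t_{k+1}}$. Since $\delta f_{t_{k}t_{k}}=0$, the increment $R$ vanishes on consecutive grid points, so it is an admissible increment in the sense of Definition~\ref{def:discrete-increments}; and the elementary computation in the proof of Proposition~\ref{prop3.4} gives $\delta R_{sut}=\delta f_{su}\otimes\delta\hat G_{ut}$. Using the Hölder control $\|\delta f_{su}\|_{2p}\le K_{p}(u-s)^{\ga}$, valid for every $p$ for the processes $f$ to which this lemma is applied, together with Cauchy--Schwarz, I obtain $\|\delta R_{sut}\|_{p}\le K_{p}(u-s)^{\ga}\,n^{\beta-3H}(t-u)^{\beta}\le K_{p}\,n^{\beta-3H}(t-s)^{\mu}$. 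Thus $\|\delta R\|_{\mu}\le K_{p}\,n^{\beta-3H}$ in the $L_{p}(\Omega)$-valued sense, and the discrete sewing Lemma~\ref{lem2.4}, applied with the Banach space $L_{p}(\Omega)$ (each of the finitely many tensor components treated separately), yields $\|R_{st}\|_{p}\le K_{\mu}K_{p}\,n^{\beta-3H}(t-s)^{\mu}\le K\,n^{\beta-3H}(t-s)$, the last bound using $(t-s)^{\mu}\le T^{\mu-1}(t-s)$.

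It remains to match the exponent. Since $\ga<H$ we have $1-\ga-3H=1-4\ga+3(\ga-H)<1-4\ga$, with a gap of size $3(H-\ga)$; hence choosing $\beta$ close enough to $1-\ga$ makes $\beta-3H$ as close as we like to $1-\ga-3H$, and in particular $\beta-3H\le 1-4\ga-2\kappa$ for $\kappa$ small (equivalently $\le 1-4\ga+2\kappa$ for any $\kappa>0$), which is the claimed estimate. The one genuinely delicate point in this scheme is precisely this balancing: the naive Young exponent $\ga+\tfrac12$ sits below $1$, which forces one to borrow a power of $n$ from the bound $t-s\ge T/n$, and one must then verify --- which is exactly what $\ga<H$ guarantees --- that this borrowed power does not eat up the gain $n^{1-4\ga}$.
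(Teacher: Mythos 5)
Your proof is correct and follows exactly the route the paper intends (the paper omits the details, saying only that the result is ``a direct application of Proposition \ref{prop3.4}'' in the spirit of Lemma \ref{lem3.11}): you combine Remark \ref{remark3.9} with the grid inequality $t-s\ge T/n$ to raise the H\"older exponent of $\delta\hat G$ above $1-\ga$, and then run the discrete sewing lemma in $L_{p}(\Omega)$ as in Remark \ref{remark3.8}. Your explicit choice of $\beta>1-\ga$ to secure $\mu=\ga+\beta>1$, and the verification that the borrowed power of $n$ costs less than the margin $3(H-\ga)$, are precisely the details the paper glosses over.
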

\begin{proof}
The corollary is a direct application of Proposition \ref{prop3.4} and is similar to the proof of Lemma \ref{lem3.11}. The details are omitted.
\end{proof}
 
 We turn to  controls of weighted sums in cases involving rougher processes. They provide  our first instances where we apply rough path methods for weighted sums, as announced in the introduction.
  In the following,  $\cv$ and $\cv'$ stands for some finite dimensional vector spaces.  
 
 \begin{prop} \label{prop3.6}
Let $f$, $g$ be two processes defined on $[0,T]$ with values in $\cv$ and $\cl(\RR^{m}, \cv)$, respectively, and $h$ be an increment defined on $\cs_{2}([0,T])$ with values in $\cv'$.   Assume that there is a constant $K$ and an exponent  $\ga>0$ such that the following conditions are met for $(s,t)\in\cs_{2}([0,T])$ and all $p\ge 1$: 
\begin{equation}\label{e3.8ii}
\|f_{t}\|_{p} + \|g_{t}\|_{p} \leq K,
\quad
\|\delta f_{st}  - g_{s} \delta B_{st}\|_{p} \leq K(t-s)^{2\ga},
\quad
\|\delta g_{st} \|_{p} \leq K (t-s)^{\ga}.
\end{equation}
We also suppose that $h$ satisfies
\begin{eqnarray}\label{e3.8i}
\| h_{st}  \|_{p} \leq K (t-s)^{\al}, 
\quad \quad \text{and} \quad\quad
\Big\| \sum_{t_{k} = s}^{t-} 
\delta B_{st_{k}} \otimes h_{t_{k}t_{k+1}}   \Big\|_{p} \leq K (t-s)^{\ga+\al},
\end{eqnarray}
for $(s,t)\in\cs_{2}(\ll 0,T\rr)$ and any $p\ge 1$, where $\al$ is such that $\al+2\ga>1$. Then we have  
\begin{eqnarray}\label{e3.9}
\Big\| \sum_{t_{k}=s}^{t-} \delta f_{st_{k}} \otimes h_{t_{k}t_{k+1}} \Big\|_{p} &\leq& K (t-s)^{\ga+\al},
\end{eqnarray}
and
\begin{eqnarray}\label{e3.9i}
\Big\| \sum_{t_{k} = s+ }^{t-} \sum_{t_{k'} = s }^{t_{k-1}} f_{t_{k'}} \otimes h_{t_{k'} t_{k'+1}} \otimes \delta B_{t_{k}t_{k+1}} \Big \|_{p}
&\leq& K   (t-s)^{\ga+\al}, 
\end{eqnarray}
which are valid for $(s,t)\in\cs_{2}(\ll 0,T\rr)$ and all $p\ge 1$. Furthermore, set 
\begin{eqnarray}\label{eq3.35}
R_{st} &=&
  \sum_{t_{k} = s }^{t-} (\delta f_{st_{k}}- g_{s}\delta B_{s t_{k}}) \otimes h_{t_{k}t_{k+1}},
\end{eqnarray}
 then we have the estimate
\begin{eqnarray}\label{eq3.36}
\|R_{st}\|_{p } &\leq & K   (t-s)^{2\ga+\frac12}.
\end{eqnarray}
\end{prop}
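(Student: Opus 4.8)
The plan is to deduce the three estimates \eqref{e3.9}, \eqref{e3.9i} and \eqref{eq3.36} from the discrete sewing Lemma~\ref{lem2.4}, applied together with the controlled‑path structure \eqref{e3.8ii} of $f$ and the two a priori bounds \eqref{e3.8i} on $h$. The recurring scheme is: identify the discrete sum at hand (or a suitable correction of it) as an element of $\mathcal{C}_{2}(\pi,L_{p})$ vanishing on consecutive grid points, compute its image under $\delta$, bound $\|\delta(\cdot)\|_{\mu}$ for some $\mu>1$, and invoke Lemma~\ref{lem2.4}. Two conventions will be used: $\rho_{st}:=\delta f_{st}-g_{s}\delta B_{st}$, so that $\|\rho_{st}\|_{p}\le K(t-s)^{2\ga}$ by \eqref{e3.8ii}; and $\tilde h_{ut}:=\sum_{t_{k}\in\ll u,t\rr,\,t_{k}<t}h_{t_{k}t_{k+1}}$, which for the additive increments $h$ arising in our applications coincides with $h_{ut}$, so that $\|\tilde h_{ut}\|_{p}\le K(t-u)^{\al}$. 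All $L_{p}$‑bounds on products are obtained by Hölder's inequality, using that the hypotheses hold for every $p\ge 1$.

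For \eqref{e3.9} and \eqref{eq3.36} I would set $A_{st}=\sum_{t_{k}=s}^{t-}\delta f_{st_{k}}\otimes h_{t_{k}t_{k+1}}$ and split $\delta f_{st_{k}}=g_{s}\delta B_{st_{k}}+\rho_{st_{k}}$, which gives $A_{st}=g_{s}\otimes\big(\sum_{t_{k}=s}^{t-}\delta B_{st_{k}}\otimes h_{t_{k}t_{k+1}}\big)+R_{st}$ with $R$ the increment \eqref{eq3.35}; the first term is $\le K(t-s)^{\ga+\al}$ by \eqref{e3.8i} and $\|g_{s}\|_{p}\le K$. For $R\in\mathcal{C}_{2}(\pi,L_{p})$ one has, using the identity $\rho_{st_{k}}-\rho_{ut_{k}}=\rho_{su}+\delta g_{su}\,\delta B_{ut_{k}}$,
\[
\delta R_{sut}=\rho_{su}\otimes\tilde h_{ut}+\delta g_{su}\otimes\Big(\textstyle\sum_{t_{k}=u}^{t-}\delta B_{ut_{k}}\otimes h_{t_{k}t_{k+1}}\Big),
\]
so that $\|\delta R_{sut}\|_{p}\le K(t-s)^{2\ga+\al}$ by \eqref{e3.8ii}, by \eqref{e3.8i} applied on the subinterval $[u,t]$, and by the bound on $\tilde h$. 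Since $2\ga+\al>1$, Lemma~\ref{lem2.4} (in the Banach space $L_{p}$, cf. Remark~\ref{remark3.8}) yields $\|R_{st}\|_{p}\le K(t-s)^{2\ga+\al}$; as $\ga>0$ this is $\le K(t-s)^{\ga+\al}$, which proves \eqref{e3.9}, and since $\al\ge\frac12$ in the regime of interest it also gives \eqref{eq3.36}.

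For \eqref{e3.9i} I would write $N_{st}$ for the left‑hand side and $M_{st_{k}}=\sum_{t_{k'}=s}^{t_{k-1}}f_{t_{k'}}\otimes h_{t_{k'}t_{k'+1}}$, so that $N_{st}=\sum_{t_{k}=s+}^{t-}M_{st_{k}}\otimes\delta B_{t_{k}t_{k+1}}\in\mathcal{C}_{2}(\pi,L_{p})$, and, using $M_{st_{k}}-M_{ut_{k}}=M_{su}$ for $t_{k}\ge u$ together with the additivity of $\delta B$, $\delta N_{sut}=M_{su}\otimes\delta B_{ut}$. From $M_{su}=f_{s}\otimes\tilde h_{su}+\sum_{t_{k'}=s}^{u-}\delta f_{st_{k'}}\otimes h_{t_{k'}t_{k'+1}}$ and \eqref{e3.9}, $\|M_{su}\|_{p}\le K(u-s)^{\al}$. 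If $\al+H>1$ (the situation in our applications, where $\ga$ is taken close to $H$), Lemma~\ref{lem2.4} applied directly to $N$ gives $\|N_{st}\|_{p}\le K(t-s)^{\al+H}\le K(t-s)^{\ga+\al}$ since $H>\ga$, which is \eqref{e3.9i}. Otherwise one first subtracts the germ $\Gamma_{st}=f_{s}\otimes\Theta_{st}$, where $\Theta_{st}=\sum_{t_{k}=s}^{t-}\tilde h_{st_{k}}\otimes\delta B_{t_{k}t_{k+1}}$ satisfies $\Theta_{t_{k}t_{k+1}}=0$ and $\delta\Theta_{sut}=\tilde h_{su}\otimes\delta B_{ut}$; then $N-\Gamma\in\mathcal{C}_{2}(\pi,L_{p})$ with
\[
\delta(N-\Gamma)_{sut}=\Big(\textstyle\sum_{t_{k'}=s}^{u-}\delta f_{st_{k'}}\otimes h_{t_{k'}t_{k'+1}}\Big)\otimes\delta B_{ut}+\delta f_{su}\otimes\Theta_{ut},
\]
and the conclusion follows from Lemma~\ref{lem2.4} together with $\|\Gamma_{st}\|_{p}\le K\|\Theta_{st}\|_{p}$, provided one has $\|\Theta_{ut}\|_{p}\le K(t-u)^{\ga+\al}$.

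The step I expect to be the main obstacle is precisely this estimate on $\Theta$ in the regime $\al+H\le 1$. A discrete‑Fubini expansion of $\Theta_{ut}$, together with $\delta B_{t_{k'+1},t}=\delta B_{ut}-\delta B_{ut_{k'}}-\delta B_{t_{k'}t_{k'+1}}$, reduces $\|\Theta_{ut}\|_{p}$, modulo terms controlled by \eqref{e3.8i} and by $H>\ga$, to the cumulative ``diagonal'' sum $\sum_{t_{k'}=u}^{t-}h_{t_{k'}t_{k'+1}}\otimes\delta B_{t_{k'}t_{k'+1}}$, which is no longer a discrete Young integral. Controlling it requires combining an $L_{p}$‑bound of the form $K\,n^{1/2-3H}(t-s)^{1/2}$ for this cumulative triple product — obtained, as in Lemma~\ref{lem11.2} and Remark~\ref{remark3.9}, by working inside a fixed Wiener chaos — with the grid inequality $|t-s|\ge T/n$ and the strict inequality $\ga<H$, which is exactly the device used in the proof of Lemma~\ref{lem3.11} (see \eqref{e3.10}) and in Remark~\ref{remark3.6}. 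Apart from this point the argument is purely bookkeeping: the computation of $\delta R_{sut}$ in \eqref{eq3.36} and the repeated peeling‑off, using \eqref{e3.8ii}, of directly controllable pieces before each application of the sewing lemma.
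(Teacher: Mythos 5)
Your proposal follows essentially the same route as the paper's proof: the decomposition $A=M+R$ with
$\delta R_{sut}=(\delta f_{su}-g_s\delta B_{su})\otimes \tilde h_{ut}+\delta g_{su}\otimes\sum_{t_k=u}^{t-}\delta B_{ut_k}\otimes h_{t_kt_{k+1}}$
followed by the discrete sewing lemma in $L_p$ is exactly the paper's argument for \eqref{e3.9} and \eqref{eq3.36}, and your splitting $N=\Gamma+(N-\Gamma)$ with germ $\Gamma_{st}=f_s\otimes\Theta_{st}$ is precisely the paper's decomposition $\wt A=\wt M+\wt R$, with the same expression for $\delta\wt R_{sut}$. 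The one point where you go beyond the paper is worth highlighting: for $\Theta$ (i.e.\ $\wt M$ stripped of the factor $f_s$) the paper merely asserts that after exchanging the order of summation inequality \eqref{e3.8i} ``can be applied directly'', whereas you correctly observe that this exchange leaves, besides pieces controlled by \eqref{e3.8i} and by $\|h_{st}\|_p\|\delta B_{st}\|_p\le K(t-s)^{\al+H}$, the diagonal sum $\sum_{t_k}h_{t_kt_{k+1}}\otimes\delta B_{t_kt_{k+1}}$, which is additive but does not vanish on elementary intervals, hence is invisible to the sewing lemma and is not controlled by hypotheses \eqref{e3.8ii}--\eqref{e3.8i} alone; in the paper's application ($h=n^{2H-\frac12}F$) it must be bounded by the chaos estimates of Lemma \ref{lem11.2} and Remark \ref{remark3.9} combined with $t-s\ge T/n$, exactly as you indicate. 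Two minor slips that do not affect the argument: the parenthetical claiming that $\al+H>1$ is ``the situation in our applications'' is backwards (there $\al=\tfrac12$ and $H<\tfrac12$, so it is the second case, which you do treat, that actually occurs); and, as you yourself note, the exponent $2\ga+\tfrac12$ in \eqref{eq3.36} only follows from the proved bound $K(t-s)^{2\ga+\al}$ when $\al\ge\tfrac12$, which is the paper's implicit standing assumption in Corollary \ref{lem9.1}.
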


 \begin{proof}
We start by proving inequality \eqref{e3.9}. To this aim, set $A_{st}=\sum_{t_{k}=s}^{t-} \delta f_{st_{k}} \otimes h_{t_{k}t_{k+1}}$ for  $(s,t)\in\cs_{2}(\ll 0,T\rr)$, and consider $p\ge 1$. We decompose the increment $A$ into $A=M+R$, where $R$ is the increment defined in \eref{eq3.35} and $M$ is defined by:
\begin{equation*}
M_{st}
=
g_{s}\sum_{t_{k} = s }^{t-} \delta B_{s t_{k}}  \otimes h_{t_{k}t_{k+1}} .
\end{equation*}
Then it is immediate from \eqref{e3.8ii} and \eqref{e3.8i} that
\begin{equation}\label{e9.3i}
\|M_{st}\|_{p} \le K (t-s)^{\ga+\al}.
\end{equation}
In order to bound the increment $R$, we note that $R_{t_{k}t_{k+1}} = 0$. Let us now calculate $\delta R$: for $(s,u,t)\in\cs_{3}(\ll 0,T\rr)$, it is readily checked that:
\begin{eqnarray*}
\delta R_{sut} &=& \delta f_{su} \otimes h_{ut} 
-        \Big(      g_{s} \delta B_{su} \otimes h_{ut} -  \delta g_{su} \sum_{t_{k} = u}^{t-} \delta B_{ut_{k}} \otimes h_{t_{k}t_{k+1}}                                                   \Big)        
\\
&=&        
(\delta f_{su}   - g_{s} \delta B_{su} ) \otimes h_{ut}     +\delta g_{su} \sum_{t_{k} = u}^{t-} \delta B_{ut_{k}} \otimes h_{t_{k}t_{k+1}}     .                                    
\end{eqnarray*}
Therefore invoking \eqref{e3.8ii} and \eqref{e3.8i} again, we get:
\begin{eqnarray*}
\| \delta R_{sut} \|_{p} &\leq& K(u-s)^{2\ga}   (t-u)^{\al} +K (u-s)^{\ga}   (t-u)^{\ga+\al}
\le (t-s)^{\mu},
\end{eqnarray*}
where $\mu=2\ga+\al$, and where by assumption we have $\mu>1$.
Hence, owing to the discrete sewing   Lemma \ref{lem2.4} (applied to the Banach space $\cx=L^{p}(\Omega)$) we have:
\begin{eqnarray}\label{e9.3}
\|R\|_{p,\mu}  \leq  K \|\delta R\|_{p,\mu} \leq K  ,
\end{eqnarray}
where $\|\cdot\|_{p,\mu}$ designates the $\mu$-H\"older norm for $L^{p}(\Omega)$-valued functions.
Putting together our estimates \eqref{e9.3i} and \eqref{e9.3} on $M$ and $R$, inequality \eqref{e3.9} is proved.

In the following we derive our second claim \eqref{e3.9i}.  The method is similar to the proof of~\eqref{e3.9}, so that it will only be sketched for sake of conciseness. We resort to the following decomposition:
\begin{equation*}
\wt{A}_{st}
\equiv
\sum_{t_{k} = s+ }^{t-} \sum_{t_{k'} = s }^{t_{k-1}} 
f_{t_{k'}}\otimes h_{t_{k'} t_{k'+1}} \otimes \delta B_{t_{k}t_{k+1}}
=
\wt{M}_{st} + \wt{R}_{st}\,, 
\end{equation*}
where
\begin{equation*}
\wt{M}_{st}
=
\sum_{t_{k} = s+ }^{t-} \sum_{t_{k'} = s }^{t_{k-1}}   
f_{s} \otimes  h_{t_{k'} t_{k'+1}} \otimes \delta B_{t_{k}t_{k+1}},
\quad\text{and}\quad
\wt{R}_{st}
=
\sum_{t_{k} = s+ }^{t-} \sum_{t_{k'} = s }^{t_{k-1}}   
\delta f_{st_{k'}} \otimes  h_{t_{k'} t_{k'+1}} \otimes \delta B_{t_{k}t_{k+1}}.
\end{equation*}
In order to bound $\wt{M}_{st}$ we change the order of summation, which allows to exhibits some terms of the form $\delta B_{st_{k}}$. Then we let the reader check that inequality  \eref{e3.8i} can be applied directly. As far as $\wt{R}$ is concerned, notice again that $\wt{R}_{t_{k}t_{k+1}} = 0$.
It is then readily seen, as in the previous step, that our estimate boils down to a bound on $\delta\wt{R}$. Furthermore, $ \delta \wt{R}$ can be computed as follows:
\begin{eqnarray*}
\delta \wt{R}_{sut} &=& 
\Big(
\sum_{t_{k'} = s}^{u-} \delta f_{st_{k'}}\otimes h_{t_{k'}t_{k'+1}}  
\Big) \otimes 
 \delta B_{ut} + \delta f_{su}\otimes  \sum_{t_{k} = u+ }^{t-} \sum_{t_{k'} = u }^{t_{k-1}}      h_{t_{k'} t_{k'+1}} \otimes \delta B_{t_{k}t_{k+1}}.
\end{eqnarray*}
We can thus resort to \eqref{e3.8ii}, \eref{e3.9} and \eref{e3.8i} in order to get:
\begin{eqnarray*}
\| \delta \wt{R}_{sut} \|_{p} &\leq& K   (t-s)^{2\ga+\al }.
\end{eqnarray*}
The proof is now completed as for relation \eqref{e3.9}.
\end{proof}
  \begin{remark}\label{remark4.7}
   In Proposition \ref{prop3.6} the weighted sum $\sum_{t_{k}=s}^{t-} \delta f_{st_{k}} \otimes h_{t_{k}t_{k+1}}$ is viewed as a $L_{p}$-valued ``discrete'' rough integral. Similarly as in Remark \ref{remark3.8}, a $\RR$-valued ``discrete'' rough path can also be considered. 
  \end{remark}
Proposition \ref{prop3.6} can be applied to the sum $F$ of L\'evy area increments of $B$. This is the contents of the corollary below.

\begin{cor}\label{lem9.1}
Let $\frac14<\ga<H$, $f$ and $g$ be as in Proposition \ref{prop3.6}.  Let $F$ be the process defined by  \eref{e4.1}, considered as a path taking values in $\cv'=\RR^{d\times d}$.  Then   the following estimates hold true for $(s,t) \in \cs_{2}(\ll 0,T \rr)$:
\begin{eqnarray*}
\Big\| \sum_{t_{k}=s}^{t-} f_{t_{k}} \otimes   \delta F_{t_{k}t_{k+1}}\Big\|_{p} &\leq& Kn^{\frac12 - 2H}(t-s)^{\frac12},
\end{eqnarray*}
and
 \begin{eqnarray*}
\Big\| \sum_{t_{k} = s+ }^{t-} \sum_{t_{k'} = s }^{t_{k-1}} f_{t_{k'}}   \otimes
\delta F_{t_{k'} t_{k'+1}} \otimes \delta B_{t_{k}t_{k+1}}  \Big\|_{p}
&\leq& K n^{\frac12 - 2H} (t-s)^{H+\frac12}. 
\end{eqnarray*}
Furthermore, set $R_{st}= 
  \sum_{t_{k} = s }^{t-} (\delta f_{st_{k}}- g_{s}\delta B_{s t_{k}}) \otimes F_{t_{k}t_{k+1}}$, then we have
\begin{eqnarray}\label{e3.35i}
\|R_{st}\|_{p } &\leq & K n^{\frac12 - 2H} (t-s)^{2\ga+\frac12}.
\end{eqnarray}

\end{cor}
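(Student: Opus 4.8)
The plan is to derive Corollary~\ref{lem9.1} as a direct application of Proposition~\ref{prop3.6} with the specific choice $h = \delta F$, where $F$ is the L\'evy-area sum defined in \eref{e4.1}, taking values in $\cv' = \RR^{d\times d}$. The key observation is that the hypotheses on $f$ and $g$ in \eqref{e3.8ii} are imposed verbatim in the statement, so the only work is to verify that $h = \delta F$ satisfies the two conditions in \eqref{e3.8i} with an exponent $\al$ such that $\al + 2\ga > 1$, and then to carry the extra $n^{\frac12-2H}$ scaling factor through the three conclusions of the proposition.

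First I would fix $\al$ to be any exponent with $2\gamma < \al < 2H$ (possible since $\gamma < H$), so that $\al + 2\gamma > 4\gamma > 1$ because $\gamma > \frac14$. For the first bound in \eqref{e3.8i}, Lemma~\ref{lem4.1} combined with the rescaling in Remark~\ref{remark3.6} (relation~\eqref{eq:bnd-Lp-F-scale-beta}) gives $\|\delta F_{st}\|_p \le K_p n^{\al - 2H}(t-s)^{\al}$; in particular $\|\delta F_{st}\|_p \le K_p(t-s)^{\al}$ up to the harmless constant, which is all Proposition~\ref{prop3.6} formally requires. The second condition in \eqref{e3.8i} — a bound on $\|\sum_{t_k=s}^{t-}\delta B_{st_k}\otimes \delta F_{t_k t_{k+1}}\|_p$ — is precisely the type of weighted sum controlled by Corollary~\ref{lem9.1} itself, so a priori there is a circularity. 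The way around this is that this particular weighted sum is a sum in the second chaos of $B$ (each $\delta F_{t_k t_{k+1}} = \BB_{t_k t_{k+1}} - \tfrac12 h^{2H}\id$ lives in the second chaos, and $\delta B_{st_k}$ in the first), so by hypercontractivity it suffices to estimate its $L^2$ norm, which one does by expanding the square and bounding the covariances; the negativity of the measure $\mu$ from \eref{eq:def-msr-mu} kills the off-diagonal far-apart terms as in Lemma~\ref{lem11.2}, leaving a sum of $O(nt)$-many near-diagonal terms of size $O(n^{-2 \cdot 2H})$, whence a bound of order $n^{\frac12-2H}(t-s)^{\gamma+\al}$. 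Actually, more efficiently, one simply reruns the three-step argument \emph{inside} the proof of Proposition~\ref{prop3.6} but keeping the scaling: $M_{st}$ and $R_{st}$ are estimated via the discrete sewing Lemma~\ref{lem2.4}, and every place an $h_{t_k t_{k+1}}$ appears one now has the improved bound $\|\delta F_{t_k t_{k+1}}\|_p \le K n^{\al-2H}(t-s)^\al$ with the $n^{\al-2H}$ prefactor, so each conclusion of the proposition inherits exactly one factor of $n^{\al-2H}$ — and since $\al$ can be taken arbitrarily close to $2H$ from below, and the target exponents $\frac12$, $H+\frac12$, $2\gamma+\frac12$ are strictly below $\al$, $H+\al$, $2\gamma+\al$ respectively, one can absorb the discrepancy using $\frac{T}{n}\le t-s$ exactly as in Lemma~\ref{lem3.11}, replacing $n^{\al-2H}$ by $n^{\frac12-2H}$ at the price of lowering the power of $(t-s)$ to $\frac12$, $H+\frac12$, $2\gamma+\frac12$.

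Concretely I would proceed in four steps. Step~1: record the $L^p$ bound $\|\delta F_{st}\|_p \le K_p n^{\al-2H}(t-s)^{\al}$ from Lemma~\ref{lem4.1}/Remark~\ref{remark3.6} and the auxiliary weighted-sum bound $\|\sum_{t_k=s}^{t-}\delta B_{st_k}\otimes\delta F_{t_k t_{k+1}}\|_p \le K n^{\al-2H}(t-s)^{\gamma+\al}$ (the latter proved by the chaos/hypercontractivity + sewing argument sketched above, which is essentially the computation already carried out in the proof of Lemma~\ref{lem4.1} refined with a weight $\delta B_{st_k}$). Step~2: apply Proposition~\ref{prop3.6} with $h=\delta F$ and this value of $\al$ to get the three estimates with exponents $(\gamma+\al,\ \gamma+\al,\ 2\gamma+\frac12)$ in $(t-s)$ and a shared prefactor $n^{\al-2H}$ — here I must re-examine the proof of Proposition~\ref{prop3.6} to confirm that the prefactor propagates linearly (it does: $h$ enters each term of each decomposition exactly once). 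Step~3: the first conclusion already reads $\|\sum f_{t_k}\otimes\delta F_{t_k t_{k+1}}\|_p \le Kn^{\al-2H}(t-s)^{\gamma+\al}$; use $(t-s)^{\gamma+\al} \le (T/n)^{\gamma+\al-1/2}(t-s)^{1/2}$-type inequalities, i.e. $(t-s)^{\gamma+\al} = (t-s)^{\gamma+\al-1/2}(t-s)^{1/2} \le K n^{-(\gamma+\al-1/2)}(t-s)^{1/2}$ since $\gamma+\al-1/2>0$ and $t-s\le T$, to convert $n^{\al-2H}(t-s)^{\gamma+\al}$ into $n^{\al-2H}n^{1/2-\gamma-\al}(t-s)^{1/2} = n^{1/2-2H-\gamma}(t-s)^{1/2}$, wait — this overshoots; more simply, directly use $|t-s|\ge T/n$ gives $n^{\al-2H} \le n^{1/2-2H}(t-s)^{\al-1/2}/T^{\al-1/2}$, hence $n^{\al-2H}(t-s)^{\gamma+\al} \le K n^{1/2-2H}(t-s)^{\gamma+2\al-1/2} \le K n^{1/2-2H}(t-s)^{1/2}$ after noting $\gamma+2\al-1/2 \ge 1/2$; this is the standard trick, and I would do the analogous bookkeeping for the second estimate (target $(t-s)^{H+\frac12}$) and for $R$ (target $(t-s)^{2\gamma+\frac12}$, already exactly the exponent produced, so only the prefactor conversion $n^{\al-2H}\to n^{1/2-2H}$ is needed, again via $T/n\le t-s$). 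Step~4: invoke Lemma~\ref{lem4.2} / Remark~\ref{remark3.6}-style almost-sure upgrades only if a pathwise statement were wanted — but since the corollary is stated in $L^p$, Steps 1–3 suffice.

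The main obstacle is Step~1, specifically the auxiliary bound $\|\sum_{t_k=s}^{t-}\delta B_{st_k}\otimes\delta F_{t_k t_{k+1}}\|_p \le K n^{\al-2H}(t-s)^{\gamma+\al}$, because this is exactly a weighted sum of L\'evy-area increments and so looks circular with the corollary being proved. The resolution is that here the "weight" $\delta B_{st_k}$ is itself an element of the first chaos rather than a generic adapted process, so the whole sum sits in the second chaos and one can bound it by brute-force $L^2$ computation: expand $\mE|\sum_k \delta B_{st_k}\otimes\delta F_{t_k t_{k+1}}|^2$ into a double sum of quantities $\mE[\delta B_{st_k}\delta B_{st_{k'}}(\BB_{t_k t_{k+1}}-\tfrac12 h^{2H})(\BB_{t_{k'}t_{k'+1}}-\tfrac12 h^{2H})]$, use the product formula for multiple Wiener integrals together with the negativity of $\mu$ to discard the dominant off-diagonal contribution and bound the remainder by the number of near-diagonal pairs times $h^{2\cdot 2H}$ — this is morally the computation of Lemma~\ref{lem4.1} with one extra first-chaos factor, and it is the one genuinely new estimate here; everything else is bookkeeping on top of Proposition~\ref{prop3.6} and the elementary inequality $t-s\ge T/n$.
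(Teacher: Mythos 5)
Your overall architecture coincides with the paper's own proof: apply Proposition \ref{prop3.6} with $h$ built from $F$, the one genuinely new input being the bound on the first-chaos-weighted sum $\sum_{t_k}\delta B_{st_k}\otimes\delta F_{t_kt_{k+1}}$, which you correctly identify as non-circular (it sits in a fixed Wiener chaos, so an $L^2$ covariance computation plus hypercontractivity suffices --- this is exactly the paper's Lemma \ref{lem:sum-delta-B-F}). The problem is your parameter choice and the exponent conversion in Step~3. You feed the \emph{weakened} bound $\|\delta F_{st}\|_p\le K n^{\al-2H}(t-s)^{\al}$ with $\al\in(2\ga,2H)$, hence $\al>\tfrac12$, into the proposition, obtain conclusions with prefactor $n^{\al-2H}$ and $(t-s)$-exponents $\ga+\al$, $\ga+\al$, $2\ga+\tfrac12$, and then try to recover the prefactor $n^{\frac12-2H}$ via $t-s\ge T/n$. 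But that inequality only lets you trade decay in $n$ for H\"older regularity in $t-s$ (namely $n^{-\theta}\le T^{-\theta}(t-s)^{\theta}$ for $\theta>0$), never the reverse. Your displayed inequality $n^{\al-2H}\le n^{\frac12-2H}(t-s)^{\al-\frac12}/T^{\al-\frac12}$ is equivalent to $n\le (t-s)/T\le 1$ and is therefore false; correspondingly, $n^{\al-2H}(t-s)^{\ga+\al}$ does \emph{not} imply $Kn^{\frac12-2H}(t-s)^{\frac12}$ (take $t-s=T$ and let $n\to\infty$). The same defect breaks your treatment of $R_{st}$, where you ask for the conversion $n^{\al-2H}\to n^{\frac12-2H}$ with no $(t-s)$-exponent left to spend.

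The repair is exactly the paper's choice: set $h=n^{2H-\frac12}F$ and $\al=\tfrac12$ from the start. The hypothesis $\ga>\tfrac14$ is there precisely so that $\al+2\ga=\tfrac12+2\ga>1$; the two conditions in \eref{e3.8i} are then verified by the \emph{sharp} bound of Lemma \ref{lem4.1} (not the rescaled version of Remark \ref{remark3.6}) and by your Step-1 chaos computation, which naturally yields $Kn^{\frac12-2H}(t-s)^{H+\frac12}\le Kn^{\frac12-2H}(t-s)^{\ga+\frac12}$. The conclusions \eref{e3.9}, \eref{e3.9i}, \eref{eq3.36} then give the corollary directly after undoing the normalization, with no exponent conversion at all. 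One further small omission: \eref{e3.9} controls the sum with weights $\delta f_{st_k}$, whereas the corollary's first estimate has weights $f_{t_k}$; you need the decomposition $f_{t_k}=f_s+\delta f_{st_k}$ together with $\|f_s\|_{2p}\,\|\delta F_{st}\|_{2p}\le Kn^{\frac12-2H}(t-s)^{\frac12}$ to absorb the extra term.
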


\begin{proof}   Take $h = n^{2H-\frac12} F $ and $\al = \frac12$. It follows from Lemma \ref{lem4.1} and 
      Lemma \ref{lem:sum-delta-B-F} that $h$ satisfies the conditions in Proposition \ref{prop3.6}. 
      In addition, if $\ga>\frac14$, the condition $2\ga+\frac12 >1$ is trivially satisfied. 
      The corollary then follows  immediately from  Proposition \ref{prop3.6} and taking into account the decomposition $f_{t_{k}} = f_{s} +\delta f_{st_{k}}$. The estimate of $R_{st}$ follows directly from relation \eref{eq3.36}.
\end{proof}

 \subsection{Limiting theorem results via rough path approach}

Take two uniform partitions on $[0,T]$: $t_{k} = \frac{T}{n}k$ and $u_{l}=\frac{T}{\nu} l$, $n,\nu \in \NN$ for $k=0,\dots, n$ and $l=0,\dots, \nu$. Let $k_{l}$ be such that $t_{k_{l}+1} > u_{l} \geq t_{k_{l}} $.  In the following, we set for each $t\in [0,T]$:
\begin{eqnarray}\label{eq9.7}
D_{l}= \{t_{k}: u_{l+1}> t_{k}\geq u_{l},\, t\geq  t_{k}   \} \quad \text{and} \quad \tilde{D}_{l}=\{ t_{k}:  t_{k_{l+1}}> t_{k}\geq t_{k_{l}},\, t\geq t_{k} \} .
\end{eqnarray}
Our main result in this section is a central limit theorem for sums weighted by a controlled process $f$.

\begin{theorem}\label{prop9.2}
Let the assumptions in Proposition \ref{prop3.6} prevail, and suppose that
\begin{eqnarray}\label{eq9.6}
(h, B) \xrightarrow{\rm{f.d.d.}} (W,B), \quad n\to \infty,
\end{eqnarray}
  where `` $  \xrightarrow{\rm{f.d.d.}}  $'' stands for convergence of finite dimensional distributions and $W$ is a    Brownian motion independent of $B$.  
Set 
\begin{eqnarray}\label{eq9.8}
\zeta_{l}^{n} &=& \sum_{ t_{k} \in \tilde{D}_{l} }  \delta B_{t_{k_{l}} t_{k}} \otimes h_{t_{k}t_{k+1}},
\end{eqnarray}
and suppose that 
\begin{eqnarray}\label{eq9.9}
 \Big\| \sum_{l= \frac{\nu r}{T} }^{  \frac{\nu r'}{T} -1 }   \zeta^{n}_{l} \Big\|_{p} 
  \leq   K \nu^{-\ka}  (r'-r)^{\al+\ga- \ka}
\end{eqnarray}
for  $r,r' \in \{u_{1},\dots,u_{\nu}\}$ and for an arbitrary $\kappa>0$.
Set 
$
\Theta^{n}_{t} =   \sum_{k=0}^{\left\lfloor \frac{nt}{T} \right\rfloor} f_{t_{k}} \otimes   h_{t_{k}t_{k+1}}
$ and $
\Theta_{t} = \int_{0}^{t} f_{s}\otimes dW_{s} 
$.
 Then the following relation holds true:
 \begin{eqnarray*}
\left( \Theta^{n}, B \right)  &\xrightarrow{\rm{f.d.d.}}& \left( \Theta , B \right)\quad\quad \text{as} \quad n\to \infty.
\end{eqnarray*}
\end{theorem}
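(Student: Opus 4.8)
The plan is to run a two-scale (``blocking'') argument: the coarse mesh $\nu$ is introduced precisely so that the weight $f$ is only evaluated at the \emph{fixed} points $u_l$ and the process $h$ only at the \emph{fixed} increments $[u_l,u_{l+1}]$, which is the regime in which hypothesis \eqref{eq9.6} can be used. Then one passes to the limit $n\to\infty$ and $\nu\to\infty$ successively, the whole point being that the blocking error is negligible as $\nu\to\infty$ uniformly in $n$.

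\emph{Step 1 (two-scale decomposition).} I would first group the indices $t_k\le t$ according to the blocks $\tilde D_l$ of \eqref{eq9.7} and use the controlled structure of $f$, namely $\delta f_{st}=g_s\,\delta B_{st}+r^f_{st}$ with $\|r^f_{st}\|_p\le K(t-s)^{2\ga}$ from \eqref{e3.8ii}. This yields, modulo a remainder attached to the possibly incomplete last block,
\begin{equation*}
\Theta^n_t=\sum_{l}f_{t_{k_l}}\otimes\Big(\sum_{t_k\in\tilde D_l}h_{t_k t_{k+1}}\Big)
+\sum_l g_{t_{k_l}}\otimes\zeta^n_l+\sum_l R^{(l)},
\qquad R^{(l)}=\sum_{t_k\in\tilde D_l}r^f_{t_{k_l}t_k}\otimes h_{t_k t_{k+1}},
\end{equation*}
with $\zeta^n_l$ as in \eqref{eq9.8}; each $R^{(l)}$ is an increment of the type \eqref{eq3.35} on $[t_{k_l},t_{k_{l+1}}]$.

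\emph{Step 2 (the corrections vanish as $\nu\to\infty$, uniformly in $n\ge\nu$).} For $\sum_l R^{(l)}$ I would invoke \eqref{eq3.36}: since $t_{k_{l+1}}-t_{k_l}\le 2T/\nu$, we get $\|R^{(l)}\|_p\le K\nu^{-(2\ga+\frac12)}$, hence $\|\sum_l R^{(l)}\|_p\le K\nu^{\frac12-2\ga}\to0$ (here $\ga$ may be taken as close to $H>\frac13$ as we wish, so $2\ga>\frac12$). For $\sum_l g_{t_{k_l}}\otimes\zeta^n_l$ I would apply the discrete sewing Lemma \ref{lem2.4} on the coarse partition $\{u_l\}$ in the Banach space $L^p(\Omega)$: setting $A_{u_p u_q}=\sum_{p\le l<q}\zeta^n_l$ and $Q_{u_p u_q}=\sum_{p\le l<q}g_{t_{k_l}}\otimes\zeta^n_l-g_{t_{k_p}}\otimes A_{u_p u_q}$, one checks $Q_{u_l u_{l+1}}=0$ and, by additivity of $A$, $\delta Q_{u_p u_s u_q}=\delta g_{t_{k_p}t_{k_s}}\otimes A_{u_s u_q}$, so that \eqref{eq9.9} together with $\|\delta g_{st}\|_p\le K(t-s)^\ga$ gives $\|\delta Q_{u_p u_s u_q}\|_p\le K\nu^{-\ka}(u_q-u_p)^{\al+2\ga-\ka}$, with exponent $>1$ for $\ka$ small; Lemma \ref{lem2.4} then yields $\|Q_{u_p u_q}\|_p\le K\nu^{-\ka}(u_q-u_p)^{\al+2\ga-\ka}$ and finally $\|\sum_l g_{t_{k_l}}\otimes\zeta^n_l\|_p\le K\nu^{-\ka}\to0$. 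The last-block remainder is controlled by \eqref{e3.9} and the regularity of $h$, and tends to $0$ as $\nu\to\infty$. I expect this step --- and in particular the coarse-grid sewing that must preserve the decay factor $\nu^{-\ka}$ of \eqref{eq9.9} --- to be the main obstacle.

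\emph{Step 3 (passage to the limit).} For fixed $\nu$, as $n\to\infty$ we have $t_{k_l}\to u_l$ and $\|f_{t_{k_l}}-f_{u_l}\|_p\le K(T/n)^\ga\to0$, while $\sum_{t_k\in\tilde D_l}h_{t_k t_{k+1}}=\delta h_{t_{k_l}t_{k_{l+1}}}$; since $f$ is a continuous functional of $B$ (up to approximation by cylinder functionals) and $W$ is independent of $B$, hypothesis \eqref{eq9.6} gives
\begin{equation*}
\Big(\sum_{l}f_{t_{k_l}}\otimes\big(\textstyle\sum_{t_k\in\tilde D_l}h_{t_k t_{k+1}}\big),\,B\Big)
\ \xrightarrow{\rm{f.d.d.}}\
(\Theta^\nu,B),
\qquad
\Theta^\nu_t:=\sum_l f_{u_l}\otimes\delta W_{u_l u_{l+1}} .
\end{equation*}
Next, letting $\nu\to\infty$, conditioning on $B$ and using the It\^o isometry (legitimate because $W\perp B$ and $f$ is $\sigma(B)$-measurable) gives $\mE|\Theta^\nu_t-\Theta_t|^2=\mE\int_0^t|f_{u_{l(s)}}-f_s|^2\,ds\to0$ by continuity of $f$ and dominated convergence (with $u_{l(s)}$ the largest $u_l\le s$), so that $(\Theta^\nu,B)\xrightarrow{\rm{f.d.d.}}(\Theta,B)$. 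A standard $\varepsilon/3$ argument on bounded Lipschitz test functions of finitely many coordinates --- first choose $\nu$ large so that the Step 2 corrections and the Riemann-sum error are small uniformly in $n\ge\nu$, then let $n\to\infty$ using the display above --- then yields $(\Theta^n,B)\xrightarrow{\rm{f.d.d.}}(\Theta,B)$, which is the assertion.
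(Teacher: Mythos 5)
Your proposal is correct and follows essentially the same route as the paper: a coarse-graining decomposition over the blocks $\tilde D_l$, freezing the weight at the block left-endpoints, controlling the Gubinelli-derivative correction $\sum_l g\otimes\zeta^n_l$ by the discrete sewing lemma on the coarse grid via hypothesis \eqref{eq9.9}, the second-order remainder via \eqref{eq3.36}, and then passing to the limit in $n$ and then in $\nu$ using \eqref{eq9.6}. The only (harmless) differences are that you evaluate $f$ and $g$ at $t_{k_l}$ rather than $u_l$, thereby absorbing the paper's auxiliary terms $\tilde R$ and $\check R$ into your main terms, and that you invoke Lemma \ref{lem2.4} directly rather than through Proposition \ref{prop3.4}.
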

\begin{remark} There are several possible generalizations of the statement of Theorem \ref{prop9.2}. If one has the convergence of $(h,B)$ in $L_{p}$ instead of the weak convergence~\eref{eq9.6}, then by a similar proof  one can show that   $\Theta^{n} $ converges to $ \Theta$ in $L^{p}$. On the other hand, similarly to what has been mentioned in Remark \ref{remark4.7}, if
conditions \eref{e3.8ii}, \eref{e3.8i} and \eref{eq9.9} are replaced by 
 the corresponding  almost sure upper-bound conditions, then one can show that   Theorem~\ref{prop9.2} still holds true. 
\end{remark}
\begin{remark}\label{remark4.12} 
In the case $\ga>\frac12$ and $\al\geq\frac12$, conditions  \eref{e3.8ii}, \eref{e3.8i} and \eref{eq9.9} are reduced to $\|\delta f_{st}\|_{p}\leq K(t-s)^{\ga}$ and $\| \delta h_{st} \|_{p}\leq K(t-s)^{\al}$.   Theorem \ref{prop9.2} then recovers the central and non-central limit theorem results in \cite{CNP} and \cite{HLN}.
\end{remark}
\begin{remark}
According to our proof of Theorem \ref{prop9.2}, in   general   the  limit of the ``Riemann sum'' $\Theta^{n}$ is independent of  the choices of the representative points. In the situation of  Remark \ref{remark4.12} this fact can be proved directly from the expression of $\Theta^{n}$.   
\end{remark}

\begin{proof}[Proof of Theorem \ref{prop9.2}]  By   definition of the f.d.d. convergence, 
it suffices to show the following weak convergence for $r_{1},\dots, r_{l} \in [0,T]$:
\begin{eqnarray*}
(\Theta^{n}_{r_{1}},\dots, \Theta^{n}_{r_{l}}, B_{r_{1}},\dots, B_{r_{l}} )  \xrightarrow{ \ (d) \ } (\Theta_{r_{1}},\dots, \Theta_{r_{l}}, B_{r_{1}},\dots, B_{r_{l}} )   \quad\quad n\rightarrow \infty.
\end{eqnarray*}

\noindent{\it Step 1: A coarse graining argument.} 
Consider an extra parameter $\nu\!<\!<\!n$   and take $\{ u_{0},\dots, u_{\nu} \}$ to be the uniform $\nu$-step partition of $[0,T]$.  We make the following decomposition:
 \begin{eqnarray*}
\Theta^{n}_{t} &=&   \tilde{\Theta}^{n}_{t}  + \hat{\Theta}^{n}_{t}  ,
\end{eqnarray*}
where 
\begin{eqnarray*}
 \tilde{\Theta}^{n}_{t} =   \sum_{l=0}^{\nu-1} \sum_{t_{k}\in D_{l}  }  \delta f_{u_{l}t_{k}}    \otimes h_{t_{k}t_{k+1}}\,, 
\quad\quad\quad
    \hat{\Theta}^{n}_{t}=  \sum_{l=0}^{\nu-1} 
    \sum_{t_{k}\in D_{l}  }  
   f_{u_{l}}  \otimes h_{t_{k}t_{k+1}}\,.
\end{eqnarray*}

Let us first handle the convergence of $\hat{\Theta}^{n}$:
by letting $n \rightarrow \infty$ and taking into account the convergence \eref{eq9.6}, and then letting $\nu\rightarrow \infty$, we easily obtain the weak convergence:
\begin{eqnarray*}
(\hat{\Theta}^{n}_{r_{1}},\dots, \hat{\Theta}^{n}_{r_{l}}, B_{r_{1}},\dots, B_{r_{l}} ) \xrightarrow{ \ (d) \ } (\Theta_{r_{1}},\dots, \Theta_{r_{l}}, B_{r_{1}},\dots, B_{r_{l}} )  .
\end{eqnarray*}
Therefore, 
in order to prove our claim, we are reduced  to show that for $t\in [0,T]$
\begin{eqnarray}\label{e9.4iii}
\lim_{\nu\rightarrow \infty} \limsup_{n\rightarrow \infty}  
\mE
\big(\big|
 \tilde{\Theta}^{n}_{t}  
 \big|^{2} \big) 
  =0.
\end{eqnarray}

\noindent{\it Step 2: First-order approximation of $f$.} 
Let $k_{l}$ be such that $t_{k_{l}+1} > u_{l} \geq t_{k_{l}} $. We compare the two sets: $D_{l} $ defined by 
\eref{eq9.7}
 and $\tilde{D}_{l}=\{ t_{k}:  t_{k_{l+1}}> t_{k}\geq t_{k_{l}},\, t\geq t_{k} \}$. It is easy to see that $D_{l}\,\Delta \, \tilde{D}_{l} \subset \{ t_{k_{l}}, t_{k_{l+1} } \} $.
It is also readily checked from   conditions \eref{e3.8ii} and \eref{e3.8i} that $ \| \delta f_{u_{l}t_{k}} \|_{L_{4}} \leq K \nu^{-\ga} $ and  $\|h_{t_{k}t_{k+1}}\|_{L_{4}} \leq K n^{-\al}$. A simple use of Cauchy-Schwarz inequality thus yields:
\begin{eqnarray}\label{e9.4ii}
\left\|   \delta f_{u_{l}t_{k}} \otimes  h_{t_{k}t_{k+1}}  \right\|_{2} &\leq &K \nu^{-\ga} n^{-\al},
\end{eqnarray}
for any $k=0,1,\dots,n-1 $. In particular, \eref{e9.4ii} holds for $ t_{k} \in D_{l}\,\Delta \, \tilde{D}_{l}  $. Therefore, in order 
to show \eref{e9.4iii}, it is sufficient to show  that
\begin{eqnarray}\label{e9.4i}
\lim_{\nu\rightarrow \infty} \limsup_{n\rightarrow \infty}  
\mE
\Big(\Big|
 \sum_{l=0}^{\nu-1}  R_{t_{k_{l}}t_{k_{l+1}}} 
 \Big|^{2} \Big) 
  =0,
\end{eqnarray}
where
\begin{eqnarray*}
R_{t_{k_{l}}t_{k_{l+1}}} &= &  \sum_{ t_{k} \in \tilde{D}_{l} }  \delta f_{u_{ {l}}t_{k}}  \otimes  h_{t_{k}t_{k+1}} \,.
\end{eqnarray*}
Now
in order to get relation \eref{e9.4i}, consider the following decomposition for $R $: 
\begin{eqnarray}\label{eq9.10}
R_{t_{k_{l}}t_{k_{l+1}}} &= &   \bar{R}_{t_{k_{l}}t_{k_{l+1}}} +   \tilde{R}_{t_{k_{l}}t_{k_{l+1}}} +\check{R}_{t_{k_{l}}t_{k_{l+1}}}+\hat{R}_{t_{k_{l}}t_{k_{l+1}}},
\end{eqnarray}
where the increments $ \bar{R}_{t_{k_{l}}t_{k_{l+1}}} $, $   \tilde{R}_{t_{k_{l}}t_{k_{l+1}}} $, $\check{R}_{t_{k_{l}}t_{k_{l+1}}}$ and $\hat{R}_{t_{k_{l}}t_{k_{l+1}}}$ are defined by
\begin{align*} 
 &
  \tilde{R}_{t_{k_{l}}t_{k_{l+1} } } = \sum_{ t_{k} \in \tilde{D}_{l} }    \delta f_{u_{ {l}}t_{k_{l}}}    \otimes h_{t_{k}t_{k+1}}\,,
 \quad\quad
 \bar{R}_{t_{k_{l}}t_{k_{l+1} }} = \sum_{ t_{k} \in \tilde{D}_{l}  }  (\delta f_{t_{k_{l}} t_{k}}- g_{t_{k_{l}} }\otimes\delta B_{ t_{k_{l}} t_{k}}) \otimes h_{t_{k}t_{k+1}}
 \,,
 \end{align*}
and
\begin{equation}\label{eq9.12}
\check{R}_{t_{k_{l}}t_{k_{l+1}}}
=    \delta g_{u_{l} t_{k_{l}} }\otimes  \sum_{ t_{k} \in \tilde{D}_{l}  }  \delta B_{t_{k_{l}} t_{k}} \otimes  h_{t_{k}t_{k+1}}\,, 
\quad\quad
 \hat{R}_{t_{k_{l}}t_{k_{l+1}}}
=    g_{u_{ {l}}}\otimes  \sum_{ t_{k} \in \tilde{D}_{l}  }  \delta B_{t_{k_{l}} t_{k}} \otimes  h_{t_{k}t_{k+1}} 
 \,. 
\end{equation}
It follows from  \eref{eq3.36} that
\begin{eqnarray}\label{e9.7i}
\Big \| \bar{R}_{t_{k_{l}}t_{k_{l+1} } }\Big\|_{2}
 &\leq& K    \nu^{-2\ga-\al}
 .
\end{eqnarray}
On the other hand, it follows from \eref{e3.8ii} and   \eref{e3.8i} that
\begin{eqnarray}\label{e9.8}
\Big\|
\tilde{R}_{t_{k_{l}} t_{k_{l+1}}} 
\Big\|_{2 }
= 
\Big\|
  \delta f_{u_{ {l}}t_{k_{l}}}    \otimes h_{t_{k_{l}}\,, \,t_{k_{l+1}}\wedge t+}
\Big\|_{2 }
 \leq 
  K n^{-\ga } \nu^{-\al}  ,
\end{eqnarray}
where recall that we   denote $t+ = t+\frac{T}{n}$ and $t_{k_{l+1}}\wedge t+ = \min ( t_{k_{l+1}},  t+ )$.
Similarly, applying  \eref{e3.8ii} and   \eref{e3.8i}  we obtain:
\begin{eqnarray}\label{eq9.15}
\Big\|
\check{R}_{t_{k_{l}} t_{k_{l+1}}} 
\Big\|_{2 }&\leq& K n^{-\ga } \nu^{-\ga-\al}  . 
\end{eqnarray}
 It follows immediately from \eref{e9.7i}, \eref{e9.8} and \eref{eq9.15} that
\begin{eqnarray}\label{e9.4}
\lim_{\nu\rightarrow \infty} \limsup_{n\rightarrow \infty}  
\mE\Big(\Big| \sum_{l=0}^{\nu-1} \Big( \bar{R}_{t_{k_{l}}t_{k_{l+1} } } +\tilde{R}_{t_{k_{l}}t_{k_{l+1} } }+\check{R}_{t_{k_{l}} t_{k_{l+1}}}\Big) \Big|^{2}\Big) &=&0.
\end{eqnarray}
In view of \eref{e9.4} and taking into account the decomposition \eref{eq9.10}, in order to show \eref{e9.4i} it suffices to show   that
\begin{eqnarray}\label{e9.10}
\lim_{\nu\rightarrow \infty} \limsup_{n\rightarrow \infty}  \mE\Big(\Big|
\sum_{l=0}^{\nu-1} \hat{R}_{t_{k_{l}}t_{k_{l+1}}}
\Big|^{2}\Big) &=& 0.
\end{eqnarray}

\noindent{\it Step 3: Study of $\hat{R}$.}
 We will see that $ \sum_{l=0}^{\nu-1} \hat{R}_{t_{k_{l}}t_{k_{l+1}}}$ can be considered as a  discrete ``Young'' integral in $L_{2}$ in the sense  of Remark \ref{remark3.8}  (see also Proposition \ref{prop3.4}), which then leads to the convergence~\eref{e9.10}. 
 Namely, starting from the expression \eref{eq9.12} of $\hat{R}$,
 let us first consider the ``weight-free'' sum
 \begin{eqnarray*}
\hat{\zeta}^{n}_{r} : = \sum_{l=0}^{\frac{\nu r}{T}-1} \zeta_{l}^{n}
\end{eqnarray*}
 where     $r \in \{u_{1},\dots,u_{\nu}\}$ and recall that $\zeta_{l}^{n}$ is defined in \eref{eq9.8}.
 Observe that    \eref{eq9.12}   can be recast as 
 \begin{eqnarray}\label{e9.18ii}
\hat{R}_{t_{k_{l}}t_{k_{l+1}}} = g_{u_{l}}\otimes \zeta^{n}_{l} = g_{u_{l}}   \delta \hat{\zeta}^{n}_{u_{l}u_{l+1}}\,,
\end{eqnarray}
for all $l=0,\dots, \nu-1$. 
According to \eref{e9.18ii}, we have
\begin{eqnarray*}
\sum_{l=0}^{\nu-1} \hat{R}_{t_{k_{l}}t_{k_{l+1}}}  = \sum_{l=0}^{\nu-1} g_{{u_{l}}} \otimes \delta \hat{\zeta}^{n}_{u_{l}u_{l+1}} 
= \sum_{l=0}^{\nu-1} \delta g_{0{u_{l}}} \otimes \delta \hat{\zeta}^{n}_{u_{l}u_{l+1}} +\sum_{l=0}^{\nu-1} g_{0} \otimes \delta \hat{\zeta}^{n}_{u_{l}u_{l+1}} . 
\end{eqnarray*}
Then our assumption \eref{eq9.9} and the bound \eref{e3.8ii} ensures that we are in a position to apply Proposition   \ref{prop3.4}. This immediately yields our claim \eref{e9.10}, which concludes the proof. 
\end{proof}

\section{Euler scheme process as a rough path}\label{section4}
In this section, we  consider a continuous time interpolation of the   Euler scheme $y^{n}$ given by~\eqref{e4}. Namely, we introduce a  sequence of processes $y^{n}$ indexed by $[0,T]$ in the following way: for $t\in[t_{k},t_{k+1})$ we set
\begin{eqnarray}\label{e1.2}
y^{n}_{t } &=& y^{n}_{t_{k}} + b(y^{n}_{t_{k}}) (t-t_{k}) +{V}(y^{n}_{t_{k}}) \delta B_{t_{k}t } +\frac12 \sum_{j=1}^{m} \partial {V}_{j}{V}_{j} (y^{n}_{t_{k}}) (t-t_{k})^{2H},
\end{eqnarray}
where we recall that   $\pi : 0=t_{0}< t_{1} <\cdots<t_{n}=T $ designates the uniform  partition of the interval $[0,T]$.  The remainder of the section is devoted to get some uniform bounds on $y^{n}$, and then to prove that the couple $(y^{n},B)$ can be lifted as a rough path. Throughout the section, we assume that $b \in C^{2}_{b}$ and $V\in C^{4}_{b}$.

\subsection{H\"older-type bounds for the   Euler scheme}

Our main results on H\"older regularity of the sequence $y^{n}$ is summarized in the following proposition.

\begin{prop}\label{lem5.1}
 Let $y^{n}$ be the process defined by the Euler scheme \eref{e1.2}. Take $\frac13<\ga<H$.  Then for all $(s,t) \in \cs_{2}(\ll0,T\rr)$, the following relations are satisfied:
 \begin{eqnarray}\label{eq:bnd-delta-yn}
|\delta y^{n}_{st}|  \leq  G|t-s|^{\ga} , 
\quad \quad \quad
|\delta y^{n}_{st} -V(y^{n}_{s}) \delta B_{st} | \leq  G (t-s)^{2\ga},
\end{eqnarray}
where $G$ stands for an integrable random variable which is independent of the parameter $n$.
\end{prop}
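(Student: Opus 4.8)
The plan is to prove both estimates in \eqref{eq:bnd-delta-yn} simultaneously by a bootstrap argument on the scale $|t-s|$, exploiting the discrete sewing Lemma \ref{lem2.4} together with the uniform $L^p$-bounds of Lemma \ref{lem4.2}. First I would set up the relevant increments. Let $z^n_{st}$ denote the "naive remainder" $z^n_{st} = \delta y^n_{st} - V(y^n_s)\delta B_{st}$. Directly from the scheme \eqref{e1.2}, on a single grid interval $[t_k,t_{k+1}]$ we have $z^n_{t_kt_{k+1}} = b(y^n_{t_k})h + \tfrac12\sum_j \partial V_jV_j(y^n_{t_k})h^{2H}$, which is $O(h^{2H})$; the key structural point is that all the "extra" contributions of the scheme live at scale $h^{2H}$, which is exactly $2\ga$-H\"older on the grid once $\ga<H$. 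One should also introduce the second-level increment associated with the Milstein-type correction, i.e. compare $y^n$ with the Davie-type expansion \eqref{eq:dcp-Davie} for the true solution, so that the drift and the $\BB^{ij}$-terms can be organized into a genuine $\mathcal C_2^{\mu}(\pi)$ remainder with $\mu>1$.

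Next I would run the iteration. Assume \emph{a priori} the crude bound $|\delta y^n_{st}|\le G|t-s|^{\ga}$ (this is the easy half and can be obtained first, e.g.\ by a standard telescoping/Gronwall argument on the grid using $\|S_2(B)\|_{\ga}\le L_{\ga}$ from Proposition \ref{prop:integrability-signature}, plus the fact that $h^{2H}\le h^{2\ga}\le |t-s|^{2\ga}$). With this in hand, I would examine $\delta z^n_{sut}$ for $(s,u,t)\in\cs_3(\ll0,T\rr)$: using Taylor expansion of $V$ and of the coefficient $\partial V_jV_j$, together with $\delta y^n_{su}=V(y^n_s)\delta B_{su}+z^n_{su}$, one finds
\[
\delta z^n_{sut} = -\bigl(V(y^n_u)-V(y^n_s)\bigr)\delta B_{ut} + (\text{drift increments}) + (\text{increments of the }h^{2H}\text{ terms}),
\]
and each piece is controlled: $V(y^n_u)-V(y^n_s) = \partial V(y^n_s)\delta y^n_{su}+O(|u-s|^{2\ga})$, so the leading term is $-\partial V(y^n_s)V(y^n_s)\,\delta B_{su}\otimes\delta B_{ut}$ plus a genuinely $3\ga$-small error. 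The $\delta B_{su}\otimes\delta B_{ut}$ term is precisely what Chen's relation $\delta\BB_{sut}=\delta B_{su}\otimes\delta B_{ut}$ removes once one incorporates $\sum_{i,j}\partial V_iV_j(y^n_s)\BB^{ij}_{st}$ into the remainder; what is left over has $\delta$-H\"older exponent $3\ga>1$ on the grid, so Lemma \ref{lem2.4} applies and yields $|z^n_{st}|\le K|t-s|^{3\ga}$ in terms of the random variable $G$ and $L_\ga$. In particular $|z^n_{st}|\le G|t-s|^{2\ga}$, the second claim; and since $|z^n_{st}|\le G|t-s|^{2\ga}$ and $|V(y^n_s)\delta B_{st}|\le K L_\ga|t-s|^{\ga}$, the first claim follows as well, with the random variable $G$ built from $L_\ga$ and the integrable bounds furnished by Lemma \ref{lem4.2} applied to the grid-scale increments of the $h^{2H}$ terms (cf.\ Remark \ref{remark3.6}).

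The main obstacle, and the place where the rough case genuinely departs from \cite{HLN1}, is controlling the correction term $\tfrac12\sum_j\partial V_jV_j(y^n_{t_k})h^{2H}$ and its increments \emph{uniformly in $n$}: naively summing $n$ copies of $h^{2H}=n^{-2H}T^{2H}$ over an interval of length $|t-s|$ gives a bound of order $n^{1-2H}|t-s|$, which is \emph{divergent}. The resolution — which I would carry out carefully — is that these terms must not be estimated termwise but rather reorganized: the increment $\delta\bigl(\sum_{t_k\le\cdot}\partial V_jV_j(y^n_{t_k})h^{2H}\bigr)_{st}$ is, up to smoother corrections, a weighted sum of the form handled by Proposition \ref{prop3.4}/Corollary \ref{lem9.1}, whose cumulative fluctuation is only $O(|t-s|^{2\ga})$ (indeed the relevant cancellations come from pairing $\partial V_jV_j(y^n)h^{2H}$ against $\BB^{jj}-\tfrac12h^{2H}$, i.e.\ the centered L\'evy-area process $F$ of \eqref{e4.1}, whose increments are estimated in Lemma \ref{lem4.1}). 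So the heart of the argument is to recognize $y^n$ as being driven, at second order, by the process $F$ rather than by a genuine $h^{2H}$ drift, and to feed the resulting structure into Lemma \ref{lem2.4}; everything else is routine Taylor expansion and bookkeeping.
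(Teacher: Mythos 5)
Your overall architecture is the same as the paper's: subtract $V(y^n_s)\delta B_{st}$, the Davie term $\sum_{i,j}\partial V_iV_j(y^n_s)\BB^{ij}_{st}$, \emph{and} the centered correction built from the process $F$ of \eqref{e4.1}, observe that the resulting remainder vanishes on grid intervals, compute its $\delta$, and feed the exponent $3\ga>1$ into the discrete sewing Lemma \ref{lem2.4}. In particular you correctly identified the one genuinely non-routine point, namely that the $h^{2H}$ corrections must not be summed termwise but paired with $\BB^{jj}$ to form $\delta F^{jj}$, whose increments are $O((t-s)^{2\ga})$ uniformly in $n$ by Remark \ref{remark3.6}. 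This is exactly the increment $q_{st}$ in the paper's proof.

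There is, however, one genuine flaw in the middle of your argument: the claim that the bound $|\delta y^n_{st}|\le G|t-s|^{\ga}$ is ``the easy half'' and ``can be obtained first by a standard telescoping/Gronwall argument on the grid.'' Telescoping gives $\sum_{t_k=s}^{t-}|\delta y^n_{t_kt_{k+1}}|\sim n(t-s)T^{-1}\cdot h^{H}\sim n^{1-H}(t-s)$, which diverges in $n$; and any Gronwall-type control of the Riemann sum $\sum_k V(y^n_{t_k})\delta B_{t_kt_{k+1}}$ already requires the sewing-lemma control of the second-order remainder, which in turn requires the $\ga$-H\"older bound on $y^n$ (the terms $\delta V(y^n)_{su}\delta B_{ut}$, $\delta(\partial V_iV_j)(y^n)_{su}\BB^{ij}_{ut}$, $\delta(\partial V_iV_j)(y^n)_{su}\delta F^{ij}_{ut}$ in $\delta R_{sut}$ all involve $\|y^n\|_{\ga}$). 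So the two bounds in \eqref{eq:bnd-delta-yn} cannot be decoupled; they must be propagated \emph{jointly}, which is what the paper does via an induction on intervals satisfying the smallness condition $G(t-s)^{\ga}\le(8K_V^2)^{-1}$, followed by a concatenation argument to reach all of $\ll 0,T\rr$ and a separate (elementary) treatment of the finitely many $n$ for which no nontrivial interval satisfies the smallness condition. Your opening sentence (``prove both estimates simultaneously by a bootstrap on the scale $|t-s|$'') is the correct plan and is consistent with the paper; the parenthetical shortcut contradicts it and, taken literally, is circular. You should also note that the concatenation step and the small-$n$ step are needed to convert the small-interval bounds into the stated global ones, though these are indeed routine.
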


\begin{proof}  

We divide this proof into several steps. For the sake of conciseness, we omit the drift~$b$ in the proof, so that we analyze a scheme defined successively by 
\begin{eqnarray}\label{eq4.4}
y^{n}_{t_{k+1}} &=& y^{n}_{t_{k}} +V(y^{n}_{t_{k}}) \delta B_{t_{k}t_{k+1}} +\frac12 \sum_{j=1}^{m} \partial V_{j}V_{j} (y^{n}_{t_{k}}) h^{2H}.
\end{eqnarray}

\noindent
{\it Step 1:  Definition   of the remainder.}
We  first define some increments of interest for the analysis of the scheme given by~\eqref{e1.2}. Let us start with a 2nd-order increment $q$ defined  by:
 \begin{eqnarray*}
 q_{st} &=&  \sum_{i,j=1}^{m} ( \partial {V}_{i}{V}_{j})(y^{n}_{s}) \delta F^{ij}_{st}  \,, \quad\quad  \cs_{2}(\ll0,T\rr) ,
\end{eqnarray*}
where recall that $F^{ij}$ is defined in \eref{e4.1}. 
 Next our remainder term for \eqref{e1.2} is given   by:
\begin{eqnarray}\label{e.10}
 R_{st} &=&
\delta y^{n}_{s  t} -   {V}(y^{n}_{s}) \delta B_{s t} -\sum_{i,j=1}^{m} ( \partial {V}_{i}{V}_{j})(y^{n}_{s}) \BB_{st}^{ij} + q_{st}, \quad \cs_{2}(\ll0,T\rr).
\end{eqnarray}
Since $R$ is expected to be regular in $|t-s|$ and $R_{t_{k}t_{k+1}} =0$ by the very definition of $y^{n}$, we will analyze $R$ through an application of the discrete sewing Lemma \ref{lem2.4}. To this aim, we calculate $\delta R$, which is easily decomposed as follows:  
\begin{eqnarray}\label{e4.5}
\delta R_{sut}   &= & A_{1}+A_{2}+A_{3}+A_{4}\,,
\end{eqnarray}
where for $(s, u, t) \in \cs_{3}(\ll 0, T\rr)$, the quantities $A_{1}, A_{2}$ are given by:
\begin{equation*}
A_{1} = \delta {V} (y^{n})_{su} \delta B_{ut},
\qquad
A_{2} =  \sum_{i,j=1}^{m}\delta ( \partial {V}_{i}{V}_{j})(y^{n})_{su} \BB_{ut}^{ij},
\end{equation*}
and where $A_{3},A_{4}$ are defined by:
\begin{equation*}
A_{3} = - \sum_{i,j=1}^{m}  ( \partial {V}_{i}{V}_{j})(y^{n}_{s})  \delta B_{su}^{j} \delta B_{ut}^{i},
\quad\text{and}\quad
A_{4} = -  \sum_{i,j=1}^{m} \delta ( \partial {V}_{i}{V}_{j})(y^{n}_{\cdot})_{su} \delta F^{ij}_{ut}\,.
\end{equation*}
Observe that in order to compute $A_{e}$, $e=1,2,3,4$ we have used the fact that $\delta \delta B = 0$, $\delta \delta F = 0$, and $\delta \BB = \delta B\otimes \delta B$.  
Moreover, note that owing to an elementary Taylor type expansion we have:
\begin{eqnarray*}
\delta {V} (y^{n})_{su} \delta B_{ut}&=&
\sum_{i=1}^{m}[\partial V_{i}(y^{n})]_{su} \delta y^{n}_{su} \delta B^{i}_{ut},
\end{eqnarray*}
where we denote $[\partial V_{i}(y^{n})]_{su} =  \int_{0}^{1} \partial {V}_{i}( y^{n}_{s} + \lambda  \delta y^{n}_{su} )  d \lambda $.
So invoking relation \eref{e.10}, we can further decompose $A_{1}$ as follows:
\begin{eqnarray*}
A_{1} = \delta {V} (y^{n})_{su} \delta B_{ut}&=&
  A_{11}+A_{12}+A_{13}+A_{14},
\end{eqnarray*}
where $A_{11}$ and $A_{12}$ are defined by
\begin{equation*}
A_{11} = \sum_{i=1}^{m}[\partial V_{i}(y^{n})]_{su}  {V}(y^{n}_{s}) \delta B_{s u}
 \delta B^{i}_{ut},
\qquad
A_{12} 
= 
\sum_{i=1}^{m}[\partial V_{i}(y^{n})]_{su} \sum_{i',j'=1}^{m}( \partial {V}_{i'}{V}_{j'})(y^{n}_{s}) \BB_{su}^{i'j'}
 \delta B^{i}_{ut}
\end{equation*}
and where
\begin{equation*}
A_{13} = -\sum_{i=1}^{m}[\partial V_{i}(y^{n})]_{su}  q_{su}
 \delta B^{i}_{ut},
\quad\text{and}\quad
A_{14} = \sum_{i=1}^{m}[\partial V_{i}(y^{n})]_{su}  R_{s u }
 \delta B^{i}_{ut}\,.
\end{equation*}
We will bound those terms separately.

\noindent{\it Step 2: Upper-bound for $y$ and $R$ on small intervals.}
Consider the following deterministic constant: 
\begin{eqnarray*}
K_{{V}} &=& (1+\|{V}\|_{\infty} +\| \partial{V}\|_{\infty}+\|\partial^{2}{V}\|_{\infty} +K_{3\ga})^{3},
\end{eqnarray*}
where $K_{3\ga}$ is the constant appearing in Lemma \ref{lem2.4}, and $\|V\|_{\infty}$ for a vector-valued field $V$ denotes the supnorm of the function $|V|$. We also introduce the following random variable: 
\begin{equation}\label{eq:def-G}
G  = G_{\ga} + L_{\ga} +1,
\end{equation}
where $G_{\ga}$ is defined in \eref{e4.21} and $L_{\ga}$ is introduced in Proposition \ref{prop:integrability-signature}.  Assume that $n$ is large enough so that
\begin{eqnarray}\label{e4.6i} 
     G{n^{-\gamma}} \leq   {(8K_{{V}}^{2})^{-1} } \,.
\end{eqnarray}
In this step, we show by induction that for $(s,t )\in \mathcal{S}_{2}(\ll 0,T \rr) $ such that 
\begin{eqnarray}\label{e4.6} 
{G} (t-s)^{\gamma} \leq   {(8K_{{V}}^{2} )^{-1}} \,,
\end{eqnarray}
 we have 
 \begin{eqnarray}\label{e11}
\|y\|_{[s,t],  \gamma, n}   \leq 2K_{{V}}   {G}\,, \quad  \|R\|_{[s,t],  3\gamma, n} \leq 8K_{{V}}^{3}   G^{3}  \, .
\end{eqnarray}
Notice that here and in the following, we adopt the notations: 
\begin{equation*} 
\|y\|_{[s,t], \al, n}:=\sup_{(u,v)\in\cs_{2}(\ll s,t \rr )}\frac{|\delta y_{uv}|}{|v-u|^{\al}} , 
\quad\text{and}\quad 
\|R\|_{[s,t],  \al, n}:= \sup_{(u,v)\in\cs_{2}(\ll s,t \rr )}\frac{|  R_{uv}| }{|v-u|^{  \al}} 
\end{equation*}
for $\al>0$.
The relations \eref{e11} will be achieved by bounding successively the terms in \eref{e4.5}. 

Specifically, we assume that relation \eqref{e11} holds true when $(s,u,t)\in\cs_{3}(\ll 0, (N-1)h\rr)$ and verify \eref{e4.6}. Our aim is to extend this inequality to $\cs_{3}(\ll 0, Nh\rr)$. We thus start from our induction assumption, and we consider $(s,u,t)\in\cs_{3}(\ll 0, Nh\rr)$ such that   \eqref{e4.6} is satisfied  and $t=Nh$. Then we start by bounding the terms $A_{11}$ and $A_{3}$ as follows:
\begin{eqnarray*}
A_{11}+A_{3} &=&  
\sum_{i =1}^{m}[\partial V_{i}(y^{n})]_{su}      {V} (y^{n}_{s}) \delta B_{s u}   \delta B^{i}_{ut}
- \sum_{i,j=1}^{m}  ( \partial {V}_{i}{V}_{j})(y^{n}_{s})  \delta B_{su}^{j} \delta B_{ut}^{i}
\\
&=& 
\sum_{i,j=1}^{m} ( [\partial V_{i}(y^{n})]_{su} - \partial V_{i}(y^{n}_{s}) )     {V}_{j}(y^{n}_{s}) \delta B^{j}_{s u}   \delta B^{i}_{ut}
\end{eqnarray*}
By the induction assumption \eqref{e11} on $\|y^{n}\|_{[s,t],\ga,n}$ and the definition \eqref{eq:def-G} of our random variable $G$, we thus have 
  \begin{eqnarray*}
\| A_{11}+A_{3}\|_{[s,t],3\gamma, n} &\leq& 2 K_{{V}}^{2} G^{3}.
\end{eqnarray*}
Along the same lines, since $\| \BB \|_{\ga} \leq G$ and invoking the induction assumption again we obtain
\begin{eqnarray*}
 \| A_{2}\|_{[s,t],3\gamma, n}  \leq   2 K_{{V}}^{2}  G^{3}\quad \quad \text{and}
 \quad\quad
 \|A_{12}\|_{[s,t],3\gamma, n}  \leq  K_{{V}} G^{3}.
\end{eqnarray*}
Similarly, the estimate \eref{e4.21} and the induction assumption implies that 
\begin{eqnarray*}
 \|A_{4}\|_{[s,t],3\gamma, n}  \leq   2 K_{{V}}^{2}  G^{3}
 \quad \quad 
 \text{and}
 \quad\quad
 \|A_{13}\|_{[s,t],3\gamma, n}  \leq    K_{{V}}  G^{3} .
\end{eqnarray*}
Finally, by the induction assumption \eqref{e11} on $R$ we obtain
\begin{eqnarray*}
  \|A_{14}\|_{[s,t],3\gamma, n} &\leq& 8K_{{V}}^{4}   G^{4} (t-s)^{\ga} \leq K_{{V}}^{2} G^{3} ,
\end{eqnarray*}
where we have used the assumption \eqref{e4.6} for the second inequality.
Applying the above estimates on $A_{1},\ldots,A_{4}$ to \eref{e4.5} we have thus obtained:
\begin{eqnarray*} 
\|\delta R\|_{[s,t],3\ga, n} &\leq& 8K_{{V}}^{2}  G^{3}\,.
\end{eqnarray*}
Since $R_{t_{k}t_{k+1}} =0$ and $3\ga>1$, we are now in a position to apply the discrete sewing Lemma~\ref{lem2.4}. This yields:
\begin{eqnarray}\label{e4.7}
\|  R\|_{[s,t],3\ga, n} &\leq& 8K_{{V}}^{3}  G^{3}\,.
\end{eqnarray}
Otherwise stated, our induction assumption \eqref{e11} is propagated for the term $R$.

Let us turn to the propagation of the induction assumption \eref{e11} for the norm of $y$. Plugging the bound   \eref{e4.7} into relation \eref{e.10}, taking onto account the definition of the random variable $G$ and recalling relation \eqref{e4.6}, it is readily checked that: 
\begin{eqnarray}\label{e5.6i}
 | \delta y^{n}_{st} - V(y^{n}_{s}) \delta B_{st} |  &\leq&     K_{{V}} G^{2} (t-s)^{2\ga} +  K_{{V}} G^{2}  (t-s)^{2\ga} +   8K_{{V}}^{3} G^{3}(t-s)^{3\ga}
 \nonumber
\\
&\leq& 3K_{{V}}G^{2} (t-s)^{2\ga}.
\end{eqnarray}
Therefore, since we have $\| B\|_{\ga}\leq G$, we obtain
\begin{eqnarray}\label{e5.3ii}
|\delta y^{n}_{st}|   \leq K_{{V}} G (t-s)^{\ga} +      2K_{{V}} G^{2} (t-s)^{2\ga} +   8K_{{V}}^{3} G^{3}(t-s)^{3\ga}
 \leq  2K_{{V}}G(t-s)^{\ga},
\end{eqnarray}
where we have invoked our hypothesis \eref{e4.6} again. 
This achieves the propagation of the induction \eref{e11} for the term $\|y^{n}\|_{\ga}$.

\noindent{\it Step 3:  Upper bound estimates on $\ll 0,T \rr$.} 
Recall that we have proved relation \eqref{e11} on small intervals $\ll s,t\rr$ satisfying \eqref{e4.6}. 
In order to extend this result  to the whole interval $\ll 0, T \rr$,  we use a partition of the form $\ll kT_{0}, (k+1)T_{0} \rr$. 
Namely, consider $T_{0} \in \ll0,T\rr$ such that  
\begin{eqnarray}\label{eq:cdt-T0}
 {(2^{\gamma}8 K_{{V}}^{2})^{-1}}\leq GT_{0}^{\ga} \leq  {(8K_{{V}}^{2})^{-1}}.
\end{eqnarray}
Also consider  $s,t\in[0,T]$ such that $t-s> T_{0}$, and  denote $k=\lfloor \frac{t-s}{T_{0}}\rfloor$ and $s_{i} = s+iT_{0}$, $i=0,\dots, k$. Then we obviously have
\begin{equation*}
|\delta y^{n}_{s t}| 
\leq  
|\delta y^{n}_{s_{0}s_{1}}| +|\delta y^{n}_{s_{1} s_{2}} | +\cdots+  |\delta y^{n}_{s_{k}t}| 
\end{equation*}
Furthermore, on each subinterval $[s_{j},s_{j+1}]$ one can apply \eqref{e11} in order to get:
\begin{equation*}
|\delta y^{n}_{st}| 
\leq  
2K_{{V}} G k T_{0}^{\ga} + 2K_{{V}} G (t-s-kT_{0})^{\ga}.
\end{equation*}
Now resort to the fact that $k \le \frac{t-s}{T_{0}}$ and inequality \eqref{eq:cdt-T0}. This yields, for another   $K$,
\begin{equation}\label{e5.3i}
|\delta y^{n}_{st}|
\leq 4K_{{V}}G \frac{t-s}{T_{0}^{1-\ga}}
\leq K G^{\frac{1}{\ga}} (t-s) .
\end{equation}
Hence, gathering our estimates \eref{e5.3ii} and \eref{e5.3i}, we end up with 
 \begin{eqnarray}\label{e5.5ii}
|\delta y^{n}_{st}| &\leq&  K   G^{\frac{1}{\ga}} (t-s)^{\ga},
\end{eqnarray}
for $(s,t )\in \mathcal{S}_{2}(\ll 0,T \rr)$. That is, we have extended the first part of \eqref{e11} to the whole interval $\ll 0,T \rr$, and thus we have proved the first relation in \eqref{eq:bnd-delta-yn} when $n$ satisfies \eqref{e4.6i}.

We now prove the second-order estimate in \eqref{eq:bnd-delta-yn} when $n$ satisfies \eqref{e4.6i}. We start by a new decomposition of the form:
\begin{eqnarray}\label{e4.10i}
 |\delta y^{n}_{st} - V(y^{n}_{s}) \delta B_{st} | 
&\leq& |r_{1}|+|r_{2}| ,
\end{eqnarray}
where 
\begin{equation*}
r_{1}= \delta y^{n}_{st} - \sum_{i=0}^{k } V(y^{n}_{s_{i}}) \delta B_{s_{i}, t\wedge s_{i+1}} ,
\quad\text{and}\quad
r_{2}=   \sum_{i=0}^{k } V(y^{n}_{s_{i}}) \delta B_{s_{i}, t\wedge s_{i+1}}- V(y^{n}_{s}) \delta B_{st} .
\end{equation*}
Now the term $r_{1}$ can be bounded as follows:
\begin{eqnarray*}
|r_{1} | &\leq& \sum_{i=0}^{k } | \delta y^{n}_{s_{i}, t\wedge s_{i+1}} - V(y^{n}_{s_{i}}) \delta B_{s_{i}, t\wedge s_{i+1}}  |. 
\end{eqnarray*}
Therefore, by \eref{e5.6i} we obtain
\begin{eqnarray*}
|r_{1}| &\leq& 2(k+1)K_{{V}}G^{2} T_{0}^{2\ga} .
\end{eqnarray*}
Moreover, since $k\leq \frac{t-s}{T_{0}}$ and $\frac{1}{T_{0}} \leq  ({2^{\ga}8 K_{V}^{2} G })^{\frac{1}{\ga}} $ owing to \eqref{eq:cdt-T0}, we can recast the previous equation as:
\begin{eqnarray}\label{e4.10}
|r_{1}| &\leq& 4K_{V}(2^{\ga} 8 K_{V}^{2})^{\frac{1}{\ga} -2} G^{\frac{1}{\ga}} (t-s). 
\end{eqnarray}
In order to bound $r_{2}$, observe that we have 
\begin{eqnarray*}
|r_{2}| &\leq& \sum_{i=0}^{k } | V(y^{n}_{s_{i}}) - V(y^{n}_{s})| \cdot |\delta B_{s_{i}, t\wedge s_{i+1}}|.
 \end{eqnarray*}
Thanks to \eref{e5.5ii},  we thus have
\begin{equation*}
|r_{2}| \leq  2k  K_{V} (K   G^{\frac{1}{\ga}} (t-s)^{\ga} ) G T_{0}^{\ga}.
\end{equation*}
Invoking again the inequalities $k\leq \frac{t-s}{T_{0}}$ and $\frac{1}{T_{0}} \leq  ({2^{\ga}8 K_{V}^{2} G })^{\frac{1}{\ga}}$, we thus get:
\begin{equation}\label{e4.11}
|r_{2}|
\le
2K_{V}K (2^{\ga}8K_{V}^{2} )^{\frac{1}{\ga}-1} (  G^{ \frac{2}{\ga}})(t-s)^{1+\ga}.
\end{equation}
Applying \eref{e4.10} and \eref{e4.11} to relation \eref{e4.10i}, this yields:
\begin{eqnarray}\label{e5.6}
|\delta y^{n}_{st} - V(y^{n}_{s}) \delta B_{st} | &\leq&  K   G^{ \frac{2}{\ga}} (t-s)^{2\ga}.
\end{eqnarray}
We have now proved \eref{eq:bnd-delta-yn}
  under the assumption \eref{e4.6i}.

\noindent{\it Step 4: Upper-bound estimate for small $n$.}  
We are now reduced to prove inequalities \eref{eq:bnd-delta-yn}
  when \eref{e4.6i} is not satisfied. Namely, we assume in this step that
 \begin{equation}\label{e5.3}
  {G}{n^{-\ga}} >   {(8K_{{V}}^{2})^{-1}},
 \quad\text{i.e}\quad
 n < (8K_{{V}}^{2}G)^{\frac{1}{\ga}}. 
\end{equation}
For $(s,t )\in \mathcal{S}_{2}(\ll 0,T \rr)$, we will also resort to the same partition $t_{0},\ldots,t_{k+1}$ as in the previous step. In this case, due to the very definition \eref{e1.2} of $y^{n}$, it is readily checked that 
\begin{eqnarray}\label{e4.19}
|\delta y^{n}_{t_{k}t_{k+1}} | &\leq&   K_{{V}} G\Big(\frac Tn\Big)^{H} + K_{{V}} \Big(\frac Tn\Big)^{2H}.
\end{eqnarray}
Therefore, summing \eref{e4.19} between $s$ and $t$ for $(s,t) \in \cs_{2}(\ll 0,T \rr)$ we get:
\begin{equation*}
|\delta y^{n}_{s t} | = |\sum_{t_{k}=s}^{t-} \delta y^{n}_{t_{k} t_{k+1}} |
\leq  {n(t-s)}{T^{-1}} \Big[    K_{{V}} G\Big(\frac Tn\Big)^{H} + K_{{V}} \Big(\frac Tn\Big)^{2H} \Big].
\end{equation*}
Taking into account the estimate of $n$ in \eref{e5.3}, this yields:
\begin{eqnarray}\label{e5.5i}
|\delta y^{n}_{st} |  &\leq& KG^{\frac{1}{\ga}}|t-s|, 
\end{eqnarray}
for $(s,t )\in \mathcal{S}_{2}(\ll 0,T \rr)$. We have thus proved the first relation in \eref{eq:bnd-delta-yn} when \eref{e4.6i} is not met.

In order to handle the second relation in \eref{eq:bnd-delta-yn} for $n$ small, just decompose the increment at stake along our partition $t_{0},\ldots,t_{k+1}$:
\begin{equation*}
|V(y^{n}_{s}) \delta B_{st} | 
\leq |V(y^{n}_{s}) |\sum_{t_{k}=s}^{t-} | \delta B_{t_{k}t_{k+1}} |
\leq KG n^{1-\ga}(t-s)
\leq
 K G^{\frac{1}{\ga}} (t-s). 
\end{equation*}
Taking into account inequality \eqref{e5.5i}, we thus easily get:
\begin{eqnarray}\label{e5.8}
| \delta y^{n}_{st} -  V(y^{n}_{s}) \delta B_{st} | &\leq& K G^{\frac{1}{\ga}} (t-s), 
\end{eqnarray}
which achieves the second relation in \eref{eq:bnd-delta-yn} for small $n$.

\noindent{\it Step 5: Conclusion.}
Gathering the estimates \eref{e5.5ii}, \eref{e5.5i}, we have obtained the desired  estimate for $\|y^{n}\|_{\ga}$ on $\ll 0, T \rr$, for all $n$. In the same way, putting    \eref{e5.6} and \eref{e5.8} together implies the   second estimate in \eref{eq:bnd-delta-yn} on $\ll 0, T \rr$ and for any $n$. The proof is complete. 
\end{proof}

\subsection{The couple $  (y^{n},B) $ as a rough path}

Our next aim is to prove that $(y^{n},B)$ can be lifted as a rough path, which amounts to a proper definition of the signature $S_{2}(y^{n},B)$ as given in Definition \ref{def:rough-path}.
 The   result below, providing an estimate of the integral $\int_{s}^{t} \delta y^{n}_{su} \otimes dB_{u}$, can be seen as an important step in this direction.   Note that on each interval $[t_{k},t_{k+1}]$, the process $y^{n}$ is a controlled process with respect to $B$, as alluded to in \eqref{eq:dcp-controlled-process}. For each $n$, the integral $\int_{s}^{t} \delta y^{n}_{su} \otimes dB_{u}$ is thus defined as
\begin{equation}\label{eq:piecewise-rough-intg-yn-dB}
\int_{s}^{t} \delta y^{n}_{su} \otimes dB_{u}
= \sum_{t_{k}=s}^{t_{-}} \int_{t_{k}}^{t_{k+1}} \delta y^{n}_{su} \otimes dB_{u},
\end{equation}
thanks to classical rough paths considerations.

 \begin{lemma} \label{lem5.2}
 Let $y^{n}$ be the process defined by the   Euler scheme \eref{e1.2}, and consider $\frac13<\ga<H$.
Then we can find a random variable $G\in\cap_{p\ge 1}L^{p}(\Omega)$ independent of $n$, such that for the integral $\int_{s}^{t} \delta y^{n}_{su} \otimes dB_{u}$ in the sense of \eref{eq:piecewise-rough-intg-yn-dB}, we have  the estimate
 \begin{eqnarray}\label{eq:bound-iter-intg-y-dB}
 \Big|\int_{s}^{t} \delta y^{n}_{su} \otimes dB_{u} \Big|  &\leq &  G  (t-s)^{2 \ga},
 \quad\text{for}\quad (s,t )\in \mathcal{S}_{2}(\ll 0,T \rr).
\end{eqnarray}
  \end{lemma}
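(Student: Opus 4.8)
The plan is to isolate the dominant part of the increment $A_{st}:=\int_{s}^{t}\delta y^{n}_{su}\otimes dB_{u}$, defined by \eqref{eq:piecewise-rough-intg-yn-dB}, and to apply the discrete sewing Lemma \ref{lem2.4} to what remains. On each cell $[t_{k},t_{k+1}]$ the very definition \eqref{e1.2} of the scheme gives, for $u\in[t_{k},t_{k+1}]$,
\[
\delta y^{n}_{t_{k}u}=b(y^{n}_{t_{k}})(u-t_{k})+V(y^{n}_{t_{k}})\delta B_{t_{k}u}+\frac12\sum_{j=1}^{m}\partial V_{j}V_{j}(y^{n}_{t_{k}})(u-t_{k})^{2H}.
\]
Integrating against $dB$ over $[t_{k},t_{k+1}]$ --- the $\delta B$-term being evaluated through the second level $V(y^{n}_{t_{k}})\BB_{t_{k}t_{k+1}}$, while the two monomials give Young integrals (the integral $\int(u-t_{k})^{2H}dB_{u}$ being legitimate because $2H+\gamma>1$, which holds since both $H$ and $\gamma$ exceed $\frac13$) --- one obtains
\[
\int_{t_{k}}^{t_{k+1}}\delta y^{n}_{t_{k}u}\otimes dB_{u}=b(y^{n}_{t_{k}})\otimes\Phi^{(1)}_{t_{k}t_{k+1}}+V(y^{n}_{t_{k}})\BB_{t_{k}t_{k+1}}+\frac12\sum_{j=1}^{m}\partial V_{j}V_{j}(y^{n}_{t_{k}})\otimes\Phi^{(2)}_{t_{k}t_{k+1}},
\]
where $\Phi^{(1)}_{t_{k}t_{k+1}}=\int_{t_{k}}^{t_{k+1}}(u-t_{k})\,dB_{u}$ and $\Phi^{(2)}_{t_{k}t_{k+1}}=\int_{t_{k}}^{t_{k+1}}(u-t_{k})^{2H}\,dB_{u}$. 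This leads me to set, for $(s,t)\in\cs_{2}(\ll 0,T\rr)$,
\[
R_{st}=A_{st}-V(y^{n}_{s})\,\BB_{st}-\sum_{t_{k}=s}^{t_{-}}\Big(b(y^{n}_{t_{k}})\otimes\Phi^{(1)}_{t_{k}t_{k+1}}+\frac12\sum_{j=1}^{m}\partial V_{j}V_{j}(y^{n}_{t_{k}})\otimes\Phi^{(2)}_{t_{k}t_{k+1}}\Big).
\]

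By the displayed identity one has $R_{t_{k}t_{k+1}}=0$, so $R$ belongs to $\mathcal{C}_{2}(\pi)$ in the sense of Definition \ref{def:discrete-increments}. The two cumulative sums are additive along the grid, hence contribute nothing to $\delta R$; combining this with the elementary relation $\delta A_{sut}=\delta y^{n}_{su}\otimes\delta B_{ut}$ (obtained from additivity of the integral together with $\delta y^{n}_{su'}=\delta y^{n}_{su}+\delta y^{n}_{uu'}$) and with the Chen relation $\delta\BB_{sut}=\delta B_{su}\otimes\delta B_{ut}$, one finds, for $(s,u,t)\in\cs_{3}(\ll 0,T\rr)$,
\[
\delta R_{sut}=\big(\delta y^{n}_{su}-V(y^{n}_{s})\delta B_{su}\big)\otimes\delta B_{ut}+\delta V(y^{n})_{su}\,\BB_{ut}.
\]
Both inequalities of Proposition \ref{lem5.1} in \eqref{eq:bnd-delta-yn}, the integrability estimate of Proposition \ref{prop:integrability-signature}, and the boundedness of $V$ and $\partial V$ then give $\|\delta R\|_{3\gamma}\le KG^{2}$ for some $n$-independent $G\in\cap_{p\ge1}L^{p}(\Omega)$. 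Since $3\gamma>1$, Lemma \ref{lem2.4} yields $|R_{st}|\le KG^{2}(t-s)^{3\gamma}$ on $\cs_{2}(\ll 0,T\rr)$.

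It remains to handle the two correction sums. Young's estimate furnishes the pathwise bounds $|\Phi^{(1)}_{t_{k}t_{k+1}}|\le KG\,h^{1+\gamma}$ and $|\Phi^{(2)}_{t_{k}t_{k+1}}|\le KG\,h^{2H+\gamma}$ with $h=T/n$; summing over the at most $(t-s)/h$ cells, bounding $b$ and $\partial V_{j}V_{j}$ by their sup-norms, and invoking $2H+\gamma>1$, $\gamma<\frac12$ and $t-s\le T$, each of the two sums is seen to be at most $KG(t-s)^{2\gamma}$. Gathering this with the estimate on $R$, with $|V(y^{n}_{s})\BB_{st}|\le\|V\|_{\infty}G(t-s)^{2\gamma}$ and with $(t-s)^{3\gamma}\le T^{\gamma}(t-s)^{2\gamma}$, we arrive at $|A_{st}|\le K(G+G^{2})(t-s)^{2\gamma}$, which is \eqref{eq:bound-iter-intg-y-dB} after renaming $K(G+G^{2})$ as $G$; this new random variable still belongs to $\cap_{p\ge1}L^{p}(\Omega)$.

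The delicate point is the choice of the remainder $R$: it must vanish on every elementary cell --- so that the discrete sewing Lemma \ref{lem2.4} can be applied --- and simultaneously have a second increment $\delta R$ of H\"older order strictly greater than $1$. The first requirement forces us to strip off, beyond the expected $V(y^{n}_{s})\BB_{st}$, the accumulated low-order contributions of the drift and of the $h^{2H}$-correction in \eqref{e1.2}, and the verification that these accumulated sums remain $O((t-s)^{2\gamma})$ \emph{uniformly in} $n$ is exactly where the standing hypothesis $H>\frac13$ is used. The second requirement is supplied by the sharp second-order estimate $|\delta y^{n}_{st}-V(y^{n}_{s})\delta B_{st}|\le G(t-s)^{2\gamma}$ of Proposition \ref{lem5.1}, which is the structural input that makes the whole argument close.
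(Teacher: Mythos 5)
Your proof is correct and follows essentially the same strategy as the paper's: subtract from $\int_{s}^{t}\delta y^{n}_{su}\otimes dB_{u}$ the terms that make the remainder vanish on each cell $[t_{k},t_{k+1}]$, compute $\delta R$ using Chen's relation, feed in the second-order bound $|\delta y^{n}_{st}-V(y^{n}_{s})\delta B_{st}|\leq G(t-s)^{2\ga}$ from Proposition \ref{lem5.1}, and conclude with the discrete sewing Lemma \ref{lem2.4}. The only (harmless) difference is bookkeeping: the paper freezes the weight and subtracts the two-parameter increment $\frac12\sum_{j}\partial V_{j}V_{j}(y^{n}_{s})\otimes\int_{s}^{t}(u-s)^{2H}dB_{u}$, absorbing the $h^{2H}$-correction into $\delta\wt{R}$ via a Young estimate, whereas you subtract the additive cell-wise sums (and keep the drift, which the paper drops for conciseness) and then bound those sums directly using $2H+\ga>1$ and $t-s\geq T/n$ — both routes close.
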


\begin{proof}  
Similarly to what we did for Proposition  \ref{lem5.1}, we will assume that $b=0$ for this proof, and analyze the scheme given by \eref{eq4.4}. Next,
 in order to bound the integral $\int_{s}^{t} \delta y^{n}_{su} \otimes dB_{u}$, let us define two increments: first, just as in the definition \eqref{eq:def-zeta1-zeta2}, we set
\begin{equation*}
\zeta_{st}^{2} 
=
\int_{s}^{t} (u-s)^{2H}  dB_{u}.
\end{equation*}
Then we define a remainder type increment $\wt{R}$ on $\cs_{2}(\ll 0,T\rr)$ by:
 \begin{eqnarray}\label{e5.10}
\wt{R}_{st} &=&  \int_{s}^{t} \delta y^{n}_{su} \otimes dB_{u} - {V}(y^{n}_{s})   \BB_{st} - \frac12\sum_{j=1}^{m} \partial {V}_{j}{V}_{j} (y^{n}_{s})\otimes \zeta_{st}^{2} \,.
\end{eqnarray}
According to the definition \eref{e1.2} of our scheme, it is clear that $\wt{R}_{t_{k}t_{k+1}} = 0$, for all $k=0,1,\dots, n-1$. Moreover, applying $\delta$ to $\wt{R}$
and recalling the elementary rule $ \delta ( \int dy \otimes dB ) = \delta y \otimes \delta B $, 
 we obtain:
\begin{multline*}
\delta \wt{R}_{srt}  = (\delta y^{n}_{sr}  -  {V} (y^{n}_{s}) \delta B_{sr}) \otimes \delta B_{rt} + \delta {V}(y^{n}_{\cdot})_{sr} \BB_{rt}  
\\
-    \frac12\sum_{j=1}^{m} \partial {V}_{j}{V}_{j} (y^{n}_{s}) \otimes \delta\zeta_{srt}^{2}
 + \frac12\sum_{j=1}^{m}
\delta  \left(   \partial {V}_{j}{V}_{j} (y^{n})\right)_{sr} \otimes \zeta^{2}_{rt}    ,
\end{multline*}
where we remark that $\delta\zeta_{srt}^{2} = \int_{r}^{t}[ (u-s)^{2H} - (u-r)^{2H} ]  dB_{u}$.
Starting from the above expression, one can thus apply Proposition  \ref{prop:integrability-signature} and Proposition \ref{lem5.1}  in order to get:
\begin{eqnarray*}
\|\delta \wt{R} \|_{3\ga} &\leq& G.
\end{eqnarray*}
Note that for the Young integral $\delta \zeta^{2}$ we used the following estimate, valid for $(s,r, t) \in  \cs_{3}([0,T])$,
\begin{eqnarray*}
\delta \zeta^{2}_{srt} \leq \Big| \int_{r}^{t}  (u-s)^{2H}   dB_{u}\Big| +  \Big|\int_{r}^{t}  (u-r)^{2H}   dB_{u}\Big|
 \leq \|B\|_{\ga} (t-s)^{2H+\ga}.
\end{eqnarray*}
Therefore, since $\| \delta \tilde{R} \|_{3\ga} \leq G$, it follows from the sewing Lemma \ref{lem2.4} that
\begin{eqnarray*}
|  \wt{R}_{st} | &\leq& G(t-s)^{3\ga}.
\end{eqnarray*}
Our claim \eqref{eq:bound-iter-intg-y-dB} is now easily deduced from the above estimate of $\wt{R}$ and expression \eref{e5.10}.  
\end{proof}

Now we provide some estimates for the iterated integral $\int_{s}^{t} \delta y^{n}_{su}\otimes dy^{n}_{u}$, which is also part of the rough path above $(y^{n},B)$. Note that $\int_{s}^{t} \delta y^{n}_{su}\otimes dy^{n}_{u}$ is  defined as
\begin{eqnarray*}
\int_{s}^{t} \delta y^{n}_{su}\otimes dy^{n}_{u} &=& \sum_{t_{k}=s}^{t-} \int_{t_{k}}^{t_{k+1}} \delta y^{n}_{su}\otimes dy^{n}_{u},
\end{eqnarray*}
  in the same way  as for the integral $\int_{s}^{t} \delta y^{n}_{su}\otimes dB_{u}$.

\begin{lemma} \label{lem5.3}
 Let the assumptions be as in Lemma \ref{lem5.2}.  Then the following estimate holds true:
 \begin{eqnarray*}
 \Big| \int_{s}^{t} \delta y^{n}_{su}\otimes dy^{n}_{u} \Big|  &\leq &    G (t-s)^{2 \ga},
 \quad\text{for}\quad (s,t )\in \mathcal{S}_{2}(\ll 0,T \rr),
\end{eqnarray*}
where $G$ is a random variable in $\cap_{p\ge 1}L^{p}(\Omega)$, independent of $n$.
  \end{lemma}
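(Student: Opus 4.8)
The plan is to mimic the proof of Lemma \ref{lem5.2}, replacing the driver $B$ by the scheme process $y^{n}$ itself and controlling the extra error terms by means of Proposition \ref{lem5.1} and Lemma \ref{lem5.2}. First I would recall that on each subinterval $[t_{k},t_{k+1}]$ the process $y^{n}$ is a controlled path with respect to $B$, so the integral $\int_{s}^{t}\delta y^{n}_{su}\otimes dy^{n}_{u}$ is well defined through the local expansion $dy^{n}_{u}=V(y^{n}_{t_{k}})dB_{u}+ (\text{lower order in }|u-t_{k}|)$ plus the $\frac12\partial V_{j}V_{j}(y^{n}_{t_{k}})d\big((u-t_{k})^{2H}\big)$ term. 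Concretely, I would write $\int_{s}^{t}\delta y^{n}_{su}\otimes dy^{n}_{u}=\int_{s}^{t}\delta y^{n}_{su}\otimes V(y^{n}_{[u]})dB_{u}+\frac12\sum_{j}\int_{s}^{t}\delta y^{n}_{su}\otimes \partial V_{j}V_{j}(y^{n}_{[u]})\,d((u-[u])^{2H})$, where $[u]$ denotes the last partition point before $u$, so that the first piece is close to $\int_{s}^{t}\delta y^{n}_{su}\otimes dB_{u}$ composed with the (bounded, by Proposition \ref{lem5.1}) coefficient $V(y^{n})$, and the second piece is a genuinely lower-order perturbation of the same shape as the $\zeta^{2}$ term in Lemma \ref{lem5.2}.

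Next, following the same strategy, I would introduce the remainder increment
\begin{equation*}
\wt{R}_{st} = \int_{s}^{t}\delta y^{n}_{su}\otimes dy^{n}_{u} - V(y^{n}_{s})\otimes\YY^{B}_{st}\,\big(\text{contracted with }V(y^{n}_{s})\big) - \text{(a }\zeta^{2}\text{-type correction)},
\end{equation*}
more precisely the analogue of \eref{e5.10} in which every occurrence of $dB$ is promoted to $dy^{n}$ using the controlled structure $\delta y^{n}_{st}=V(y^{n}_{s})\delta B_{st}+r^{y^{n}}_{st}$ with $|r^{y^{n}}_{st}|\le G(t-s)^{2\ga}$ from Proposition \ref{lem5.1}. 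By construction $\wt{R}_{t_{k}t_{k+1}}=0$ for every $k$, so the discrete sewing Lemma \ref{lem2.4} applies once I bound $\delta\wt{R}$. Applying $\delta$ and using the rule $\delta(\int \delta y^{n}\otimes dy^{n})_{srt}=\delta y^{n}_{sr}\otimes\delta y^{n}_{rt}$, the three-point increment $\delta\wt{R}_{srt}$ decomposes into a product of $(\delta y^{n}_{sr}-V(y^{n}_{s})\delta B_{sr})$ with $\delta y^{n}_{rt}$, a term involving $\delta(V(y^{n}))_{sr}$ times the second-order object above, and the $\zeta^{2}$-type corrections — each of which is of order $(t-s)^{3\ga}$ thanks to Proposition \ref{lem5.1}, Lemma \ref{lem5.2} and the estimate $|\delta\zeta^{2}_{srt}|\le\|B\|_{\ga}(t-s)^{2H+\ga}$ already used in Lemma \ref{lem5.2}. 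Since $3\ga>1$, Lemma \ref{lem2.4} yields $|\wt{R}_{st}|\le G(t-s)^{3\ga}$, and combining this with the explicit second-order terms (which are $O((t-s)^{2\ga})$ because $V(y^{n})$ is bounded and $|\YY^{B}_{st}|\le G(t-s)^{2\ga}$, $|\zeta^{2}_{st}|\le G(t-s)^{2H}$) gives the claimed bound $|\int_{s}^{t}\delta y^{n}_{su}\otimes dy^{n}_{u}|\le G(t-s)^{2\ga}$.

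The main obstacle I anticipate is bookkeeping the ``mixed'' second-order object correctly: when one replaces $dB$ by $dy^{n}$, the natural second-order term is not simply $V(y^{n}_{s})\BB_{st}$ contracted once, but rather $V(y^{n}_{s})\otimes V(y^{n}_{s})$ applied to $\BB_{st}$ (i.e. the $\YY^{n}$ piece of the rough path above $y^{n}$), together with a cross term coming from the deterministic $h^{2H}$-type increments interacting with $\delta B$. I would handle this exactly as in the controlled-path computation preceding Theorem \ref{thm 3.3}: expand $\delta y^{n}_{su}=V(y^{n}_{s})\delta B_{su}+\big(\text{lower order}\big)$ inside the integral, use that the remainder $r^{y^{n}}$ has H\"older exponent $2\ga$ so that $\int_{s}^{t}r^{y^{n}}_{su}\otimes dy^{n}_{u}$ is already $O((t-s)^{3\ga})$ by a direct application of the discrete sewing lemma (or of Lemma \ref{lem5.2} with $\delta y^{n}$ in place of $\delta B$, which is legitimate since $y^{n}$ is itself controlled by $B$ with $n$-uniform bounds), and absorb the $\frac12\partial V_{j}V_{j}(y^{n})h^{2H}$ contributions into $\zeta^{2}$-type remainders whose $3\ga$-H\"older norm is controlled uniformly in $n$ by Remark \ref{remark3.6} and Lemma \ref{lem5.2}. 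Once this identification of the ``principal part'' is pinned down, the remaining estimates are routine repetitions of the arguments in the proofs of Proposition \ref{lem5.1} and Lemma \ref{lem5.2}.
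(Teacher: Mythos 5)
Your proposal is correct and follows essentially the same route as the paper: define a remainder increment that vanishes on each $[t_{k},t_{k+1}]$, bound its $\delta$ by $(t-s)^{3\ga}$ using Proposition \ref{lem5.1} and the bounds on $\BB$ and the $\zeta^{2}$-type integrals, apply the discrete sewing Lemma \ref{lem2.4}, and then bound the explicit principal parts by $(t-s)^{2\ga}$. The only (cosmetic) difference is that the paper's remainder \eref{e5.11} keeps the principal part in the compact form $V(y^{n}_{s})\int_{s}^{t}\delta B_{su}\otimes dy^{n}_{u}+\frac12\sum_{j}\partial V_{j}V_{j}(y^{n}_{s})\otimes\int_{s}^{t}(u-s)^{2H}dy^{n}_{u}$, which automatically bundles the cross terms you expand by hand, and is then controlled via Lemma \ref{lem5.2}.
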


\begin{proof}  
We proceed similarly as in the proof of Lemma  \ref{lem5.2}. Namely, we still assume $b=0$ for sake of conciseness, and  the existence of the integral $\int_{s}^{t} \delta y^{n}_{su}\otimes dy^{n}_{u}$ is justified as for \eqref{eq:piecewise-rough-intg-yn-dB}.
Next we define a remainder type increment $\bar{R}$ on $\cs_{2}(\ll 0,T\rr)$ by:
\begin{equation}\label{e5.11}
\bar{R}_{st} =  \int_{s}^{t} \delta y^{n}_{su} \otimes dy^{n}_{u} - {V}(y^{n}_{s})   \int_{s}^{t} \delta B_{su} \otimes d y^{n}_{u}
- \frac12\sum_{j=1}^{m} \partial {V}_{j}{V}_{j} (y^{n}_{s})\otimes \int_{s}^{t} (u-s)^{2H}  dy^{n}_{u}\,.
\end{equation}
As previously, it is clear that $\bar{R}_{t_{k}t_{k+1}} = 0$. In the same way as in Lemma \ref{lem5.2}, we can also show that $\|\delta \bar{R}\|_{3\ga} \leq G$, so by Lemma \ref{lem2.4} we obtain
\begin{eqnarray*}
|  \bar{R}_{st}| &\leq& G(t-s)^{3\ga}.
\end{eqnarray*}
Applying this estimate to \eref{e5.11}, we obtain the desired estimate for $\int_{s}^{t} \delta y^{n}_{su}\otimes dy^{n}_{u} $. 
\end{proof}

We can now conclude and get a uniform bound on $(y^{n},B)$ as a rough path. 

\begin{prop}\label{prop5.2}
Let $y$ be the solution of equation \eref{e1.1} and $y^{n}$ be the solution of the   Euler scheme \eref{e1.2}. Consider $\frac13 <\ga<H<\frac12$, and set:
\begin{equation*}
 S_{2}( B, y^{n} )_{st}
= 
\left(  ( \delta B_{st}\,, \delta y^{n}_{st}), \int_{s}^{t} (\delta B_{su}\,,\delta y^{n}_{su} ) \otimes d (\delta B_{u}\,, \delta y^{n}_{u}) \right) .
\end{equation*}
Then $S_{2}( B, y^{n} )$ can be considered as a $\ga$-rough path according to Definition \ref{def:rough-path}. In addition,   there exists a random variable $G\in\cap_{p\ge 1}L^{p}(\Omega)$ independent of $n$ such that:
\begin{eqnarray*}
 \| S_{2}( B, y^{n} ) \|_{\ga} \leq G \,,
\end{eqnarray*}
where $\| \cdot \|_{\ga}$ is defined by \eref{eq:def-norm-rp}.
\end{prop}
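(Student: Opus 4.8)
The goal is to verify the three defining properties of a $\ga$-rough path for $S_2(B,y^n)$ — namely finiteness of the $\ga$-H\"older norm of the level-one path $(B,y^n)$, finiteness of the $2\ga$-H\"older norm of the level-two increment, and Chen's algebraic relation $\delta \XX_{sut} = x_{su}\otimes x_{ut}$ — together with a bound uniform in $n$. The plan is to assemble the pieces already established. First I would note that the level-one bound $\|(B,y^n)\|_\ga \le G$ is immediate: $\|B\|_\ga \le L_\ga$ by Proposition~\ref{prop:integrability-signature}, and $\|y^n\|_\ga \le G$ is exactly the first estimate in \eqref{eq:bnd-delta-yn} of Proposition~\ref{lem5.1}. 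For the level-two object, the $BB$-component is $\BB$, controlled by $L_\ga$ again; the crossed components $\int_s^t \delta y^n_{su}\otimes dB_u$ and $\int_s^t \delta B_{su}\otimes dy^n_u$ are controlled by Lemma~\ref{lem5.2} (the second one is handled by the symmetric argument, or by subtracting the crossed iterated integral from $\delta y^n_{st}\otimes\delta B_{st}$ via Chen's relation); and $\int_s^t \delta y^n_{su}\otimes dy^n_u$ is controlled by Lemma~\ref{lem5.3}. All four bounds are of the form $G(t-s)^{2\ga}$ with $G$ independent of $n$, so $\|\XX\|_{2\ga}\le G$ and hence $\|S_2(B,y^n)\|_\ga \le \|B\|_\ga+\|y^n\|_\ga+\|\XX\|_{2\ga}^{1/2}\le G$ after relabelling the random variable.

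The remaining point is the algebraic (Chen) identity. Here I would argue that on each elementary interval $[t_k,t_{k+1}]$ the pair $(B,y^n)$ is a genuine smooth-plus-rough controlled path — $y^n$ is $C^1$ in time plus a $V(y^n_{t_k})\delta B$ increment — so the iterated integrals in the definition of $S_2(B,y^n)_{st}$, built by concatenating the piecewise rough integrals as in \eqref{eq:piecewise-rough-intg-yn-dB}, automatically satisfy $\delta\XX_{sut} = (\delta B,\delta y^n)_{su}\otimes(\delta B,\delta y^n)_{ut}$ for $s,u,t\in\ll0,T\rr$. Indeed, the multiplicativity $\delta\big(\int (\delta z)\otimes dw\big)_{sut}=\delta z_{su}\otimes\delta w_{ut}$ is the elementary rule used repeatedly in the proofs of Lemmas~\ref{lem5.2} and~\ref{lem5.3}, and it holds for each of the four tensor slots. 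So Chen's relation is inherited slot by slot.

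One technical wrinkle is that Definition~\ref{def:rough-path} and the norm \eqref{eq:def-norm-rp} are stated for increments on the \emph{continuous} simplex $\cs_2([0,T])$, whereas Lemmas~\ref{lem5.1}--\ref{lem5.3} give bounds only on the discrete simplex $\cs_2(\ll0,T\rr)$. I would remove this gap exactly as in Remark~\ref{remark3.6}: on each cell $[t_k,t_{k+1}]$ the process $y^n$ is an explicit controlled path with respect to $B$, so $S_2(B,y^n)$ extends canonically to all of $\cs_2([0,T])$, and the H\"older bounds on $[s,t]$ for general $s,t$ follow by splitting the interval at grid points and using $|t-s|\ge\frac{T}{n}$ together with the within-cell estimates coming from $B\in C^\ga$ and the defining formula \eqref{e1.2} (the drift term, the $\delta B$ term and the $(t-t_k)^{2H}$ term are each controlled on a sub-cell scale, noting $2H>2\ga$ and $1>2\ga$). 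Assembling these observations gives $\|S_2(B,y^n)\|_\ga\le G$ on $[0,T]$ with $G\in\cap_{p\ge1}L^p(\Omega)$ independent of $n$.

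The main obstacle, and the only non-bookkeeping part, is precisely this last passage from the discrete to the continuous simplex while keeping the constant independent of $n$: one has to check that the ``within one cell'' contributions — in particular the non-standard term $\frac12\sum_j\partial V_j V_j(y^n_{t_k})(t-t_k)^{2H}$ appearing in \eqref{e1.2} — do not spoil the $2\ga$-H\"older scaling of the level-two object. This is where the inequality $2\ga<2H$ is used, and it is the same mechanism already exploited through the increment $\zeta^2_{st}=\int_s^t(u-s)^{2H}dB_u$ in the proof of Lemma~\ref{lem5.2}; once that observation is in place the proof is a direct synthesis of Propositions~\ref{prop:integrability-signature} and~\ref{lem5.1} and Lemmas~\ref{lem5.2}--\ref{lem5.3}.
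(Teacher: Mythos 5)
Your proposal is correct and follows essentially the same route as the paper: combine Proposition~\ref{lem5.1} with Lemmas~\ref{lem5.2} and~\ref{lem5.3} for the discrete-grid bounds, check the within-cell estimates directly from the defining formula \eqref{e1.2}, and glue the two regimes together (the paper packages this last splitting-at-grid-points step as Lemma~\ref{lem4.5} in the appendix, which is exactly the argument you describe). No gaps.
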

\begin{proof}
Putting together the results of Proposition \ref{lem5.1}, Lemma \ref{lem5.2} and Lemma \ref{lem5.3}, we easily get the definition of $S_{2}( B, y^{n} )$, together with the bound:
\begin{eqnarray*}
|S_{2}( B, y^{n} )_{st}| &\leq& G(t-s)^{\ga},
\quad\text{for}\quad
(s,t) \in \cs_{2}(\ll 0,T \rr) .
\end{eqnarray*}
On the other hand, by the definition of $y^{n}$  it is clear the same estimate holds for $s,t \in [t_{k},t_{k+1}]$, $k=0,\dots, n-1$. The proposition then follows by applying  Lemma \ref{lem4.5} to $S_{2}( B, y^{n} )$.
\end{proof}

\section{Almost sure convergence of the   Euler scheme}\label{section5}

We now take advantage of the information gathered up to now, and show  the almost sure convergence of the   Euler scheme~\eref{e1.2}.   Notice however that the convergence rate obtained in this section is not optimal, and has to be seen as a preliminary step; see Section~\ref{section7.2} for a more accurate result.

\begin{remark}\label{rmk:strategy-linear-eq}
The approximation process $y^{n}$ is discrete by nature, and the reader might wonder why we have spent some effort trying to show that $(y^{n},B)$ is a rough path. The answer will be clearer within the landmark of the current section. Indeed, our analysis of the numerical scheme mainly hinges on the fact that the renormalized error satisfies a linear equation driven by both $y^{n}$ and $B$. The best way we have found to properly define this equation is by showing that $(y^{n},B)$ can be seen as a rough path. Let us mention however two alternative ways to get the same kind of result:

\noindent
\emph{(i)}
We could have relied on the fact that $y^{n}$ is a controlled process   with respect to $B$; see~\eref{eq:dcp-controlled-process} and \cite{FH, G} for the notion of controlled process. However, due to the fact that $y^{n}$ is defined on a discrete grid, we haven't been able to find a satisfactory way to see $y^{n}$ as a continuous time controlled process.

\noindent
\emph{(ii)}
We could also have dealt with a discrete version of the linear equation, which governs the error process on our discrete grid. Nevertheless we believe that the continuous time version exhibited below is more elegant, and this is why we have sticked to the continuous time strategy.
\end{remark}

With Remark \ref{rmk:strategy-linear-eq} in mind, we will now introduce the linear equation which will govern the error process, and then analyze the Euler scheme. Throughout the section  we assume that $b \in C^{2}_{b}$ and $V\in C^{4}_{b}$. 

\subsection{A linear rough differential equation}
Recall that we are dealing with the unique solution $y$ to the following equation: 
\begin{eqnarray}\label{e5.1}
dy_{t} &=& b(y_{t}) dt + {V} (y_{t}) d B_{t},\quad t\in [0,T].
\end{eqnarray}
Its numerical approximation  $y^{n}$ is given by  the   Euler scheme \eref{e1.2}.  As we shall see later in the paper, the error process is   governed by a kind of discrete equivalent of the Jacobian for equation \eref{e5.1}. Specifically, we consider the following linear equation:  
\begin{eqnarray}\label{e6.1i}
\Phi^{n}_{t} &=&\id+\int_{0}^{t} \{ \partial b(y^{n}) \}_{s}\Phi^{n}_{s} ds + \sum_{j=1}^{m}  \int_{0}^{t} \{ \partial V_{j}(y^{n}) \}_{s}  \Phi^{n}_{s} dB_{s}^{j},
\end{eqnarray}
where $\id$ is the $d\times d$ identity matrix, and where we have set
\begin{equation*}
\{ \partial V_{j}(y^{n}) \}_{s} 
=  \int_{0}^{1} \partial {V}_{j}( y^{n}_{s} + \lambda  (y_{s}- y^{n}_{s} ) )  d \lambda,
\quad\text{and}\quad
\{ \partial b(y^{n}) \}_{s} 
= \int_{0}^{1} \partial b ( y^{n}_{s} + \lambda  (y_{s}- y^{n}_{s} ) ) d\lambda.
\end{equation*}
In this subsection, we derive an upper-bound estimate of $\Phi^{n}$ and its inverse $\Psi^{n}$ based on Proposition \ref{prop5.2}.

 \begin{prop}\label{prop:bnd-Phi-n}
The linear equation \eref{e6.1i} has a unique solution $\Phi^{n}$, and there exists an integrable random variable $G$ such that the following estimate holds true:  
\begin{eqnarray}\label{e5.3iii}
 |  S_{2}({\Phi}^{n}, z^{n} )_{st}|     &\leq& e^{G}(t-s)^{\ga},
\end{eqnarray}
where we recall that 
$z^{n} = (y^{n}, B)$ and
 the signature $S_{2}$ is introduced in Definition \ref{def:rough-path}.
Furthermore,    $\Phi^{n}$ admits an inverse process $\Psi^{n}\equiv(\Phi^{n})^{-1}$  and    estimate~\eqref{e5.3iii} also holds for  $\Psi^{n}$.  
 \end{prop}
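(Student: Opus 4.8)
The strategy is to recognize \eqref{e6.1i} as a linear rough differential equation driven by the rough path $z^{n}=(y^{n},B)$ and to invoke the general RDE theory recalled in Theorem \ref{thm 3.3} (in its linear version), combined with the uniform-in-$n$ bound on $S_{2}(B,y^{n})$ obtained in Proposition \ref{prop5.2}. First I would rewrite \eqref{e6.1i} in the canonical driven form
\begin{equation*}
d\Phi^{n}_{t} = M^{n}(\Phi^{n}_{t})\, d\tilde{z}^{n}_{t},
\end{equation*}
where $\tilde z^{n}=(t,y^{n},B)$ is the time-augmented rough path and $M^{n}$ is the collection of linear vector fields obtained by reading off the coefficients $\{\partial b(y^{n})\}_{s}$ and $\{\partial V_{j}(y^{n})\}_{s}$ from \eqref{e6.1i}. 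Since $b\in C^{2}_{b}$ and $V\in C^{4}_{b}$, and since $y,y^{n}$ are bounded (the latter uniformly in $n$, by Proposition \ref{lem5.1}), the averaged coefficients $\{\partial b(y^{n})\}$ and $\{\partial V_{j}(y^{n})\}$ are themselves $\ga$-H\"older paths with norm controlled by an integrable random variable independent of $n$ — this needs a short argument using the Lipschitz character of $\partial b,\partial V_{j}$ together with the bound $|\delta y^{n}_{st}|\le G|t-s|^{\ga}$ and the analogous (standard) bound on $\delta y$. Hence $M^{n}$ is a linear vector field whose relevant norms are bounded by $e^{G}$ for an integrable $G$ not depending on $n$.

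Next, existence and uniqueness of $\Phi^{n}$ follows immediately from the linear part of Theorem \ref{thm 3.3}, once we know $\tilde z^{n}$ is a genuine $\ga$-rough path with uniformly bounded H\"older norm. The rough-path structure of $(B,y^{n})$ and the bound $\|S_{2}(B,y^{n})\|_{\ga}\le G$ are exactly the content of Proposition \ref{prop5.2}; appending the smooth component $t$ changes nothing (its iterated integrals against itself and against $B,y^{n}$ are trivially controlled). Then the linear estimate in Theorem \ref{thm 3.3} gives
\begin{equation*}
|S_{2}(\Phi^{n})_{st}| \le K_{1}\,\|S_{2}(\tilde z^{n})\|_{\ga,[s,t]}\,\exp\!\big(K_{2}\|S_{2}(\tilde z^{n})\|_{\ga}^{1/\ga}\big)(t-s)^{\ga},
\end{equation*}
and substituting $\|S_{2}(\tilde z^{n})\|_{\ga}\le G$ yields $|S_{2}(\Phi^{n})_{st}|\le e^{G'}(t-s)^{\ga}$ for a suitable integrable $G'$. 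To upgrade this to the joint signature $S_{2}(\Phi^{n},z^{n})$ claimed in \eqref{e5.3iii}, one observes that $(\Phi^{n},z^{n})$ solves jointly an RDE of the same linear-in-$\Phi^{n}$, rough-in-$z^{n}$ type (the $z^{n}$-component is driven trivially by itself), so the cross iterated integrals $\int\delta\Phi^{n}\otimes dz^{n}$, $\int\delta z^{n}\otimes d\Phi^{n}$ are all controlled by standard rough-path a priori estimates in terms of $\|S_{2}(z^{n})\|_{\ga}$ and the coefficient norms; again this is bounded by $e^{G}$ uniformly in $n$.

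Finally, for the inverse process $\Psi^{n}=(\Phi^{n})^{-1}$: it is classical that $\Psi^{n}$ satisfies the adjoint linear RDE
\begin{equation*}
d\Psi^{n}_{t} = -\Psi^{n}_{t}\,\{\partial b(y^{n})\}_{t}\,dt - \sum_{j=1}^{m}\Psi^{n}_{t}\,\{\partial V_{j}(y^{n})\}_{t}\,dB^{j}_{t}
\end{equation*}
(in the Davie/RDE sense), obtained by applying $\delta$ to $\Phi^{n}\Psi^{n}=\id$; this is again a linear RDE driven by $\tilde z^{n}$ with the same uniform coefficient bounds, so the identical argument gives estimate \eqref{e5.3iii} for $S_{2}(\Psi^{n},z^{n})$. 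The main obstacle I anticipate is not any single deep step but rather the bookkeeping needed to verify rigorously that the \emph{averaged} coefficients $\{\partial b(y^{n})\}$, $\{\partial V_{j}(y^{n})\}$ inherit the right H\"older regularity with constants independent of $n$ — this uses both the uniform bound on $y^{n}$ from Proposition \ref{lem5.1} and the (here presumably already established) H\"older bound on the true solution $y$, and one must be careful that the $\lambda$-integration does not destroy these bounds (it does not, since the integrand is uniformly controlled). Once that is in place, everything else reduces to citing the linear case of Theorem \ref{thm 3.3} together with Proposition \ref{prop5.2}.
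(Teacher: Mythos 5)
Your proposal follows essentially the same route as the paper: recognize \eqref{e6.1i} as a linear RDE, bound the driver uniformly in $n$ via Proposition \ref{prop5.2}, apply the linear case of Theorem \ref{thm 3.3}, and handle the inverse by the adjoint equation. The only (cosmetic) difference is that the paper folds the time-dependent coefficients into the driving rough path by setting $\theta^{il}_{t}=\sum_{j}\int_{0}^{t}\{\partial_{i}V^{l}_{j}(y^{n})\}_{s}\,dB^{j}_{s}$ and $\xi^{il}_{t}=\int_{0}^{t}\{\partial_{i}b^{l}(y^{n})\}_{s}\,ds$, so that Theorem \ref{thm 3.3} applies to an autonomous linear equation driven by $\zeta=(\theta,\xi)$ --- precisely the reformulation your time-dependent vector fields $M^{n}$ would require, and which your discussion of the H\"older regularity of the averaged coefficients already anticipates.
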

 
\begin{proof}
Define two $\RR^{d\times d}$-valued processes $\theta$ and $\xi$ respectively by 
\begin{eqnarray*}
\theta^{il}_{t} =  \sum_{j=1}^{m}  \int_{0}^{t}  \{\partial_{i}V^{l}_{j}(y^{n})\}_{s}    dB_{s}^{j} 
\quad \text{ and } \quad \xi^{il}_{t} = \int_{0}^{t} \{\partial_{i}b^{l}(y^{n})\}_{s}ds.
\end{eqnarray*}
    Then we can easily recast equation~\eqref{e6.1i} as:
\begin{eqnarray}\label{e6.1}
\Phi^{n,l}_{t} &=& \delta_{l}+ \sum_{i=1}^{d} \int_{0}^{t} \Phi^{n,i}_{s} d\xi^{il}_{s} + \sum_{i=1}^{d} \int_{0}^{t} \Phi^{n,i }_{s}  d\theta^{il}_{s}\,.
\end{eqnarray}
In particular, $\Phi^{n}$ satisfies a linear equation driven by $\zeta = \{\theta^{il}, \, \xi^{il}; \, i,l=1,\dots, d\}$.
By the estimate of $\|S_{2}(   z^{n}  )\|_{\ga}$ contained in Proposition \ref{prop5.2}, 
we can show that for $   S_{2}(\zeta,  z^{n}) $ we have
$
\|   S_{2}(\zeta,  z^{n} )  \|_{\ga} \leq G$, 
where $G$ is an integrable random variable independent of $n$. 
So applying Theorem \ref{thm 3.3} to equation~\eref{e6.1},  we obtain  
\begin{equation*}
 |  S_{2}({\Phi}^{n} ,  z^{n})_{st}|     
 \leq 
 K_{1} \|  S_{2}(\zeta, z^{n})    \|_{\ga} (t-s)^{\ga} \exp \left( K_{2} \|  S_{2}(\zeta, z^{n})  \|_{\ga}^{1/\ga} \right) 
   ,
\end{equation*}
and the estimate \eref{e5.3iii} then follows.
The estimate of the inverse of $    \Phi^{n}   $ can be obtained in the same way. 
 \end{proof}
 \begin{remark}
 Note that from the proof of Proposition \ref{prop:bnd-Phi-n}, it is not clear that the random variable $e^{G}$ in \eref{e5.3iii} is integrable. However, the almost sure bound  \eref{e5.3iii} will be enough for our use in deriving the almost sure convergence rate of the   Euler scheme \eref{e1.2}. Let us mention that the methodology adopted in \cite{CLL} in order to get the integrability   of the Jacobian of a RDE driven by Gaussian processes does not apply  to  equation \eref{e6.1i}. This is due to the fact that \eref{e6.1i} involves the process $y^{n}$, which is the solution of a ``discrete'' RDE  driven by both $B$ and $F$  (recall that $F$ is defined in \eref{e4.1}). We believe that a discrete strategy in order to bound $\Phi^{n}$ would lead to the integrability of $ |  S_{2}({\Phi}^{n})_{st}|  $, but we haven't delved deeper into this direction for sake of conciseness. 
 \end{remark}

 \subsection{Error process as  a rough path}

In this subsection, we derive some estimates on the error process of the   Euler scheme. To this aim, we will first write the process $y^{n}$ as the solution of a differential equation in continuous time. Namely, it is readily checked that one can recast equation \eref{e1.2} as follows:
\begin{equation}\label{eq:yn-diff-eq}
y_{t}^{n} = y_{0} + \int_{0}^{t} b(y^{n}_{\eta(s)}) \, ds 
+ \int_{0}^{t} V(y^{n}_{\eta(s)}) \, dB_{s} -  {A_{t}^{1}}{ },
\end{equation}
where we have set
\begin{equation}\label{eq:def-eta-A1}
\eta(s) = t_{\lfloor \frac{ns}{T}\rfloor},
\quad\text{and}\quad
A_{t}^{1}
=
- \frac12 \sum_{k=0}^{\lfloor \frac{nt}{T} \rfloor} \sum_{j=1}^{m} \partial {V}_{j}{V}_{j} (y^{n}_{t_{k}}) (t\wedge t_{k+1}-t_{k})^{2H}.
\end{equation}
Note that the dependence of $A^{1}$ on $n$ is omitted for simplicity. With this simple algebraic decomposition in hand, we can state the following bound on the error process:

 \begin{lemma}\label{prop6.1}
 Let $y$, $y^{n}$, and $\Phi^{n}$ be the solution of equations \eref{e5.1},   \eref{e1.2}, and    \eref{e6.1i}, respectively, and $\Psi^{n}$ be the inverse process of $\Phi^{n}$.  Consider the path $\ep$ defined by
\begin{equation}\label{eq:def-ep-t}
\ep_{t} = \Psi^{n}_{t}(y_{t} - y^{n}_{t}).
\end{equation}
Then for all $\frac13 <\ga<H<\frac12$, we can find an almost surely finite random variable $G$ independent of $n$ such that:
 \begin{eqnarray}\label{e6.7}
| \delta \ep_{st}| &\leq& G (t-s)^{1- \ga} n^{1-3\ga}, \quad (s,t) \in \cs_{2}(\ll 0,T \rr).
\end{eqnarray}
 \end{lemma}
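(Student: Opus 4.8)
The strategy is to derive a differential/difference equation for $\ep_{t}=\Psi^{n}_{t}(y_{t}-y^{n}_{t})$ on the discrete grid, and then apply the discrete sewing Lemma \ref{lem2.4}. First I would write the equation satisfied by the error $y-y^{n}$ by subtracting \eqref{eq:yn-diff-eq} from \eqref{e5.1}. Using a first-order Taylor expansion of the coefficients $b$ and $V$ around $y^{n}$, the difference $y_{t}-y^{n}_{t}$ satisfies a linear equation driven by $B$ (and the Lebesgue measure) with the same linear part as \eqref{e6.1i}, plus a forcing term coming from (i) the higher-order terms in the rough Taylor expansion, i.e. the iterated-integral terms $\partial V_{i}V_{j}(y^{n}_{s})\BB^{ij}_{st}$, (ii) the correction term $A^{1}$ in \eqref{eq:def-eta-A1}, which carries the $h^{2H}$ contributions, and (iii) the discrepancy between $V(y^{n}_{\eta(s)})$ and $V(y^{n}_{s})$. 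The key algebraic point, exactly as in the classical variation-of-constants method, is that conjugating by $\Psi^{n}=(\Phi^{n})^{-1}$ kills the linear part: $\delta\ep_{st}$ becomes $\Psi^{n}_{s}$ times (a sum of remainder terms), up to controlled errors, so that $\delta\ep_{st}=\sum_{t_k=s}^{t-}\Psi^{n}_{t_k}\,(\text{local error increments})$.

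The core of the proof is then to identify and bound the resulting "discrete rough integral". The leading local error, over a grid interval $[t_k,t_{k+1}]$, is the replacement of $\BB^{ij}_{t_kt_{k+1}}$ by its expectation $\frac12 h^{2H}\delta_{ij}$, i.e. precisely $\delta F^{ij}_{t_kt_{k+1}}$ as defined in \eqref{e4.1}; so the dominant contribution to $\delta\ep_{st}$ is of the form $\sum_{t_k=s}^{t-} f_{t_k}\otimes\delta F^{ij}_{t_kt_{k+1}}$ with $f$ a controlled process built from $\Psi^{n}$, $y^{n}$ and $\partial V_i V_j$. This is exactly the object controlled by Corollary \ref{lem9.1}, which gives a bound of order $n^{\frac12-2H}(t-s)^{\frac12}$; rescaling via $\frac Tn\le t-s$, this is $\le K n^{1-3\ga}(t-s)^{1-\ga}$ for $\ga<H$. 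The remaining local errors — third-order Taylor remainders of the rough Taylor expansion (of type $\zeta^{ijl}$ in Lemma \ref{lem11.2} / Lemma \ref{lem3.11}), the terms $\partial V_j V_j(y^n)(t\wedge t_{k+1}-t_k)^{2H}$ handled via Remark \ref{remark3.9} and Lemma \ref{cor3.13}, and the $\eta(s)$-vs-$s$ discrepancies — all turn out to be of the same or smaller order, say $n^{1-4\ga+2\kappa}(t-s)^{1-\ga}$ or better, again by Proposition \ref{prop3.4}, Lemma \ref{lem3.11} and Corollary \ref{lem9.1}. One then checks $R_{t_kt_{k+1}}=0$ for the appropriate remainder increment, computes $\delta R_{sut}$, observes it is the product of a Hölder increment of $\ep$-type with one of these small terms so that $\|\delta R\|_{\mu}$ is finite with $\mu>1$, and invokes Lemma \ref{lem2.4} to upgrade the two-point bound.

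Throughout I would use the uniform-in-$n$ estimates already established: $\|S_2(B,y^n)\|_\ga\le G$ from Proposition \ref{prop5.2}, the almost-sure bound $|S_2(\Phi^n,z^n)_{st}|\le e^G(t-s)^\ga$ (and the same for $\Psi^n$) from Proposition \ref{prop:bnd-Phi-n}, and the Hölder bounds on $\delta F$ from Remark \ref{remark3.6}. The only subtlety is that $e^{G}$ is only almost surely finite, not integrable; but since the target estimate \eqref{e6.7} is itself an almost-sure bound, one can work pathwise — on the event $\{e^G<\infty\}\cap\{G_\ga<\infty\}\cap\dots$ of full measure — combining the $L^p$-bounds from the weighted-sum lemmas (which produce, via Lemma \ref{lem4.2}, almost-sure bounds with an extra $n^{\kappa}$, harmless since $\ga$ can be taken slightly below $H$) with the pathwise bounds on $\Phi^n,\Psi^n$.

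**The main obstacle.** The hardest part is the bookkeeping in deriving the error equation and correctly identifying which term is the leading one: one must carefully expand both the continuous RDE \eqref{e5.1} and the continuous interpolation \eqref{eq:yn-diff-eq} of the scheme to the right order, recognize that the $\BB^{ij}$-versus-$\frac12 h^{2H}\delta_{ij}$ discrepancy is exactly $\delta F^{ij}$, and verify that every other remainder (third-order rough Taylor terms, the $\eta(s)$-shift in the coefficients, cross terms between $\Psi^n$-increments and the errors) is genuinely of lower or equal order $n^{1-3\ga}$ modulo $\kappa$. Conjugating by $\Psi^n$ and checking that the linear drift cancels cleanly at the level of the discrete increments, rather than only in continuous time, also requires some care since $y^n$ only solves the scheme on the grid. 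Once the decomposition $\delta\ep_{st}=\sum_{t_k=s}^{t-}\Psi^n_{t_k}\otimes(\text{local errors})+(\text{sewing remainder})$ is in place, the analytic estimates are essentially immediate consequences of Corollary \ref{lem9.1}, Lemma \ref{lem3.11}, Lemma \ref{cor3.13} and the discrete sewing Lemma \ref{lem2.4}.
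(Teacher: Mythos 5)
Your proposal follows essentially the same route as the paper: subtract the continuous interpolation \eqref{eq:yn-diff-eq} from \eqref{e5.1}, Taylor-expand the coefficients, conjugate by $\Psi^{n}$ via variation of constants to get $\ep_{t}=\sum_{e}\int_{0}^{t}\Psi^{n}_{s}\,dA^{e}_{s}$, identify the dominant contribution as the weighted sum $\sum_{t_k}\partial V_{j}V_{i}(y^{n}_{t_k})\,\delta F^{ij}_{t_kt_{k+1}}$ controlled by Corollary \ref{lem9.1} (rescaled to $n^{1-3\ga}(t-s)^{1-\ga}$ via $t-s\ge T/n$), and dispose of the remaining local errors and the weight $\Psi^{n}$ with Proposition \ref{prop3.4} and the discrete sewing Lemma \ref{lem2.4}; this is exactly the content of the paper's Lemma \ref{lem6.2}. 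The approach and the key estimates are correct and match the paper's proof.
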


\begin{proof}
Putting together equations \eqref{e5.1} and  \eqref{eq:yn-diff-eq}, it is easily seen that:
\begin{equation*}
y_{t}-y^{n}_{t} =  \int_{0}^{t} (b(y_{s}) -b(y^{n}_{\eta(s)})) ds + \int_{0}^{t}( {V}(y_{s}) -{V}(y^{n}_{\eta(s)}) ) d B_{s} +A^{1}_{t}. 
\end{equation*}
In addition, the chain rule for rough integrals enables to write:
\begin{equation*}
\vp(y_{s}^{n}) - \vp(y_{\eta(s)}^{n}) 
=
\int_{\eta(s)}^{s} \partial \vp(y^{n}_{u}) dy^{n}_{u},
\end{equation*}
for any $\vp\in C^{1/\ga}(\RR^{d};\RR^{d})$ and $s\in[0,T]$, and where $\partial \vp$ designates the gradient of $\vp$. Owing to this relation, applied successively to $b$ and $V$, we get:
\begin{equation*} 
y_{t}-y^{n}_{t} 
=  
\int_{0}^{t} (b(y_{s}) -b(y^{n}_{s})) ds + \int_{0}^{t}( {V}(y_{s}) -{V}(y^{n}_{s}) ) d B_{s} 
+ \sum_{e=1}^{3} A_{t}^{e},
\end{equation*}
where we recall that $A^{1}$ is defined by \eqref{eq:def-eta-A1}, and where we have set:
\begin{equation}\label{e6.4ii}
A_{t}^{2} = \int_{0}^{t} \int_{\eta(s)}^{s} \partial b(y^{n}_{u}) dy^{n}_{u} ds,
\quad\text{and}\quad
A_{t}^{3}=   \sum_{j=1}^{m}\int_{0}^{t} \int_{\eta(s)}^{s} \partial  {V}_{j}(y^{n}_{u}) dy^{n }_{u} d B^{j}_{s}.
\end{equation}
Notice that $A_{t}^{3}$ above can be considered as a rough integral, thanks to Proposition \ref{prop5.2}. 
Taking into account the identity 
\begin{eqnarray}\label{eq5.10}
b(y_{s}) - b(y^{n}_{s}) = \{ \partial b (y^{n}) \}_{s} (y_{s} - y^{n}_{s}) \quad \text{ and } \quad   V_{j}(y_{s}) - V_{j}(y^{n}_{s}) = \{ \partial V_{j} (y^{n}) \}_{s} (y_{s} - y^{n}_{s}), 
\end{eqnarray}
we have
\begin{equation}\label{e6.4iii}
y_{t}-y^{n}_{t} 
=  
\int_{0}^{t}  \{ \partial b (y^{n}) \}_{s} (y_{s} - y^{n}_{s})  ds + \int_{0}^{t}\{ \partial V_{j} (y^{n}) \}_{s} (y_{s} - y^{n}_{s}) d B_{s} 
+ \sum_{e=1}^{3} A_{t}^{e}\,.
\end{equation}
Now starting from expression \eqref{e6.4iii} and applying the variation of parameter method to the equation \eref{e6.1i} governing $\Phi^{n}$,   
 it is easy to verify that
\begin{eqnarray}\label{e6.4j}
y_{t}-y^{n}_{t} &=&  \sum_{e=1}^{3} \Phi^{n}_{t} \int_{0}^{t} \Psi^{n}_{s} dA_{s}^{e} \,.
\end{eqnarray}
Therefore, we can also write:
\begin{eqnarray}\label{e6.4i}
\ep_{t} = \Psi^{n}_{t} (y_{t}-y^{n}_{t}) = \sum_{e=1}^{3}   \int_{0}^{t}  \Psi^{n}_{s}  dA_{s}^{e}\, , \quad\quad t \in [0,T].
\end{eqnarray}
Our claim \eqref{e6.7} thus follows  from Proposition \ref{prop:bnd-Phi-n}, together with  Lemma~\ref{lem6.2} below. 
\end{proof}

\begin{lemma}\label{lem6.2}
Let $A^{e}$, $e=1,2,3$, be as in \eqref{eq:def-eta-A1} and \eref{e6.4ii}. 
Let   $f$ be a continuous function with values in a finite dimensional vector space $\cv$ such that the path
\begin{eqnarray*}
S_{2}( f,B) :=  \Big( (f_{t}, B_{t}), \int_{0}^{t} (f_{s}, B_{s}) \otimes d(f_{s}, B_{s}) \Big)     
\end{eqnarray*}
is well defined. We also assume that there exists an a.s finite random variable $G$
 satisfying the upper bound $\| S_{2}( f,B) \|_{\ga} \leq G$     for any $\frac13<\ga<H<\frac12$. Then for all $(s,t )\in \mathcal{S}_{2}(\ll 0,T \rr)$ we have:
\begin{eqnarray*}
\Big| \sum_{e=1 }^{3} \int_{s}^{t} f_{u} \otimes d A_{u}^{e} \Big| &\leq& G(t-s)^{1-\ga} n^{1-3\ga}. 
\end{eqnarray*}
\end{lemma}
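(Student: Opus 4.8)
The strategy is to bound each of the three sums $\sum_{t_k=s}^{t-}\int_{t_k}^{t_{k+1}\wedge t} f_u\otimes dA^e_u$ separately, since $A^1$, $A^2$ and $A^3$ have rather different natures. The key observation is that all three increments $A^e$ are supported on the fine grid $\pi$ and vanish (or nearly so) on the elementary cells, so each weighted sum is amenable to the discrete sewing Lemma \ref{lem2.4} once we compute its $\delta$. In all cases the building block estimate comes from the $C^\gamma$-bounds for $y^n$ (Proposition \ref{lem5.1}), the uniform rough-path bound on $(B,y^n)$ (Proposition \ref{prop5.2}), and the hypothesis $\|S_2(f,B)\|_\gamma\le G$.

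First I would treat $A^2$, which is the easiest: from \eqref{e6.4ii} one has $\delta A^2_{t_kt_{k+1}}=\int_{t_k}^{t_{k+1}}\!\int_{\eta(s)}^{s}\partial b(y^n_u)\,dy^n_u\,ds$, and since $|s-\eta(s)|\le h=T/n$ on $[t_k,t_{k+1}]$ together with $\|y^n\|_\gamma\le G$, this yields $|\delta A^2_{t_kt_{k+1}}|\le G h^{1+\gamma}=G(T/n)^{1+\gamma}$, hence summing over $\ll s,t\rr$ (there are at most $n(t-s)/T$ cells) gives $|\sum f_u\otimes dA^2_u|\le G\,n^{-\gamma}(t-s)$, which is far better than the claimed $n^{1-3\gamma}(t-s)^{1-\gamma}$. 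For $A^1$, from \eqref{eq:def-eta-A1} each elementary increment is $\delta A^1_{t_kt_{k+1}}=-\tfrac12\sum_j\partial V_jV_j(y^n_{t_k})h^{2H}$, of size $O(h^{2H})=O(n^{-2H})$; summing directly costs a factor $n(t-s)$ and gives $O(n^{1-2H}(t-s))$, which is already within the target. To get the sharper $(t-s)^{1-\gamma}$ dependence one writes $\sum_{t_k=s}^{t-} f_{t_k}\otimes\delta A^1_{t_kt_{k+1}} = f_s\otimes\delta A^1_{st} + \sum_{t_k=s}^{t-}\delta f_{st_k}\otimes\delta A^1_{t_kt_{k+1}}$ and applies Proposition \ref{prop3.4} to the second term with the exponents $\alpha=\gamma$ for $f$ and, for the (linearly interpolated) increment $A^1$, the scaling $|\delta A^1_{st}|\le G\,n^{2H-\beta}\dots$ — one must juggle the $h^{2H}$ factor so that the total power of $n$ comes out to $1-3\gamma$; this is a routine optimization in $\gamma<H$.

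The genuinely delicate term is $A^3=\sum_j\int_0^t\!\int_{\eta(s)}^s\partial V_j(y^n_u)\,dy^n_u\,dB^j_s$, because this is a double rough integral against $dB$ and a naive summation loses too much. Here I would exploit the rough-path structure: write $\int_{\eta(s)}^s\partial V_j(y^n_u)\,dy^n_u = \partial V_j(y^n_{\eta(s)})\delta y^n_{\eta(s)s} + \sum_{i,i'}\partial\partial V_j\cdot(y^n_{\eta(s)})\,\YY^{\,\cdot}_{\eta(s)s}+\text{(smaller)}$, so that $A^3$ is, up to a remainder of order $3\gamma$ per cell, a sum of terms of the form $\int_0^t g(y^n_{\eta(s)})\otimes(\text{iterated integral of }y^n)\,dB^j_s$; the leading piece $\sum_{t_k}\partial V_j(y^n_{t_k})\otimes(\cdot)\otimes\delta B^j_{t_kt_{k+1}}$ is controlled by a Young/rough-sewing argument (in the spirit of \eqref{e3.9i} of Proposition \ref{prop3.6}) using $\|S_2(f,B)\|_\gamma\le G$, and one checks $|\delta A^3_{t_kt_{k+1}}|\lesssim G\,h^{3\gamma}$ with a total of $n(t-s)$ terms, again giving $O(n^{1-3\gamma}(t-s))$, and sharpening the $(t-s)$-power to $(t-s)^{1-\gamma}$ by the same $\delta f_{st_k}$-splitting and an application of Lemma \ref{lem2.4} to the remainder. \textbf{The main obstacle is precisely the $A^3$ term:} one has to set up the discrete sewing for the composite increment $R_{st}=\sum_{t_k=s}^{t-} f_{t_k}\otimes\delta A^3_{t_kt_{k+1}}$, carefully identify $\delta R_{sut}$ as a sum of products of increments whose regularity exponents add up to something strictly larger than $1$, and verify that the worst such product still carries the factor $n^{1-3\gamma}$ uniformly in $n$ — this requires the uniform-in-$n$ estimate $\|S_2(B,y^n)\|_\gamma\le G$ of Proposition \ref{prop5.2} in an essential way, and is where the rough-path lifting of $(y^n,B)$ built in Section \ref{section4} pays off. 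Once all three bounds are assembled, summing them gives $\big|\sum_{e=1}^3\int_s^t f_u\otimes dA^e_u\big|\le G(t-s)^{1-\gamma}n^{1-3\gamma}$, as claimed.
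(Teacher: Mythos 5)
There is a genuine gap, and it sits exactly at the heart of the lemma: you treat $A^{1}$ and $A^{3}$ separately with pathwise, cell-by-cell bounds, but neither term is small enough on its own. For $A^{1}$ your brute-force bound is $n\cdot(t-s)T^{-1}\cdot h^{2H}\sim n^{1-2H}(t-s)$, and $n^{1-2H}\leq K\,n^{1-3\ga}$ forces $\ga\leq \tfrac{2H}{3}$; the lemma must hold for $\ga$ arbitrarily close to $H$ (this is what the bootstrap in Theorem \ref{thm7.3} uses), and there it fails. Likewise for $A^{3}$: the elementary increment is \emph{not} $O(h^{3\ga})$ — its leading part is $\sum_{i,j}\partial V_{j}V_{i}(y^{n}_{t_{k}})\,\BB^{ij}_{t_{k}t_{k+1}}\sim h^{2\ga}$, and summing $n(t-s)/T$ such terms gives only $n^{1-2\ga}(t-s)$, again insufficient. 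Your appeal to Proposition \ref{prop3.6} for this leading piece does not rescue it either, because hypothesis \eref{e3.8i} (the bound on $\sum\delta B_{st_{k}}\otimes h_{t_{k}t_{k+1}}$) is not available for the raw increments $\BB^{ij}_{t_{k}t_{k+1}}$ with a useful exponent.

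The missing idea is the cancellation between $A^{1}$ and the diagonal of the leading part of $A^{3}$. The paper expands $A^{3}$ to isolate $A^{310}_{t}=\sum_{i,j}\partial V_{j}V_{i}(y^{n}_{t_{k}})\BB^{ij}_{t_{k}t_{k+1}}+(\text{remainders of order }h^{3\ga}\text{ per cell})$ and then sets $R^{5}=A^{310}+A^{1}=\sum_{i,j}\partial V_{j}V_{i}(y^{n}_{t_{k}})\,\delta F^{ij}_{t_{k}t_{k+1}}$, where $F$ is the \emph{compensated} L\'evy-area sum of \eref{e4.1}. Only this combination is controllable, and not pathwise term by term but through the probabilistic (second-chaos, Breuer--Major type) estimate $\|\delta F_{st}\|_{p}\leq K n^{\frac12-2H}(t-s)^{\frac12}$ of Lemma \ref{lem4.1}, fed into the weighted-sum machinery of Corollary \ref{lem9.1} and Lemma \ref{lem4.2}; one then uses $t-s\geq T/n$ to convert $n^{\frac12-2H}(t-s)^{\frac12}$ into $n^{1-3\ga}(t-s)^{1-\ga-2\kappa}$, which \emph{does} hold for all $\ga<H\leq\frac12$. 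Your remaining ingredients (the $n^{-\ga}(t-s)$ bound for $A^{2}$, the $h^{3\ga}$-per-cell bounds for the genuine remainders of $A^{3}$, and the final splitting $f_{u}=f_{\eta(u)}+\delta f_{\eta(u)u}$ with Proposition \ref{prop3.4} and the discrete sewing lemma) match the paper's Steps 2--3, but without the $F$-compensation and its $L^{p}$ estimate the stated rate $n^{1-3\ga}$ cannot be reached.
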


\begin{proof}
We divide this proof in several steps.

\noindent{\it Step 1: Decomposition of $A^{e}$.} 
Applying  the chain rule to $A^{3}$ we obtain:
 \begin{eqnarray*}
A_{t}^{3}
 &=&A_{t}^{31}  
+R_{t}^{2} \,,
\end{eqnarray*}
where the paths $A_{t}^{31}$ and $R_{t}^{2}$ are respectively defined by:
\begin{eqnarray*}
A_{t}^{31} =  \sum_{j=1}^{m} \int_{0}^{t} \int_{\eta(s)}^{s} \partial  {V}_{j}(y^{n}_{\eta(s)}) dy^{n }_{u} d B^{j}_{s} \,,
\quad\quad
R_{t}^{2} =  \sum_{j,j'=1}^{m}\int_{0}^{t} \int_{\eta(s)}^{s} \int_{\eta(s)}^{u}\partial_{j'}\partial  {V}_{j}(y^{n}_{v}) dy^{n,j'}_{v}dy^{n }_{u} d B^{j}_{s}.
\end{eqnarray*}
Moreover, recalling the equation \eqref{eq:yn-diff-eq} governing  $y^{n}$,  we obtain:
\begin{eqnarray*}
A_{t}^{31}&= &   A_{t}^{310}
 + R_{t}^{3}
 + R_{t}^{4},
\end{eqnarray*}
where we have set:
\begin{equation*}
A_{t}^{310}= \sum_{i,j=1}^{m} \int_{0}^{t} \int_{\eta(s)}^{s} \partial  {V}_{j}(y^{n}_{\eta(s)}) {V}_{i}(y^{n}_{\eta(s)})d B^{i}_{u} d B^{j}_{s}\,,
\quad
 R_{t}^{3} = \sum_{j=1}^{m} \int_{0}^{t} \int_{\eta(s)}^{s} \partial  {V}_{j}(y^{n}_{\eta(s)}) b(y^{n}_{\eta(s)}) du d B^{j}_{s}\,, 
\end{equation*}
and where
\begin{equation*}
R_{t}^{4} =   \frac{1}{2} 
\sum_{j,j'=1}^{m}\int_{0}^{t} \int_{\eta(s)}^{s} \partial  {V}_{j}(y^{n}_{\eta(s)}) 
 \partial {V}_{j'}{V}_{j'} (y^{n}_{\eta(s)})
d(u-\eta(s))^{2H} d B^{j}_{s}\,.
\end{equation*}
Summarizing our decomposition up to now, we have found that $A_{t}^{3} = A_{t}^{310} +R_{t}^{2}+R_{t}^{3} +R_{t}^{4}$.
Denoting
$
R_{t}^{5} = A_{t}^{310}   + A_{t}^{1}$,     {and}  $ R_{t}^{1} = A_{t}^{2}$,
    we can now express our driving process  $\sum_{e=1}^{3}A^{e}$ as a sum of remainder type terms:
\begin{eqnarray} \label{e6.5}
\sum_{e=1}^{3}A^{e}_{t} &=&\sum_{e=1}^{5}R_{t}^{e}.
\end{eqnarray}

\noindent{\it Step 2: Estimation  procedure.} 
We will now upper bound the terms $R^{e}$ given in our decomposition \eqref{e6.5}. For the term $R^{2}$, observe that (due to Proposition \ref{prop5.2}) the couple $(B,y^{n})$ can be seen as a $\ga$-H\"older rough path.   Taking into account all the time increments defining $R^{2}$, we obtain $|\delta R^{2}_{t_{k}t_{k+1}}| \leq Gn^{-3\ga}$ for all $t_{k}=s,\ldots,t_{-}$.
Therefore,  
\begin{eqnarray}\label{e5.4}
|\delta R_{st}^{2}  | \leq \sum_{t_{k}=s}^{t-} |\delta R^{2}_{t_{k}t_{k+1}}|   \leq G(t-s)  n^{1-3\ga}, \quad (s,t )\in \mathcal{S}_{2}(\ll 0,T \rr).
\end{eqnarray}
In the same way we can show that a similar estimate holds for $R^{4}$. In order to bound $R^{5}$, note that for $t \in \ll0,T\rr$ we have:
\begin{eqnarray*}
R_{t}^{5}&=& \sum_{i,j=1}^{m}\sum_{k=0}^{\frac{nt}{T}-1} ( \partial {V}_{j}{V}_{i}) (y^{n}_{t_{k}}) 
\delta F^{ij}_{t_{k}t_{k+1}}.
\end{eqnarray*}
Furthermore, by Proposition \ref{lem5.1}    one can show that the process $[ \partial {V}_{j}{V}_{i}] (y^{n}_{t_{k}})$ satisfies the conditions of Proposition \ref{prop3.6}. Hence, combining Corollary \ref{lem9.1}  and Lemma \ref{lem4.2}, we end up with the following inequality for $\kappa>0$ arbitrarily:
\begin{equation}\label{e6.7i}
|\delta R_{st}^{5}|
\leq
G(t-s)^{\frac12-\kappa}  n^{\frac12-2H+\kappa}
\leq
G(t-s)^{1-\gamma-2\kappa}  n^{1-3\ga}, \quad (s,t)\in \cs_{2}(\ll0,T\rr) ,
\end{equation}
where we have used the fact that $t-s\ge \frac{T}{n}$  for the last step.
The terms $R^{1}$ and $R^{3}$ are bounded along the same lines, in a slightly easier way due to the presence of Lebesgue type integrals. We get:
\begin{eqnarray}\label{e6.8i}
|\delta R_{st}^{e}|&\leq& G(t-s) n^{-\ga}  , \quad (s,t )\in \mathcal{S}_{2}(\ll 0,T \rr)
\end{eqnarray}
for $e=1,3$.
In summary of the estimates \eref{e5.4}, \eref{e6.7i} and \eref{e6.8i}, we have obtained:   
\begin{eqnarray}\label{e6.5i}
 \sum_{e=1}^{5} \left|  \delta R_{st}^{e}  \right| &\leq& G(t-s)^{1-\ga-2\kappa} n^{1-3\ga} , \quad (s,t )\in \mathcal{S}_{2}(\ll 0,T \rr).
\end{eqnarray}

\noindent{\it Step 3: Conclusion.} 
Thanks to our decomposition   \eref{e6.5} we  can  write  
\begin{eqnarray}\label{e5.5}
 \sum_{e=1}^{3}  \int_{s}^{t} f_{u} \otimes dA_{u}^{e}   &=&  
 \sum_{e=1}^{5}  \int_{s}^{t} \delta f_{\eta(u),u} \otimes dR_{u}^{e} + \sum_{e=1}^{3}  \int_{s}^{t}   f_{\eta(u)} \otimes dA_{u}^{e}
 \equiv B_{st}^{1} + B_{st}^{2}.
\end{eqnarray}
Let us start by bounding the term $B^{2}$. Similarly to relation \eqref{e3.11i}, we can decompose $B^{2}$ as:
\begin{equation*}
B_{st}^{2}=\sum_{e=1}^{5}\sum_{t_{k}=s}^{t-} f_{s}  \otimes \delta R_{t_{k}t_{k+1}}^{e}
+\sum_{e=1}^{5} \sum_{t_{k}=s}^{t-} \delta f_{s t_{k}}  \otimes\delta R_{t_{k}t_{k+1}}^{e}.
\end{equation*}
Then recall the assumption $f \in C^{\ga'} $ for any $\frac13<\ga'<H $. By choosing $\ga'$ and $\kappa$ such that $ \ga'+1-\ga-2\kappa >1$, we can apply Proposition \ref{prop3.4}. Taking into account \eref{e6.5i} this yields:
\begin{eqnarray}\label{e6.3}
\left| B_{st}^{2}\right| &\leq& G(t-s)^{1-\ga}n^{1-3\ga},
\end{eqnarray}
for $(s,t) \in \cs_{2}(\ll 0,T \rr)$. As far as the term $B^{1}$ above is concerned, we get: 
\begin{equation}\label{e5.19}
\left| B_{st}^{1} \right|
\leq 
\sum_{t_{k}=s}^{t-}\Big| \sum_{e=1}^{3} \int_{t_{k}}^{t_{k+1}} \delta f_{ t_{k}u}  \otimes dA_{u}^{e} \Big|.
\end{equation}
Moreover, note that $\int_{t_{k}}^{t_{k+1}} \delta f_{ t_{k}u}  \otimes dA_{u}^{e}$ is a $3$rd-order integral of the process $(f,B)$ on the interval $[t_{k},t_{k+1}]$, for all $k $. Therefore, since we have assumed $\|S_{2}(f, B)\|_{\ga} \leq G $, we easily obtain the estimate $ |\int_{t_{k}}^{t_{k+1}} \delta f_{ t_{k}u}  \otimes dA_{u}^{e}| \leq Gn^{ -3\ga} $. Applying this inequality to \eref{e5.19} yields:
\begin{equation}\label{e6.4}
\left| B_{st}^{1} \right|
\le G(t-s)n^{1-3\ga}.
\end{equation}
The lemma follows by applying \eref{e6.3} and \eref{e6.4} to \eref{e5.5}.
\end{proof}

We now wish, as in the case of $y^{n}$, to consider the error process $\ep$ as a rough path. As a first step, let us label the following regularity assumption for further use:

\begin{hyp}\label{lem:bnd-ep-holder-1/2}
Let $\ep$ be the process defined by \eqref{eq:def-ep-t}. 
We suppose that there exists an exponent $\al<2H-\frac12$ and an almost surely finite random variable $G$ such that the error process $\ep$ satisfies:
\begin{equation}\label{e6.14}
| \delta \ep_{st}| 
\leq 
\frac{G}{n^{\al}} \, (t-s)^{\frac12} , \quad (s,t) \in \cs_{2}(\ll 0,T \rr).
\end{equation}
\end{hyp}


\begin{remark}
It follows from Lemma \ref{prop6.1}   that Hypothesis \ref{lem:bnd-ep-holder-1/2} holds true for $\al =  3\ga -1$. We will see later on how to improve it to larger  values of $\al$. 
\end{remark}

 We are now ready to define and estimate the double iterated integrals of $\ep$, which are a fundamental part of the rough path above $\ep$. 

\begin{lemma} \label{prop6.3}
Let $\ep$ be the process defined by \eqref{eq:def-ep-t} and assume that Hypothesis \ref{lem:bnd-ep-holder-1/2} is satisfied for some $\al<2H-\frac12$.
Then for any $\ka >0$ we have
\begin{eqnarray}\label{eq5.21}
\Big| \int_{s}^{t} 
 \delta \ep_{ s u } \otimes
 d\ep_{u} \Big| &\leq& G  n^{-2\al+ 2\ka}(t-s), \quad (s, t)\in \cs_{2}(\ll 0,T\rr),
\end{eqnarray}
where $G$ is a random variable such that $G \in \cap_{p\geq 1} L^{p}(\Omega)$. 
\end{lemma}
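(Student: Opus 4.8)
The plan is to write $\int_{s}^{t}\delta\ep_{su}\otimes d\ep_{u}$, defined piecewise as in \eqref{eq:piecewise-rough-intg-yn-dB} (which is legitimate because on each cell $[t_{k},t_{k+1}]$ the process $\ep=\Psi^{n}(y-y^{n})$ is a controlled path with respect to $B$: both $y$ and $y^{n}$, hence $y-y^{n}$, are controlled by $B$ on a cell, and $\Psi^{n}$ solves an RDE driven by $B$), and then to use the elementary identity valid for $(s,t)\in\cs_{2}(\ll 0,T\rr)$,
\begin{equation*}
\int_{s}^{t}\delta\ep_{su}\otimes d\ep_{u}=\sum_{t_{k}=s}^{t-}\delta\ep_{st_{k}}\otimes\delta\ep_{t_{k}t_{k+1}}\;+\;\sum_{t_{k}=s}^{t-}\int_{t_{k}}^{t_{k+1}}\delta\ep_{t_{k}u}\otimes d\ep_{u},
\end{equation*}
bounding the two sums on the right separately.

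\textbf{The diagonal sum.} Since $(s,t)$ and $(t_{k},t_{k+1})$ lie on the grid we have $t-s\ge T/n$, so I would first upgrade Hypothesis \ref{lem:bnd-ep-holder-1/2}: writing $(t-s)^{1/2}=(t-s)^{1/2+\kappa}(t-s)^{-\kappa}\le (T/n)^{-\kappa}(t-s)^{1/2+\kappa}$ gives $|\delta\ep_{st}|\le G\,n^{-\al+\kappa}(t-s)^{1/2+\kappa}$ on $\cs_{2}(\ll 0,T\rr)$, for every $\kappa>0$. As $(\tfrac12+\kappa)+(\tfrac12+\kappa)>1$, Proposition \ref{prop3.4} (applied componentwise with $f=g=\ep$) then yields $\big|\sum_{t_{k}=s}^{t-}\delta\ep_{st_{k}}\otimes\delta\ep_{t_{k}t_{k+1}}\big|\le G\,n^{-2\al+2\kappa}(t-s)^{1+2\kappa}\le G\,n^{-2\al+2\kappa}(t-s)$, which is of the required order.

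\textbf{The local–iterated–integral sum.} This is the core of the matter. Setting $g_{t}=\sum_{t_{k}\le t-}\int_{t_{k}}^{t_{k+1}}\delta\ep_{t_{k}u}\otimes d\ep_{u}$, the remaining sum is $\delta g_{st}$. On each cell I would insert the representation \eqref{e6.4i}, $\delta\ep_{t_{k}u}=\sum_{e=1}^{3}\int_{t_{k}}^{u}\Psi^{n}_{v}\,dA^{e}_{v}$ (and the analogous expression for $d\ep_{u}$), together with the decomposition $\sum_{e=1}^{3}A^{e}=\sum_{e'=1}^{5}R^{e'}$ established inside the proof of Lemma \ref{lem6.2}. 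This turns each local integral into a finite sum over pairs $(e',e'')$ of iterated integrals of $\Psi^{n}$–weighted controlled processes against $dR^{e'}$ and $dR^{e''}$, so that $\delta g_{st}$ becomes a finite combination of discrete weighted sums. Three types of terms arise: (i) terms in which both integrators are of L\'evy–area type $R^{5}$ (governed by $\delta F$ and by the $(u-\eta(u))^{2H}$ part), which are dominant and are handled by Corollary \ref{lem9.1}, Lemma \ref{cor3.13} and Remark \ref{remark3.9} after splitting each weight as $w_{t_{k}}=w_{s}+\delta w_{st_{k}}$ (the $w_{s}$ part telescopes, the $\delta w_{st_{k}}$ part is treated via Proposition \ref{prop3.6}); these carry the decisive prefactors $n^{1/2-2H}$ or $n^{1/2-3H}$; (ii) mixed terms with exactly one $R^{5}$ integrator, of strictly lower order by the same estimates; (iii) a genuinely pointwise remainder, coming from the $R^{e'}$ with $e'\in\{1,2,3,4\}$ and from the third–order rough–path remainders on each cell, which is bounded cell by cell by $G\,n^{1-2\al-3\gamma}(t-s)$, hence by $G\,n^{-2\al}(t-s)$ since $\gamma>\tfrac13$. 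Collecting these bounds, using once more $t-s\ge T/n$ to trade a factor $(t-s)^{1/2}$ for $(t-s)\,n^{1/2}$, and invoking $\al<2H-\tfrac12$ (equivalently $1-4H<-2\al$) to absorb the worst power of $n$, I expect to obtain $|\delta g_{st}|\le G\,n^{-2\al+2\kappa}(t-s)$, which completes the estimate \eqref{eq5.21}.

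\textbf{Main obstacle and integrability.} The hard part will be the third step: because there are $O(n(t-s))$ cells, one must organize the local iterated integrals into coherent discrete weighted sums so that the Lévy–area cancellation encoded in $F$ (and the gain $n^{1/2-2H}$) is exploited globally and not cell by cell — a naive triangle inequality over cells is too lossy. For the integrability claim $G\in\cap_{p\ge1}L^{p}(\Omega)$, I would run the whole argument in $L^{p}(\Omega)$: the $\delta F$– and $(u-\eta(u))^{2H}$–sums of Corollary \ref{lem9.1}, Lemma \ref{cor3.13} and Remark \ref{remark3.9} come with deterministic constants (the weights are bounded vector–field expressions of $y^{n}$), the cell remainders are controlled in $L^{p}$ through Lemmas \ref{lem5.2} and \ref{lem5.3} and Proposition \ref{prop5.2}, and a final application of the Garsia–type Lemma \ref{lem4.2} upgrades the $L^{p}$ bound on $\delta g$ (and on the diagonal sum) to an almost sure bound with an $\al$–independent random variable $G$ in $\cap_{p\ge1}L^{p}(\Omega)$.
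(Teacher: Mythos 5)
Your decomposition
\begin{equation*}
\int_{s}^{t}\delta\ep_{su}\otimes d\ep_{u}
=\sum_{t_{k}=s}^{t-}\delta\ep_{st_{k}}\otimes\delta\ep_{t_{k}t_{k+1}}
+\sum_{t_{k}=s}^{t-}\int_{t_{k}}^{t_{k+1}}\delta\ep_{t_{k}u}\otimes d\ep_{u}
\end{equation*}
is exactly the paper's splitting into $D^{1}_{st}=\int_{s}^{t}\delta\ep_{s\eta(u)}\otimes d\ep_{u}$ and $D^{2}_{st}=\int_{s}^{t}\delta\ep_{\eta(u)u}\otimes d\ep_{u}$, and your treatment of the diagonal sum (upgrading Hypothesis \ref{lem:bnd-ep-holder-1/2} to $|\delta\ep_{st}|\le G n^{-\al+\ka}(t-s)^{\frac12+\ka}$ and applying Proposition \ref{prop3.4}) coincides with the paper's estimate of $D^{1}$.

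For the local term, however, your plan goes astray, and the premise on which it rests is false: the ``naive triangle inequality over cells'' is not too lossy here --- it is precisely what the paper does, and it suffices. Writing $D^{2}_{st}=\sum_{e,e'}\int_{s}^{t}\int_{\eta(u)}^{u}\Psi^{n}_{v}\,dA^{e'}_{v}\otimes\Psi^{n}_{u}\,dA^{e}_{u}$ via \eqref{e6.4i}, each cell contributes a fourth-order iterated integral of $(B,y^{n},\Psi^{n})$, bounded by $G\,n^{-4\ga}$ through the Lyons lift $S_{4}(B,y^{n},\Psi^{n})$ and Proposition \ref{prop:bnd-Phi-n}; summing the $\le n(t-s)/T$ cells gives $|D^{2}_{st}|\le G\,n^{1-4\ga}(t-s)$, and since $\al<2H-\frac12$ one may take $\ga$ close enough to $H$ that $1-4\ga<-2\al$. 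No L\'evy-area cancellation across cells is needed. By contrast, your scheme has two concrete problems. First, the claimed per-remainder bound $G\,n^{1-2\al-3\ga}(t-s)$ has no source for the factor $n^{-2\al}$: inside a single cell the increments of $\ep$ are controlled only through the rough-path norms of $(B,y^{n},\Psi^{n})$ (cf.\ Lemma \ref{lem6.5}, which yields $n^{-\ga}$ per increment, not $n^{-\al}$), so the exponent $\al$ cannot appear in a cell-by-cell estimate of $D^{2}$. Second, the tools you invoke for the ``dominant'' terms --- Corollary \ref{lem9.1}, Lemma \ref{cor3.13}, Proposition \ref{prop3.6} --- bound weighted sums that are \emph{linear} in the increments $\delta F_{t_{k}t_{k+1}}$ (or in $h_{t_{k}t_{k+1}}$); the terms you describe, with both integrators of L\'evy-area type on the same cell, are \emph{quadratic} in these increments and are not covered by those statements. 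So as written the core of your argument for the second sum does not close; the repair is to discard the cancellation machinery and use the simple cell-wise bound, choosing $\ga$ close to $H$ at the end.
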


\begin{proof}
Observe that the double integral $\int_{s}^{t}  \delta \ep_{ s u } \otimes d\ep_{u}$ is well defined, since $(y,y^{n},B)$ admits a rough path lift.
Next take $(s, t)\in \cs_{2}(\ll 0,T\rr)$.  We can write:
\begin{eqnarray}\label{e5.20}
\int_{s}^{t} \delta \ep_{su} \otimes d\ep_{u} 
&=&   
\int_{s}^{t}  \delta \ep_{s\eta(u)} \otimes
 d\ep_{u} +   \int_{s}^{t} 
 \delta \ep_{ \eta(u) u } \otimes
 d\ep_{u} 
 \equiv D_{st}^{1} + D_{st}^{2},
\end{eqnarray}
where recall that $\eta(u)$ is defined in \eref{eq:def-eta-A1}.
Let us bound those two terms separately.

The term $D^{1}$ above can be expressed in a more elementary way as $D_{st}^{1}=
\sum_{t_{k}=s}^{t-} \delta \ep_{st_{k}}\otimes \delta \ep_{t_{k}t_{k+1}}$. Moreover, thanks to \eref{e6.14} we   have for any $\ka < \al$ and   $(s,t) \in \mathcal{S}_{2}(\ll0,T\rr)$:
\begin{eqnarray*}
|\delta \ep_{st}| &\leq& G n^{-\al+\ka} (t-s)^{\frac12+\ka}.
\end{eqnarray*}
Taking this estimate into account and applying Proposition \ref{prop3.4} we get:
\begin{eqnarray}\label{e6.8}
\left|   D^{1}_{st} \right| &\leq & G^{2} n^{-2\al+2\ka}(t-s)^{1+2\ka} .
\end{eqnarray}

On the other hand, owing to identity \eref{e6.4i} for $\ep$, we have
\begin{eqnarray}\label{e5.22}
 D^{2}_{st}  &=& 
 \sum_{e,e'=1}^{3} \int_{s}^{t} \int_{\eta(u)}^{u} \Psi^{n}_{v} dA^{e'}_{v}    \otimes \Psi^{n}_{u} dA^{e}_{u} .
\end{eqnarray}
By  the definition of $A_{e}$, $e=1,2,3$ in  \eref{eq:def-eta-A1} and \eref{e6.4ii}, it is easy to see  that each of the nine terms on the right-hand 
side is bounded by $Gn^{1-4\ga}(t-s)$. Indeed, for the term corresponding to   $e=e'=3$, we use  \eref{e6.4ii} to write 
\begin{eqnarray*}
 \int_{s}^{t} \int_{\eta(u)}^{u} \Psi^{n}_{v} dA^{3}_{v}    \otimes \Psi^{n}_{u} dA^{3}_{u} 
&=&  \sum_{j,j'=1}^{m} \sum_{t_{k}=s}^{t-} A^{33,jj'}_{t_{k}t_{k+1}} \,,
\end{eqnarray*}
where 
\begin{eqnarray*}
A^{33,jj'}_{t_{k}t_{k+1}} &=& \int_{t_{k}}^{t_{k+1}} \int_{t_{k}}^{u} \Psi^{n}_{v}  
  \int_{t_{k}}^{v} \partial  {V}_{j}(y^{n}_{r}) dy^{n }_{r} d B^{j}_{v}
   \otimes \Psi^{n}_{u}     \int_{t_{k}}^{u} \partial  {V}_{j'}(y^{n}_{r'}) dy^{n }_{r'} d B^{j'}_{u}.
\end{eqnarray*}
It follows from  Proposition \ref{prop:bnd-Phi-n}   and the     Lyon's lift map theorem (see e.g.   \cite{FV})  that
\begin{eqnarray*}
\left| S_{4}(B, y^{n}, \Psi^{n})_{t_{k},t_{k+1}} \right| &\leq&  {G} \left(\frac Tn\right)^{\ga},
\end{eqnarray*}
so for all $j,j'\leq m$ and $(s,t) \in \cs_{2}(\ll0,T\rr)$ we have
\begin{eqnarray*}
|A^{33,jj'}_{t_{k}t_{k+1}}| &\leq& G\left(\frac Tn\right)^{4\ga}.
\end{eqnarray*}
Therefore, summing this bound  over $j$, $j'$ and $t_{k} $ we end up with 
\begin{eqnarray}\label{eq5.28}
\Big| \int_{s}^{t} \int_{\eta(u)}^{u} \Psi^{n}_{v} dA^{3}_{v}    \otimes \Psi^{n}_{u} dA^{3}_{u}
\Big|
 \leq
   \sum_{j,j'=1}^{m} \sum_{t_{k}=s}^{t-}  |A^{33,jj'}_{t_{k}t_{k+1}}| 
   \leq
    G\Big(\frac Tn\Big)^{4\ga-1}(t-s).
\end{eqnarray}

The other terms of the form $\int_{s}^{t} \int_{\eta(u)}^{u} \Psi^{n}_{v} dA^{e}_{v}    \otimes \Psi^{n}_{u} dA^{e'}_{u} $ on the right-hand side of \eref{e5.22} can be estimated in the same way. 
Therefore, we obtain the estimate
\begin{eqnarray}\label{e6.9}
\left| D^{2}_{st} \right| &\leq& Gn^{1-4\ga}(t-s).
\end{eqnarray}
In conclusion, plugging \eref{e6.8} and \eref{e6.9} into \eref{e5.20}, we obtain the desired estimate  \eref{eq5.21}.
\end{proof}

Recall that we wish to construct a rough path above $(\ep, B, y^{n})$. In the previous lemma we have analyzed the double integral $\int_{s}^{t}\delta \ep_{su} \otimes \delta \ep_{u}$. We now consider the  integral  $\int_{s}^{t} \delta \ep_{su} \otimes d ( B_{u}, y^{n}_{u} )$. 
\begin{lemma} \label{lem6.4}
Denote $z^{n} =  ( B, y^{n} )$, and let the assumptions of Lemma \ref{prop6.1} prevail.  The  following estimate holds true
\begin{eqnarray*}
\Big| \int_{s}^{t} \delta \ep_{su}\otimes dz^{n}_{u} \Big| &\leq&  G(t-s) n^{1-3\ga} ,
\quad\quad (s,t )\in \mathcal{S}_{2}(\ll 0,T \rr).
\end{eqnarray*}
\end{lemma}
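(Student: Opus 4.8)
The plan is to mimic the strategy used in the proof of Lemma \ref{prop6.3}, splitting the integral at the grid points. First I would write
\[
\int_{s}^{t} \delta \ep_{su}\otimes dz^{n}_{u}
= \int_{s}^{t} \delta \ep_{s\eta(u)}\otimes dz^{n}_{u} + \int_{s}^{t} \delta \ep_{\eta(u)u}\otimes dz^{n}_{u}
\equiv E^{1}_{st}+E^{2}_{st},
\]
where $\eta$ is the grid projection from \eqref{eq:def-eta-A1}. The first term $E^{1}$ is a genuinely discrete object: $E^{1}_{st}=\sum_{t_{k}=s}^{t-}\delta\ep_{st_{k}}\otimes\delta z^{n}_{t_{k}t_{k+1}}$ (using $\int_{t_k}^{t_{k+1}}dz^n = \delta z^n_{t_kt_{k+1}}$). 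Since $\|\ep\|_{1-\ga}\le G n^{1-3\ga}$ by Lemma \ref{prop6.1} and $\|z^{n}\|_{\ga}\le G$ by Proposition \ref{prop5.2} (for $B$ trivially, for $y^{n}$ by Proposition \ref{lem5.1}), and $(1-\ga)+\ga=1$ is the borderline exponent, I would apply the discrete sewing Lemma \ref{lem2.4} to $R_{st}=\sum_{t_k=s}^{t-}\delta\ep_{st_k}\otimes\delta z^n_{t_kt_{k+1}}$ exactly as in Proposition \ref{prop3.4}: one has $R_{t_kt_{k+1}}=0$ and $\delta R_{sut}=\delta\ep_{su}\otimes\delta z^n_{ut}$, so $|\delta R_{sut}|\le G n^{1-3\ga}|t-s|$. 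A small subtlety is that the exponent is exactly $1$, not $>1$; to fix this I would instead use a slightly better Hölder exponent for $\ep$, namely $|\delta\ep_{st}|\le Gn^{1-3\ga+\kappa}(t-s)^{1-\ga+\kappa}$ obtained from Lemma \ref{prop6.1} together with $t-s\ge T/n$, so that $(1-\ga+\kappa)+\ga>1$ and Lemma \ref{lem2.4} applies, absorbing the extra $n^{\kappa}$ back into $n^{1-3\ga}$ via $|t-s|\le T$. This gives $|E^{1}_{st}|\le G(t-s)n^{1-3\ga}$.

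For the second term $E^{2}_{st}=\sum_{t_{k}=s}^{t-}\int_{t_{k}}^{t_{k+1}}\delta\ep_{t_ku}\otimes dz^{n}_{u}$, I would bound each summand by a local estimate. On $[t_k,t_{k+1}]$, recall from \eqref{e6.4i} that $\ep_{\cdot}$ solves $d\ep = \sum_{e=1}^{3}\Psi^{n}dA^{e}$, where the $A^{e}$ are built out of Lebesgue integrals, rough integrals in $B$, and the deterministic increment $(u-\eta(u))^{2H}$; on a single cell of length $h=T/n$ each of these produces a factor $h^{2\ga}$ at worst (the $A^1$ and $R^5$-type terms contribute $h^{2H}$, which is smaller). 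More precisely $\int_{t_k}^{t_{k+1}}\delta\ep_{t_ku}\otimes dz^n_u$ is a second-level iterated integral of the rough path $S_2(\ep,z^n)=S_2(\ep,B,y^n)$ restricted to $[t_k,t_{k+1}]$, and since on each cell $|\delta\ep_{t_ku}|\le Gh^{1-\ga}$ (Lemma \ref{prop6.1}) and $|\delta z^n_{t_ku}|\le Gh^{\ga}$, the cross integral is bounded by $G h^{(1-\ga)+\ga}=Gh$... which is not good enough. Instead I would exploit that on $[t_k,t_{k+1}]$ one has the controlled-path expansion $\delta\ep_{t_ku}= \Psi^n_{t_k}V(y^n_{t_k})\delta B_{t_ku}+O(h^{2\ga})$ coming from \eqref{e6.4i}, so that $\int_{t_k}^{t_{k+1}}\delta\ep_{t_ku}\otimes dz^n_u$ splits into a term $\Psi^n_{t_k}V(y^n_{t_k})\int_{t_k}^{t_{k+1}}\delta B_{t_ku}\otimes dz^n_u = O(h^{2\ga})$ (a genuine L\'evy-area-type term of $(B,y^n)$) plus a remainder of size $O(h\cdot h^{2\ga-1})\cdot$...

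Let me streamline: each cell contributes $O(h^{3\ga})$ from the leading-order analysis — $\delta\ep$ on a cell is $O(h^{1-\ga})$ but its \emph{second-order structure} against $dz^n$ is governed by the rough-path bound $|S_4(B,y^n,\Psi^n)_{t_k,t_{k+1}}|\le G h^{\ga}$, as used in \eqref{eq5.28}, combined with the representation \eqref{e6.4i}; chasing exponents as there gives $|\int_{t_k}^{t_{k+1}}\delta\ep_{t_ku}\otimes dz^n_u|\le G h^{1+2\ga}\le G n^{-3\ga}\cdot h$ since $2\ga>... $ hmm, $1+2\ga \ge 1 + 2/3$; and $n^{1-3\ga}h = n^{-3\ga}T$, so I want $h^{1+2\ga}\le Gn^{-3\ga}\cdot$ no. The cleanest route, which I would adopt, is: by Lemma \ref{prop6.1} $\|\ep\|_{[t_k,t_{k+1}],1-\ga}\le Gn^{1-3\ga}$ but more importantly on the \emph{fine} scale $\ep$ restricted to $[t_k,t_{k+1}]$ is a controlled path whose increment is $O(h)$ times the same Hölder constant, while $dz^n$ is $\ga$-Hölder, and a crossed integral of a path of finite-variation-like size $O(n^{1-3\ga}h)$... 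Summing $n$ cells each of size $G\,n^{1-3\ga}h\cdot h^{\ga}$ — which is what the naive bound $|\int_{t_k}^{t_{k+1}}\delta\ep_{t_ku}\otimes dz^n_u|\le \|\ep\|_{[t_k,t_{k+1}],1-\ga} h^{1-\ga}\cdot\|z^n\|_{\ga}h^{\ga}$ gives — yields $\sum_k Gn^{1-3\ga}h^{1}= Gn^{1-3\ga}(t-s)$. That is exactly the claimed bound. So in fact the \emph{easy} cell-wise bound suffices for $E^2$: $|E^2_{st}|\le \sum_{t_k=s}^{t-}\|\ep\|_{1-\ga}h^{1-\ga}\cdot G h^{\ga} = \|\ep\|_{1-\ga}G\sum_{t_k}h \le G n^{1-3\ga}(t-s)$, using Lemma \ref{prop6.1} and Proposition \ref{prop5.2}.

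\begin{proof}
Recall $\eta(u)=t_{\lfloor nu/T\rfloor}$ from \eqref{eq:def-eta-A1}, and decompose, for $(s,t)\in\cs_{2}(\ll 0,T\rr)$,
\begin{equation}\label{eq:E1E2-decomp}
\int_{s}^{t}\delta\ep_{su}\otimes dz^{n}_{u}
=\int_{s}^{t}\delta\ep_{s\eta(u)}\otimes dz^{n}_{u}
+\int_{s}^{t}\delta\ep_{\eta(u)u}\otimes dz^{n}_{u}
\equiv E^{1}_{st}+E^{2}_{st}.
\end{equation}
Throughout, $G$ denotes an a.s.\ finite random variable independent of $n$, which may change from line to line, and $\kappa>0$ is arbitrarily small. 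By Proposition \ref{prop5.2} we have $\|z^{n}\|_{\ga}\le G$, and by Lemma \ref{prop6.1} we have $\|\ep\|_{1-\ga}\le G n^{1-3\ga}$ on $\cs_{2}(\ll 0,T\rr)$; moreover, since $|t-s|\ge T/n$ on this set, the bound \eqref{e6.7} upgrades to
\begin{equation}\label{eq:eps-upgrade}
|\delta\ep_{st}|\le G n^{1-3\ga+\kappa}(t-s)^{1-\ga+\kappa},\qquad (s,t)\in\cs_{2}(\ll 0,T\rr).
\end{equation}

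\emph{Bound on $E^{1}$.}
Since on each cell $\int_{t_{k}}^{t_{k+1}}dz^{n}_{u}=\delta z^{n}_{t_{k}t_{k+1}}$, we have
\[
E^{1}_{st}=\sum_{t_{k}=s}^{t-}\delta\ep_{st_{k}}\otimes\delta z^{n}_{t_{k}t_{k+1}}=:R_{st}.
\]
Clearly $R_{t_{k}t_{k+1}}=0$, and for $(s,u,t)\in\cs_{3}(\ll 0,T\rr)$ one checks $\delta R_{sut}=\delta\ep_{su}\otimes\delta z^{n}_{ut}$, so that by \eqref{eq:eps-upgrade} and $\|z^{n}\|_{\ga}\le G$,
\[
|\delta R_{sut}|\le G n^{1-3\ga+\kappa}(t-s)^{1-\ga+\kappa}\,(t-s)^{\ga}
= G n^{1-3\ga+\kappa}(t-s)^{1+\kappa}.
\]
Choosing $\kappa$ small enough that $1+\kappa>1$ (any $\kappa>0$ works), the discrete sewing Lemma \ref{lem2.4} applies and yields $|R_{st}|\le G n^{1-3\ga+\kappa}(t-s)^{1+\kappa}$. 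Using $(t-s)^{\kappa}\le T^{\kappa}$ and $n^{\kappa}(T/n)^{\kappa}=T^{\kappa}$ together with $t-s\ge T/n$, we obtain
\begin{equation}\label{eq:E1-bnd}
|E^{1}_{st}|\le G n^{1-3\ga}(t-s),\qquad (s,t)\in\cs_{2}(\ll 0,T\rr).
\end{equation}

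\emph{Bound on $E^{2}$.}
Write $E^{2}_{st}=\sum_{t_{k}=s}^{t-}\int_{t_{k}}^{t_{k+1}}\delta\ep_{t_{k}u}\otimes dz^{n}_{u}$.
For each cell, $\int_{t_{k}}^{t_{k+1}}\delta\ep_{t_{k}u}\otimes dz^{n}_{u}$ is a second-level iterated integral of the rough path $S_{2}(\ep,z^{n})=S_{2}(\ep,B,y^{n})$ restricted to $[t_{k},t_{k+1}]$, whose existence follows from the fact that $(y,y^{n},B)$ admits a rough path lift. By the standard estimate for rough integrals, together with $\|\ep\|_{1-\ga}\le Gn^{1-3\ga}$ and $\|z^{n}\|_{\ga}\le G$,
\[
\Big|\int_{t_{k}}^{t_{k+1}}\delta\ep_{t_{k}u}\otimes dz^{n}_{u}\Big|
\le G\,\|\ep\|_{1-\ga}\,\|z^{n}\|_{\ga}\Big(\frac{T}{n}\Big)^{(1-\ga)+\ga}
\le G\,n^{1-3\ga}\,\frac{T}{n}.
\]
Summing over the at most $n(t-s)/T+1$ cells between $s$ and $t$ gives
\begin{equation}\label{eq:E2-bnd}
|E^{2}_{st}|\le G\,n^{1-3\ga}(t-s),\qquad (s,t)\in\cs_{2}(\ll 0,T\rr).
\end{equation}

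Plugging \eqref{eq:E1-bnd} and \eqref{eq:E2-bnd} into \eqref{eq:E1E2-decomp} completes the proof.
\end{proof}
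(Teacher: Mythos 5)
Your grid-point decomposition into $E^{1}+E^{2}$ is exactly the paper's (its $\hat{D}^{1}+\hat{D}^{2}$), and your treatment of $E^{1}$ by discrete sewing matches the paper's appeal to Proposition \ref{prop3.4}. One small slip there: to beat the borderline exponent you shift a $\kappa$ onto $\ep$ at the price of $n^{\kappa}$, and then claim this is absorbed ``using $n^{\kappa}(T/n)^{\kappa}=T^{\kappa}$ together with $t-s\ge T/n$''. That inequality goes the wrong way: from $|R_{st}|\le Gn^{1-3\ga+\kappa}(t-s)^{1+\kappa}$ you only get $n^{\kappa}(t-s)^{\kappa}\le (nT)^{\kappa}$, so you end with $Gn^{1-3\ga+\kappa}(t-s)$, not $Gn^{1-3\ga}(t-s)$. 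The clean fix, and the paper's, is to put the extra regularity on $z^{n}$ instead: Proposition \ref{prop5.2} holds for every exponent below $H$, so $|\delta z^{n}_{ut}|\le G(t-u)^{\ga+\kappa}$ with $0<\kappa<H-\ga$, giving $|\delta R_{sut}|\le Gn^{1-3\ga}(t-s)^{1+\kappa}$ with no spurious power of $n$.

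The substantive gap is in $E^{2}$. Your per-cell bound rests on $\bigl|\int_{t_{k}}^{t_{k+1}}\delta\ep_{t_{k}u}\otimes dz^{n}_{u}\bigr|\le G\,\|\ep\|_{1-\ga}\|z^{n}\|_{\ga}(T/n)^{(1-\ga)+\ga}$, which you call ``the standard estimate for rough integrals''. No such product estimate exists once the H\"older exponents sum to $\le 1$: inside a cell the integral against the $dB$-component of $dz^{n}$ is a genuine rough integral (locally $\ep$ is only $\ga$-H\"older, cf.\ Lemma \ref{lem6.5}, and $2\ga<1$), and the generic size of such an integral is $(t-s)^{2\ga}$ — the term where the Gubinelli derivative of $\ep$ is paired with the second-level component of $z^{n}$ — not $(t-s)^{1}$. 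Had that derivative been of order one at $t_{k}$, summing the cells would give $n^{1-2\ga}(t-s)$, which is too big. The bound you want is true, but for a structural reason you must invoke: by \eqref{e6.4i}, $\delta\ep_{t_{k}u}=\sum_{e}\int_{t_{k}}^{u}\Psi^{n}_{v}\,dA^{e}_{v}$, the only rough contribution comes from $A^{3}$, and its Gubinelli derivative $\Psi^{n}_{u}\int_{t_{k}}^{u}\partial V_{j}(y^{n})\,dy^{n}$ vanishes at $u=t_{k}$; hence each cell contributes a third-order iterated integral of the lift of $(B,y^{n},\Psi^{n})$, of size $G(T/n)^{3\ga}=GT^{3\ga-1}\,n^{1-3\ga}\,(T/n)$. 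This is exactly how the paper argues (it refers back to the estimate of $D^{2}$ in Lemma \ref{prop6.3}, see \eqref{eq5.28}). Without this step your per-cell bound is unjustified, even though the final exponent happens to coincide.
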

\begin{proof}
We use the same kind of decomposition as in Lemma \ref{prop6.3}:
\begin{eqnarray}\label{e6.10}
\int_{s}^{t} \delta \ep_{su} \otimes dz^{n}_{u} &=&   \int_{s}^{t} 
 \delta \ep_{s\eta(u)} \otimes
 dz^{n}_{u} +   \int_{s}^{t} 
 \delta \ep_{ \eta(u) u } \otimes
 dz^{n}_{u} := \hat{D}^{1}_{st} + \hat{D}^{2}_{st} .
\end{eqnarray}
For the first term on the right-hand side of \eref{e6.10}  we have the following   expression:
\begin{eqnarray*}
  \hat{D}^{1}_{st}
 &=&
    \sum_{t_{k}=s}^{t-} \delta \ep_{st_{k}} \otimes \delta z^{n}_{t_{k}t_{k+1}}\,, \quad\quad  (s,t )\in \mathcal{S}_{2}(\ll 0,T \rr). 
\end{eqnarray*}
Moreover, it follows from  Lemma \ref{prop6.1}  and Proposition \ref{prop5.2}  that for all $(s,t) \in \cs_{2}(\ll0,T\rr)$ the following bounds holds true: 
\begin{eqnarray*}
|\delta \ep_{st}| \leq G(t-s)^{1-\ga }n^{1-3\ga} \quad \text{and} \quad 
|\delta z_{st}| \leq G (t-s)^{\ga+\ka},
\end{eqnarray*}
where $\ka < H-\ga$, so by Proposition \ref{prop3.4}
 we have  
\begin{eqnarray*}
\Big|  \hat{D}^{1}_{st} \Big| &\leq& G(t-s) n^{1-3\ga} .
\end{eqnarray*}
On the other hand, in the same way as the estimate of $D^{2}_{st}$ in \eref{e6.9}, we can  show that 
\begin{eqnarray*}
\Big|  \hat{D}^{2}_{st}  \Big| &\leq& G(t-s) n^{1-3\ga}.
\end{eqnarray*}
The lemma   follows from applying the above two inequalities for $\hat{D}^{1}_{st}  $ and $\hat{D}^{2}_{st}  $ to \eref{e6.10}. 
 \end{proof}

 The next result provides further estimates of the rough path above $(\ep, B, y^{n})$   for $(s,t)   \in \cs_{2}( [t_{k},t_{k+1}])$. 
\begin{lemma}\label{lem6.5}
Let $\ep$ be the process defined by \eqref{eq:def-ep-t} and recall that we have set $ z^{n} =  ( B, y^{n} ) $. Take $(s,t)   \in \cs_{2}( [t_{k},t_{k+1}])$ and $k=0,1,\dots, n-1$.  Then the following estimate for the first-order increments of $\ep$ holds true:
\begin{eqnarray}
|\delta \ep_{st}| &\leq& G (t-s)^{\ga} n^{-\ga}.
\label{e6.18}
\end{eqnarray}
In addition,   the second-order iterated integrals of $\ep$ and $z^{n}$ satisfy:
\begin{eqnarray}\label{e6.19}
\Big| \int_{s}^{t} \delta z^{n}_{su} \otimes d \ep_{u} \Big|  \leq  G (t-s)^{2\ga} n^{-\ga}. 
\quad\quad\quad
\Big| \int_{s}^{t} \delta\ep_{su} \otimes d \ep_{u} \Big|  \leq  G (t-s)^{2\ga} n^{-2 \ga}.  
\end{eqnarray}
\end{lemma}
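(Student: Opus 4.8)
The natural starting point is the representation $(\ref{e6.4i})$ of the error process, namely $\ep_{t}=\sum_{e=1}^{3}\int_{0}^{t}\Psi^{n}_{u}\,dA^{e}_{u}$, which for $(s,t)\in\cs_{2}([t_{k},t_{k+1}])$ gives $\delta\ep_{st}=\sum_{e=1}^{3}\int_{s}^{t}\Psi^{n}_{u}\,dA^{e}_{u}$. The crucial observation is that, \emph{on a single mesh interval} $[t_{k},t_{k+1}]$, each $A^{e}$ vanishes at $t_{k}$ and has small $\ga$-H\"older norm. Indeed, reading off $(\ref{e1.2})$, $(\ref{eq:def-eta-A1})$ and $(\ref{e6.4ii})$, for $t\in[t_{k},t_{k+1}]$ one finds
\begin{align*}
A^{1}_{t}-A^{1}_{t_{k}} &= -\frac12\sum_{j=1}^{m}\partial V_{j}V_{j}(y^{n}_{t_{k}})\,(t-t_{k})^{2H}, \\
A^{2}_{t}-A^{2}_{t_{k}} &= \int_{t_{k}}^{t}\int_{t_{k}}^{u}\partial b(y^{n}_{v})\,dy^{n}_{v}\,du, \\
A^{3}_{t}-A^{3}_{t_{k}} &= \sum_{j=1}^{m}\int_{t_{k}}^{t}\int_{t_{k}}^{u}\partial V_{j}(y^{n}_{v})\,dy^{n}_{v}\,dB^{j}_{u}.
\end{align*}
Using Proposition~\ref{lem5.1}, the uniform rough path bound of Proposition~\ref{prop5.2}, the subadditivity of $x\mapsto x^{2H}$, and --- for $A^{3}$ --- the standard rough integral estimate with the $B$-controlled integrand $I^{j}_{u}=\int_{t_{k}}^{u}\partial V_{j}(y^{n}_{v})\,dy^{n}_{v}$ (which vanishes at $t_{k}$, hence $|I^{j}_{u}|\le G(u-t_{k})^{\ga}\le G\,n^{-\ga}$), one checks, for $(s,t)\in\cs_{2}([t_{k},t_{k+1}])$, that $|\delta A^{1}_{st}|\le G(t-s)^{2H}$, $|\delta A^{2}_{st}|\le G(t-s)^{1+\ga}$ and a comparable bound for $A^{3}$; since $t-s\le T/n$ and the exponents $2H$, $1+\ga$, $2\ga$ all strictly exceed $\ga$, this shows
$\|A^{e}\|_{\ga,[t_{k},t_{k+1}]}\le G\,n^{-\ga}$ for $e=1,2,3$.

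\emph{Proof of $(\ref{e6.18})$.} Since $3\ga>1$, for $(s,t)\in\cs_{2}([t_{k},t_{k+1}])$ each $\int_{s}^{t}\Psi^{n}_{u}\,dA^{e}_{u}$ is a Young integral, and by Proposition~\ref{prop:bnd-Phi-n} the path $\Psi^{n}$ has sup-norm and $\ga$-H\"older norm bounded by $e^{G}$ uniformly in $n$. The elementary Young estimate then gives
\begin{equation*}
\Big|\int_{s}^{t}\Psi^{n}_{u}\,dA^{e}_{u}\Big|
\le |\Psi^{n}_{s}|\,|\delta A^{e}_{st}| + K\,\|\Psi^{n}\|_{\ga}\,\|A^{e}\|_{\ga,[t_{k},t_{k+1}]}\,(t-s)^{2\ga}
\le G\,n^{-\ga}(t-s)^{\ga},
\end{equation*}
and summing over $e=1,2,3$ yields $(\ref{e6.18})$.

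\emph{Proof of $(\ref{e6.19})$.} For the first estimate, decompose $\int_{s}^{t}\delta z^{n}_{su}\otimes d\ep_{u}=\sum_{e=1}^{3}\int_{s}^{t}\delta z^{n}_{su}\otimes\Psi^{n}_{u}\,dA^{e}_{u}$. The weight $\delta z^{n}_{su}$ vanishes at $u=s$ and satisfies $|\delta z^{n}_{su}|\le G(u-s)^{\ga}$ by Proposition~\ref{prop5.2}. For $e=1,2$ the integral against $dA^{e}$ is a Lebesgue integral with an explicit density, and a direct bound (using $\int_{s}^{t}(u-t_{k})^{2H-1}du\le K(t-s)^{2H}$, resp. $|\int_{t_{k}}^{u}\partial b(y^{n}_{v})dy^{n}_{v}|\le G\,n^{-\ga}$) produces $G\,n^{-\ga}(t-s)^{2\ga}$. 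For $e=3$ one has a rough integral against $B^{j}$ whose integrand $\delta z^{n}_{su}\otimes\Psi^{n}_{u}\,I^{j}_{u}$ vanishes at $u=s$ (so the leading term drops) and whose Gubinelli derivative at $s$ equals the Gubinelli derivative of $z^{n}$ at $s$ times $\Psi^{n}_{s}I^{j}_{s}$, hence is of order $n^{-\ga}$; the remaining terms then pick up one power $(t-s)^{\ga}$ from $\delta z^{n}$ and one factor $n^{-\ga}$, giving again $G\,n^{-\ga}(t-s)^{2\ga}$. Hence $|\int_{s}^{t}\delta z^{n}_{su}\otimes d\ep_{u}|\le G\,n^{-\ga}(t-s)^{2\ga}$. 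For the second estimate one repeats this argument with $\delta z^{n}_{su}$ replaced by $\delta\ep_{su}$: by $(\ref{e6.18})$ one has $|\delta\ep_{su}|\le G\,n^{-\ga}(u-s)^{\ga}$ with $\delta\ep_{su}$ vanishing at $s$, while $d\ep_{u}=\sum_{e'}\Psi^{n}_{u}\,dA^{e'}_{u}$ again carries a factor $n^{-\ga}$; the two independent factors $n^{-\ga}$ combine to give $|\int_{s}^{t}\delta\ep_{su}\otimes d\ep_{u}|\le G\,n^{-2\ga}(t-s)^{2\ga}$.

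\emph{Main obstacle.} The Young- and rough-integration machinery is routine; the delicate point is the bookkeeping of the powers of $n^{-\ga}$. One has to verify that the order-$n^{-\ga}$ size of each $A^{e}$ on a single mesh interval genuinely propagates into the leading \emph{and} the remainder terms of the (iterated) integrals above, which means tracking, on $[t_{k},t_{k+1}]$, the sup-norm, the Gubinelli derivative and the $2\ga$-H\"older remainder of every integrand and observing that each such quantity is either itself $\lesssim n^{-\ga}$ or else multiplied by a factor $(t-s)^{\ga}\le (T/n)^{\ga}$; the extra power of $(t-s)^{\ga}$ in $(\ref{e6.19})$ is exactly the one produced by the weight ($\delta z^{n}$ or $\delta\ep$) vanishing at $s$. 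A secondary, routine point (given Sections~\ref{section4}--\ref{section5}) is the well-posedness of the rough integrals on the mesh intervals, which follows from the rough path estimates of Proposition~\ref{prop:bnd-Phi-n} together with the fact that $\ep$ is a controlled path with respect to $(B,y^{n},\Psi^{n})$.
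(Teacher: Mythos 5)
Your proposal follows the same route as the paper: both start from the representation \eref{e6.4i}, $\delta\ep_{st}=\sum_{e=1}^{3}\int_{s}^{t}\Psi^{n}_{u}\,dA^{e}_{u}$, localize to a single mesh interval where each $A^{e}$ picks up a factor $n^{-\ga}$ (for $A^{3}$, through the inner integral $\int_{t_k}^{u}\partial V_j(y^n_v)\,dy^n_v=O((u-t_k)^{\ga})$), and then estimate the resulting iterated integrals; the bounds you obtain agree with the paper's, and your bookkeeping of where the powers of $n^{-\ga}$ come from in \eref{e6.19} is the right one.

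One justification is wrong, though the conclusion survives. In the proof of \eref{e6.18} you assert that each $\int_{s}^{t}\Psi^{n}_{u}\,dA^{e}_{u}$ is a Young integral ``since $3\ga>1$'' and apply the Young--Lo\`eve estimate. For $e=3$ this is not legitimate: $A^{3}$ is an integral against $dB$, hence only $\ga$-H\"older with $\ga<\tfrac12$, and $\Psi^{n}$ is also only $\ga$-H\"older, so the relevant sum of exponents is $2\ga<1$ and Young integration does not apply (the condition $3\ga>1$ is irrelevant here). The integral must be treated as a rough/controlled-path integral, which is what the paper does: it splits $\int_{t_k}^{u}=\int_{t_k}^{s}+\int_{s}^{u}$ and bounds the two resulting pieces as iterated integrals of the joint rough path built from $(B,y^{n},\Psi^{n})$ (the second piece being $\bigl|\int_{t_k}^{s}\partial V_j(y^n)\,dy^n\bigr|\cdot\bigl|\int_s^t\Psi^n\,dB^j\bigr|\le Gn^{-\ga}(t-s)^{\ga}$). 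The correct rough estimate carries an additional second-level term proportional to $\BB_{st}=O((t-s)^{2\ga})\le O(n^{-\ga}(t-s)^{\ga})$ using $t-s\le T/n$, so your final bound is unaffected; but as written the step for $e=3$ is not valid. The remainder of your argument, in particular the treatment of \eref{e6.19}, correctly recognizes the rough-integral nature of the $e=3$ terms and matches the paper's (very terse) proof.
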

 \begin{proof} 
The estimate \eref{e6.18} follows by showing that the three terms on the right-hand side of  \eref{e6.4i} are all bounded by $G (t-s)^{\ga} n^{-\ga}$. As before, we will focus on a bound for the increment $\int_{s}^{t} \Psi^{n}_{s} dA^{3}_{s} $. In fact, owing to  \eref{e6.4ii} it is easily seen that $\int \Psi^{n} dA^{3}$ can be decomposed as a sum of double iterated integrals: 
\begin{eqnarray*}
\int_{s}^{t}\Psi^{n}_{s} dA^{3}_{s} &=&  \sum_{j=1}^{m}  \int_{s}^{t} \Psi^{n}_{u} \int_{t_{k}}^{u}\partial V_{j} (y^{n}_{v}) dy^{n}_{v} dB^{j}_{u}
\\
&=& 
  \sum_{j=1}^{m}  \int_{s}^{t} \Psi^{n}_{u} \int_{s}^{u}\partial V_{j} (y^{n}_{v}) dy^{n}_{v} dB^{j}_{u}
  +  \sum_{j=1}^{m} \int_{t_{k}}^{s}\partial V_{j} (y^{n}_{v}) dy^{n}_{v}   \int_{s}^{t} \Psi^{n}_{u} dB^{j}_{u} 
 .
\end{eqnarray*}
One can easily bound the two terms above, thanks to the fact that $(y^{n}, B)$ is a rough path. We obtain 
\begin{eqnarray*}
\int_{s}^{t}\Psi^{n}_{s} dA^{3}_{s}&\leq & G (t-s)^{\ga}n^{-\ga}.
\end{eqnarray*}
In the same way we can show that the same estimate holds for the term $\sum_{e=1}^{2}\int_{s}^{t} \Psi^{n}_{s} dA^{e}_{s} $ on the right-hand side of \eref{e6.4i}. This proves our claim \eref{e6.18}.

In order to prove \eref{e6.19}, let us invoke   \eref{e6.4i} again, which yields:
\begin{eqnarray*}
 \int_{s}^{t} \delta z^{n}_{su} \otimes d \ep_{u}  &=& \sum_{e=1}^{3}     \int_{s}^{t}\delta z^{n}_{su} \otimes \Psi^{n}_{u}  dA_{u}^{e} .
\end{eqnarray*}
The estimate \eref{e6.19} then follows from a similar argument as for the estimate of \eref{e6.18}. The estimate of integral $ \int_{s}^{t} \ep_{su} \otimes d \ep_{u}$ can be shown in a similar way.    This completes the proof.
\end{proof}

Following is the main result of this section. Recall that  $S_{2} (n^{3\ga - 1} \ep , z^{n})$ denotes the lift of the process $(n^{3\ga - 1} \ep , z^{n})$, that is,
\begin{eqnarray*}
S_{2} (n^{3\ga - 1} \ep , z^{n})_{st} &=& \Big(  (n^{3\ga - 1} \delta\ep_{st} , \delta z^{n}_{st} ) , \int_{s}^{t}  (n^{3\ga - 1} \delta \ep_{su} , \delta z^{n}_{su} )  \otimes (n^{3\ga - 1} \ep_{u}  , z^{n}_{u}) \Big).
\end{eqnarray*}

\begin{prop}\label{prop5.3}
Let $y$ be the solution of equation \eref{e1.1} and $y^{n}$ be the solution of the   Euler scheme \eref{e1.2}. Take $\frac13 <\ga<H<\frac12$. Then we have the estimate   
\begin{eqnarray*}
 \| S_{2} (  n^{3\ga-1}\ep \,, z^{n}) \|_{\ga} &\leq&   G  \,,
\end{eqnarray*}
where $G$ is a random variable  independent of $n$.
\end{prop}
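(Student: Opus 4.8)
The plan is to assemble the bound on $\|S_{2}(n^{3\ga-1}\ep, z^{n})\|_{\ga}$ from the component estimates already established, and then upgrade from the discrete grid $\cs_{2}(\ll 0,T\rr)$ to the full simplex $\cs_{2}([0,T])$. First I would record the first-order part: by Proposition \ref{prop5.2} we have $\|\delta z^{n}\|_{\ga}\le G$ on $\cs_{2}(\ll 0,T\rr)$, and by Lemma \ref{prop6.1} we have $|\delta\ep_{st}|\le G(t-s)^{1-\ga}n^{1-3\ga}$; since $t-s\ge T/n$ on the grid, $(t-s)^{1-\ga}n^{1-3\ga} = (t-s)^{\ga}\cdot (t-s)^{1-2\ga}n^{1-3\ga}\le (t-s)^{\ga}(T/n)^{1-2\ga}n^{1-3\ga} = K(t-s)^{\ga}n^{-\ga}$, using $1-2\ga>0$. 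Hence $\|n^{3\ga-1}\delta\ep\|_{\ga}\le G n^{-\ga+(3\ga-1)}\cdot$(constant) — actually more simply $n^{3\ga-1}|\delta\ep_{st}|\le G(t-s)^{\ga}n^{3\ga-1}n^{-\ga}$, and the exponent $3\ga-1-\ga = 2\ga-1<0$ is negative, so this is $\le G(t-s)^{\ga}$. So the level-one component of $S_{2}(n^{3\ga-1}\ep,z^{n})$ is bounded by $G(t-s)^{\ga}$ uniformly in $n$.

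Next I would handle the level-two (iterated integral) component, which splits into the nine blocks $\int_{s}^{t}\delta a_{su}\otimes da_{u}$ with $a$ ranging over $n^{3\ga-1}\ep$ and the components of $z^{n}=(B,y^{n})$. The blocks not involving $\ep$ are controlled by Proposition \ref{prop5.2} directly. For the blocks $\int\delta z^{n}\otimes d\ep$ and $\int\delta\ep\otimes dz^{n}$, Lemma \ref{lem6.4} gives $|\int_{s}^{t}\delta\ep_{su}\otimes dz^{n}_{u}|\le G(t-s)n^{1-3\ga}$, and a symmetric argument (or integration by parts, $\int\delta z^{n}\otimes d\ep = \delta z^{n}_{st}\otimes\delta\ep_{st} - \int\delta\ep\otimes dz^{n}$) gives the companion estimate. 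Multiplying by $n^{3\ga-1}$ turns this into $G(t-s)n^{1-3\ga}n^{3\ga-1} = G(t-s)$; since $t-s\le T$ on a finite interval we may write $(t-s)\le T^{1-2\ga}(t-s)^{2\ga}$, giving the required $(t-s)^{2\ga}$ scaling. For the diagonal block $\int\delta\ep\otimes d\ep$, Lemma \ref{prop6.3} (with $\al=3\ga-1$, legitimate because of Hypothesis \ref{lem:bnd-ep-holder-1/2} holding for $\al=3\ga-1$, itself guaranteed by the remark following Lemma \ref{lem6.2}) yields $|\int_{s}^{t}\delta\ep_{su}\otimes d\ep_{u}|\le Gn^{-2\al+2\ka}(t-s) = Gn^{-2(3\ga-1)+2\ka}(t-s)$; multiplying by $(n^{3\ga-1})^{2}$ gives $Gn^{2\ka}(t-s)$, and absorbing the harmless $n^{2\ka}$ into the constant is acceptable for an almost-sure bound with $\ka$ arbitrarily small — alternatively one chooses $\ka$ once and for all and carries it. Collecting these, $|\int_{s}^{t}(n^{3\ga-1}\delta\ep,\delta z^{n})_{su}\otimes d(n^{3\ga-1}\ep,z^{n})_{u}|\le G(t-s)^{2\ga}$ on $\cs_{2}(\ll 0,T\rr)$, which is exactly $\|\XX\|_{2\ga}\le G$ for the level-two part.

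The remaining point is to pass from the discrete simplex to the continuous one. On each cell $[t_{k},t_{k+1}]$ one uses Lemma \ref{lem6.5}: $|\delta\ep_{st}|\le G(t-s)^{\ga}n^{-\ga}$ and the iterated-integral bounds $|\int\delta z^{n}\otimes d\ep|\le G(t-s)^{2\ga}n^{-\ga}$, $|\int\delta\ep\otimes d\ep|\le G(t-s)^{2\ga}n^{-2\ga}$, which after multiplication by the renormalization powers give $G(t-s)^{\ga}$ and $G(t-s)^{2\ga}$ respectively (the negative powers of $n$ are more than enough), together with the $z^{n}$-only bounds from Proposition \ref{prop5.2} which are already $\ga$-Hölder on $[0,T]$. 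Thus the rough-path norm of $S_{2}(n^{3\ga-1}\ep,z^{n})$ restricted to any single cell is $\le G(t-s)^{\ga}$. Now I would invoke the gluing lemma (the one called Lemma \ref{lem4.5} in the paper, already used in the proof of Proposition \ref{prop5.2}): a Hölder-type estimate that holds on the grid $\cs_{2}(\ll 0,T\rr)$ and on each cell $[t_{k},t_{k+1}]$ propagates to all of $\cs_{2}([0,T])$, yielding $\|S_{2}(n^{3\ga-1}\ep,z^{n})\|_{\ga}\le G$ with $G\in\cap_{p\ge1}L^{p}(\Omega)$ independent of $n$.

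I expect the main obstacle to be bookkeeping of exponents rather than any genuinely new idea: one must verify in each of the nine level-two blocks that the factor $n^{3\ga-1}$ (squared on the diagonal $\ep$-$\ep$ block) exactly cancels or is dominated by the negative power of $n$ coming from Lemmas \ref{prop6.1}, \ref{prop6.3}, \ref{lem6.4}, and that the implicit $\ka$'s in Lemma \ref{prop6.3} and the $\ga$-versus-$H$ slack can be chosen consistently; and one must make sure the discrete-to-continuous gluing (Lemma \ref{lem4.5}) is applied to the \emph{renormalized} triple and not just to $(y^{n},B)$.
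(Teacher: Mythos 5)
Your proposal is correct and follows essentially the same route as the paper: combine Proposition \ref{prop5.2}, Lemma \ref{prop6.1}, Lemma \ref{prop6.3} (with $\al=3\ga-1$, justified by the remark after Hypothesis \ref{lem:bnd-ep-holder-1/2}) and Lemma \ref{lem6.4} to get the bound on $\cs_{2}(\ll 0,T\rr)$, then use Lemma \ref{lem6.5} on each cell and glue with Lemma \ref{lem4.5}. Two small exponent slips do not affect the argument: the inequality $(t-s)^{1-2\ga}\le (T/n)^{1-2\ga}$ goes the wrong way (you only need $(t-s)^{1-\ga}\le T^{1-2\ga}(t-s)^{\ga}$), and the stray factor $n^{2\ka}$ in the $\ep$--$\ep$ block cannot literally be absorbed into a constant --- it disappears if you feed the $(t-s)^{1-\ga}n^{1-3\ga}$ bound of Lemma \ref{prop6.1} directly into Proposition \ref{prop3.4}, or if you run the estimates with a slightly larger $\ga'$.
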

\begin{proof}
In summary of   Lemma \ref{prop6.1}, \ref{prop6.3}, \ref{lem6.4} and Proposition \ref{prop5.2}, we have 
\begin{eqnarray}\label{eq6.33}
|S_{2} (  n^{3\ga-1}\ep \,, z^{n})_{st}| &\leq& G (t-s)^{\ga}
\end{eqnarray}
for $(s, t)\in \mathcal{S}_{2}(\ll 0,T \rr)$.  
On the other hand,   Lemma \ref{lem6.5} implies that   relation \eref{eq6.33} still holds true  
for $s,t \in [t_{k},t_{k+1}]$. The lemma then follows by applying Lemma \ref{lem4.5} to $S_{2} (  n^{3\ga-1}\ep \,, z^{n}) $.
\end{proof}

\section{The error process as a rough path under new conditions}\label{section6}
In this section, we derive an improved upper-bound estimate of the error process under new conditions. 
Our considerations   relies on the following process,   solution of a linear RDE:
\begin{eqnarray}\label{e6.1 i}
\Phi_{t} &=& \id+ \int_{0}^{t} \partial b(y_{s})\Phi_{s}ds + \sum_{j=1}^{m}\int_{0}^{t} \partial V_{j}(y_{s}) \Phi_{s}dB^{j}_{s}\,.
\end{eqnarray}
Throughout the section,  we assume that $b \in C^{2}_{b}$ and $V\in C^{4}_{b}$. 
The reader might have noticed that $\Phi$ is simply the Jacobian related to equation \eref{e1.1}.
The process $\Phi$ is also the limit of the process $\Phi^{n}$ defined in \eref{e6.1i}, in a sense which    will be made   clear in the next section. We denote by $\Psi_{t}$ the inverse matrix of $\Phi_{t}$.  
Let us   introduce the following process on $S_{2}(\ll0,T\rr)$:
\begin{eqnarray}\label{e7.1}
\delta \wt{\ep}_{st} &=& \delta \ep_{st} -  \delta \hat{\ep}_{st} \,,
\end{eqnarray}
where $\ep$ is defined by \eref{eq:def-ep-t} and
 \begin{eqnarray}\label{e6.2i}
\delta \hat{\ep}_{st} &=& \sum_{j,j'=1}^{m}  \sum_{t_{k}=s}^{t-}  \Psi_{t_{k}} \partial {V}_{j}{V}_{j'} (y_{t_{k}}) \delta F^{jj'}_{t_{k}t_{k+1}} .
\end{eqnarray}
We shall now assume some a priori bounds on $\tilde{\ep}$, similarly to what we did in Hypothesis \ref{lem:bnd-ep-holder-1/2}. 
\begin{hyp}\label{condition6.1} 
The process $\wt{\ep}$ defined in \eref{e7.1} satisfies the following relation  for some $\al>0$:
\begin{eqnarray*} 
\left| \delta \wt{\ep}_{st}    \right| &\leq & G n^{-\al} (t-s)^{1-\ga}, \quad\quad (s,t )\in \mathcal{S}_{2}(\ll 0,T \rr).
\end{eqnarray*}  
\end{hyp}
Our aim is to get a new bound on the rough path above $(\ep, z^{n})$ under Hypothesis~\ref{condition6.1}, similarly to what has been obtained in Proposition \ref{prop5.3}.
Let us first consider  $\int_{s}^{t} \delta \ep_{su}\otimes dB_{u}$.  
\begin{lemma} \label{lem7.2}
 Suppose that Hypothesis \ref{condition6.1} is met for some  $\al: 0<\al< 2H-\frac12$. Then the following estimate holds true
 for all $(s, t)\in \cs_{2}(\ll 0,T\rr)$:
\begin{eqnarray}\label{eq6.5}
\Big| \int_{s}^{t} \delta \ep_{su}\otimes dB_{u} \Big| &\leq& G (n^{-\al}+n^{\frac12 - 2\ga} ) (t-s)^{2\ga}.
\end{eqnarray}
\end{lemma}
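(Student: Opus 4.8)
The plan is to reproduce for the integral $\int_s^t \delta\ep_{su}\otimes dB_u$ the same mechanism that was used in Lemma \ref{lem6.4} and Lemma \ref{prop6.3}, but now exploiting the refined a priori bound of Hypothesis \ref{condition6.1} together with the explicit form \eqref{e6.2i} of the correction term $\delta\hat\ep$. First I would split the integral along the discretization grid:
\begin{eqnarray*}
\int_s^t \delta\ep_{su}\otimes dB_u
=
\int_s^t \delta\ep_{s\eta(u)}\otimes dB_u + \int_s^t \delta\ep_{\eta(u)u}\otimes dB_u
=:
I^1_{st} + I^2_{st},
\end{eqnarray*}
where $\eta$ is defined in \eqref{eq:def-eta-A1}. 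The term $I^2_{st}$ only involves the increment of $\ep$ on a single mesh interval; using \eqref{e6.4i} and the estimates on $\int\Psi^n dA^e$ already obtained in the proof of Lemma \ref{lem6.5} (see \eqref{e6.18}), it contributes a term of order $G(t-s)n^{1-4\ga}$, which is dominated by $G\,n^{\frac12-2\ga}(t-s)^{2\ga}$ once one uses $t-s\ge T/n$ and $\ga<H<1/2$, so $I^2$ is harmless.

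The main term is $I^1_{st}=\sum_{t_k=s}^{t-}\delta\ep_{st_k}\otimes \delta B_{t_k t_{k+1}}$, a discrete Young/rough sum with weight $\delta\ep$. Here I would insert the decomposition $\delta\ep_{st_k}=\delta\wt\ep_{st_k}+\delta\hat\ep_{st_k}$ coming from \eqref{e7.1}. For the $\wt\ep$ piece, Hypothesis \ref{condition6.1} gives $|\delta\wt\ep_{st_k}|\le G n^{-\al}(t_k-s)^{1-\ga}$; since $1-\ga>1/2>\ga$, the exponents satisfy $(1-\ga)+\ga=1>$ nothing is needed — wait, one needs the sum to converge, and indeed Proposition \ref{prop3.4} applies with $\al_{\text{Prop}}=1-\ga$ and $\beta_{\text{Prop}}=\ga$ (sum $=1$), but to be safe one works with $\ga'+\ka$ slightly larger exponents as is done elsewhere in the paper, yielding a contribution bounded by $G n^{-\al}(t-s)^{1-\ga+\text{something}}\le G n^{-\al}(t-s)^{2\ga}$ after again using $(t-s)\ge T/n$ and absorbing the gap into $n^{-\al}$ versus $n^{\frac12-2\ga}$; the precise bookkeeping mimics the passage \eqref{e6.7i}. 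For the $\hat\ep$ piece, I would plug \eqref{e6.2i} in and recognize
\begin{eqnarray*}
\sum_{t_k=s}^{t-}\delta\hat\ep_{st_k}\otimes\delta B_{t_k t_{k+1}}
=
\sum_{t_k=s}^{t-}\Big(\sum_{t_{k'}=s}^{t_{k}-}\Psi_{t_{k'}}\partial V_j V_{j'}(y_{t_{k'}})\,\delta F^{jj'}_{t_{k'}t_{k'+1}}\Big)\otimes\delta B_{t_k t_{k+1}},
\end{eqnarray*}
which is exactly the double-sum object controlled in the second display of Corollary \ref{lem9.1}, with the weight $f_{t_{k'}}=\Psi_{t_{k'}}\partial V_j V_{j'}(y_{t_{k'}})$. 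That $f$ has the required $C^{\ga'}$-regularity for all $\ga'<H$ because $y$, $\Psi$ are solutions of RDEs driven by $B$ and $V\in C^4_b$; hence Corollary \ref{lem9.1} gives this term a bound of order $G\,n^{\frac12-2H}(t-s)^{H+\frac12}\le G\,n^{\frac12-2\ga}(t-s)^{2\ga}$, again using $H>\ga$ and $(t-s)\ge T/n$.

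Collecting the three contributions ($I^2$, the $\wt\ep$-part, the $\hat\ep$-part) and summing the corresponding exponents gives the claimed estimate $\big|\int_s^t\delta\ep_{su}\otimes dB_u\big|\le G(n^{-\al}+n^{\frac12-2\ga})(t-s)^{2\ga}$. The point where care is genuinely needed — and which I would treat as the main obstacle — is the $\wt\ep$-weighted Young sum: one must verify that the Hölder exponent $1-\ga$ furnished by Hypothesis \ref{condition6.1} is large enough (i.e. $1-\ga+\ga=1$, so one must pass to a slightly better exponent via Lemma \ref{lem4.2}/Proposition \ref{prop3.4} as in the rest of Section \ref{section5}) so that the discrete sewing Lemma \ref{lem2.4} can be invoked, and then to re-express the resulting $(t-s)$-power as $(t-s)^{2\ga}$ by trading powers of $(t-s)\ge T/n$ against powers of $n^{-1}$ without losing the prefactor $n^{-\al}$. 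The $\hat\ep$-part is comparatively routine once the identification with Corollary \ref{lem9.1} is made, and $I^2$ is immediate from the already-established single-interval bounds.
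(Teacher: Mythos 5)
Your decomposition and your treatment of the off-diagonal part $I^1$ follow the paper's proof closely: the split $\delta\ep=\delta\wt\ep+\delta\hat\ep$, Proposition \ref{prop3.4} combined with Hypothesis \ref{condition6.1} (using a H\"older exponent of $B$ strictly larger than $\ga$ so that the discrete sewing applies) for the $\wt\ep$-part, and the identification of the $\hat\ep$-part with the double sum of Corollary \ref{lem9.1} (plus Lemma \ref{lem4.2} to pass from the $L_p$ bound to a pathwise one) are exactly the right ingredients and yield the contributions $Gn^{-\al}(t-s)$ and $Gn^{\frac12-2\ga}(t-s)^{2\ga}$.

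The gap is in the diagonal term $I^2_{st}=\int_s^t\delta\ep_{\eta(u)u}\otimes dB_u$, which you dismiss as contributing $G(t-s)n^{1-4\ga}$ ``using the estimates already obtained in the proof of Lemma \ref{lem6.5}.'' Those single-interval estimates give $|\delta\ep_{t_ku}|\le G(u-t_k)^{\ga}n^{-\ga}$, so each rough integral $\int_{t_k}^{t_{k+1}}\delta\ep_{t_ku}\otimes dB_u$ is only of order $n^{-3\ga}$ (a third-order object: two levels from $\delta\ep$ and one from $dB$), and summing over the $\sim n(t-s)/T$ mesh intervals yields $G(t-s)n^{1-3\ga}$, not $n^{1-4\ga}$. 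You appear to have carried over the exponent from \eqref{e6.9} in Lemma \ref{prop6.3}, but there the outer integrator is $d\ep_u$, which costs an extra factor $n^{-\ga}$ compared with $dB_u$. The bound $G(t-s)n^{1-3\ga}$ is \emph{not} dominated by $G(n^{-\al}+n^{\frac12-2\ga})(t-s)^{2\ga}$: since $1-3\ga>\frac12-2\ga$ for $\ga<\frac12$, one would need $\al\le 3\ga-1$, whereas the lemma must hold for all $\al<2H-\frac12$, and $2H-\frac12>3H-1>3\ga-1$. This is precisely why the paper spends most of its proof on this term: it substitutes the decomposition \eref{e6.4i} of $d\ep=\sum_e\Psi^n\,dA^e$, expands $dA^3$ via the scheme equation \eref{e1.2}, and invokes the weighted-sum estimates \eref{e11.1ii} and \eref{e 11.1i} of Lemma \ref{lem6.11} --- which rest on the probabilistic cancellations of Lemma \ref{cor3.13} and Lemma \ref{lem11.2} across mesh intervals --- to upgrade $n^{1-3\ga}$ to $n^{1-4\ga+2\kappa}(t-s)^{1-\ga}\le n^{-\ga+2\kappa}(t-s)^{2\ga}$. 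Without this step your proof does not close.
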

\begin{proof}  As in Lemma \ref{lem6.4}, we first write
\begin{eqnarray} \label{e6.17ii}
\int_{s}^{t} \delta \ep_{su} \otimes dB_{u} =   \int_{s}^{t} 
 \delta \ep_{s\eta(u)} \otimes
 dB_{u} +   \int_{s}^{t} 
 \delta \ep_{ \eta(u) u } \otimes
 dB_{u} := \tilde{D}^{1}_{st} +\tilde{D}^{2}_{st}.
\end{eqnarray}
Furthermore, invoking decomposition \eref{e6.4i}  we have
\begin{eqnarray}\label{eq6.4}
 \tilde{D}^{2}_{st} 
 = \sum_{e=1}^{3} \int_{s}^{t} \int_{\eta(u)}^{u} \Psi^{n}_{v} d A^{e}_{v}  \otimes dB_{u}
  := \sum_{e=1}^{3}I^{e}_{st}   .
  \end{eqnarray}
  Note that, recalling expression \eref{e6.4ii} for $A^{2}$, $I_{2}$ can be seen as a triple iterated integral which is interpreted in the Young sense. Then similar to \eref{eq5.28}   it is easy to show that
  \begin{eqnarray}\label{e 6.4}
 | I^{2}_{st}  | &\leq& G n^{-2\ga} (t-s).
\end{eqnarray}
As far as $I_{1}$ is concerned,
some elementary computations reveal that for $(s,t) \in \cs_{2} (\ll 0,T \rr)$:
\begin{eqnarray*}
I^{1}_{st} &=& -\frac12 \sum_{j=1}^{m} \sum_{t_{k}=s}^{t-} \partial V^{i}_{j}V_{j} (y^{n}_{t_{k}}) \int_{t_{k}}^{t_{k+1}} \int_{t_{k}}^{u} \Psi^{n, i}_{v} d  (v-t_{k})^{2H} \otimes dB_{u}.
\end{eqnarray*}
Then
 it follows from \eref{e11.1ii} in Lemma \ref{lem6.11}  that
\begin{eqnarray}\label{e 6.5}
|I^{1}_{st}  |
 \leq  G n^{1-4\ga} (t-s)^{1-\ga}
 \leq  G n^{-\ga} (t-s)^{2\ga} ,
\end{eqnarray}
where we use the fact that $t-s\geq \frac Tn$ for the second inequality. 
For $I_{3}$, we start from relation~\eref{e6.4ii}. Then due to 
the expression \eref{e1.2} for $y^{n}$, we have 
\begin{eqnarray*}
  I^{3}_{st}     & = &  
 I^{31}_{st}   + I^{32}_{st}  + I^{33}_{st}   ,
\end{eqnarray*}
where
\begin{eqnarray*}
I^{31}_{st} &=& \sum_{t_{k} = s}^{t-} \int_{t_{k}}^{t_{k+1}} \int_{t_{k}}^{u} \Psi^{n}_{v} \sum_{j=1}^{m} \int_{t_{k}}^{v} \partial V_{j} (y^{n}_{r}) V(y^{n}_{t_{k}}) dB_{r} dB^{j}_{v}  \otimes d B_{u} ,
\\
I^{32}_{st} &=&  \sum_{t_{k} = s}^{t-} \int_{t_{k}}^{t_{k+1}} \int_{t_{k}}^{u} \Psi^{n}_{v} \sum_{j=1}^{m} \int_{t_{k}}^{v} \partial V_{j} (y^{n}_{r}) \frac12 \sum_{j'=1}^{m} \partial V_{j'}V_{j'}(y^{n}_{t_{k}}) d(r-t_{k})^{2H} dB^{j}_{v}  \otimes d B_{u}
\\
I^{33}_{st} &=& \sum_{t_{k} = s}^{t-} \int_{t_{k}}^{t_{k+1}} \int_{t_{k}}^{u} \Psi^{n}_{v}  \sum_{j=1}^{m} \int_{t_{k}}^{v} \partial V_{j} (y^{n}_{r}) b(y^{n}_{t_{k}}) d {r} dB^{j}_{v}  \otimes d B_{u}\,. 
\end{eqnarray*}
As for $I^{2}$, it is easy to show that  $|I^{32}_{st}|$ and $|I^{33}_{st}|$ are bounded by $Gn^{-2\ga} (t-s)$ and $G n^{1-4\ga} (t-s)$, respectively. On the other hand,  it follows from \eref{e11.1 i} in Lemma  \ref{lem6.11} that
for any $\kappa >0$ we can find a random variable $G$ such that
$|I^{31}_{st}|$ is less than $Gn^{1-4\ga+2\kappa} (t-s)^{1-\ga}$. We now choose $\kappa >0$ small enough such that $1-4\ga+2\kappa<-\ga$. Then,  summarizing our estimates of $I^{31}$, $I^{32}$ and $I^{33}$, we have
\begin{eqnarray}\label{e6.6}
| I^{3}_{st} |  \leq  Gn^{1-4\ga+2\kappa} (t-s)^{1-\ga}
 \leq  G n^{-\ga} (t-s)^{2\ga} .
\end{eqnarray}
Applying \eref{e 6.4}, \eref{e 6.5} and \eref{e6.6}  to \eref{eq6.4} we have thus obtained:
\begin{eqnarray}\label{e6.7ii}
\left|  \tilde{D}^{2}_{s t} \right|
 &\leq & G n^{-\ga} (t-s)^{2\ga} .
\end{eqnarray}

We now turn to the   term $\tilde{D}^{1}_{st}$ on the right-hand side of \eref{e6.17ii}.  
Write 
\begin{eqnarray*}
\tilde{D}^{1}_{st} 
=
\int_{s}^{t} ( \delta  {\ep}_{s\eta(u)} -  \delta \wt{\ep}_{s\eta(u)} ) \otimes dB_{u} 
+ \int_{s}^{t} \delta \wt{\ep}_{s\eta(u)} \otimes dB_{u}
:=
 \tilde{D}^{11}_{st} + \tilde{D}^{12}_{st}   ,
\end{eqnarray*}
where we recall that $\wt{\ep}$ is defined in \eref{e7.1}. 
Applying Proposition \ref{prop3.4} and taking into account   Hypothesis \ref{condition6.1} and the fact that $H>\ga$, we obtain
\begin{eqnarray}\label{e6.4 i}
\Big|  \tilde{D}^{12}_{st}  \Big| &\leq& G n^{-\al} (t-s).
\end{eqnarray}
On the other hand, by \eref{e7.1} and \eref{e6.2i} we have
\begin{eqnarray*}
  \tilde{D}^{11}_{st}   
&=& \int_{s}^{t} 
 \sum_{j,j'=1}^{m}   \int_{s}^{\eta(u)} \Psi_{\eta(v)} \partial {V}_{j}{V}_{j'} (y_{\eta(v)}) dF^{jj'}_{\eta(v) v} \otimes dB_{u}
 \\
 &=& \sum_{j,j'=1}^{m}   \sum_{t_{k}=s+}^{t-} \sum_{t_{k'} = s }^{t_{k-1}}  
 \Psi_{t_{k'}} \partial {V}_{j}{V}_{j'} (y_{ t_{k' }}) 
      F^{jj'}_{t_{k'}t_{k' +1 } } \otimes \delta B_{t_{k}t_{k+1}}.
\end{eqnarray*}
So applying Corollary \ref{lem9.1} to $\tilde{D}^{11}_{st}$ we obtain
\begin{eqnarray*} 
\Big\| \tilde{D}^{11}_{st} 
 \Big\|_{p} &\leq& K n^{\frac12 - 2H} (t-s)^{H+\frac12}, \quad \text{ for all  } p\geq 1. 
\end{eqnarray*}
Taking into account Lemma \ref{lem4.2},  we thus get:
\begin{eqnarray}\label{e6.5ii}
\Big| \tilde{D}^{11}_{st} 
 \Big|  &\leq& G n^{\frac12 - 2\ga} (t-s)^{2\ga}.
\end{eqnarray}
In summary of \eref{e6.7ii}, \eref{e6.4 i} and \eref{e6.5ii}, we end up with: 
\begin{eqnarray*}
\Big| \int_{s}^{t} \delta \ep_{su}\otimes dB_{u} \Big| &\leq&  
 G n^{-\ga} (t-s)^{2\ga} 
 + Gn^{-\al}(t-s) + G n^{\frac12 -2\ga} (t-s)^{2\ga}
, 
\end{eqnarray*}
from which our claim \eref{eq6.5} is easily deduced. 
\end{proof}

In order to complete the study of the rough path above $(\ep, y^{n}, B)$, let us 
  turn to the integral $\int_{s}^{t} \delta y^{n}_{su}   \otimes d\ep_{u} $\,.
\begin{lemma}\label{lem7.4}
Suppose that Hypothesis \ref{lem:bnd-ep-holder-1/2} and \ref{condition6.1} are met for some  $\al\in( 0, 2H-\frac12)$.  
Then the integral $\int_{s}^{t} \delta y^{n}_{su}   \otimes d\ep_{u} $ satisfies the following relation for all $(s,t)\in\cs_{2}(\ll 0,T\rr)$:
\begin{eqnarray}\label{eq:6.12ii}
\Big| \int_{s}^{t} \delta y^{n}_{su}   \otimes d\ep_{u}   \Big| &\leq&  Gn^{-\al} (t-s)^{2\ga}.
\end{eqnarray}
\end{lemma}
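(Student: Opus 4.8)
# Proof Proposal for Lemma \ref{lem7.4}

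\textbf{Overall approach.} The plan is to mimic the strategy of Lemma \ref{lem7.2} and Lemma \ref{lem6.4}: split the integral according to whether the argument of $\ep$ is evaluated at the grid point $\eta(u)$ or in between, then exploit the expression \eqref{e6.4i} for $\ep$ in terms of $\Psi^n$ and the driving increments $A^e$, together with the rough-path bounds on $S_2(B,y^n)$ from Proposition \ref{prop5.2} and on $\Psi^n$ from Proposition \ref{prop:bnd-Phi-n}. First I would write
\begin{equation*}
\int_{s}^{t} \delta y^{n}_{su}\otimes d\ep_{u}
= \int_{s}^{t} \delta y^{n}_{s\eta(u)}\otimes d\ep_{u}
+ \int_{s}^{t} \delta y^{n}_{\eta(u) u}\otimes d\ep_{u}
=: E^{1}_{st} + E^{2}_{st}.
\end{equation*}
For $E^{1}$ one has the discrete form $E^{1}_{st}=\sum_{t_k=s}^{t-}\delta y^{n}_{st_k}\otimes\delta\ep_{t_kt_{k+1}}$, so by Proposition \ref{lem5.1} ($|\delta y^n_{st}|\le G(t-s)^{\ga}$) and Hypothesis \ref{condition6.1} (which after combining with Hypothesis \ref{lem:bnd-ep-holder-1/2} and the relation $t-s\ge T/n$ gives $|\delta\ep_{st}|\le Gn^{-\al}(t-s)^{1-\ga}$ — note $\ga+1-\ga=1>1$ fails, so one needs the sharper exponent $|\delta\ep_{st}|\le Gn^{-\al}(t-s)^{1-\ga+\kappa}$ coming from $n^{-\al}(t-s)^{1-\ga}=n^{-\al}(t-s)^{1-\ga-\kappa}(t-s)^{\kappa}\le n^{-\al-\kappa}(t-s)^{1-\ga-\kappa}$... ) a direct application of Proposition \ref{prop3.4} with exponents $\ga$ and $1-\ga+\kappa$ (whose sum exceeds $1$) yields $|E^{1}_{st}|\le Gn^{-\al}(t-s)$, which is dominated by $Gn^{-\al}(t-s)^{2\ga}$ since $t-s\le T$ and $2\ga\le 1$.

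\textbf{The term $E^{2}$.} For $E^{2}_{st}$ I would insert \eqref{e6.4i}, writing $\delta\ep_{\eta(u)u}=\sum_{e=1}^{3}\int_{\eta(u)}^{u}\Psi^{n}_{v}\,dA^{e}_{v}$, so that
\begin{equation*}
E^{2}_{st}=\sum_{e=1}^{3}\int_{s}^{t}\delta y^{n}_{\eta(u)u}\otimes\Big(\int_{\eta(u)}^{u}\Psi^{n}_{v}\,dA^{e}_{v}\Big)\,du \ ?
\end{equation*}
— more precisely this is a sum of iterated integrals over each cell $[t_k,t_{k+1}]$. On each cell, each of the nine (or, here, three) contributions is a higher-order iterated integral of the rough path $(B,y^{n},\Psi^{n})$ (for $e=1$ it also involves the deterministic factor $(v-t_k)^{2H}$, handled as a Young integral as in Lemma \ref{lem6.5} and Lemma \ref{lem6.11}). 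Using Proposition \ref{prop:bnd-Phi-n} and Lyons' extension theorem exactly as in the estimate \eqref{eq5.28}, each cell contributes $\le G(T/n)^{4\ga}$ for the $e=3$ term, $\le G(T/n)^{1+2\ga+2H}$-type bounds for the $e=1$ term, and a strictly smaller power for $e=2$; summing over the at most $n(t-s)/T$ cells gives $|E^{2}_{st}|\le Gn^{1-4\ga}(t-s)$ in the worst case. Since $1-4\ga<-\ga<2\ga-1$ for $\ga>1/3$, this is again bounded by $Gn^{-\al}(t-s)^{2\ga}$ using $t-s\ge T/n$ — but one must check $1-4\ga\le-\al$, which holds because $\al<2H-\tfrac12<2\ga$ can be arranged together with $\ga$ close enough to $H$; if not, one absorbs the extra factor into $(t-s)^{2\ga-1}$ via $t-s\ge T/n$.

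\textbf{Main obstacle.} The delicate point is bookkeeping the exponents so that everything collapses cleanly to the single bound $Gn^{-\al}(t-s)^{2\ga}$: the $E^{2}$ contribution naturally produces negative powers of $n$ like $n^{1-4\ga}$ or $n^{1-4\ga+2\kappa}$ which are \emph{not} of the form $n^{-\al}$ unless one trades powers of $(t-s)$ for powers of $n$ via $t-s\ge T/n$, and one must verify that the resulting exponent of $n$ is still $\le-\al$ after this trade (this uses $\al<2H-\tfrac12$ and the freedom to pick $\ga<H$ close to $H$ and $\kappa$ small). A secondary technical nuisance is that the $E^{1}$ estimate requires the combined a priori bound on $\delta\ep$ with H\"older exponent strictly greater than $1-2\ga$, so one must first derive from Hypotheses \ref{lem:bnd-ep-holder-1/2}--\ref{condition6.1} and the identity \eqref{e7.1} that $|\delta\ep_{st}|\le G\,n^{-\al}(t-s)^{1-\ga}$ holds with room to spare (interpolating with the cell-wise bound \eqref{e6.18}), so that Proposition \ref{prop3.4} applies. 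Once these two reductions are in place, assembling $|E^1_{st}|+|E^2_{st}|$ gives \eqref{eq:6.12ii} immediately.
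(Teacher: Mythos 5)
Your decomposition into $E^{1}+E^{2}$ and your treatment of $E^{2}$ are in the spirit of what the paper does elsewhere, but the treatment of $E^{1}=\sum_{t_{k}=s}^{t-}\delta y^{n}_{st_{k}}\otimes\delta\ep_{t_{k}t_{k+1}}$ contains a genuine gap that no amount of exponent juggling closes. Proposition \ref{prop3.4} needs the two H\"older exponents to sum to strictly more than $1$. The weight $\delta y^{n}$ is only $\ga$-H\"older with $\ga<H<\frac12$, while the best available regularity for $\delta\ep=\delta\hat\ep+\delta\wt{\ep}$ is exponent $\frac12$ for the $\hat\ep$ part (it is a weighted sum of the L\'evy-area increments $\delta F$, cf. \eqref{e6.2i} and \eqref{eq6.20}) and $1-\ga$ for the $\wt{\ep}$ part; the exponent sums are therefore $\ga+\frac12<1$ and $\ga+(1-\ga)=1$. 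Trading powers of $(t-s)$ for powers of $n$ via $t-s\ge T/n$ raises the H\"older exponent only at the price of worsening the $n$-prefactor by the same amount: for the $\hat\ep$ contribution one ends up with a prefactor no better than $n^{\frac12-2H+\kappa}$ with $\kappa>\frac12-\ga$, i.e.\ essentially $n^{1-3H}$, which is strictly larger than $n^{-\al}$ when $\al$ is close to $2H-\frac12$ (since $1-3H>\frac12-2H$ for $H<\frac12$). The structural point being missed is that the weight must be expanded to second order, $\delta y^{n}_{st_{k}}=V(y^{n}_{s})\delta B_{st_{k}}+O((t_{k}-s)^{2\ga})$, and the leading sum $\sum\delta B_{st_{k}}\otimes\delta F_{t_{k}t_{k+1}}$ must be controlled by the dedicated $L_{p}$ estimate (Lemma \ref{lem:sum-delta-B-F}, Corollary \ref{lem9.1}), not by a Young pairing.

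This is exactly what the paper's proof does, in a shorter way: it sets $R_{st}=\int_{s}^{t}\delta y^{n}_{su}\otimes d\ep_{u}-V(y^{n}_{s})\int_{s}^{t}\delta B_{su}\otimes d\ep_{u}$, bounds $\delta R_{sut}$ using the controlled-path estimate \eqref{eq:bnd-delta-yn}, Hypothesis \ref{lem:bnd-ep-holder-1/2} and, crucially, Lemma \ref{lem7.2} for $\int\delta B\otimes d\ep$, and then applies the discrete sewing Lemma \ref{lem2.4} (the relevant exponents $2\ga+\frac12$ and $3\ga$ now exceed $1$). The main term $V(y^{n}_{s})\int_{s}^{t}\delta B_{su}\otimes d\ep_{u}$ is again handled by Lemma \ref{lem7.2}. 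Your proposal never invokes Lemma \ref{lem7.2}, which is precisely where the hard work (the weighted sums of $F$ via Corollary \ref{lem9.1}, the use of Hypothesis \ref{condition6.1}, and the within-cell triple integrals) has been packaged; without it, or an equivalent second-order treatment of the weight, the claimed bound \eqref{eq:6.12ii} does not follow.
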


\begin{proof} 
We consider a remainder term $R$ defined for $(s,t)\in\cs_{2}(\ll 0,T\rr)$ by:
\begin{eqnarray}\label{e6.12i}
R_{st} &=& \int_{s}^{t} \delta y^{n}_{su}   \otimes d\ep_{u}  - V(y^{n}_{s}) \int_{s}^{t} \delta B_{su} \otimes d \ep_{u}. 
\end{eqnarray}
According to the basic rules of action of $\delta$ on products of increments, we have:
\begin{eqnarray}\label{e6.15i}
\delta R_{sut} &=& ( \delta y^{n}_{su}   - V(y^{n}_{s}) \delta B_{su}) \otimes \delta \ep_{ut} + \delta V(y^{n}_{\cdot})_{su} \int_{u}^{t} \delta B_{uv} \otimes d\ep_{v},
\end{eqnarray}
for all $(s,u,t)\in\cs_{3}(\ll 0,T\rr)$.
  Applying the second inequality \eref{eq:bnd-delta-yn} and Hypothesis \ref{lem:bnd-ep-holder-1/2} to the first term on the right-hand side of \eref{e6.15i} and invoking Lemma \ref{lem7.2} for the the second term,  we obtain:
\begin{eqnarray*}
|\delta R_{sut}| \leq G n^{-\al} (u-s)^{2\ga} (t-u)^{\frac12} + Gn^{-\al} (u-s)^{\ga} (t-u)^{2\ga}
 \leq
Gn^{-\al}(t-s)^{3\ga}.
\end{eqnarray*}
Since $3\ga>1$, we are in a position to apply the discrete sewing Lemma \ref{lem2.4}, which yields:
\begin{eqnarray}\label{e6.12}
\|R\|_{3\ga} \leq K\| \delta R \|_{3\ga} \leq G n^{-\al}.
\end{eqnarray}
We now recast \eqref{e6.12i} as follows:
\begin{equation}\label{eq7.17}
 \int_{s}^{t} \delta y^{n}_{su}   \otimes d\ep_{u}
 =
V(y^{n}_{s}) \int_{s}^{t} \delta B_{su} \otimes d \ep_{u}
+
R_{st} .
\end{equation}
Then resorting to \eref{e6.12} and Lemma \ref{lem7.2}  for \eref{eq7.17}, our claim \eqref{eq:6.12ii} easily follows.
\end{proof}

We can now state the main result of this section, giving a full estimate of the rough path above $(z^{n} , n^{\al-\kappa} \ep)$. Recall that $z^{n}$ designates the couple $(B, y^{n})$.
\begin{prop}\label{prop7.1}
Let $\wt{\ep}$ be defined in \eref{e7.1} and suppose that  Hypothesis   \ref{condition6.1} hold true  for some  $ 0<\al< 2H-\frac12$.  
Then for any $\kappa \in (0,\al)$ and $(s,t)\in\cs_{2}([0,T])$ we have:
\begin{eqnarray}\label{e6.17i}
|S_{2}(z^{n} , n^{\al-\kappa} \ep)_{st}| &\leq& G(t-s)^{\ga}.
\end{eqnarray}
\end{prop}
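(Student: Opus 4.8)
The plan is to assemble the estimate \eqref{e6.17i} by combining the first-order bounds on $\ep$ with the second-order (iterated-integral) bounds established in Lemmas \ref{lem7.2} and \ref{lem7.4}, and then to pass from the discrete grid $\ll 0,T\rr$ to the full interval $[0,T]$ via Lemma \ref{lem4.5}, exactly as was done for Proposition \ref{prop5.3}. First I would recall that $S_{2}(z^{n}, n^{\al-\kappa}\ep)$ consists of the level-one increments $(\delta z^{n}_{st}, n^{\al-\kappa}\delta\ep_{st})$ together with the four families of level-two integrals built from the pairs of components: $\int \delta z^{n}\otimes dz^{n}$, $n^{\al-\kappa}\int \delta z^{n}\otimes d\ep$, $n^{\al-\kappa}\int\delta\ep\otimes dz^{n}$, and $n^{2(\al-\kappa)}\int\delta\ep\otimes d\ep$. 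The bound on $\delta z^{n}$ and on $\int\delta z^{n}\otimes dz^{n}$ is precisely the content of Proposition \ref{prop5.2}, so those terms are already controlled by $G(t-s)^{\ga}$ uniformly in $n$.

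Next I would handle the $\ep$-terms one at a time on $(s,t)\in\cs_{2}(\ll 0,T\rr)$. For the first-order increment: under Hypothesis \ref{condition6.1} we have $|\delta\wt\ep_{st}|\le Gn^{-\al}(t-s)^{1-\ga}$, and writing $\delta\ep=\delta\wt\ep+\delta\hat\ep$ with $\delta\hat\ep$ given by \eqref{e6.2i}, Corollary \ref{lem9.1} (applied to the weight $\Psi_{t_k}\partial V_j V_{j'}(y_{t_k})$, which satisfies the hypotheses of Proposition \ref{prop3.6} thanks to Proposition \ref{lem5.1} and the regularity of $\Phi,\Psi$) together with Lemma \ref{lem4.2} gives $|\delta\hat\ep_{st}|\le Gn^{\frac12-2\ga}(t-s)^{2\ga}$; since $t-s\ge T/n$ and $\ga<H$, both pieces are $\le G n^{-\al}(t-s)^{\ga}$ after absorbing powers of $(t-s)$ into $n$-powers, hence $n^{\al-\kappa}|\delta\ep_{st}|\le G(t-s)^{\ga}$. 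For the mixed integral $\int\delta\ep_{su}\otimes dB_u$ (and similarly with $dy^n$), Lemma \ref{lem7.2} gives $\le G(n^{-\al}+n^{\frac12-2\ga})(t-s)^{2\ga}$, so multiplying by $n^{\al-\kappa}$ and using $n^{\frac12-2\ga}\le n^{-\al}$ (valid since $\al<2H-\frac12<2\ga-\frac12$ for $\ga$ close to $H$) yields $\le Gn^{-\kappa}(t-s)^{2\ga}\le G(t-s)^{2\ga}$, consistent with the $\ga$-rough-path scaling $\|\XX\|_{2\ga}^{1/2}$. The integral $\int\delta z^n_{su}\otimes d\ep_u$ is treated by Lemma \ref{lem7.4}, which gives the bound $Gn^{-\al}(t-s)^{2\ga}$ directly. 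For the purely-$\ep$ double integral $\int\delta\ep_{su}\otimes d\ep_u$, I would invoke Lemma \ref{prop6.3}: under Hypothesis \ref{lem:bnd-ep-holder-1/2} with exponent $\al$ it gives $|\int\delta\ep_{su}\otimes d\ep_u|\le Gn^{-2\al+2\kappa}(t-s)$, and since $t-s\ge T/n$ this converts to $\le Gn^{-2(\al-\kappa)}(t-s)^{2\ga}$, so $n^{2(\al-\kappa)}$ times it is $\le G(t-s)^{2\ga}$.

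Collecting these, we obtain $|S_{2}(z^{n}, n^{\al-\kappa}\ep)_{st}|\le G(t-s)^{\ga}$ for all $(s,t)\in\cs_{2}(\ll 0,T\rr)$. Finally, on each small interval $s,t\in[t_k,t_{k+1}]$ the same bound holds by Lemma \ref{lem6.5} (the estimates there give $(t-s)^{\ga}n^{-\ga}$ and $(t-s)^{2\ga}n^{-\ga}$ or $n^{-2\ga}$, all dominated by $G(t-s)^{\ga}$ after the renormalization since $\al-\kappa<\ga$), so the hypotheses of the interpolation Lemma \ref{lem4.5} are met on both scales, and applying it to $S_{2}(z^{n}, n^{\al-\kappa}\ep)$ extends the estimate to all $(s,t)\in\cs_{2}([0,T])$, which is \eqref{e6.17i}. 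The main obstacle I anticipate is bookkeeping the exponents: one must check carefully that under the standing assumption $0<\al<2H-\frac12$ and for $\ga$ chosen sufficiently close to $H$, every auxiliary $n$-power produced (notably $n^{\frac12-2\ga}$ from $\delta\hat\ep$ and from $\tilde D^{11}$ in Lemma \ref{lem7.2}, and the $n^{1-4\ga+2\kappa}$ terms in $I^3$) is indeed $\le n^{-(\al-\kappa)}$ or better after using $t-s\ge T/n$, so that the renormalized rough path is genuinely bounded uniformly in $n$; this is routine but must be done with the right ordering of the choices of $\ga$, then $\kappa$.
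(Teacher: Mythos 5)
Your proposal is correct and follows essentially the same route as the paper: the first-order bound comes from splitting $\delta\ep=\delta\wt\ep+\delta\hat\ep$ and treating $\hat\ep$ via Corollary \ref{lem9.1} and Lemma \ref{lem4.2}, the level-two terms come from Lemmas \ref{prop6.3}, \ref{lem7.2} and \ref{lem7.4}, and the extension off the grid uses Lemmas \ref{lem6.5} and \ref{lem4.5}. The only slip is the intermediate restatement $|\delta\hat\ep_{st}|\le Gn^{\frac12-2\ga}(t-s)^{2\ga}$ (converting $(t-s)^{1/2}$ to $(t-s)^{2\ga}$ via $t-s\ge T/n$ actually yields $n^{2\ga-2H}$, not $n^{\frac12-2\ga}$), but since for a level-one increment only the $(t-s)^{\ga}$ scaling matters, your final conclusion $n^{\al-\kappa}|\delta\ep_{st}|\le G(t-s)^{\ga}$ is unaffected.
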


\begin{proof} 

We start by analyzing the first-order increments of $S_{2}(z^{n} , n^{\al-\kappa} \ep)$. First notice that $\delta z^{n}$ is controlled by Proposition~\ref{prop5.2}. Furthermore, 
according to relation \eref{e7.1}, we have 
\begin{eqnarray}\label{eq6.19}
\delta \ep &=& \delta \hat{\ep} +\delta \tilde{\ep}.
\end{eqnarray}
As in the proof of Lemma \ref{lem7.2}, equation \eref{e6.2i} also asserts that Corollary \ref{lem9.1} can be applied to $\delta \hat{\ep}$, yielding an inequality of the form
\begin{eqnarray}\label{eq6.20}
\| \delta \hat{\ep}_{st} \|_{p} &\leq& Kn^{\frac12 -2H} (t-s)^{\frac12}
\end{eqnarray}
for $(s,t) \in \cs_{2} (\ll0,T\rr)$. Applying Lemma \ref{lem4.2} to relation \eref{eq6.20}, plugging this information into \eref{eq6.19} and invoking Hypothesis   \ref{condition6.1}, 
 we obtain:
\begin{eqnarray}\label{e6.17}
| \delta \ep_{st}| \leq  Gn^{-\al}(t-s)^{\frac12} \leq Gn^{-\al+\kappa} (t-s)^{\frac12 +\kappa},
\end{eqnarray}
for all $(s,t)\in\cs_{2}(\ll 0,T\rr)$. This is compatible with our claim \eqref{e6.17i}.

Let us now handle the 2nd-order increments of $S_{2}(z^{n} , n^{\al-\kappa} \ep)$. According to Lemma~\ref{prop6.3}, 
 \begin{eqnarray}\label{e6.14i}
\Big| \int_{s}^{t} 
 \delta \ep_{ s u } \otimes
 d\ep_{u} \Big| &\leq& G  n^{-2\al+2\kappa}(t-s) ,
\quad\quad
   (s,t)\in\cs_{2}(\ll 0,T\rr).
\end{eqnarray}
In the same way, gathering  Lemma \ref{lem7.2}, Lemma \ref{lem7.4} together with \eref{e6.17}, we get that   
\begin{eqnarray}\label{e6.15}
\Big| \int_{s}^{t} \delta \ep_{su}\otimes dz^{n}_{u} \Big| &\leq& G  n^{-\al+\kappa}  (t-s)^{2\ga} ,
\quad \quad (s,t)\in\cs_{2}(\ll 0,T\rr). 
\end{eqnarray}
Hence, putting together inequalities \eref{e6.14i} and \eref{e6.15} and adding the estimate of $S_{2}(z^{n})$ in Proposition \ref{prop5.2}, we obtain that on the grid $\cs_{2}(\ll 0,T\rr)$:
 \begin{eqnarray}\label{e6.15ii}
|S_{2} (z^{n}, n^{\al-\kappa} \ep)_{st}| &\leq& G (t-s)^{\ga}
\end{eqnarray}
On the other hand,    Lemma \ref{lem6.5} implies that  \eqref{e6.15ii} also holds true for $s,t \in [t_{k},t_{k+1}]$. Therefore,  applying  Lemma \ref{lem4.5} to $S_{2} (z^{n}, n^{\al-\kappa} \ep)$ we obtain the desired estimate \eqref{e6.17i}.
\end{proof}

\section{Rate of convergence for the   Euler scheme}\label{section7}

In this section, we take another look at the strong convergence of the   Euler scheme. Thanks to the information we have gathered on the error process, we shall reach optimality for the convergence rate of the scheme. However, before we can state this optimal result, let us give some preliminaries about the Jacobian $\Phi$ of equation \eqref{e1.1}.

\subsection{Rate of convergence for the Jacobian}\label{section7.1} 

As mentioned in Section \ref{section6}, the Jacobian $\Phi$ of equation \eref{e1.1} should be seen as the limit of the process $\Phi^{n}$. In the current section we shall quantify this convergence. We start by an algebraic identity which is stated as a lemma.

\begin{lemma}\label{lem:eq-mathcalE}
Recall that $\Phi$ and $\Phi^{n}$ are the solutions of   equations \eref{e6.1 i} and \eref{e6.1i}, respectively. For $t\in[0,T]$, we set 
\begin{equation}\label{eq:def-mathcal-E}
\mathcal{E}_{t} = \Psi_{t} \lp \Phi_{t} -\Phi^{n}_{t} \rp.
\end{equation}
Then $\ce$ satisfies the following equation on $[0,T]$:
\begin{equation}\label{e7.1ii}
\mathcal{E}_{t} = 
\int_{0}^{t} \Psi_{s} \sum_{i=1}^{d}  \{ \partial_{i}\partial b(y^{n}) \}_{s}  (\Phi^{n}_{s} \ep_{s})^{i} \Phi^{n}_s ds 
+ \sum_{j=1}^{m}  \int_{0}^{t} \Psi_{s} \sum_{i=1}^{d} \{ \partial_{i}\partial V_{j}(y^{n}) \}_{s}  
 (\Phi^{n}_{s} \ep_{s})^{i} \Phi^{n}_s dB^{j}_{s}\, ,
\end{equation}
where the processes $\{ \partial_{i}\partial b(y^{n}) \}_{s}$ and $\{ \partial_{i}\partial V_{j}(y^{n}) \}_{s}$ are defined by:
\begin{eqnarray}
\{ \partial_{i}\partial b(y^{n}) \}_{s} &=&  \int_{0}^{1} \int_{0}^{1}   \partial_{i}\partial b ( y_{s}+ (1-\mu)(1-\lambda)(y^{n}_{s}-y_{s})  ) (1-\lambda)d\mu d\lambda, \label{eq:def-ddb-yn}
\\
 \{ \partial_{i}\partial V_{j}(y^{n}) \}_{s} &=&  \int_{0}^{1} \int_{0}^{1}   \partial_{i}\partial V_{j} ( y_{s}+ (1-\mu)(1-\lambda)(y^{n}_{s}-y_{s})  ) (1-\lambda)d\mu d\lambda.  \label{eq:def-ddV-yn}
\end{eqnarray}
If we define $\wt{\mathcal{E}}_{t} = \Phi_{t} (\Psi_{t} - \Psi^{n}_{t}) $, then a similar expression can be derived for $\wt{\mathcal{E}}_{t} $.  
\end{lemma}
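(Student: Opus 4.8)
The plan is to derive the equation for $\ce_t$ directly from the two linear RDEs \eqref{e6.1 i} and \eqref{e6.1i}, treating the difference $\Phi_t - \Phi^n_t$ by the variation of parameters formula built from $\Phi$ (equivalently from $\Psi$). First I would write both equations in the common form $d\Phi_t = \{\partial b(y)\}_t\Phi_t\,dt + \sum_j \{\partial V_j(y)\}_t\Phi_t\,dB^j_t$ with the genuine Jacobian coefficients $\partial b(y_s)$, $\partial V_j(y_s)$, versus $d\Phi^n_t = \{\partial b(y^n)\}_t\Phi^n_t\,dt + \sum_j\{\partial V_j(y^n)\}_t\Phi^n_t\,dB^j_t$ where $\{\partial b(y^n)\}_s$ and $\{\partial V_j(y^n)\}_s$ are the averaged coefficients of \eqref{e6.1i}. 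Subtracting, the increment $d(\Phi_t-\Phi^n_t)$ splits into a ``same coefficient'' part $\partial b(y_s)(\Phi_s-\Phi^n_s)\,ds + \sum_j\partial V_j(y_s)(\Phi_s-\Phi^n_s)\,dB^j_s$ and a ``coefficient defect'' part $(\partial b(y_s)-\{\partial b(y^n)\}_s)\Phi^n_s\,ds + \sum_j(\partial V_j(y_s)-\{\partial V_j(y^n)\}_s)\Phi^n_s\,dB^j_s$. The first part shows that $\Phi_t-\Phi^n_t$ solves the linear RDE with Jacobian coefficients plus a forcing term given by the defect part, so by the variation of constants formula (valid for linear RDEs, e.g. as in \cite{FV}) one gets $\Phi_t-\Phi^n_t = \Phi_t\int_0^t \Psi_s\,dZ_s$, where $dZ_s$ is exactly the defect forcing; multiplying by $\Psi_t$ on the left yields the stated integral representation of $\ce_t$, provided the coefficient defects are identified with the kernels in \eqref{eq:def-ddb-yn}–\eqref{eq:def-ddV-yn}.

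The second, and really the only computational, step is to identify the defect $\partial b(y_s) - \{\partial b(y^n)\}_s$ (and similarly for $V_j$) with $\sum_{i=1}^d \{\partial_i\partial b(y^n)\}_s (\Phi^n_s\ep_s)^i$. Here I would recall that $\ep_s = \Psi^n_s(y_s-y^n_s)$, so $\Phi^n_s\ep_s = y_s - y^n_s$, and that $\{\partial b(y^n)\}_s = \int_0^1 \partial b(y^n_s+\lambda(y_s-y^n_s))\,d\lambda$ by the definition preceding \eqref{e6.1i}. Thus $\partial b(y_s) - \{\partial b(y^n)\}_s = \int_0^1\big(\partial b(y_s) - \partial b(y^n_s+\lambda(y_s-y^n_s))\big)\,d\lambda$, and writing $y_s = (y^n_s+\lambda(y_s-y^n_s)) + (1-\lambda)(y_s-y^n_s)$ and applying the fundamental theorem of calculus in the direction $(1-\lambda)(y_s-y^n_s)$ gives a second integration variable $\mu$ and the Hessian evaluated at $y^n_s+\lambda(y_s-y^n_s)+\mu(1-\lambda)(y_s-y^n_s)$. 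A change of variable recasting the base point as $y_s + (1-\mu)(1-\lambda)(y^n_s - y_s)$ (which matches $y^n_s+\lambda(y_s-y^n_s)+\mu(1-\lambda)(y_s-y^n_s)$ after reorganizing) produces exactly the kernel $\{\partial_i\partial b(y^n)\}_s$ of \eqref{eq:def-ddb-yn} contracted against the $i$-th component of $y_s-y^n_s = (\Phi^n_s\ep_s)^i$. The same bookkeeping applies verbatim to each $V_j$.

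The main obstacle I anticipate is purely organizational: getting the nested averaging parametrization in \eqref{eq:def-ddb-yn}–\eqref{eq:def-ddV-yn} to come out with the precise base point $y_s + (1-\mu)(1-\lambda)(y^n_s-y_s)$ and the weight $(1-\lambda)$, which requires a careful double application of Taylor's formula with integral remainder and one change of variables; there is no analytic difficulty, only the risk of sign or parametrization errors. Finally, for $\wt{\mathcal{E}}_t = \Phi_t(\Psi_t - \Psi^n_t)$ I would use the identity $\Psi_t - \Psi^n_t = -\Psi_t(\Phi_t - \Phi^n_t)\Psi^n_t$, together with the just-derived expression for $\Phi_t - \Phi^n_t$, which yields an analogous integral equation; alternatively one differentiates $\Psi^n = (\Phi^n)^{-1}$ directly, obtaining $d\Psi^n_t = -\Psi^n_t\,d\Phi^n_t\,\Psi^n_t$ in the rough sense, and repeats the subtraction argument. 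I would only indicate this last derivation briefly, since it is a routine transcription of the computation for $\ce_t$.
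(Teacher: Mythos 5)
Your proposal is correct and follows essentially the same route as the paper: subtract the two linear RDEs, isolate the ``coefficient defect'' forcing terms, apply variation of constants with $\Phi,\Psi$, and identify the defects $\partial b(y_s)-\{\partial b(y^n)\}_s$ via a double Taylor expansion with integral remainder (your base-point bookkeeping $y^n_s+[\lambda+\mu(1-\lambda)](y_s-y^n_s)=y_s+(1-\mu)(1-\lambda)(y^n_s-y_s)$ is exactly right, and your use of $\Phi^n_s\ep_s=y_s-y^n_s$ is the correct reading, the paper's proof containing a harmless $\Psi^n$/$\Phi^n$ slip at that point). Your sketch for $\wt{\mathcal{E}}$ via $\Psi_t-\Psi^n_t=-\Psi_t(\Phi_t-\Phi^n_t)\Psi^n_t$ is a reasonable way to supply the detail the paper leaves to the reader.
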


\begin{proof}
Subtracting  \eref{e6.1i} from \eref{e6.1 i}, it is easily seen that:
\begin{eqnarray*}
\Phi_{t} - \Phi^{n}_{t} &=&  \int_{0}^{t} \partial b(y_{s})(\Phi_{s} - \Phi^{n}_s)ds + \sum_{j=1}^{m}\int_{0}^{t} \partial V_{j}(y_{s})(\Phi_{s} - \Phi^{n}_s)dB^{j}_{s}
+L^{1}_{t}  +L^{2}_{t}, 
\end{eqnarray*}
where 
\begin{eqnarray*}
L^{1}_{t} &=&  \int_{0}^{t} \left(\partial b(y_{s})- \{ \partial b(y^{n}) \}_{s}\right) \Phi^{n}_s ds,
\\
L^{2}_{t} &=& \sum_{j=1}^{m}\int_{0}^{t} \left( \partial V_{j}(y_{s})  -\{ \partial V_{j}(y^{n}) \}_{s} \right)\Phi^{n}_s dB^{j}_{s}.
\end{eqnarray*}
Let  now $\Psi = \Phi^{-1}$ be the inverse of $\Phi$. By means of the variation of the constant method, one can verify that
\begin{eqnarray*}
\Phi_{t} - \Phi^{n}_{t} &=&\sum_{e=1,2} \Phi_{t} \int_{0}^{t} \Psi_{s} dL^{e}_{s}\,.
\end{eqnarray*}
Hence, for $\ce$ defined by \eqref{eq:def-mathcal-E},  we have
\begin{eqnarray}\label{e7.1 i}
\mathcal{E}_{t}  &=&\sum_{e=1,2}   \int_{0}^{t} \Psi_{s} dL^{e}_{s}\,.
\end{eqnarray}
In addition, observe that with \eqref{eq:def-ddb-yn} and \eqref{eq:def-ddV-yn} in mind, the following identities hold true:
\begin{equation*}
\partial b(y_{s})  -\{ \partial b(y^{n}) \}_{s}
=  
\sum_{i=1}^{d} \{ \partial_{i}\partial b(y^{n}) \}_{s}  (y^{i}_{s}-y^{n,i}_{s})
=  
\sum_{i=1}^{d} \{ \partial_{i}\partial b(y^{n}) \}_{s}  (\Psi^{n}_{s}\ep_{s})^{i},
\end{equation*}
and
\begin{equation*} 
\partial V_{j}(y_{s})  -\{ \partial V_{j}(y^{n}) \}_{s}  
=  
\sum_{i=1}^{d} \{ \partial_{i}\partial V_{j}(y^{n}) \}_{s}   (\Psi^{n}_{s}\ep_{s})^{i}.
\end{equation*}
Plugging these relations into the definition of $L^{1}$ and $L^{2}$, our claim \eqref{e7.1ii} easily stems from 
relation \eref{e7.1 i}.
\end{proof}

We shall now assume some a priori bounds on the lift of $(z^{n}, n^{\al}\ep)$.
\begin{hyp}\label{hyp7.2}
The processes $z^{n} = (y^{n}, B)$ and   $\ep $   defined in \eref{eq:def-ep-t} satisfy the following inequality for some $\al >0$:
\begin{eqnarray*}
  |S_{2}(z^{n}, n^{\al} \ep)_{st}| &\leq& G   (t-s)^{\ga}, \quad   \quad (s,t) \in \cs_{2}([0,T]).
\end{eqnarray*}

\end{hyp} 
Notice that Hypothesis \ref{hyp7.2} is a version of Hypothesis \ref{lem:bnd-ep-holder-1/2} for the lift of $(z^{n}, n^{\al}\ep)$. 

Thanks to the previous lemma, we can now consider $(  y^{n}, B, n^{\al} {  \ep},  n^{\al}{ {  {\mathcal{E}} }}  ,  n^{\al}{ { \wt{\mathcal{E}} }}  )$ as a single rough path. This is achieved in the following lemma:

\begin{lemma}\label{lem7.1}
Suppose that
$b \in C^{2}_{b}$, $V\in C^{4}_{b}$, and
 Hypothesis \ref{hyp7.2} is met for  
  $\ga>\frac13$ and $\al<2H-\frac12$.
We also consider the processes $\ce$ and $\tilde{\ce}$ as defined in Lemma \ref{lem:eq-mathcalE}.
Then the vector 
$
(z^{n}, n^{\al} {  \ep},  n^{\al}{ {  {\mathcal{E}} }}  ,  n^{\al}{ { \wt{\mathcal{E}} }})
$
satisfies the following upper bound:
 \begin{eqnarray}\label{eq:upp-bnd-S2-zn-eps-cal-E}
 \| S_{2}(  z^{n}, n^{\al} {  \ep},  n^{\al}{ {  {\mathcal{E}} }}  ,  n^{\al}{ { \wt{\mathcal{E}} }}  ) \|_{\ga}&\leq& G . 
\end{eqnarray}
\end{lemma}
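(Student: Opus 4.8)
The plan is to treat the five-tuple $(z^{n}, n^{\al}\ep, n^{\al}\ce, n^{\al}\tilde\ce)$ by the same bootstrap strategy used for $S_{2}(z^{n},\ep)$ in Proposition \ref{prop5.3} and for $S_{2}(z^{n}, n^{\al-\ka}\ep)$ in Proposition \ref{prop7.1}: first establish the estimate on the discrete grid $\cs_{2}(\ll 0,T\rr)$, then check it separately for $(s,t)\in\cs_{2}([t_{k},t_{k+1}])$, and finally glue the two scales together by applying Lemma \ref{lem4.5} to the full rough path. The first-order increments are already under control: $\delta z^{n}$ is handled by Proposition \ref{prop5.2}, and $n^{\al}\delta\ep$ is exactly the content of Hypothesis \ref{hyp7.2}. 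Hence the genuinely new work concerns the two Jacobian-error processes $\ce$ and $\tilde\ce$, together with the various crossed second-order integrals in which they appear.

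First I would bound the first-order increments of $\ce$. Using the representation \eqref{e7.1ii}, one reads $\ce$ as the solution of a \emph{linear} RDE driven by $z^{n}$ (more precisely by $B$ and by Lebesgue time), with a forcing term whose coefficients involve the process $\Phi^{n}\ep$. Since $b\in C^{2}_{b}$, $V\in C^{4}_{b}$, the kernels $\{\partial_{i}\partial b(y^{n})\}$ and $\{\partial_{i}\partial V_{j}(y^{n})\}$ are bounded, and by Proposition \ref{prop:bnd-Phi-n} together with Hypothesis \ref{hyp7.2} the process $n^{\al}\Phi^{n}\ep$ is $\ga$-H\"older with an $n$-independent random constant. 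Writing \eqref{e7.1ii} as $\ce_{t}=\int_{0}^{t}\lambda_{s}\,ds+\sum_{j}\int_{0}^{t}\mu^{j}_{s}\,dB^{j}_{s}$ with $\|n^{\al}\lambda\|_{\infty}+\|n^{\al}\mu\|_{\ga}\le G$, the standard rough (here Young, since the integrand is $\ga$-H\"older and $B$ is $\ga$-H\"older with $2\ga>1$) estimates give $|\delta(n^{\al}\ce)_{st}|\le G(t-s)^{\ga}$ on $\cs_{2}(\ll0,T\rr)$ — and likewise for $\tilde\ce$ by the analogous equation mentioned in Lemma \ref{lem:eq-mathcalE}. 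The same computation done on a single mesh interval $[t_{k},t_{k+1}]$, where all the processes involved are genuine controlled paths and everything is of size $(T/n)^{\ga}$, gives the local bound.

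Next I would handle the second-order part, i.e.\ all iterated integrals $\int_{s}^{t}\delta a_{su}\otimes d b_{u}$ with $a,b$ ranging over the five components. The pure $z^{n}$ block is Proposition \ref{prop5.2}; the blocks mixing $z^{n}$ with $n^{\al}\ep$ are covered by Lemma \ref{lem7.2} and Lemma \ref{lem7.4} combined with \eqref{e6.17} (which is available since Hypothesis \ref{hyp7.2} encodes Hypothesis \ref{lem:bnd-ep-holder-1/2} for $\ep$); and the $n^{\al}\ep$--$n^{\al}\ep$ block is Lemma \ref{prop6.3}. For the new blocks involving $\ce$ or $\tilde\ce$, I would again split $\int_{s}^{t}\delta\ce_{su}\otimes d(\cdot)_{u}$ as $\int \delta\ce_{s\eta(u)}\otimes d(\cdot) + \int\delta\ce_{\eta(u)u}\otimes d(\cdot)$ à la \eqref{e6.10}: the first piece is a discrete Young sum $\sum_{t_{k}=s}^{t-}\delta\ce_{st_{k}}\otimes\delta(\cdot)_{t_{k}t_{k+1}}$ to which Proposition \ref{prop3.4} applies thanks to the $\ga$-H\"older bound on $n^{\al}\ce$ just obtained plus $2\ga>1$; the second piece is treated, as in \eqref{e6.9} and \eqref{eq5.28}, by writing it via the integral representation \eqref{e7.1ii} and using the Lyons lift of the enlarged rough path $(B,y^{n},\Psi^{n},\Phi^{n})$ on each $[t_{k},t_{k+1}]$ to get an $n^{-\al}(T/n)^{4\ga-1}(t-s)$-type bound. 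The estimates for $\tilde\ce$ are identical.

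Finally, assembling the first- and second-order bounds on $\cs_{2}(\ll0,T\rr)$ and on $\cs_{2}([t_{k},t_{k+1}])$, Lemma \ref{lem4.5} upgrades these to the claimed $\|S_{2}(z^{n},n^{\al}\ep,n^{\al}\ce,n^{\al}\tilde\ce)\|_{\ga}\le G$ over all of $[0,T]$, with $G$ independent of $n$. The main obstacle, I expect, is bookkeeping rather than conceptual: there are many crossed integrals to check, and in the new ones involving $\ce$ and $\tilde\ce$ one must be careful that the second-order forcing in \eqref{e7.1ii}, which carries an \emph{extra} factor $\ep$ (hence an extra $n^{-\al}$), interacts correctly with the single renormalization $n^{\al}$ we put in front of $\ce$ — so that the genuinely dangerous terms are exactly the ``first-variation'' ones handled by the Young/discrete-sewing argument, while the quadratic-in-$\ep$ remainders are automatically of lower order. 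Verifying this cancellation uniformly in $n$, and confirming that Hypothesis \ref{hyp7.2} is strong enough to feed Proposition \ref{prop:bnd-Phi-n}, is where the care is needed.
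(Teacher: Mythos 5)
Your outline follows the right general direction, but it contains a recurring error that breaks the argument as written: you repeatedly justify steps by Young integration on the grounds that ``$2\ga>1$''. Since the standing assumption is $\frac13<\ga<H<\frac12$, one has $2\ga<1$. Consequently the stochastic integrals $\int_0^t \mu_s\,dB^j_s$ appearing in \eqref{e7.1ii}, with $\mu$ only $\ga$-H\"older, are \emph{not} Young integrals, and likewise the discrete sums $\sum_{t_k=s}^{t-}\delta\ce_{st_k}\otimes\delta B_{t_kt_{k+1}}$ cannot be controlled by Proposition \ref{prop3.4}, whose hypothesis $\al+\be>1$ fails when both exponents equal $\ga$. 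Note also that $\ce$ is only $\ga$-H\"older (it carries a genuine $dB$ component through \eqref{e7.1ii}), so the situation is not analogous to the term $\hat{D}^1$ of Lemma \ref{lem6.4}, where the extra regularity of $\ep$ (exponent $1-\ga$ or $\frac12$) made the discrete Young bound work. To repair the argument you must use the second-level data: the integrands in \eqref{e7.1ii} are controlled processes with respect to $(z^n,n^{\al}\ep)$, and Hypothesis \ref{hyp7.2} supplies precisely the bound on $S_2(z^n,n^{\al}\ep)$ needed to define and estimate these rough integrals uniformly in $n$.

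Once this is corrected, your proof essentially collapses to the paper's, which is much shorter: the paper observes that $\Phi$, $\Psi$, $\Phi^n$, $\Psi^n$ and, via \eqref{e7.1ii}, $n^{\al}\ce$ and $n^{\al}\wt{\ce}$ form a system of linear RDEs driven by the rough path $(z^n,n^{\al}\ep)$, and then invokes the linear part of Theorem \ref{thm 3.3} together with Hypothesis \ref{hyp7.2}. In particular, your grid-versus-mesh-interval splitting and the appeal to Lemma \ref{lem4.5} are unnecessary here, because Hypothesis \ref{hyp7.2} is already formulated on all of $\cs_2([0,T])$ rather than only on the discrete grid.
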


\begin{proof} 
Note that $\Phi$, $\Psi$, $\Phi^{n}$, $\Psi^{n}$ are solutions of equations driven by $z^{n}$. 
Furthermore, owing to relation \eqref{e7.1ii}, it is easily seen that $n^{\al}\ce$ and $n^{\al}\wt\ce$ are solutions of   equations driven by $(z^{n},n^{\al}\ep)$. Thus   \eqref{eq:upp-bnd-S2-zn-eps-cal-E} is a direct consequence of Theorem \ref{thm 3.3} (linear part) and of Hypothesis~\ref{hyp7.2}.
\end{proof}

\begin{remark}\label{remark7.2}
Roughly speaking, Lemma \ref{lem7.1} shows that if the convergence rate of the numerical scheme $y^{n}$   to $y$ is $n^{-\al}$, then so is that of  $(\Phi^{n}, \Psi^{n})$     to $(\Phi , \Psi )$ as $n\rightarrow \infty$. 
\end{remark}

\subsection{Optimal rate of convergence}\label{section7.2}

Recall that $\ep$ is defined by \eref{eq:def-ep-t} and $\delta \wt{\ep}_{st} =\delta\ep_{st} - \delta \hat{\ep}_{st} $ is defined in \eref{e7.1}. With the preliminary results obtained in Section \ref{section7.1}, we can now go further in our analysis of the error process $\ep$.

\begin{prop} \label{prop6.7}
Consider the process $z^{n} = (y^{n}, B)$ and the error process $\ep$ defined in~\eref{eq:def-ep-t}. 
Assume that $b \in C^{2}_{b}$, $V\in C^{4}_{b}$.
As in Lemma \ref{lem7.1}, 
suppose that Hypothesis \ref{hyp7.2} is met 
 for some exponents $\al,\ga$   such that $ \frac13<\ga<H$ and $\al<2H-\frac12$. Take $\kappa>0$ arbitrarily small. Then the following estimate holds true for $(s,t) \in \cs_{2}(\ll0,T\rr)$:
\begin{eqnarray}\label{eq7.7}
\left|  \delta\wt{\ep}_{st}  \right| &\leq & G (n^{1-3\ga -\al +2\kappa} +n^{-\ga}  ) (t-s)^{1-\ga}.
\end{eqnarray}
In addition, for all  $(s,t) \in \cs_{2}(\ll0,T\rr)$ we also have:
\begin{eqnarray}\label{eq7.9}
|\delta\ep_{st}| &\leq& G (n^{1-3\ga -\al +2\kappa}   +n^{\frac12 -2\ga} )(t-s)^{\frac12 - \kappa}.
\end{eqnarray}

\end{prop}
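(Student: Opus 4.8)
The plan is to obtain both bounds by going back to the identity $\delta\ep_{st}=\sum_{e=1}^{3}\int_s^t\Psi^n_u\,dA^e_u$ established in \eqref{e6.4i}, and to re-examine the five remainder terms $R^1,\dots,R^5$ from the decomposition \eqref{e6.5} of $\sum_e A^e$, this time with the sharper a priori information contained in Hypothesis \ref{hyp7.2}. The point is that \eqref{eq7.7} is really a statement about $\wt\ep=\ep-\hat\ep$, where $\hat\ep$ carries precisely the "$F$-driven" part $R^5$ (recall $R^5_t=\sum_{i,j}(\partial V_jV_i)(y^n_{t_k})\,\delta F^{ij}_{t_kt_{k+1}}$ summed over $k$, and $\hat\ep$ is the analogous sum built with $\Psi\partial V_jV_{j'}(y)$). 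So the first task is a \emph{comparison step}: show that $\delta\hat\ep_{st}$ and the contribution of $R^5$ to $\delta\ep_{st}$ differ by a term of order $n^{-\ga}(t-s)^{1-\ga}$ (or better). This amounts to replacing $\Psi^n_u$ by $\Psi_{t_k}$ and $y^n$ by $y$ inside the weighted sum $\sum_{t_k}\Psi^n_{t_k}\partial V_jV_i(y^n_{t_k})\,\delta F^{ij}_{t_kt_{k+1}}$; each such replacement produces a weight increment of H\"older order $\ga$ (using Proposition \ref{lem5.1}, Lemma \ref{lem7.1}/Remark \ref{remark7.2} for $\Psi^n\to\Psi$, and the convergence rate $n^{-\al}$ of $y^n\to y$), so by Corollary \ref{lem9.1} applied with these "second-order" weights one gains a factor of order $n^{-\ga}$ or $n^{-\al}$ beyond the bare $n^{1/2-2H}(t-s)^{1/2}$, which after using $t-s\ge T/n$ is absorbed into the right-hand side of \eqref{eq7.7}.

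Next comes the \emph{re-estimation of $R^1,R^2,R^3,R^4$}. These were already shown in Lemma \ref{lem6.2} to satisfy $|\delta R^e_{st}|\le G(t-s)n^{1-3\ga}$ (for $e=2,4$) and $|\delta R^e_{st}|\le G(t-s)n^{-\ga}$ (for $e=1,3$), and those bounds do not require Hypothesis \ref{hyp7.2}, so they are available verbatim. The only subtlety is that $R^2$ and $R^4$ involve the rough path $(B,y^n)$, and their contribution to $\delta\ep$ after integrating against $\Psi^n$ must be controlled; here one invokes Lemma \ref{lem7.1} so that $(z^n,n^\al\ep,n^\al\ce,n^\al\wt\ce)$ is a rough path of uniformly bounded H\"older norm, hence $\int\Psi^n\,dR^e$ is a genuine rough integral with the expected regularity. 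Collecting: the $F$-free part of $\delta\wt\ep$ is bounded by $G\,n^{1-3\ga}(t-s)^{1-\ga}$ from $R^2,R^4$ and by $G\,n^{-\ga}(t-s)^{1-\ga}$ from $R^1,R^3$, while the comparison step above contributes $G(n^{-\ga}+n^{-\al})(t-s)^{1-\ga}$. Since $\al<2H-\tfrac12<\tfrac12$ and $1-3\ga-\al+2\kappa$ is the dominant exponent whenever $\al$ is not yet close to optimal, these combine to give \eqref{eq7.7}. I expect the bookkeeping around the $R^2,R^4$ terms — tracking exactly why the weight $\Psi^n$ times the "interior" double integral keeps H\"older regularity $1-\ga$ rather than degrading — to be the main obstacle, because it is where the a priori Hypothesis \ref{hyp7.2} is genuinely used and where one has to be careful that the $n$-power does not deteriorate.

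Finally, \eqref{eq7.9} follows from \eqref{eq7.7} by \emph{adding back} $\delta\hat\ep$. By construction $\hat\ep$ is (up to the weight $\Psi\partial V_jV_{j'}(y)$) a weighted sum of the $F^{jj'}$ increments, so Corollary \ref{lem9.1} gives $\|\delta\hat\ep_{st}\|_p\le K\,n^{1/2-2H}(t-s)^{1/2}$, and Lemma \ref{lem4.2} upgrades this to the pathwise bound $|\delta\hat\ep_{st}|\le G\,n^{1/2-2\ga+\kappa}(t-s)^{1/2-\kappa}$ for any $\kappa>0$ and $\ga<H$. Writing $\delta\ep=\delta\hat\ep+\delta\wt\ep$ and using $(t-s)^{1-\ga}\le (T/n)^{1/2-\ga+\kappa}(t-s)^{1/2-\kappa}$ (valid since $1-\ga\ge 1/2-\kappa$ on the discrete grid, because $t-s\ge T/n$) to convert the exponent in \eqref{eq7.7}, one gets $|\delta\wt\ep_{st}|\le G(n^{1-3\ga-\al+2\kappa}+n^{-\ga})\cdot n^{-(1/2-\ga+\kappa)}(t-s)^{1/2-\kappa}$; a direct check that $1-3\ga-\al+2\kappa-(1/2-\ga+\kappa)=1/2-2\ga-\al+\kappa\le 1-3\ga-\al+2\kappa$ and that $-\ga-(1/2-\ga+\kappa)=-1/2-\kappa\le 1/2-2\ga$ (since $\ga<1/2$) shows both pieces are dominated by $G(n^{1-3\ga-\al+2\kappa}+n^{1/2-2\ga})(t-s)^{1/2-\kappa}$, which together with the $\hat\ep$ bound $n^{1/2-2\ga+\kappa}(t-s)^{1/2-\kappa}$ yields exactly \eqref{eq7.9}. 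The routine part here is just chasing exponents; no new idea is needed once \eqref{eq7.7} is in hand.
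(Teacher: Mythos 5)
Your overall architecture (isolate the $F$-driven part of $\delta\ep$, compare it with $\delta\hat{\ep}$ while gaining a factor $n^{-\al}$ from the replacements $y^{n}\to y$ and $\Psi^{n}\to\Psi$ in the weights, and handle the remaining terms separately) is the right one and matches the paper's strategy. However, there is a genuine gap in your treatment of the ``$F$-free'' higher-order remainders. You take the bounds $|\delta R^{e}_{st}|\le G(t-s)n^{1-3\ga}$ for $e=2,4$ verbatim from Lemma \ref{lem6.2} and assert that they ``combine to give \eqref{eq7.7}''. They do not: since $\al>0$ and $\ga<\frac12$, the exponent $1-3\ga$ strictly dominates both $1-3\ga-\al+2\kappa$ (for $\kappa<\al/2$) and $-\ga$, so a contribution of size $n^{1-3\ga}(t-s)$ cannot be absorbed into the right-hand side of \eqref{eq7.7}. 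Indeed, the whole point of Proposition \ref{prop6.7} is to improve the error exponent beyond $3\ga-1$; if the crude bounds from Lemma \ref{lem6.2} sufficed, the bootstrap in Theorem \ref{thm7.3} could never advance past the rate $n^{1-3\ga}$.

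What is actually required --- and what the paper does --- is to re-expand those higher-order terms (the analogues of your $R^{2}$, $R^{4}$, together with the error committed when freezing the coefficients of the inner double integral at the grid points) and to estimate them not by summing $n$ mesh-interval contributions of size $n^{-3\ga}$ each, but by exploiting the probabilistic cancellation across mesh intervals in cumulative sums of triple iterated integrals: Lemma \ref{lem11.2} gives $\|\sum_{k}\zeta_{t_{k}t_{k+1}}\|_{p}\le Kn^{\frac12-3H}(t-s)^{\frac12}$, a gain of order $n^{-1/2}$ over the triangle inequality, and its weighted, pathwise consequences (Lemma \ref{lem3.11}, Lemma \ref{cor3.13}, Lemma \ref{lem6.11}) yield bounds of the form $Gn^{1-4\ga+2\kappa}(t-s)^{1-\ga}$. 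Since $3\ga>1$ one has $1-4\ga<-\ga$, so these pieces are finally of order $n^{-\ga}(t-s)^{1-\ga}$ and fit into \eqref{eq7.7}. Your comparison step for the $F$-part and your deduction of \eqref{eq7.9} from \eqref{eq7.7} are essentially sound (modulo the caveat that Corollary \ref{lem9.1} requires deterministic moment bounds on the weights, so for the random weight $\Psi(\partial V V(y^{n})-\partial V V(y))$ one must invoke the pathwise variant of the weighted-sum estimate, as the paper does via \eqref{e6.7i} and \eqref{e7.20}); but without the sharpened triple-integral estimates the proof of \eqref{eq7.7} does not close.
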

\begin{remark}
In Proposition \ref{prop6.7}, we prove that Hypothesis \ref{condition6.1} is satisfied for $\tilde{\ep}$, with $\al$ replaced by $(3\ga-1+\al)\wedge \ga$. We also prove that Hypothesis \ref{lem:bnd-ep-holder-1/2} for $\ep$ is fulfilled with an improved exponent $\al_{1} = (\al+3\ga-1)\wedge 2\ga-\frac12$, which satisfies $\al_{1}>\al$. 
\end{remark}

\begin{proof}[Proof of Proposition \ref{prop6.7}]
This proof is divided into several steps.

\noindent \textit{Step 1: Decomposition of $\delta \ep$.}\quad
Starting from the decomposition \eref{e6.4i} of $\ep$, one can write:
\begin{eqnarray}\label{e6.13}
\delta \ep_{st} &=&U^{1}_{st} + U^{2}_{st}\,,
\end{eqnarray}
with 
\begin{eqnarray}\label{e7.7i}
U^{1}_{st} &=&  \sum_{e=1}^{3} \int_{s}^{t} \Psi_{u} dA^{e}_{u}  
\\
 U^{2}_{st} &=& \sum_{e=1}^{3} \int_{s}^{t} (\Psi^{n}_{u}- \Psi_{u}) dA^{e}_{u} =-\sum_{e=1}^{3} \int_{s}^{t} \Psi_{u}  {\wt{\mathcal{E}}}_{u} dA^{e}_{u} ,
 \nonumber
\end{eqnarray}
where we recall that $\tilde{\ce}$ has been introduced in Lemma \ref{lem:eq-mathcalE}.
Moreover, the term $U^{2}$ above is easily bounded. Indeed,
applying Lemma \ref{lem6.2} and taking into account the estimate in Lemma \ref{lem7.1}
 we obtain
\begin{eqnarray}\label{e 7.7}
\left|
U^{2}_{st}
\right|
 &\leq &  G (t-s)^{1-\ga} n^{1-3\ga -\al}
\end{eqnarray}
for all $(s,t )\in \mathcal{S}_{2}(\ll 0,T \rr)$.  

\noindent \textit{Step 2: Decomposition of $U^{1}$.}\quad We turn to the quantity $U^{1}_{st}$ given by \eqref{e7.7i}. 
First, from the expression of $A^{2}$ in \eref{e6.4ii} and a discrete-time decomposition similar to the estimate of \eref{e5.5} it is clear that  
\begin{eqnarray}\label{e 7.8}
\Big| \int_{s}^{t} \Psi_{u} dA^{2}_{u} \Big| &\leq& G n^{-\ga}(t-s).
\end{eqnarray}
In the case $e=1$, recall expression \eref{eq:def-eta-A1} for $A^{1} $. Then one can decompose $\int_{s}^{t} \Psi_{u}dA^{1}_{u}$ into
\begin{eqnarray}\label{e7.9}
\int_{s}^{t} \Psi_{u} dA^{1}_{u}  &=&
M^{1}_{st}+M^{2}_{st},
\end{eqnarray}
where $M^{1}$ and $M^{2}$ are defined by:
\begin{eqnarray}
M^{1}_{st} &=& -
 \frac12   \sum_{j=1}^{m} \int_{s}^{t} \Psi_{\eta(u)} 
        \partial {V}_{j}{V}_{j} (y^{n}_{\eta(u)} ) d(u -\eta(u))^{2H} 
\label{eq7.13}
\\
M^{2}_{st}&=& -  \frac12   \sum_{j=1}^{m}\int_{s}^{t}   \delta \Psi_{\eta(u)u}
       \partial {V}_{j}{V}_{j} (y^{n}_{\eta(u)}) d (u -\eta(u))^{2H} .
       \nonumber
\end{eqnarray}
We defer the evaluation for $M^{1}$ to the end of the proof, but $M^{2}$ is easily controlled. Indeed, 
by \eref{e11.1 ii} in Lemma \ref{lem6.11} applied to $ f = \partial V_{j}V_{j} (y^{n})$ and $g=\Psi$,  we have
\begin{eqnarray}\label{e 7.9}
|M^{2}_{st}| &\leq& G n^{1-4\ga} (t-s)^{1-\ga}.
\end{eqnarray}

We now decompose the term $ \int_{s}^{t} \Psi_{u} dA^{3}_{u} $ in \eqref{e7.7i}: it is readily checked that owing to \eref{e6.4ii}, one can write
\begin{eqnarray*} 
 \int_{s}^{t} \Psi_{u} dA^{3}_{u}  &=&  \sum_{j=1}^{m}\int_{s}^{t} \Psi_{u}\int_{\eta(u)}^{u} \partial  {V}_{j}(y^{n}_{v}) dy^{n }_{v} d B^{j}_{u}
  \,.
\end{eqnarray*}
Hence, plugging the equation \eref{e1.2} followed by $y^{n}$ into this relation we can write:
\begin{eqnarray}\label{eq7.18}
\int_{s}^{t} \Psi_{u} dA^{3}_{u}  &=&  I_{st}^{1} +I_{st}^{2}+I_{st}^{3},
\end{eqnarray}
where $ I^{1} ,I^{2},I^{3}$ are given by
\begin{eqnarray}
I^{1}_{st}&=&  \sum_{j=1}^{m}\int_{s}^{t} \Psi_{u}\int_{\eta(u)}^{u}   \partial  {V}_{j}(y^{n}_{v})   b(y^{n}_{\eta(v)}) dv d B^{j}_{u}\,,
\nonumber
\\
I^{2}_{st} &=& \sum_{j=1}^{m}\int_{s}^{t} \Psi_{u}\int_{\eta(u)}^{u}   \partial  {V}_{j}(y^{n}_{v})   V(y^{n}_{\eta(v)}) d B_{v} d B^{j}_{u}\,,
\label{e7.14}
\\
I^{3}_{st} &=& \frac12   \sum_{j,j'=1}^{m}\int_{s}^{t} \Psi_{u}\int_{\eta(u)}^{u} \partial  {V}_{j}(y^{n}_{v})   \partial V_{j'}V_{j'} (y^{n}_{\eta(v)}) d (v-\eta(v))^{2H}  d B^{j}_{u}\,.
\nonumber
\end{eqnarray}

\noindent \textit{Step 3: Estimate of $I^{1}$, $I^{2}$, $I^{3}$.}\quad
We will now evaluate $I^{1}$, $I^{2}$, $I^{3}$ separately. First, invoking a discrete-time decomposition as in \eref{e5.5} again, we get:
\begin{eqnarray}\label{e 7.10}
|I^{1}_{st}|&\leq& G (t-s)n^{-\ga}.
\end{eqnarray}
On the other hand, applying \eref{e11.1ii} in 
Lemma \ref{lem6.11}
 to $  {I}^{3}_{st}   $ we obtain: 
\begin{eqnarray}\label{e 7.11}
|     {I}^{3}_{st}    | &\leq&  Gn^{1 - 4\ga} (t-s)^{1-\ga}.
\end{eqnarray}
 
Let us now consider the term $I^{2} $ defined by \eref{e7.14}. To this aim,    set
\begin{eqnarray}\label{e7.18}
 J^{2}_{st} &=& 
  \sum_{j=1}^{m}\int_{s}^{t} \Psi_{\eta(u)}\int_{\eta(u)}^{u}   \partial  {V}_{j}(y^{n}_{\eta(v)})   V(y^{n}_{\eta(v)}) d B_{v} d B^{j}_{u}.
\end{eqnarray}
By \eref{e 11.1i} in Lemma \ref{lem6.11}, the patient reader can check that for any $\kappa>0$ and $(s,t) \in \cs_{2}([0,T])$ we have:
\begin{eqnarray}\label{e7.7}
|I^{2}_{st} -  {J}^{2}_{st}| &\leq &  G n^{1-4\ga+2\kappa} (t-s)^{1-\ga} .
\end{eqnarray}
In addition, we may consider $\kappa>0$ such that $1-4\ga+2\kappa<-\ga$. In this case, the previous bound becomes:
\begin{eqnarray}\label{e 7.12}
| {I}^{2}_{st} - J^{2}_{st}| &\leq & G n^{-\ga } (t-s)^{1-\ga}.
\end{eqnarray}

\noindent \textit{Step 4: Conclusion.}\quad
So far we have made a sequence of decompositions for $ {\ep}$ in  \eref{e6.13},  \eref{e7.7i}, \eref{e7.9}, \eref{eq7.18}. Taking into account the   estimates \eref{e 7.7}, \eref{e 7.8}, \eref{e 7.9}, \eref{e 7.10}, \eref{e 7.11}, \eref{e 7.12},  it is clear that to prove the  estimate \eref{eq7.7} for $\tilde{\ep}$ it suffices to show that
\begin{eqnarray}\label{e7.19}
\left| \delta \bar{\ep}_{st} \right|  &\leq& G  n^{1-3\ga -\al +2\kappa}   (t-s)^{1-\ga}
\end{eqnarray}
for any $\kappa>0$,
where the increment $\delta \bar{\ep}$ is defined by
\begin{eqnarray*}
\delta \bar{\ep}_{st} &=& J^{2}_{st} + M^{1}_{st} - \delta \hat{\ep}_{st}
\end{eqnarray*}
with $J^{2}$ given by \eref{e7.18} and $M^{1}$ given by \eref{eq7.13}. We also recall that $\hat{\ep}$ has been introduced in \eref{e6.2i} and is given by the following expression:
\begin{eqnarray}\label{e7.23}
\delta \hat{\ep}_{st}&=&
\sum_{jj'=1}^{m} \sum_{t_{k}=s}^{t-}   \Psi_{t_{k}} \partial {V}_{j}{V}_{j'} (y_{t_{k}})  \delta F^{jj'}_{t_{k}t_{k+1}}.
\end{eqnarray}
In order to prove \eref{e7.19}, let us now observe that 
\begin{eqnarray}\label{e7.13}
 \delta \bar{\ep}_{st} &=&
 \sum_{jj'=1}^{m} \sum_{t_{k}=s}^{t-}    \Psi_{t_{k}}( \partial {V}_{j}{V}_{j'} (y^{n}_{t_{k}}) - \partial {V}_{j}{V}_{j'} (y_{t_{k}}) ) \delta F^{jj'}_{t_{k}t_{k+1}}.
\end{eqnarray}
Further, we note  that similarly to \eref{eq5.10}, the following identity holds true:
\begin{eqnarray*}
 \partial {V}_{j}{V}_{j'} (y^{n}_{t_{k}}) - \partial {V}_{j}{V}_{j'} (y_{t_{k}}) &=&- \sum_{i=1}^{d} \int_{0}^{1}  ( \partial_{i}  \partial {V}_{j}{V}_{j'}) ( \lambda  y^{n}_{t_{k}} + (1-\lambda) y_{t_{k}} ) )  d\lambda \cdot ( \Phi^{n}_{t_{k}} \ep_{t_{k}})^{i}    .
\end{eqnarray*}
According to Hypothesis \ref{hyp7.2}, $y^{n}$, $\Psi$ and $n^{\al}\ep$ are $\ga$-H\"older continuous functions. Hence 
 \begin{eqnarray}\label{e7.20}
n^{\al}  \| \Psi ( \partial {V}_{j}{V}_{j'} (y^{n} ) - \partial {V}_{j}{V}_{j'} (y ) ) \|_{\ga} &\leq & G.
\end{eqnarray}
In order to bound the right-hand side of \eref{e7.13}, let us apply a bound on weighted sums of the process $F$ as in relation \eref{e6.7i}. Taking into account \eref{e7.20}, this yields:
\begin{eqnarray*}
|\delta \bar{\ep}_{st}| \leq Gn^{1-3\ga -\al}(t-s)^{1-\ga-2\kappa}
\leq Gn^{1-3\ga -\al +2\kappa}(t-s)^{1-\ga}
\end{eqnarray*}
for an arbitrary $\kappa>0$, which is our claim \eref{e7.19}. 
The proof of \eref{eq7.7} is now complete.

In order to get \eref{eq7.9} from \eref{eq7.7}, we recall once again relation \eref{eq6.19} and we just analyze the term $\delta \hat{\ep}$. This can be done in a similar way as in  \eref{eq6.20} and \eref{e6.17}. Our proof is   complete.
\end{proof}

\begin{theorem}\label{thm7.3}
Let $\ep$ be given by \eref{eq:def-ep-t} and $\wt{\ep}$ be defined in \eref{e7.1}. Suppose that $b \in C^{2}_{b}$, $V\in C^{4}_{b}$. Then the following statements  holds true: 

(i) There exists a constant $\ka_{H}>0$  depending on $H$ such that
\begin{eqnarray}\label{e7.26}
n^{2H- \frac12 }  { \left| \delta \wt{\ep}_{st}  \right| } &\leq &G n^{-\kappa_{H}} { |t-s|^{1-\ga} },
\end{eqnarray}
  In particular, we have the following almost sure convergence:   
\begin{eqnarray*}
\lim_{n\rightarrow \infty} n^{2H- \frac12 }  \delta \wt{\ep}_{st}    &=& 0.
\end{eqnarray*}

(ii) Take a constant $\kappa>0$. The error process $y-y^{n}$ satisfies 
\begin{eqnarray}\label{e7.25}
n^{2H-\frac12 -\kappa } \sup_{t\in [0,T]} |y_{t}-y^{n}_{t}| \rightarrow 0 \quad \text{ as } n\rightarrow \infty,
\end{eqnarray}
meaning that the Euler scheme has a rate of convergence $n^{\frac12 -2H +\kappa}$ for an arbitrary $\kappa >0$.

\end{theorem}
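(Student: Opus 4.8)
The plan is to bootstrap the exponent $\al$ in Hypothesis \ref{hyp7.2} by iterating Proposition \ref{prop6.7} and Proposition \ref{prop7.1}, starting from the crude bound coming from Section \ref{section5}. Concretely, Lemma \ref{prop6.1} tells us that Hypothesis \ref{lem:bnd-ep-holder-1/2} holds with $\al_{0}=3\ga-1$, and Proposition \ref{prop5.3} upgrades this to Hypothesis \ref{hyp7.2} with the same $\al_{0}$. Now apply Proposition \ref{prop6.7}: it shows $\wt{\ep}$ satisfies Hypothesis \ref{condition6.1} with exponent $(3\ga-1+\al_{0})\wedge\ga$, and $\ep$ satisfies Hypothesis \ref{lem:bnd-ep-holder-1/2} with the improved exponent $\al_{1}=(\al_{0}+3\ga-1)\wedge(2\ga-\tfrac12)$. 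Feeding $\al_{1}$ back into Proposition \ref{prop7.1} (or equivalently into Proposition \ref{prop5.3}/Lemma \ref{lem7.1}) re-establishes Hypothesis \ref{hyp7.2} with $\al_{1}$, and one iterates. Since $3\ga-1>0$ and $\ga$ can be chosen arbitrarily close to $H$, after finitely many steps the $(2\ga-\tfrac12)$ term is the binding one, so the procedure stabilizes at $\al_{\infty}=2\ga-\tfrac12$ for every $\ga<H$; a final limiting argument in $\ga\uparrow H$ with a $\kappa$-loss gives the rate $n^{\frac12-2H+\kappa}$.

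For part (i), once the iteration above has pushed $\al$ up to a value $\al>\tfrac12-\kappa_{H}'$ for a suitable small $\kappa_{H}'$ (possible because $2H-\tfrac12>0$ and, say, $3\ga-1+\al$ eventually exceeds $2\ga-\tfrac12$ strictly at each finite stage before saturation), one reads off from estimate \eqref{eq7.7} in Proposition \ref{prop6.7} that
\[
\left|\delta\wt{\ep}_{st}\right|\leq G\,\big(n^{1-3\ga-\al+2\kappa}+n^{-\ga}\big)\,(t-s)^{1-\ga}.
\]
With $\al$ at (or near) its stable value $2\ga-\tfrac12$ this gives $|\delta\wt{\ep}_{st}|\leq G\,n^{1-5\ga+\tfrac12+2\kappa}(t-s)^{1-\ga}$, and since $\ga$ may be taken close to $H$ we get an exponent strictly better than $\tfrac12-2H$: precisely $n^{2H-\tfrac12}|\delta\wt{\ep}_{st}|\leq G\,n^{-\kappa_{H}}|t-s|^{1-\ga}$ for some $\kappa_{H}>0$ depending on $H$ (coming from the gap $H-\ga$ together with the slack $3\ga-1$). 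Sending $n\to\infty$ along the estimate and using that $|t-s|^{1-\ga}$ is bounded on $[0,T]$ yields $n^{2H-\tfrac12}\delta\wt{\ep}_{st}\to0$ almost surely. This proves \eqref{e7.26} and the displayed limit.

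For part (ii), recall $\ep_{t}=\Psi^{n}_{t}(y_{t}-y^{n}_{t})$, hence $y_{t}-y^{n}_{t}=\Phi^{n}_{t}\ep_{t}$ and $\sup_{t}|y_{t}-y^{n}_{t}|\leq \big(\sup_{t}|\Phi^{n}_{t}|\big)\sup_{t}|\ep_{t}|$. Proposition \ref{prop:bnd-Phi-n} bounds $\sup_{t}|\Phi^{n}_{t}|$ by an ($n$-independent) almost surely finite random variable, so it suffices to control $\sup_{t}|\ep_{t}|$. Writing $\ep_{t}=\ep_{0}+\delta\ep_{0t}=\delta\ep_{0t}$ (since $\ep_{0}=0$) and using the decomposition $\delta\ep=\delta\hat\ep+\delta\wt\ep$ from \eqref{eq6.19}, part (i) controls $\delta\wt\ep$ at the rate $n^{\frac12-2H-\kappa_{H}}$, while the estimate \eqref{eq6.20} together with Lemma \ref{lem4.2} gives $|\delta\hat\ep_{st}|\leq G\,n^{\frac12-2H+\kappa}(t-s)^{\frac12}$ almost surely; combining the two gives $\sup_{t}|\ep_{t}|\leq G\,n^{\frac12-2H+\kappa}$ for any $\kappa>0$. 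Multiplying through by $n^{2H-\frac12-\kappa}$ and absorbing one more $\kappa$ yields \eqref{e7.25}.

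The main obstacle is making the bootstrap argument rigorous: one must check that each application of Proposition \ref{prop6.7} is legitimate (its hypotheses require $\al<2H-\tfrac12$, which must be verified at every stage of the iteration, including at the saturation value), that the exponent genuinely increases at each step until the $(2\ga-\tfrac12)$-cap is reached, and that only finitely many iterations are needed so that the implied random constants $G$ remain almost surely finite and $n$-independent throughout. A secondary delicate point is the final interchange of the limits $n\to\infty$ and $\ga\uparrow H$: one fixes $\ga$ close enough to $H$ so that the resulting exponent is within $\kappa$ of the optimal $\tfrac12-2H$, which is why the almost sure statement \eqref{e7.25} carries an arbitrarily small loss $\kappa$ rather than being exact.
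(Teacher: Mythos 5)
Your proposal follows essentially the same route as the paper: item (i) is obtained by bootstrapping the exponent in Hypothesis \ref{hyp7.2} through alternating applications of Propositions \ref{prop6.7} and \ref{prop7.1}, starting from $\al_{0}=3\ga-1$ (the paper organizes the finitely many iterations via the case analysis $H\in(H_{k+1},H_{k}]$ with $H_{k}=\frac{2k-1}{6k-4}$), and item (ii) uses the decomposition $\delta\ep=\delta\hat\ep+\delta\wt\ep$ together with Corollary \ref{lem9.1} and Lemma \ref{lem4.2} to control $\hat\ep$. The one imprecision is that the iteration cannot literally saturate at $\al=2\ga-\frac12$, since Proposition \ref{prop6.7} requires $\al<2H-\frac12$; one instead stops at the last admissible step, where $3\ga-1+\al$ already exceeds $2H-\frac12$ because $\ga>\frac13$ --- exactly the point your proposal flags as needing verification.
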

\begin{proof} Item (i): Take $\frac13<\ga<H$. 
According to Proposition \ref{prop5.3} for $S_{2} (n^{3\ga-1} \ep, z^{n})$, Hypothesis \ref{hyp7.2} holds with $\al = 3\ga-1$. Hence one can apply Proposition   \ref{prop6.7} in order to get
\begin{eqnarray}\label{e7.6}
\left|\delta  \wt{\ep}_{st}  \right| &\leq & G (n^{2(1-3\ga) +2\kappa} +n^{-\ga}  ) (t-s)^{1-\ga}.
\end{eqnarray}
In the case $ \frac38< H<\frac12$, it  is easy to see that  $ 3H - 1 <2H-\frac12$ and $ 2 (3H -1) > 2H -\frac12 $. Take $\frac38< \ga<H$ such   that $ 2 (3\ga -1) -2\kappa> 2H -\frac12 $ and $\ga> 2H-\frac12$. Then   
\eref{e7.6} implies that  for $ \frac38< H<\frac12 $ we have $ n^{2H-\frac12} |\delta\wt{\ep}_{st}| \leq G n^{-\kappa_{H}} { |t-s|^{1-\ga} }$  for $\ka_{H} = \left(  (6\ga-2-2\kappa)\wedge \ga\right) - (2H-\frac12 )$. This proves our claim \eref{e7.26} for $\frac38 < H< \frac12$. 

Let us now handle the case $\frac13< H\leq \frac38$. To this aim, set   $H_{k} = \frac{2k-1}{6k-4}$, $k \geq 2$.   We consider the case when $ H_{k+1}< H\leq H_{k}$ for all $k\geq 2$.  It  is easy to verify that  $k(3H - 1) <2H-\frac12$ and $ (k+1) (3H -1) > 2H -\frac12 $. We can thus choose $H_{k+1}<\ga<H$ and $\kappa > 0$  such that $ (k+1) (3\ga -1) - 3k\kappa > 2H -\frac12 $ and $\ga>2H-\frac12$.
It follows from  inequality \eref{e7.6}   that 
\begin{eqnarray*}
\left|  \delta\wt{\ep}_{st}  \right| &\leq & G  n^{2(1-3\ga) +2\kappa}   (t-s)^{1-\ga}.
\end{eqnarray*}
We can now iterate this bound in the following way: 
  apply Proposition \ref{prop7.1} which gives
\begin{eqnarray}\label{e7.22}
|S_{2}(z^{n} , n^{2( 3\ga-1) -3\kappa} \ep)_{st}| &\leq& G(t-s)^{\ga}.
\end{eqnarray}
Then invoke Proposition \ref{prop6.7} again.  Taking into account the   estimate \eref{e7.22}
we obtain
\begin{eqnarray*}
\left| \delta \wt{\ep}_{st}  \right| &\leq & G (n^{3(1-3\ga) +5\kappa } +n^{-\ga}  ) (t-s)^{1-\ga}.
\end{eqnarray*}
We can now repeat   the application of Proposition \ref{prop7.1}  and \ref{prop6.7} in order to get
\begin{eqnarray*}
\left|  \delta\wt{\ep}_{st}  \right| &\leq & G (n^{(k+1)(1-3\ga) + 3k\kappa  } +n^{-\ga}  ) (t-s)^{1-\ga}.
\end{eqnarray*}
This implies that $n^{2H-\frac12} |\delta\wt{\ep}_{st}| \leq G n^{-\kappa_{H}} { |t-s|^{1-\ga} }$  for 
\begin{eqnarray*}
\ka_{H} &=& \left(  (k+1) (3\ga -1) - 3k\kappa \right) \wedge \ga - \big(2H-\frac12 \big).
\end{eqnarray*}

Item (ii): Recall $\delta \ep = \delta \hat{\ep}+\delta \tilde{\ep}$ given by relation \eref{e7.1}. With item (i) in hand, our claim \eref{e7.25} is reduced to prove that 
\begin{eqnarray}\label{e7.28}
\lim_{n\rightarrow \infty} n^{2H-\frac12 -\kappa} 
\sup_{(s,t) \in \cs_{2}(\ll 0,T \rr )}
 |\delta \hat{\ep}_{st} | =0.
\end{eqnarray}
In order to prove \eref{e7.28}, recall the expression \eref{e7.23} for $\delta \hat{\ep}_{st}$ as a weighted sum of the increment $\delta F$. We can thus apply Corollary \ref{lem9.1} with $f= \Psi \partial V_{j}V_{j'} (y)$. Indeed, one can easily see that $f$ satisfies the assumptions of Proposition \ref{prop3.6}: both $\Psi$ and $\partial V_{j}V_{j'} (y)$ are controlled processes admitting moments of any order (see \cite{CLL} for the integrability of $\Psi$). Applying Corollary \ref{lem9.1} we thus get 
\begin{eqnarray*}
\| \delta \hat{\ep}_{st} \|_{p} &\leq& K n^{\frac12 -2H} (t-s)^{\frac12}.
\end{eqnarray*}
Then, invoking Lemma \ref{lem4.2}, we end up with 
\begin{eqnarray*}
|\delta \hat{\ep}_{st}| &\leq& G n^{\frac12-2H+\frac{\kappa}{2}}, 
\end{eqnarray*}
which finishes the proof.
 \end{proof}

\section{Asymptotic error distributions}\label{section9}
In this section, we first review a central limit theorem from \cite{NTU} (see also \cite{HLN}), then in the second part, we prove the asymptotic error distribution of the Euler scheme. 
\subsection{A central limit theorem for the L\'evy area process}\label{section8}

In this subsection,  we recall   a central limit theorem for the process $F$.   
 Let us first define some parameters  that will appear in the limit distribution of $F$. Namely, for $k \in \ZZ$, we set:
 \begin{eqnarray}
Q(k)  = \int_{[0,1]^{2}} R \Big( \begin{array}{c c} 0 &s' \\ k  &k+s\end{array} \Big)  dR(s,s' ), 
\quad
P(k)  = \int_{[0,1]^{2}} R \Big( \begin{array}{c c} 0 &s' \\ k+s  & k+1 \end{array} \Big)  dR(s,s' ),
\label{eq8.4}
\end{eqnarray}
where we recall that $R$ is the   covariance function    defined by~\eqref{eq:cov-fbm}, whose rectangular increments are given by  formula~\eref{eq3.1}.  We now state a slight elaboration of \cite[Theorem 3]{NTU} and~\cite[Proposition 5.1]{HLN}.

\begin{prop}\label{prop8.3}
 Let $B= (B^{1}, \dots, B^{m})$ be a $m$-dimensional standard fBm with Hurst parameter $\frac14< H<\frac12$. Let
 $\bar{F}_{t} = F_{\eta(t)}$    for $t \in [0,T] $, where we 
 recall that the process $F$ is defined  by \eref{e4.1} and $\eta$ is given by \eref{eq:def-eta-A1}. 
  Then the finite dimensional distributions of $\{n^{2H-\frac12} \bar{F} , B \}  $ converge  weakly to those of $(W,B)$, where $W= (W^{ij} )$ is an $m\times m$-dimensional Brownian  motion, independent of $B$, such that 
 \begin{eqnarray}\label{e8.6}
\mE[W^{ij}_{t}W^{i'j'}_{s}] &=& T^{4H-1} (Q\delta_{ii'}\delta_{jj'} + P \delta_{ij'}\delta_{ji'} ) (t\wedge s). 
\end{eqnarray}
In formula \eref{e8.6}, we have set $\delta_{ij}=1$ if $i=j$ and $\delta_{ij}=0$ if $i\neq j$, and 
$Q = \sum_{k\in \ZZ} Q(k)$,  $P = \sum_{k\in \ZZ} P(k) $. 
 \end{prop}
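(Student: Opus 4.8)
The statement is a multidimensional central limit theorem for the discretized Lévy area process $F=(F^{ij})$ defined in \eref{e4.1}, jointly with the driving fBm $B$. The plan is to reduce the claim to a one-dimensional situation and then to a known result. First I would recall that $F^{ij}_{t_k}$ for $i\neq j$ is a sum of iterated integrals $\BB^{ij}_{t_\ell t_{\ell+1}}$ lying in the second chaos of $B$, while for $i=j$ it is a sum of centered squares $\BB^{ii}_{t_\ell t_{\ell+1}}-\tfrac12 h^{2H}$, which (in distribution) is $\tfrac12 h^{2H} H_2(B_{\ell,\ell+1})$. By stationarity and self-similarity of the fBm, after the scaling $n^{2H-\frac12}$ the increments $n^{2H-\frac12}\delta\bar F^{ij}_{u_l u_{l+1}}$ become, up to the factor $T^{2H-\frac12}$, normalized sums of $\lfloor \nu^{-1} n\rfloor$-many iterated-integral terms over a block of length $T/\nu$; this is exactly the setting of \cite{NTU, HLN}.

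The core analytic step is the computation of the limiting covariance structure. For a single pair $(i,j)$ with $i\neq j$, one writes $\mE[\BB^{ij}_{t_k t_{k+1}}\BB^{ij}_{t_\ell t_{\ell+1}}]$ via the two-dimensional Young integral formula \eref{e3.1}, then uses self-similarity to reduce to unit-step quantities, obtaining precisely $Q(k-\ell)$ and $P(k-\ell)$ as defined in \eref{eq8.4}; summing over the block and letting $n\to\infty$ (the sums $\sum_k Q(k)$ and $\sum_k P(k)$ converge absolutely because $\mu$ has an integrable tail for $H<\tfrac12$) yields the constants $Q$ and $P$. The cross term $\mE[\BB^{ij}\BB^{ji}]$ produces the $P\,\delta_{ij'}\delta_{ji'}$ contribution, while distinct index pairs and the diagonal-vs-off-diagonal pairs decorrelate asymptotically. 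The independence of the limit $W$ from $B$ follows because $\mE[\BB^{ij}_{t_k t_{k+1}}\,\delta B^{l}_{u v}]=0$ for all choices of indices (an odd-order moment in the Gaussian family, or more concretely: $\BB^{ij}$ is a second-chaos object orthogonal to the first chaos), so the joint characteristic function factorizes in the limit; this orthogonality survives the scaling.

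For the multidimensional (and joint-with-$B$) statement I would then invoke the fourth-moment / Peccati–Tudor theorem: since the components $n^{2H-\frac12}\delta\bar F^{ij}$ live in a fixed finite sum of Wiener chaoses (first and second), it suffices to check (a) convergence of the covariances, done above, and (b) vanishing of the relevant fourth cumulants, equivalently the $L^4$ contractions, which is precisely what is established in \cite[Theorem 3]{NTU} for each scalar component; the vector version follows by the criterion for joint chaos convergence. The role of the extra parameter $\nu$ (coarse graining) is only to organize the block structure; since the increments over disjoint blocks become asymptotically independent and stationary, the finite-dimensional distributions of $\{n^{2H-\frac12}\bar F,B\}$ converge to those of a Brownian motion $W$ with covariance \eref{e8.6} together with $B$.

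\textbf{Main obstacle.} The delicate point is not the scaling or the abstract chaos machinery but the bookkeeping of the covariance: verifying that all ``off-diagonal'' index combinations (e.g. $(i,j)$ vs.\ $(i,j')$ with $j\neq j'$, or $i=j$ vs.\ $i'\neq j'$) contribute nothing in the limit, and that the surviving ones give exactly $Q\,\delta_{ii'}\delta_{jj'}+P\,\delta_{ij'}\delta_{ji'}$, requires carefully splitting $\mE[\BB^{ij}_{uv}\BB^{i'j'}_{st}]$ using independence of the coordinates of $B$ together with Lemma~\ref{lem3.1}, and controlling the long-range sums $\sum_{|k|>1}$ by the decay $|k|^{2H-2}$ of $\mu$. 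Once this is in place, the convergence itself is a direct citation of \cite[Theorem 3]{NTU} and \cite[Proposition 5.1]{HLN}, extended componentwise.
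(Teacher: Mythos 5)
Your proposal is correct and matches the paper's approach: the paper gives no proof of Proposition \ref{prop8.3} at all, presenting it as ``a slight elaboration of'' \cite[Theorem 3]{NTU} and \cite[Proposition 5.1]{HLN}, which is exactly the citation your argument reduces to. The additional details you supply — the covariance bookkeeping via \eref{e3.1} leading to $Q(k)$ and $P(k)$, the chaos-orthogonality argument for independence from $B$, and the Peccati–Tudor criterion for the joint Gaussian limit — are the standard ingredients behind those cited results and are consistent with how the paper uses the proposition.
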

 \begin{remark}
 Proposition \ref{prop8.3} shows that the process $n^{2H-\frac12} \bar{F}$ converges stably to $W$ when $ \frac14<H<\frac12$. We refer the reader to Chapter 8 in \cite{JS} for the definition of stable convergence and its equivalent conditions.
 \end{remark}

\begin{remark}
The following plot of  constants $Q$ and $P$ shows that $Q$ is strictly larger than~$P$ for $H \in (\frac14, \frac34)$.
In particular, this implies that the $m\times m$ random  matrix $W$ defined in Proposition \ref{prop8.3} is not symmetric. Let us also mention that as has been observed in \cite{HLN2}, the fact that $Q>P$ results in   different features of the Crank-Nicolson scheme and the numerical schemes \eref{e2i} and \eref{e3} between the scalar case and  the  multi-dimensional cases.
\end{remark}
\begin{center}
 \includegraphics[scale=.4]{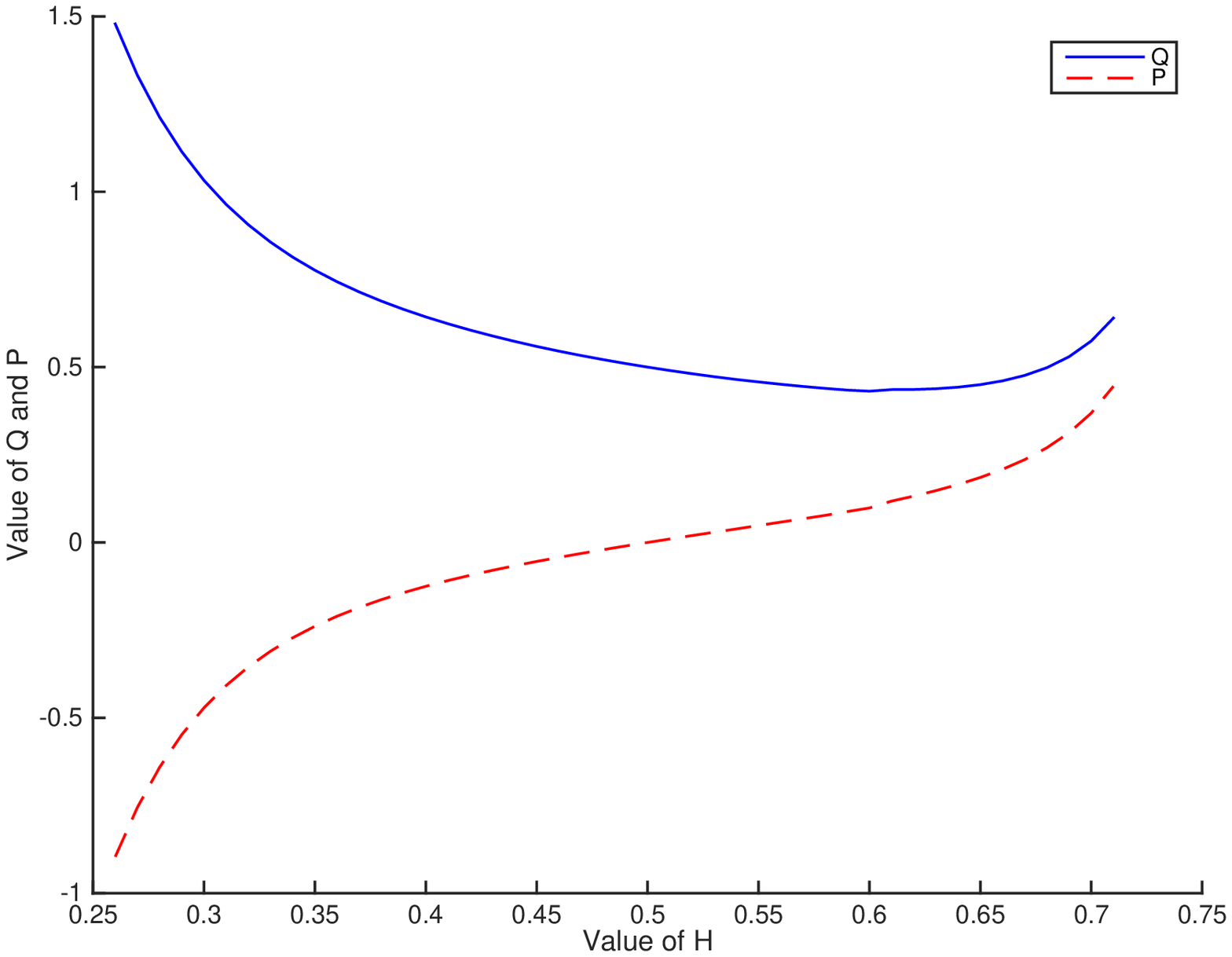}
\end{center}

\subsection{Asymptotic error distributions}
We can now prove the convergence of a renormalized version of the error process $y-y^{n}$ related to the Euler-type scheme $y^{n}$. Namely, we prove the following central limit theorem.

 \begin{theorem}\label{thm9.1}
 Let $y^{n}$ be the   Euler scheme defined in \eref{e4}.
 Suppose $b \in C^{2}_{b}$ and $V\in C^{4}_{b}$.
  Then 
 the sequence of processes 
   $ ( n^{2H-\frac12} ( {y}- {y}^{n}), B  )$ 
   converges weakly in
    $D([0,T])$    to the couple  $(U , B)$  as $n\rightarrow \infty$, where $U$ is the solution of the linear SDE
 \begin{eqnarray}\label{e9.1}
U_{t} &=& \int_{0}^{t} \partial b(y_{s}) U_{s}ds + \sum_{j=1}^{m} \int_{0}^{t} \partial V_{j} (y_{s}) U_{s} dB^{j}_{s} + \sum_{i,j=1}^{m} \int_{0}^{t} \partial V_{i}V_{j } (y_{s}) d W^{ij}_{s},
\end{eqnarray}
 and where $W$ is the Wiener process obtained in Proposition \ref{prop8.3}.
 \end{theorem}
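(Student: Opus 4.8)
The plan is to combine the rough-path decomposition of the error process developed in Sections \ref{section5}--\ref{section7} with the central limit theorem for the L\'evy-area sum $F$ (Proposition \ref{prop8.3}) and the general weighted-sum limit theorem (Theorem \ref{prop9.2}). Recall the change of variable $\ep_{t} = \Psi^{n}_{t}(y_{t}-y^{n}_{t})$ of \eqref{eq:def-ep-t}, and the splitting $\delta\ep = \delta\hat\ep + \delta\tilde\ep$ from \eqref{e7.1}, where $\delta\hat\ep$ is the weighted sum of increments of $F$ given in \eqref{e6.2i}. By Theorem \ref{thm7.3}(i), $n^{2H-\frac12}\delta\tilde\ep_{st}\to 0$ almost surely (uniformly, after a Garsia--Rodemich--Rumsey/Lemma \ref{lem4.2} type argument), so the asymptotics of $n^{2H-\frac12}\ep$ are driven entirely by $n^{2H-\frac12}\hat\ep$. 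Thus the first step is to show
\begin{equation*}
\Big( n^{2H-\frac12}\hat\ep, B\Big) \xrightarrow{\mathrm{f.d.d.}} \Big(\Xi, B\Big),
\qquad
\Xi_{t} = \sum_{i,j=1}^{m}\int_{0}^{t}\Psi_{s}\,\partial V_{i}V_{j}(y_{s})\,dW^{ij}_{s},
\end{equation*}
where $W$ is the Brownian motion of Proposition \ref{prop8.3}, independent of $B$.

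To prove this convergence I would apply Theorem \ref{prop9.2} with the weight process $f_{t_{k}} = \Psi_{t_{k}}\partial V_{i}V_{j}(y_{t_{k}})$ (a controlled process with moments of all orders, using \cite{CLL} for the integrability of $\Psi$) and the increment $h = n^{2H-\frac12}F^{ij}$. The hypothesis \eqref{eq9.6} is exactly the content of Proposition \ref{prop8.3}, i.e.\ $(n^{2H-\frac12}\bar F, B)\xrightarrow{\mathrm{f.d.d.}}(W,B)$; the structural conditions \eqref{e3.8ii}, \eqref{e3.8i} on $f$ and $h$ follow from Proposition \ref{lem5.1}, Lemma \ref{lem4.1} and the estimates of Section \ref{section4i} (in particular Corollary \ref{lem9.1} and Lemma \ref{lem:sum-delta-B-F}); and the coarse-graining bound \eqref{eq9.9} on $\sum_{l}\zeta^{n}_{l}$ is again a consequence of the L\'evy-area estimates (Corollary \ref{lem9.1}, second inequality, which yields exactly the $(r'-r)^{\al+\ga-\kappa}$ scaling with a negative power of $\nu$). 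Theorem \ref{prop9.2} then gives $(\Theta^{n},B)\xrightarrow{\mathrm{f.d.d.}}(\Theta,B)$ with $\Theta_{t} = \int_{0}^{t} f_{s}\otimes dW_{s}$, which after summing over $i,j$ is the claimed convergence of $n^{2H-\frac12}\hat\ep$; one upgrades from finite-dimensional to $D([0,T])$ convergence by the tightness estimate coming from the uniform H\"older bound \eqref{eq7.7}.

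The second step is to transfer the limit back from $\ep$ to $y-y^{n}$. Since $y_{t}-y^{n}_{t} = \Phi^{n}_{t}\ep_{t}$, and since Lemma \ref{lem7.1} (via Hypothesis \ref{hyp7.2}, now verified with $\al = 2H-\frac12-\kappa$ thanks to Theorem \ref{thm7.3}) shows that $\Phi^{n}\to\Phi$ at rate $n^{-(2H-\frac12-\kappa)}$ in $\ga$-H\"older norm, while $n^{2H-\frac12}\ep$ is tight, we get
\begin{equation*}
n^{2H-\frac12}(y_{t}-y^{n}_{t}) = \Phi^{n}_{t}\big(n^{2H-\frac12}\ep_{t}\big)
\xrightarrow{\mathrm{f.d.d.}} \Phi_{t}\,\Xi_{t}.
\end{equation*}
It remains to identify $\Phi\,\Xi$ with the solution $U$ of \eqref{e9.1}: this is the variation-of-constants formula for the linear equation \eqref{e6.1 i} driven by $B$ with inhomogeneous term $\sum_{i,j}\int_{0}^{t}\partial V_{i}V_{j}(y_{s})dW^{ij}_{s}$, using $U_{t} = \Phi_{t}\int_{0}^{t}\Psi_{s}\sum_{i,j}\partial V_{i}V_{j}(y_{s})dW^{ij}_{s}$; the stochastic integral against $W$ is well defined as an It\^o--Wiener integral because $W$ is independent of $B$ and $\Psi$, $y$ are $B$-measurable. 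Joint convergence with $B$ is retained throughout since all the approximating processes are functionals of $(B, n^{2H-\frac12}\bar F)$ and the limit theorem is stated jointly with $B$.

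The main obstacle is the verification of the coarse-graining estimate \eqref{eq9.9} with the correct rate in both $n$ and the auxiliary parameter $\nu$: this is where the genuinely rough ($H<\frac12$) difficulties concentrate, since it requires the sharp $L^{2}$ bounds on double and triple sums of L\'evy-area increments in the second chaos of $B$ (Lemma \ref{lem4.1}, Corollary \ref{lem9.1}) to be organized exactly so that the block sums over the $\nu$-grid decay in $\nu$. A secondary but nontrivial point is controlling the passage from $\Phi^{n}\ep$ to $\Phi\,\Xi$ jointly (rather than just marginally), which is handled by the uniform-in-$n$ rough-path bound of Lemma \ref{lem7.1} together with continuity of the It\^o--Lyons map.
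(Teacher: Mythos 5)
Your proposal is correct and follows essentially the same route as the paper: decompose $\ep=\hat\ep+\tilde\ep$, kill $n^{2H-\frac12}\tilde\ep$ via Theorem \ref{thm7.3}, apply Theorem \ref{prop9.2} (through Proposition \ref{prop9.3}) to the weighted sum $\hat\ep$ with weight $\Psi\,\partial V_iV_j(y)$, identify the limit by variation of constants, and conclude with tightness. The one inaccuracy is your attribution of the coarse-graining bound \eqref{eq9.9}: the second inequality of Corollary \ref{lem9.1} carries no decay in $\nu$ (the block sums $\zeta^n_l$ anchor $\delta B$ at the block points $t_{k_l}$, a different object), and the paper proves \eqref{eq9.9} by the dedicated second-chaos computation of Lemma \ref{lem11.4} — though you correctly flag this as the main obstacle.
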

\begin{proof} 
Recalling that $y-y^{n} = \Phi \ep$. 
We consider the following decomposition:
\begin{equation}\label{e9.2}
 {y}_{t}- {y}^{n}_{t} = \Phi^{n}_{\eta(t)} \tilde{\ep}_{\eta(t)} + (\Phi^{n}_{\eta(t)}-\Phi_{\eta(t)})  \hat{\ep}_{\eta(t)} +\Phi_{\eta(t)}   \hat{\ep}_{\eta(t)} + (y_{t}- {y}_{\eta(t)})-(y^{n}_{t} - {y}^{n}_{\eta(t)}  )  ,
\end{equation}
 where   recall that $\ep$ is defined by \eref{eq:def-ep-t}, and  $\tilde{\ep}$, $\hat{\ep}$ are respectively introduced in \eref{e7.1} and~\eref{e6.2i}.

Note that, thanks to Theorem \ref{thm7.3}, Lemma \ref{lem7.1} and Corollary \ref{lem9.1} we have almost surely:
\begin{eqnarray}\label{eq9.3}
\lim_{n\to \infty} \sup_{t\in [0,T]} 
n^{2H-\frac12} \left( \Phi^{n}_{\eta(t)} \tilde{\ep}_{\eta(t)} + (\Phi^{n}_{\eta(t)}-\Phi_{\eta(t)})  \hat{\ep}_{\eta(t)}
\right)
 =0.
\end{eqnarray}
 On the other hand,   thanks to  Theorem \ref{thm 3.3} and  equation \eref{e1.2} governing $y^{n}$ it is clear that    
 \begin{eqnarray}\label{eq9.4}
|y_{t}- {y}_{\eta(t)} | + | {y}^{n}_{t} -  y^{n}_{\eta(t)}| \leq G  {n^{-\ga}} = G  {n^{\frac12 -2H-\kappa}}
\end{eqnarray}
  for $G= K(1+\|B\|^{1/\ga}_{\ga})$ and for any $\ga<H$, where we use the fact that $H>2H-\frac12$ for the last inequality and we take $\kappa = \ga-2H+\frac12>0$.   Therefore, going back to \eref{e9.2}  the convergence of the finite dimensional distributions of $(n^{2H-\frac12}( {y}- {y}^{n}), B)$ can be reduced to the convergence of $(n^{2H-\frac12} \Phi_{\eta{(t)}}   \hat{\ep}_{\eta{(t)}}, B_{t}, \,t\in [0,T])$. 
 Furthermore,   Proposition \ref{prop9.3}
   delivers a central limit theorem for general weighted sums of the process $F$. Taking into account the expression \eref{e6.2i} for $\hat{\ep}$, it can be applied in order to get the convergence of the finite dimensional distributions of   
 $(n^{2H-\frac12} \Phi_{\eta(\cdot)}   \hat{\ep}_{\eta(\cdot)}, B)$ to  $(U,B)$, where
\begin{eqnarray}\label{eq9.5}
 U_{t} &=&\sum_{jj'=1}^{m} \Phi_{t} \int_{0}^{t} \Psi_{u} \partial {V}_{j}{V}_{j'} (y_{u})  d W^{jj'}, \quad t\in [0,T]
\end{eqnarray}
as $n\rightarrow \infty$.
Similarly to \eref{e6.4j}, an easy variation of parameter argument shows that $U$ defined by \eref{eq9.5} solves the linear SDE \eref{e9.1}. Summarizing our considerations so far, we have obtained the finite dimensional distribution convergence of 
 $(n^{2H-\frac12} (y-y^{n}), B)$ to $(U, B)$.

 It remains to show the tightness of the error $n^{2H-\frac12}( {y}- {y}^{n})  $.   To this end,  
 we invoke relations \eref{eq9.3}, \eref{eq9.4} and apply   Lemma~3.31 in  Chapter 6 \cite{JS} to   our decomposition  \eref{e9.2}. 
 Therefore, we are   reduced to show   the tightnesss of    $n^{2H-\frac12} \Phi_{\eta(\cdot)}   \hat{\ep}_{\eta(\cdot)}$. This follows immediately from Corollary \ref{lem9.1} and a tightness criterion in (13.14) of \cite{B}. The proof is now complete.
\end{proof}

We now state the limit theorem on which Theorem \ref{thm9.1} relies.

\begin{prop}\label{prop9.3}
Let $f $, $g $ be   processes defined as in Proposition \ref{prop3.6} and $W$ be the Brownian motion defined in Proposition \ref{prop8.3}.  
Set 
\begin{eqnarray*}
\Theta^{n}_{t} = n^{2H-\frac12} \sum_{k=0}^{\left\lfloor \frac{nt}{T} \right\rfloor} f_{t_{k}} \otimes \delta F_{t_{k}t_{k+1}}
\quad\text{and }\quad
\Theta_{t} = \int_{0}^{t} f_{s}\otimes dW_{s}\,.
\end{eqnarray*}
 Then the following relation holds true as $n\to \infty$:
 \begin{eqnarray*}
\left( \Theta^{n}, B \right)  &\xrightarrow{\rm{f.d.d.}}& \left( \Theta , B \right),\quad \text{as} \quad n\to \infty.
\end{eqnarray*}
\end{prop}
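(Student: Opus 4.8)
\textbf{Proof strategy for Proposition \ref{prop9.3}.} The plan is to deduce this statement as a direct application of the general limit theorem, Theorem \ref{prop9.2}, with the specific choice $h_{st} = n^{2H-\frac12}\,\delta F_{st}$, where $F$ is the L\'evy area process of \eref{e4.1}. Once this identification is made, $\Theta^{n}$ as defined here is exactly the object $\Theta^{n}$ appearing in Theorem \ref{prop9.2} (modulo the obvious matching of the summation index $\lfloor nt/T\rfloor$ with the grid point appearing there), and the conclusion $(\Theta^{n},B)\xrightarrow{\rm f.d.d.}(\Theta,B)$ follows verbatim. So the entire proof reduces to checking that $h=n^{2H-\frac12}F$ satisfies the three hypotheses of Theorem \ref{prop9.2}: the weak convergence \eref{eq9.6}, the moment/regularity bounds \eref{e3.8i} inherited from Proposition \ref{prop3.6}, and the coarse-graining estimate \eref{eq9.9} on the weight-free sums $\zeta^{n}_{l}$.

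First I would verify \eref{eq9.6}: this is precisely Proposition \ref{prop8.3}, which asserts that $(n^{2H-\frac12}\bar F, B)$, with $\bar F_t = F_{\eta(t)}$, converges in finite dimensional distributions to $(W,B)$ with $W$ independent of $B$ and covariance given by \eref{e8.6}. Thus the limiting Brownian motion in the statement is exactly the $W$ of Proposition \ref{prop8.3}, which is what we want. Next I would record the needed upper bounds on $h$: from Lemma \ref{lem4.1} we have $\|\delta F_{st}\|_p \le K_p n^{\frac12-2H}(t-s)^{\frac12}$, so $\|h_{st}\|_p \le K(t-s)^{\frac12}$, i.e. the exponent $\al$ in Theorem \ref{prop9.2} is $\al = \frac12$. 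The mixed bound $\|\sum_{t_k=s}^{t-}\delta B_{st_k}\otimes h_{t_kt_{k+1}}\|_p \le K(t-s)^{\ga+\frac12}$ is supplied by Lemma \ref{lem:sum-delta-B-F} (used already in the proof of Corollary \ref{lem9.1}); together these give \eref{e3.8i}. Since $f,g$ are assumed to satisfy the conditions of Proposition \ref{prop3.6}, \eref{e3.8ii} holds automatically, and the constraint $\al+2\ga>1$ becomes $\frac12+2\ga>1$, i.e. $\ga>\frac14$, which holds for $\ga\in(\frac13,H)$.

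The remaining and genuinely substantive point is the coarse-graining bound \eref{eq9.9}: for the second uniform partition $\{u_l = \frac{T}{\nu}l\}$ and the sets $\tilde D_l$ of \eref{eq9.7}, writing $\zeta^{n}_l = \sum_{t_k\in\tilde D_l}\delta B_{t_{k_l}t_k}\otimes h_{t_kt_{k+1}}$, one needs $\|\sum_{l=\nu r/T}^{\nu r'/T - 1}\zeta^{n}_l\|_p \le K\nu^{-\kappa}(r'-r)^{\al+\ga-\kappa}$. Here I would exploit the structure of $F$: $\zeta^{n}_l$ is, up to the normalization $n^{2H-\frac12}$, a double sum $\sum_{t_k}\sum_{t_{k'}\le t_{k-1}}\delta B \otimes \delta F \otimes \delta B$-type object over the block $[u_l,u_{l+1}]$, which is precisely the quantity estimated in the second display of Corollary \ref{lem9.1}; that corollary gives an $L_p$ bound of order $n^{\frac12-2H}(t-s)^{H+\frac12}$ for such a block sum, so after renormalization each block over an interval of length $\sim\nu^{-1}$ is $O(\nu^{-(H+\frac12)})$ in $L_p$. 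The sum over $\sim\nu(r'-r)$ blocks then needs to be controlled; because these are increments in the second/third chaos of $B$ one can use hypercontractivity to reduce to $p=2$ and then estimate the covariances of distinct blocks, the key being that the off-diagonal contributions decay (the measure $\mu$ in \eref{eq:def-msr-mu} being negative, as used repeatedly in Lemma \ref{lem11.2}), which yields the gain $\nu^{-\kappa}$. I expect this last verification — carefully packaging the block sums $\zeta^n_l$ so that Corollary \ref{lem9.1} and a chaos-variance argument apply uniformly over $\nu$ and $n$ — to be the main obstacle; the rest is bookkeeping. Once \eref{eq9.9} is in place, Theorem \ref{prop9.2} applies and, noting that the limit $\Theta_t = \int_0^t f_s\otimes dW_s$ is independent of the choice of representative points, the proof is complete.
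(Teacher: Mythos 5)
Your proposal follows the paper's proof essentially verbatim: the paper likewise applies Theorem \ref{prop9.2} with $h=n^{2H-\frac12}F$ and $\al=\frac12$, checking \eref{eq9.6} via Proposition \ref{prop8.3} and \eref{e3.8i} via Corollary \ref{lem9.1} (i.e.\ Lemma \ref{lem4.1} together with Lemma \ref{lem:sum-delta-B-F}). The coarse-graining bound \eref{eq9.9} that you rightly flag as the only substantive point is exactly the appendix Lemma \ref{lem11.4}, which is proved by the very mechanism you sketch (hypercontractivity to reduce to $p=2$, then block-covariance estimates exploiting the negativity of $\mu$ to get near-orthogonality of the $\zeta^{n}_{l}$), so your outline matches the paper's argument in full.
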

\begin{proof}
The proposition is an application of Theorem \ref{prop9.2}. As in  Corollary \ref{lem9.1}, we take $\ga>\frac13$, $h = n^{2H-\frac12} F $ and $\al = \frac12$. It suffices to verify   the conditions   \eref{e3.8i},  \eref{eq9.6} and \eref{eq9.9}. According to Corollary \ref{lem9.1} and Proposition \ref{prop8.3}, conditions   \eref{e3.8i} and \eref{eq9.6} holds true for our $h$. 
Applying Lemma \ref{lem11.4} and take  $\kappa: \frac12 +H-\ka +\ga>1$ we obtain the relation \eref{eq9.9}.
\end{proof}

\section{Appendix}

\subsection{Estimates for the H\"older semi-norm of a rough path}
The following   lemma is convenient while deriving upper bound estimates for the H\"older semi-norm  of  a rough path.
  \begin{lemma}\label{lem4.5}
 Let $X$ and $Y$ be   functions on $[0,T]$ and $\ZZ $ be a two parameter path on $\cs_{2}([0,T])$ such that $\delta \ZZ_{sut} = \delta X_{su} \otimes \delta Y_{ut}$. 
 We recall the notation $(t_{0},t_{1},\dots, t_{n})$ for a partition of $[0,T]$ and $\ll s,t\rr$ for discrete intervals given in the introduction. 
 Suppose that
 \begin{eqnarray*}
|\delta X_{st}| + | \delta Y_{st} | + |\ZZ_{st}|^{\frac12}&\leq& 
\begin{cases}
K|t-s|^{\be}, \quad (s,t )\in \mathcal{S}_{2}(\ll 0,T \rr)
\\
 K|t-s|^{\beta}, \quad s, t\in [t_{k},t_{k+1}], \, k=0,1,\dots,n-1
\end{cases}
\end{eqnarray*}
  for some  $   \beta>0$  and   $K>0$. 
   Then the following relations hold for all $(s,t)  \in \cs_{2}( [0,T])$:
 \begin{eqnarray*}
 |\delta X_{st}| + |\delta Y_{st}| \leq K(t-s)^{\be} \quad\quad \quad |\ZZ_{st}| \leq  K  (t-s)^{2\beta}.
\end{eqnarray*}
 \end{lemma}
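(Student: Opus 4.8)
The plan is to reduce an arbitrary pair $(s,t)\in\cs_2([0,T])$ to a pair of grid points plus two ``boundary'' cells of the partition, and then patch together the two regimes provided by the hypothesis. First I would dispose of the trivial case in which $s$ and $t$ lie in a single cell $[t_k,t_{k+1}]$: there the conclusion is literally the second alternative in the assumption. Otherwise $[s,t]$ meets the grid, and I set $t_p=\min\{t_k:\, t_k\ge s\}$ and $t_q=\max\{t_k:\, t_k\le t\}$, so that $s\le t_p\le t_q\le t$, the endpoints satisfy $\{s,t_p\}\subset[t_{p-1},t_p]$ and $\{t_q,t\}\subset[t_q,t_{q+1}]$, the pair $(t_p,t_q)$ belongs to $\cs_2(\ll 0,T\rr)$, and all three lengths $t_p-s$, $t_q-t_p$, $t-t_q$ are at most $t-s$ (with the convention that the middle increments vanish when $t_p=t_q$).

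For the first--order bounds I would simply write $\delta X_{st}=\delta X_{st_p}+\delta X_{t_pt_q}+\delta X_{t_qt}$, apply the sub-interval estimate to the two outer increments and the grid estimate to the inner one, and use that each length is $\le t-s$; this gives $|\delta X_{st}|\le 3K(t-s)^{\beta}$, and likewise for $\delta Y$. Thus, after the harmless enlargement of $K$ in the spirit of the paper's running convention, $|\delta X_{st}|+|\delta Y_{st}|\le K(t-s)^{\beta}$ for every $(s,t)\in\cs_2([0,T])$.

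For the second--order term I would use the Chen-type identity that follows from $\delta\ZZ_{sut}=\delta X_{su}\otimes\delta Y_{ut}$, namely $\ZZ_{st}=\ZZ_{su}+\ZZ_{ut}+\delta X_{su}\otimes\delta Y_{ut}$ for $s\le u\le t$. Applying it at $u=t_p$, and then again at $u=t_q$ inside $\ZZ_{t_pt}$, produces
\begin{equation*}
\ZZ_{st}=\ZZ_{st_p}+\ZZ_{t_pt_q}+\ZZ_{t_qt}+\delta X_{t_pt_q}\otimes\delta Y_{t_qt}+\delta X_{st_p}\otimes\delta Y_{t_pt}.
\end{equation*}
Here the three $\ZZ$-terms are bounded by $K^2(t-s)^{2\beta}$ via the sub-interval estimate (on $[t_{p-1},t_p]$ and $[t_q,t_{q+1}]$) and the grid estimate, while the two cross terms are bounded by combining the sub-interval/grid bounds on $\delta X$ with the bound on $\delta Y$ over the full simplex $\cs_2([0,T])$ obtained in the previous step, again using that $t_p-s$, $t_q-t_p$, $t-t_q$ and $t-t_p$ are all $\le t-s$. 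Summing the five contributions yields $|\ZZ_{st}|\le K(t-s)^{2\beta}$.

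There is no genuine obstacle: this is essentially a bookkeeping lemma. The only points requiring a little care are the degenerate cases ($[s,t]$ contained in a single cell, or containing exactly one grid point so that $t_p=t_q$), and the observation that the cross term $\delta X_{st_p}\otimes\delta Y_{t_pt}$ requires the $\delta Y$ estimate to be already available on the continuous simplex — which is why the first--order bounds must be established before the second--order one.
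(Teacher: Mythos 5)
Your proposal is correct and follows essentially the same route as the paper: the same three-term decomposition of $\delta X_{st}$ at the nearest grid points, and the same five-term Chen-type expansion of $\ZZ_{st}$ with two cross terms, each bounded by the appropriate regime of the hypothesis. Your explicit treatment of the degenerate cases and of the order in which the first- and second-order bounds must be established is a welcome (if minor) addition to what the paper leaves implicit.
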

 
 \begin{proof} We first consider $\delta X$ and $\delta Y$. 
 Take $t_{k-1}\leq s\leq t_{k}\leq t_{k'}\leq t\leq t_{k'+1}$. We have
\begin{eqnarray*}
| \delta X_{st}  | &\leq& | \delta X_{t_{k'}t} | +| \delta X_{t_{k}t_{k'}} | +| \delta X_{st_{k}}  |
\\
&\leq&K\left(  (t-t_{k'})^{\beta} + (t_{k'}-t_{k})^{\be}+ (t_{k}-s)^{\beta} \right)
 \\
&\leq& K(t-s)^{\beta}
. 
\end{eqnarray*}
The same estimate holds for $\delta Y$.
We now turn to the estimate for $\ZZ_{st}$\,. We consider $t_{k-1}\leq s\leq t_{k}\leq t_{k'}\leq t\leq t_{k'+1}$ again, and we  have 
\begin{eqnarray*}
|\ZZ_{s t}|  &=&  |\ZZ_{s t_{k}}+\ZZ_{t_{k} t_{k'}}+\ZZ_{t_{k'}  t}+ \delta X_{t_{k} t_{k'}} \otimes \delta Y_{t_{k'} t} + \delta X_{s t_{k}} \otimes \delta Y_{t_{k} t} |
\\
&\leq& K  \left( (t_{k} - s)^{2\beta} + (t-t_{k'})^{2\beta} + (t_{k'}-t_{k})^{2\be} 
\right)
\\
&&+ K^{2} ( (t_{k'} - t_{k})^{\beta}(t-t_{k'})^{\beta}  + (t_{k} - s)^{\beta}(t-t_{k})^{\beta} ) 
\\
&\leq& K (t-s)^{2\beta} .
\end{eqnarray*}
The proof is complete. 
\end{proof}

\subsection{Estimates for some iterated integrals}\label{section10.2}

 This section summarizes some estimates for weighted sums involving double or triple iterated integrals.

 \begin{lemma}\label{lem6.11}
 Let $B$ be our $m$-dimensional fBm with Hurst parameter $H>\frac13$. 
Let $f$ be a real-valued path on $[0,T]$ such that $  \|   f  \|_{\ga} \leq K  $ for $ \frac13< \ga< H$. 
Suppose that
$g$ is another real-valued path and $\tilde{g} = (\tilde{g}^{1},\tilde{g}^{2},\tilde{g}^{3})$ is a continuous path in $\cl(\RR^{m}, \RR) \times \RR \times \RR $, such that 
 $S_{2}( g, \tilde{g}, B)$ is well defined as a   $\ga$-rough path,
and that $\delta g$ can be decomposed as 
\begin{eqnarray*}
 \delta g_{t_{k}u} =\int_{s}^{t} \tilde{g}^{1}_{u} dB_{u}+\int_{s}^{t} \tilde{g}^{2}_{u} du+\int_{s}^{t} \tilde{g}^{3}_{u} d (u-t_{k})^{2H}  
\end{eqnarray*}
for all $(s,t) \in \cs_{2} (\ll 0,T \rr)$.  
 Consider an arbitrarily small parameter $\kappa>0$.
Then the following  inequalities hold  true for $(s,t) \in \cs_{2} (\ll0,T\rr)$:
\begin{eqnarray}
\Big| \sum_{t_{k}=s}^{t-} f_{t_{k}} \int_{t_{k}}^{t_{k+1}}  \delta g_{t_{k}u}      d(u-t_{k})^{2H} \Big|
&\leq& G n^{1-4\ga+2\kappa} (t-s)^{1-\ga},  
\label{e11.1 ii}
\\
\Big| \sum_{t_{k}=s}^{t-} f_{t_{k}} \int_{t_{k}}^{t_{k+1}} \int_{t_{k}}^{u} g_{v}   d(v-t_{k})^{2H} dB_{u} \Big|
&\leq& G n^{1-4\ga+2\kappa} (t-s)^{1-\ga},
\label{e11.1ii}
\\
\Big| \sum_{t_{k}=s}^{t-} f_{t_{k}} \int_{t_{k}}^{t_{k+1}}  \delta g_{t_{k}u}   \int_{t_{k}}^{u} dB_{v} \otimes dB_{u} \Big|
&\leq& G n^{1-4\ga+2\kappa} (t-s)^{1-\ga} .
\label{e 11.1i}
\end{eqnarray}
 
\end{lemma}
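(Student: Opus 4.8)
\textbf{Proof proposal for Lemma \ref{lem6.11}.}

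The plan is to treat the three estimates \eqref{e11.1 ii}, \eqref{e11.1ii}, \eqref{e 11.1i} by a common mechanism: each one is a weighted sum of the form $\sum_{t_{k}=s}^{t-} f_{t_{k}} \otimes h_{t_{k}t_{k+1}}$, where $h$ is a second-order increment built from double or triple iterated integrals of $(g,\tilde g, B)$ on the small intervals $[t_{k},t_{k+1}]$. Since $f$ is $\ga$-H\"older and (thanks to the discrete sewing Lemma \ref{lem2.4}) such weighted sums are essentially controlled by the ``diagonal-free'' part plus the sum of the increments $h$, the heart of the matter is to show that the ``weight-free'' cumulative sum $\delta \bar g_{st} := \sum_{t_{k}=s}^{t-} h_{t_{k}t_{k+1}}$ satisfies an $L_p$-bound of order $n^{\frac12-3H}(t-s)^{\frac12}$, which after invoking $t-s\ge T/n$ and $\ga<H$ translates into $n^{1-4\ga+\kappa}(t-s)^{1-\ga+\kappa}$ on the discrete grid. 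Then Lemma \ref{lem4.2} upgrades this to an almost sure bound $Gn^{1-4\ga+2\kappa}(t-s)^{1-\ga}$, and the weighted piece $\sum \delta f_{st_k}\otimes h_{t_kt_{k+1}}$ is handled by Proposition \ref{prop3.4} exactly as in the proof of Lemma \ref{lem3.11} (where $\zeta_{t_kt_{k+1}}=\delta g_{t_kt_{k+1}}$ plays the role of our $h$). So the strategy is: (i) decompose $\sum f_{t_k}h_{t_kt_{k+1}} = \sum \delta f_{st_k}h_{t_kt_{k+1}} + f_s\,\delta\bar g_{st}$; (ii) bound $\delta\bar g_{st}$ in $L_p$; (iii) apply Lemma \ref{lem4.2} then Proposition \ref{prop3.4}.

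For step (ii), the key observation is that each $h_{t_kt_{k+1}}$ lives in a fixed Wiener chaos of $B$ (the second chaos for \eqref{e11.1 ii} and \eqref{e11.1ii}, since one factor $d(u-t_k)^{2H}$ is deterministic; the third chaos for \eqref{e 11.1i}), so all $L_p$-norms are equivalent and it suffices to estimate the $L_2$-norm. Here I would expand $\|\sum_{t_k=s}^{t-} h_{t_kt_{k+1}}\|_2^2$ into a double sum over pairs $(k,k')$ and split into the near-diagonal terms $|k-k'|\le 1$ and the far terms $|k-k'|>1$. For the far terms, using the independence of the distinct coordinates of $B$ and Lemma \ref{lem3.1}, one writes the covariance $\mE(h_{t_kt_{k+1}}h_{t_{k'}t_{k'+1}})$ as a multiple integral against the measure $\mu$ from \eqref{eq:def-msr-mu}; since $\mu$ is a negative measure for $H<1/2$, these cross terms contribute a non-positive quantity, hence can be dropped. (For the pieces where indices coincide, as in $\zeta^{1,il}$, $\zeta^{2,l}$ of Lemma \ref{lem11.2}, one further decomposes using Hermite polynomials $H_2$ and the deterministic part $(u-t_k)^{2H}$ so that the same sign argument applies; this is precisely the bookkeeping already carried out in Lemma \ref{lem11.2} and Remark \ref{remark3.9}, which I would cite rather than redo.) The near-diagonal terms number $O(n(t-s))$ and each has $\|h_{0,1}\|_2 \sim n^{-3H}$ by self-similarity (one $dB$ factor scales like $n^{-H}$, two factors like $n^{-2H}$, and the deterministic $d(u-t_k)^{2H}$ scales like $n^{-2H}$, so in all three cases $h_{0,1}$ scales like $n^{-3H}$), giving $O(n(t-s)\cdot n^{-6H}) = O(n^{1-6H}(t-s))$, i.e. an $L_2$-norm of order $n^{\frac12-3H}(t-s)^{\frac12}$ as desired. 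For \eqref{e11.1 ii} the innermost object $\delta g_{t_ku}$ is given by the stated three-term decomposition, so $h_{t_kt_{k+1}}$ is a sum of three pieces — a $(dB)\,d(u-t_k)^{2H}$ double integral, a $du\,d(u-t_k)^{2H}$ (deterministic-times-Lebesgue, even smaller), and a $d(u-t_k)^{2H}\,d(u-t_k)^{2H}$ piece — and each is treated by the same chaos/sign dichotomy.

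The main obstacle I anticipate is the careful sign bookkeeping for the ``far'' terms when the coordinate indices are not all distinct (so that the integrand is not simply a product against $\mu$ but involves Hermite-type corrections), and making sure the stochastic-Fubini manipulations needed to recognize the covariances as iterated $\mu$-integrals are legitimate for $H>\frac13$ — this is exactly the technical content that Lemma \ref{lem11.2} and its proof already establish, so the cleanest route is to reduce \eqref{e11.1 ii}--\eqref{e 11.1i} to that lemma by writing each $h_{t_kt_{k+1}}$ explicitly in terms of the $\zeta$'s of \eqref{eq:def-zeta-and-g} and \eqref{eq:def-zeta1-zeta2}, plus genuinely smoother pieces (Lebesgue integrals, powers of $(u-t_k)$) whose contribution is $O(n^{-\text{something}\ge 3H})$ and hence negligible. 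Once the $L_p$-bound on $\delta\bar g_{st}$ is in hand, the remainder of the argument is a verbatim repetition of the proofs of Lemma \ref{lem3.11} and Lemma \ref{cor3.13}: apply Lemma \ref{lem4.2} to pass from $L_p$ to a pathwise bound with an $n^{\kappa}$ loss, then apply Proposition \ref{prop3.4} to the $\sum \delta f_{st_k}\otimes h_{t_kt_{k+1}}$ term, and combine via the decomposition in (i). Since the referee and reader have already seen these moves twice, I would state the reduction precisely and then write ``the details are left to the reader,'' as the authors do for Lemma \ref{cor3.13}.
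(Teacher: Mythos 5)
Your overall architecture --- split off the weight, bound the weight-free cumulative sum in $L_p$ by chaos and sign arguments, then upgrade via Lemma \ref{lem4.2} and Proposition \ref{prop3.4} --- matches the paper's, and your target $L_p$ bound $n^{\frac12-3H}(t-s)^{\frac12}$ together with its conversion to $n^{1-4\ga+2\kappa}(t-s)^{1-\ga}$ is correct. However, one step fails as written: you assert that each $h_{t_kt_{k+1}}$, e.g.\ $\int_{t_k}^{t_{k+1}}\int_{t_k}^{u} g_v\,d(v-t_k)^{2H}\,dB_u$, ``lives in a fixed Wiener chaos of $B$.'' It does not: $g$ and $\tilde g$ are general random controlled paths (in the applications they are built from $y^{n}$, $\Psi$, etc.), so neither hypercontractivity nor the negative-measure sign argument for the off-diagonal covariances applies to $h$ directly. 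The missing idea is the linearization of $g$ (resp.\ $\tilde g$) at the left endpoint $t_k$ of each small interval. The paper's proof writes the $k$-th summand as $D_k^1+D_k^2$, where $D_k^1$ has $\tilde g$ frozen at $t_k$ --- so the iterated integral becomes a pure iterated integral of $B$ and $(u-t_k)^{2H}$ with enlarged weight $f_{t_k}\tilde g_{t_k}$, to which Lemma \ref{lem11.2}, Remark \ref{remark3.9} and Lemma \ref{cor3.13} apply --- while $D_k^2$ involves $\delta\tilde g_{t_k\cdot}$ and is a higher-order iterated rough integral, bounded \emph{pathwise} by $Gn^{-2H-2\ga}$ per step using precisely the hypothesis that $S_2(g,\tilde g,B)$ is a $\ga$-rough path, a hypothesis your argument never invokes. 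Your closing paragraph gestures at a reduction to the $\zeta$'s of Lemma \ref{lem11.2}, but mischaracterizes the correction terms as ``Lebesgue integrals, powers of $(u-t_k)$'': the correction is a rough integral against $\delta\tilde g$, controlled by rough-path estimates rather than chaos estimates.

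Two smaller repairs. After freezing, the effective weight is $f\tilde g$ rather than $f$, so the decomposition in your step (i) must pull out $f_s\tilde g_s$ (the product is still $\ga$-H\"older, so Proposition \ref{prop3.4} and Lemma \ref{cor3.13} still apply). And for \eref{e11.1 ii} one first needs an integration by parts (change of variables for Young integrals) to rewrite $\int_{t_k}^{t_{k+1}}\int_{t_k}^{u}dB_v\,d(u-t_k)^{2H}$ as $\delta B_{t_kt_{k+1}}h^{2H}-\int_{t_k}^{t_{k+1}}(u-t_k)^{2H}dB_u$ before Lemma \ref{cor3.13} and Remark \ref{remark3.9} can be cited.
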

\begin{proof}
For the sake of clarity, we will only prove our claims for $g$ whose increments can be written as
$ \delta g_{st} = \int_{s}^{t} \tilde{g}_{s} d B_{s} $, where $S_{2} (g, \tilde{g}, B)$ defines a rough path. 
We   also focus on  the inequality \eref{e11.1 ii}. For convenience we denote by $D$ the following increment defined on $\cs_{2} (\ll 0,T \rr)$: 
\begin{eqnarray*}
D_{st}&=&   \sum_{t_{k}=s}^{t-} f_{t_{k}} \int_{t_{k}}^{t_{k+1}} \delta g_{t_{k}u}     d(u-t_{k})^{2H}.
\end{eqnarray*}
 Since we have assumed that $\delta g_{st}=\int_{s}^{t} \tilde{g} dB$, we can write
\begin{eqnarray*}
D_{st}
&=&
     \sum_{t_{k}=s}^{t-} f_{t_{k}} \int_{t_{k}}^{t_{k+1}}
     \int_{t_{k}}^{u}   \tilde{g}_{v} d B_{v}
           d(u-t_{k})^{2H}.
\end{eqnarray*}
We now consider the following decomposition of $D_{st}$:
\begin{eqnarray}\label{e11.5iii}
D_{st} &=&  \sum_{t_{k}=s}^{t}(   D_{k}^{1} +D_{k}^{2}),
\end{eqnarray}
where $D^{1}_{k}$ and $D^{2}_{k}$ are given by:
     \begin{eqnarray*}
   D^{1}_{k}=
  f_{t_{k}}    \tilde{g}_{t_{k}} \int_{t_{k}}^{t_{k+1}}
     \int_{t_{k}}^{u}   d B_{v}
           d(u-t_{k})^{2H}   
,
\quad
 D^{2 }_{k}=
 f_{t_{k}}    \int_{t_{k}}^{t_{k+1}}
   \Big(  \int_{t_{k}}^{u} \int_{t_{k}}^{v} d \tilde{g}_{r} d B_{v} \Big)
           d(u-t_{k})^{2H}.
\end{eqnarray*}
Both $D^{1}_{k}$ and $D^{2}_{k}$ are easily bounded. Indeed, one can
note that $D^{2}_{k}$ is a Young integral,  and
since $(\tilde{g}, B)$ admits a lift as a $\ga$-rough path 
 it is easy to show that 
\begin{eqnarray}\label{e11.5ii}
\left|  D^{2}_{k}
\right| 
&\leq & G n^{-2H-2\ga}. 
\end{eqnarray}
Therefore, summing up both sides of \eref{e11.5ii} from $s$ to $t$ we obtain
\begin{eqnarray}\label{e11.6i}
\Big|\sum_{t_{k}=s}^{t-}  D^{2}_{k}\Big| &\leq & G n^{1-4\ga}(t-s). 
\end{eqnarray}
In order to bound $D^{1}_{k}$, we apply a change of variable formula for Young integrals, which yields
\begin{eqnarray*}
D^{1}_{k} = f_{t_{k}}\tilde{g}_{t_{k}}\delta B_{t_{k}t_{k+1}} (t_{k+1}-t_{k})^{2H}
- f_{t_{k}} \tilde{g}_{t_{k}} \int_{t_{k}}^{t_{k+1}} (u-t_{k})^{2H}dB_{u} \equiv D^{11}_{k}+D^{12}_{k}.
\end{eqnarray*}
Then $\sum_{t_{k}=s}^{t-} D^{11}_{k}$ is bounded by elementary considerations. The terms $D^{12}_{k}$ are handled by decomposing $  f_{t_{k}} \tilde{g}_{t_{k}} $ as $\delta (f\tilde{g})_{st_{k}}+  f_{s} \tilde{g}_{s} $ and by a direct application of   Lemma \ref{cor3.13}. It is thus readily seen that  
\begin{eqnarray}\label{e11.7}
\Big| \sum_{t_{k}=s}^{t-} D^{1}_{k} \Big| &\leq & G n^{1-4\ga+2\kappa} (t-s)^{1-\ga}. 
\end{eqnarray}
The estimate \eref{e11.1 ii} follows by applying \eref{e11.6i} and \eref{e11.7} to \eref{e11.5iii}.

Inequalities \eref{e11.1ii} and \eref{e 11.1i} can be shown in a similar way by invoking Lemma \ref{cor3.13} and Lemma \ref{lem11.2}, respectively. The proof is omitted.
\end{proof}
The following lemma considers almost sure bounds of a triple integral. It can be shown along the same lines as for Lemma \ref{lem6.11}. The proof, which hinges on Lemma \ref{lem11.2}, is omitted for sake of conciseness. 
\begin{lemma}
Let $f$, $\kappa$ be as in Lemma  \ref{lem6.11}.
Let $h=(h^{1},h^{2},h^{3})$ and $\tilde{h}=(\tilde{h}^{1},\tilde{h}^{2},\tilde{h}^{3})$ be continuous paths such that
$h^{e}$ takes values in $\RR$ and $\tilde{h}^{e}$ takes values in $\cl(\RR^{m},\RR)$. We also assume that
 $S_{2}(h, \tilde{h}, B)$  is a $\ga$-rough path for $H>\ga>\frac13$, and  that  $\delta h^{e}_{st}=\int_{s}^{t} \tilde{h}^{e} dB$ for $(s,t) \in \cs_{2}([0,T])$ and $e=1,2,3$. Then we have the following estimate for all $(s,t) \in \cs_{2} (\ll0,T\rr)$: 
\begin{eqnarray}
\Big| \sum_{t_{k}=s}^{t-} f_{t_{k}} \int_{t_{k}}^{t_{k+1}}    \int_{t_{k}}^{u}  \int_{t_{k}}^{v} h^{1}_{r} dB_{r} \otimes  h^{2}_{v} dB_{v} \otimes  h^{3}_{u} dB_{u} \Big|
&\leq& G n^{1-4\ga+2\kappa} (t-s)^{1-\ga}.
\label{e11.1 i}
\end{eqnarray}
\end{lemma}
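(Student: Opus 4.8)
The statement to be established is inequality \eref{e11.1 i}, which is the triple-integral analogue of the double-integral bounds \eref{e11.1 ii}, \eref{e11.1ii}, \eref{e 11.1i} in Lemma \ref{lem6.11}. The plan is to mimic the proof of Lemma \ref{lem6.11} verbatim, replacing the double iterated integrals there by triple iterated integrals, and invoking Lemma \ref{lem11.2} (in place of Lemma \ref{cor3.13}) for the innermost cumulative-sum estimate. Concretely, write
\begin{eqnarray*}
D_{st} &=& \sum_{t_{k}=s}^{t-} f_{t_{k}} \int_{t_{k}}^{t_{k+1}} \int_{t_{k}}^{u} \int_{t_{k}}^{v} h^{1}_{r} dB_{r} \otimes h^{2}_{v} dB_{v} \otimes h^{3}_{u} dB_{u},
\end{eqnarray*}
and decompose it as $D_{st} = \sum_{t_{k}=s}^{t-} (D^{1}_{k} + D^{2}_{k})$, where $D^{1}_{k}$ is the "frozen-coefficient" term obtained by replacing $h^{1}_{r}$, $h^{2}_{v}$, $h^{3}_{u}$ by their values $h^{1}_{t_{k}}$, $h^{2}_{t_{k}}$, $h^{3}_{t_{k}}$ at the left endpoint, and $D^{2}_{k}$ collects the remainders coming from $\delta h^{e}$.

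\textbf{Key steps.} First I would handle $D^{2}_{k}$: each such term is a Young integral of a $\ga$-regular path against another, over an interval of length $n^{-1}$, involving at least one increment $\delta h^{e}_{t_{k}\cdot}$ (itself controlled via $S_{2}(h,\tilde h,B)$ being a rough path) together with the factors $dB_{v}$, $dB_{u}$; consequently $|D^{2}_{k}| \leq G n^{-3H-\ga}$ (or with an analogous polynomial count in $\ga$), so summing over the at most $n(t-s)/T$ indices $t_{k}$ gives a bound of the form $G n^{1-4\ga}(t-s)$, which is absorbed in the right-hand side of \eref{e11.1 i}. Second, and this is the substantive step, I would treat $D^{1}_{k}$. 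Here all the vector fields are frozen, so
\begin{eqnarray*}
\sum_{t_{k}=s}^{t-} D^{1}_{k} = \sum_{t_{k}=s}^{t-} f_{t_{k}}\, h^{1}_{t_{k}} h^{2}_{t_{k}} h^{3}_{t_{k}} \int_{t_{k}}^{t_{k+1}} \int_{t_{k}}^{u} \int_{t_{k}}^{v} dB_{r} \otimes dB_{v} \otimes dB_{u},
\end{eqnarray*}
and the inner triple integral is exactly $\zeta_{t_{k}t_{k+1}}$ of \eqref{eq:def-zeta-and-g}. So $\sum_{t_{k}=s}^{t-} D^{1}_{k}$ is a weighted sum of $\zeta$ with weight $F_{t_{k}} := f_{t_{k}} h^{1}_{t_{k}} h^{2}_{t_{k}} h^{3}_{t_{k}}$, which is $\ga$-H\"older for every $\ga<H$ (being a product of such paths). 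Lemma \ref{lem3.11} applies directly and yields $|\sum_{t_{k}=s}^{t-} D^{1}_{k}| \leq G n^{1-4\ga+2\kappa}(t-s)^{1-\ga}$. Combining the two bounds gives \eref{e11.1 i}.

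\textbf{Main obstacle.} The genuinely new ingredient relative to Lemma \ref{lem6.11} is the cumulative-sum estimate \eref{e11.0} for the triple integral $\zeta$, i.e. Lemma \ref{lem11.2}; but this is already proved in the excerpt, so the present lemma inherits it for free through Lemma \ref{lem3.11}. The only care-requiring point is the bookkeeping for $D^{2}_{k}$ when the indices among $h^{1},h^{2},h^{3}$ coincide or when $\tilde h^{e}$ introduces an extra innermost $dB$: one must check that every remainder term is still a well-defined Young (or rough) integral with the claimed $n^{-3H-\ga}$-type decay, exactly as in the $i=j\neq l$ and $i\neq j,\ i=l$ cases handled in Lemma \ref{lem11.2}. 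Since the paper explicitly says this lemma "can be shown along the same lines as for Lemma \ref{lem6.11}" and omits the proof, the expectation is that no new difficulty arises; the role of the present plan is simply to record that the reduction to Lemma \ref{lem3.11} via the frozen-coefficient decomposition is the mechanism, and that Lemma \ref{lem11.2} supplies the one nontrivial estimate.
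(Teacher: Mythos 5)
Your proposal is correct and follows exactly the route the paper indicates (the paper omits the proof, saying only that it goes "along the same lines as Lemma \ref{lem6.11}" and "hinges on Lemma \ref{lem11.2}"): freeze the coefficients $h^{e}$ at $t_{k}$ so that the leading term becomes a weighted sum of $\zeta_{t_{k}t_{k+1}}$ handled by Lemma \ref{lem3.11}, and bound the remainder as a sum of fourth-level iterated integrals of the rough path $S_{2}(h,\tilde h,B)$, each of size $Gn^{-4\ga}$, so that the sum contributes $Gn^{1-4\ga}(t-s)$. The only cosmetic slip is the stated remainder rate $n^{-3H-\ga}$, which should be $n^{-4\ga}$ as you yourself parenthetically acknowledge; this does not affect the argument.
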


The following results provide some upper-bound estimates for the $L_{p}$-norm of a ``discrete'' rough double integral. Recall that $0=t_{0}< \cdots<t_{n}=T$ and $0=u_{0}<\cdots<u_{\nu}=T$ are two uniform partitions on $[0,T]$. 

\begin{lemma}\label{lem11.4}
Let $F$ be defined in \eref{e4.1} and $H>\frac14$.   We set 
\begin{eqnarray*}
\hat{\zeta}^{n}_{r} : = \sum_{l=0}^{\frac{\nu r}{T}-1} \zeta_{l}^{n}
 \quad \text{ with }\quad
\zeta_{l}^{n} :=\zeta_{l}^{n,ijj'}= n^{2H-\frac12} \sum_{ t_{k} \in \tilde{D}_{l} }  \delta B^{i}_{t_{k_{l}} t_{k}}   \delta   F^{jj'}_{t_{k}t_{k+1}},
\end{eqnarray*}
 where     $r \in \{u_{1},\dots,u_{\nu}\}$.  
Then the following estimate holds true   for all $p\geq 1$:
\begin{eqnarray}\label{e9.32}
\| \delta \hat{\zeta}^{n}_{r,r'} \|_{p} \leq   K \nu^{-H}  n^{\frac12 -2H} (r'-r)^{\frac12} \,,
\quad\quad r,r' \in \{ u_{0},\dots, u_{\nu} \}.
\end{eqnarray}
\end{lemma}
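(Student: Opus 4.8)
\textbf{Proof proposal for Lemma \ref{lem11.4}.}

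The plan is to recognize $\delta\hat\zeta^n_{r,r'}$ as a ``discrete Young integral'' of the weight-free type and apply the machinery already assembled in Section \ref{section4i}, most notably Proposition \ref{prop3.4} (and its $L_p$-valued variant in Remark \ref{remark3.8}) together with Corollary \ref{lem9.1}. First I would fix $r,r'\in\{u_0,\dots,u_\nu\}$ and rewrite
\begin{equation*}
\delta\hat\zeta^n_{r,r'}
=\sum_{l=\frac{\nu r}{T}}^{\frac{\nu r'}{T}-1}\zeta^n_l
= n^{2H-\frac12}\sum_{l=\frac{\nu r}{T}}^{\frac{\nu r'}{T}-1}\ \sum_{t_k\in\tilde D_l}\delta B^i_{t_{k_l}t_k}\,\delta F^{jj'}_{t_kt_{k+1}}.
\end{equation*}
The idea is to split off the ``diagonal'' part $\sum_{t_k\in\tilde D_l}\delta B^i_{u_lt_k}\,\delta F^{jj'}_{t_kt_{k+1}}$ relative to the coarse grid point $u_l$, exactly as in the decomposition \eqref{eq9.10}--\eqref{eq9.12} used in the proof of Theorem \ref{prop9.2}, and to handle the telescoping boundary terms $\delta B^i_{t_{k_l}u_l}$ separately using the Cauchy--Schwarz bound $\|\delta B^i_{t_{k_l}u_l}\otimes\delta F^{jj'}_{t_kt_{k+1}}\|_p\le K n^{-H}\cdot n^{-\frac12}$ coming from Lemma \ref{lem4.1} (recall $\|\delta B^i_{t_{k_l}u_l}\|_p\le K n^{-H}$ since $u_l-t_{k_l}\le T/n$). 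These boundary corrections contribute at most $K\nu^{-1}n^{\frac12-2H}$ after summing over $l$, which is of lower order than the claimed bound.

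The main term is a genuine weight-free discrete rough integral $\sum_{l}\ g_s\otimes\zeta^n_l$ with constant (or trivial) weight, so Corollary \ref{lem9.1} applies directly: taking $h=n^{2H-\frac12}F$ and $\al=\frac12$ there, the first inequality of Corollary \ref{lem9.1} gives $\|\sum_{t_k\in\tilde D_l} f_{t_k}\otimes\delta F_{t_kt_{k+1}}\|_p\le Kn^{\frac12-2H}(\text{length})^{\frac12}$ on any discrete subinterval, while the $\gamma$-H\"older control of $B$ supplies the factor $\nu^{-H}$ once we track the scale of the coarse grid. Concretely, I would apply Proposition \ref{prop3.4} in its $L_p$-valued form with $f=\delta B^i$ (so $\|\delta f_{st}\|_p\le K(t-s)^{H-\kappa}$ for any $\kappa>0$, or more cleanly $\|\delta f_{st}\|_p\le K(t-s)^{\ga}$ for $\ga<H$) and $g$ the process whose increment over $[t_k,t_{k+1}]$ is $\delta F^{jj'}_{t_kt_{k+1}}$, for which Remark \ref{remark3.6}/Lemma \ref{lem4.1} give $\|\delta g_{st}\|_p\le Kn^{\frac12-2H}(t-s)^{\frac12}$. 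Since $H+\frac12>1$, the exponent condition $\al+\beta>1$ in Proposition \ref{prop3.4} is met, and the resulting bound is $\|\delta\hat\zeta^n_{r,r'}\|_p\le Kn^{2H-\frac12}\cdot n^{-H}\cdot n^{\frac12-2H}(r'-r)^{H+\frac12}\le Kn^{-H}$ on the coarse scale; combined with $r'-r\ge T/\nu$, interpolation between the trivial bound at scale $T/\nu$ and the global bound recovers the stated form $K\nu^{-H}n^{\frac12-2H}(r'-r)^{\frac12}$.

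The delicate point — the ``hard part'' — is bookkeeping the two scales $n$ and $\nu$ simultaneously: one must be careful that the $B$-increments $\delta B^i_{t_{k_l}t_k}$ have size governed by the coarse spacing $u_{l+1}-u_l=T/\nu$ (yielding the $\nu^{-H}$ factor) while the $F$-increments and their cumulative sums are controlled at the fine scale $1/n$ via Lemma \ref{lem4.1}, and that the interpolation step from $\cs_2(\ll0,T\rr)$ back to the partition points $\{u_j\}$ is legitimate. I would mirror the argument in the proof of Lemma \ref{lem3.11} and in Step 3 of the proof of Theorem \ref{prop9.2}: write $f_{t_k}=f_s+\delta f_{st_k}$, bound the $f_s$-part by the hypothesis \eqref{e3.8i}-type estimate from Corollary \ref{lem9.1}, and bound the $\delta f_{st_k}$-part by Proposition \ref{prop3.4}. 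All remaining manipulations are routine once the correct scaling is pinned down, and hyper-contractivity in the second chaos of $B$ reduces everything to $p=2$ as in Lemma \ref{lem4.1}.
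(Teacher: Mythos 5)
Your proposal has a genuine gap, and it sits at the central step. You invoke Proposition \ref{prop3.4} with $f=\delta B^{i}$ (H\"older exponent $\ga<H$) and $g$ satisfying $\|\delta g_{st}\|_{p}\le Kn^{\frac12-2H}(t-s)^{\frac12}$, asserting that ``since $H+\frac12>1$, the exponent condition $\al+\beta>1$ is met.'' But throughout the paper $H<\frac12$, so $\ga+\frac12<H+\frac12<1$ and the hypothesis of the discrete sewing lemma fails; this is precisely why the second condition in \eref{e3.8i} for $h=n^{2H-\frac12}F$ has to be established probabilistically (Lemma \ref{lem:sum-delta-B-F}) rather than by sewing. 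More fundamentally, no deterministic Young/sewing argument can produce the claimed bound: $\delta\hat\zeta^{n}_{r,r'}$ is a sum of $\frac{\nu(r'-r)}{T}$ block quantities $\zeta^{n}_{l}$, each anchored at its own coarse point $t_{k_{l}}$, and the triangle inequality over blocks loses a factor of order $(\nu(r'-r))^{\frac12}$ compared with \eref{e9.32}. The stated $(r'-r)^{\frac12}$ therefore requires square-root cancellation \emph{across} the blocks, i.e.\ approximate orthogonality of the $\zeta^{n}_{l}$ in Wiener chaos --- a covariance statement, not a regularity statement. Your fallback (``interpolation between the trivial bound at scale $T/\nu$ and the global bound'') does not supply this cancellation, and Corollary \ref{lem9.1}, which you want to quote, gives no $\nu^{-H}$ factor on $[r,r']$ and is itself proved from Lemma \ref{lem:sum-delta-B-F} by the very chaos computation you are trying to avoid, so the route is circular as well as insufficient.

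The paper's proof is a direct second-moment computation in the (third) chaos: write $\mE\bigl|\sum_{l}\zeta^{n}_{l}\bigr|^{2}$ as a fourfold sum of $J(k,k',t_{k_{l}},t_{k_{l'}})=\mE\bigl[\delta B^{i}_{t_{k_{l}}t_{k}}\,\delta F^{jj'}_{t_{k}t_{k+1}}\,\delta B^{i}_{t_{k_{l'}}t_{k'}}\,\delta F^{jj'}_{t_{k'}t_{k'+1}}\bigr]$, expand by the Gaussian product formula into $J_{1}+J_{2}+J_{3}$, bound the diagonal pairing $J_{1}$ by Cauchy--Schwarz on $\langle\mathbf{1}_{[t_{k_{l}},t_{k}]},\mathbf{1}_{[t_{k_{l'}},t_{k'}]}\rangle_{\mathcal{H}}$ (this is where $\nu^{-2H}$ enters, since $t_{k}-t_{k_{l}}\le T/\nu$) combined with the positivity of $\langle\beta_{k},\beta_{k'}\rangle_{\mathcal{H}^{\otimes2}}$, which lets the full $(k,k')$-sum be identified with $\mE\bigl|F^{jj'}_{\eta(r)}\bigr|^{2}\le Kn^{1-4H}r$; and control the crossed pairings $J_{2},J_{3}$ by approximating $\beta_{k}$ with rectangles and exploiting the negativity of the measure $\mu$ for $H<\frac12$ to obtain $|J_{e}|\le Kn^{-6H}|k-k'|^{2H-2}$, which is summable in $k-k'$. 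None of these ingredients appears in your proposal, and they cannot be bypassed by the sewing machinery of Section \ref{section4i}.
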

\begin{proof} 
Let us treat the special case $i=j\neq j'$.    
Then we can expand the variance of $\zeta^{n}_{l}$ as:
\begin{eqnarray}\label{e9.18i}
\mE\Big(\Big| \sum_{l=0}^{  \frac{\nu r}{T} -1 }   \zeta^{n}_{l} \Big|^{2}\Big) 
&=&  
n^{4H-1}
\sum_{l,l' = 0}^{\frac{\nu r}{T}-1} \sum_{  t_{k} \in \tilde{D}_{l}  }    \sum_{ t_{k'} \in \tilde{D}_{l'}  } J(k,k',t_{k_{l}},t_{k_{l'}}) ,
\end{eqnarray}
where for all $k$, $k'$, $u$ and $v$ we have set:
\begin{eqnarray}\label{e9.11i}
J(k,k',u,v) &=& 
\mE\left(
\delta B^{i}_{u t_{k}}   \delta  F^{jj'}_{t_{k}t_{k+1}} 
\delta B^{i}_{v t_{k'}}   \delta  F^{jj'}_{t_{k'}t_{k'+1}} 
\right) .
\end{eqnarray}

In order to evaluate \eref{e9.11i}, we first condition the expected value on $B^{j'}$. We are thus left with the expected value of a product of four centered Gaussian random variables, for which we can use the Gaussian identity, and the isometry property stated in Definition \ref{def3.2}. 
We use the same isometry again in order to integrate with respect to $B^{j'}$, which yields:
\begin{eqnarray}\label{eq9.18}
J(k,k',u,v)  
&=& \sum_{e=1}^{3}  J_{e}(k,k',u,v) ,
 \end{eqnarray}
where  we have:
\begin{eqnarray}
J_{1}(k,k',u,v)  &=& \langle  \mathbf{1}_{[u, t_{k}] } , \mathbf{1}_{[v, t_{k'}] }    \rangle_{\mathcal{H}}  \langle  \beta_{k}  ,  \beta_{k'}     \rangle_{\mathcal{H}^{\otimes 2}} ,
\label{e9.11}
\\
J_{2}(k,k',u,v)  &=& \left\langle
\langle  \mathbf{1}_{[u, t_{k} ]} , \beta_{k}    \rangle_{\mathcal{H}} ,  \langle    \mathbf{1}_{[v, t_{k'} ]} ,  \beta_{k'}     \rangle_{\mathcal{H}} 
\right\rangle_{\mathcal{H}},
\label{e9.12}
\\
J_{3}(k,k',u,v)  &=& \left\langle
\langle  \mathbf{1}_{[v, t_{k'} ]} , \beta_{k}    \rangle_{\mathcal{H}} ,  \langle    \mathbf{1}_{[u, t_{k} ]} ,  \beta_{k'}     \rangle_{\mathcal{H}} 
\right\rangle_{\mathcal{H}},
\label{e9.13i}
\end{eqnarray}
and where the function $\beta$ is defined by 
   $ \beta_{k} (u,v) =   \mathbf{1}_{   t_{k}<u<v< t_{k+1} }$.

Next observe that we have $ \langle   \beta_{k}  ,  \beta_{k'}      \rangle_{\mathcal{H}^{\otimes 2}}\geq 0$ for all $k$ and $k'$.
Indeed, when $k=k'$, this stems from the fact that $\langle   \beta_{k}  ,  \beta_{k'}      \rangle_{\mathcal{H}^{\otimes 2}}$ can be identified with $\mE [ | \delta F^{jj'}_{t_{k}t_{k+1}}|^{2}]$, while for $k\neq k'$ the expression for $\langle   \beta_{k}  ,  \beta_{k'}      \rangle_{\mathcal{H}^{\otimes 2}}$ is given by \eref{e3.3}, and the product of the measures $\mu$ therein gives a positive contribution. 
 So the Cauchy-Schwarz inequality implies that
\begin{eqnarray}\label{e9.15}
|J_{1} (k,k',u,v)| &\leq& K |t_{k}-u|^{H} |t_{k'} - v|^{H}   \langle   \beta_{k}  ,  \beta_{k'}      \rangle_{\mathcal{H}^{\otimes 2}} ,
\end{eqnarray}
and we easily get the following bound: 
\begin{eqnarray*}
\Big|\sum_{l,l' = 0}^{\frac{\nu r}{T} -1 } \sum_{  t_{k} \in \tilde{D}_{l}  }    \sum_{ t_{k'} \in \tilde{D}_{l'}  }  J_{1} (k,k',t_{k_{l}},t_{k_{l'}})\Big| 
  \leq 
  K \nu^{-2H} \sum_{k,k'=0}^{\lfloor\frac{nr}{T}\rfloor-1}   \langle   \beta_{k}  ,  \beta_{k'}      \rangle_{\mathcal{H}^{\otimes 2}}
  \\
  = K \nu^{-2H}  \sum_{k,k'=0}^{\lfloor\frac{nr}{T}\rfloor-1}  
  \mE\left[ \delta F^{jj'}_{t_{k}t_{k+1}}  \delta F^{jj'}_{t_{k'}t_{k'+1}}  \right] 
  = K \nu^{-2H}
  \mE\Big[ \Big|F^{jj'}_{\eta(r)}\Big|^{2} \Big] 
   .
\end{eqnarray*}
It then follows from Lemma \ref{lem4.1} that
\begin{eqnarray}\label{eq9.24}
\Big|
\sum_{l,l' = 0}^{\frac{\nu r}{T} -1 } \sum_{  t_{k} \in \tilde{D}_{l}  }    \sum_{ t_{k'} \in \tilde{D}_{l'}  }  J_{1} (k,k',t_{k_{l}},t_{k_{l'}})
\Big|
 \leq  K \nu^{-2H}  n^{1 -4H} \eta(r) 
 \leq  K \nu^{-2H}  n^{1 -4H} r .
\end{eqnarray}

Let us now handle the terms $J_{2}$ and $J_{3}$ in \eref{eq9.18}. First,  by self-similarity of $B$  we obtain
\begin{eqnarray}\label{eq11.18}
J_{2}(k,k',u,v)  
&=& 
n^{-6H}T^{6H}
\big\langle
\langle  \mathbf{1}_{[\frac{nu}{T},  {k} ]} (a) , \phi_{k}  (b, a)  \rangle_{\mathcal{H}} ,  \langle    \mathbf{1}_{[\frac{nv}{T},  {k'} ]}(c) ,  \phi_{k'}    (b,c) \rangle_{\mathcal{H}} 
\big\rangle_{\mathcal{H}},
\end{eqnarray}
where we have denoted $\phi_{k}(u,v) = \mathbf{1}_{  k< u < v < k+1 }   $, and
   the letters  $a$, $b$, $c$  designate  the pairing for our inner product  in $\ch$. In order to estimate the quantity \eref{eq11.18}, we assume first that   $k,k'$ satisfies $|k-k'|>2$.  In this case, we can  approximate the   functions $\mathbf{1}_{k< u<v< k+1}$ in the definition of $\phi_{k}$ by sums of indicators of rectangles. Namely, for $k \leq \left\lfloor\frac{nt}{T} \right\rfloor$  we set
\begin{eqnarray}\label{eq11.19}
\phi^{\ell}_{k} (u,v)  &=& \sum_{i=0}^{\ell-1}  \mathbf{1}_{[k+\frac{i}{\ell}, k+\frac{i+1}{\ell}]} (u) \times   \mathbf{1}_{[   k+ \frac{i+1 }{\ell}, k+1]} (v) ,
\end{eqnarray}
 then the convergence $\lim_{\ell\to \infty} \| \phi_{k}-\phi^{\ell}_{k} \|_{\mathcal{H}^{\otimes 2}} = 0$ holds true whenever $H>\frac14$. Applying the convergence of $\phi_{k}^{\ell}$ to \eref{eq11.18} and taking into account   expression \eref{eq11.19}  we obtain
 \begin{eqnarray*}
J_{2}(k,k',u,v)  
&=& n^{-6H}T^{6H}
\lim_{\ell\to \infty}
\Big\langle
\langle  \mathbf{1}_{[\frac{nu}{T},  {k} ]} (a) , \phi^{\ell}_{k}  (b, a)  \rangle_{\mathcal{H}} ,  \langle    \mathbf{1}_{[\frac{nv}{T},  {k'} ]}(c) ,  \phi^{\ell}_{k'}    (b,c) \rangle_{\mathcal{H}} 
\Big\rangle_{\mathcal{H}}
\\
&\leq&n^{-6H}T^{6H} \lim_{\ell\to \infty} \sum_{i,j=1}^{\ell-1}   d_{ij}\tilde{d}_{ij}
,
\end{eqnarray*}
where we denote
\begin{eqnarray*}
d_{ij}= \langle 
 \mathbf{1}_{[k+\frac{i}{\ell}, k+\frac{i+1}{\ell}]}   ,
  \mathbf{1}_{[k'+\frac{j}{\ell}, k'+\frac{j+1}{\ell}]}  
 \rangle_{\mathcal{H}}\,,
\quad\quad
\tilde{d}_{ij} = \langle 
 \mathbf{1}_{[\frac{nu}{T},  {k} ]},  \mathbf{1}_{[   k+ \frac{i+1 }{\ell}, k+1]}
 \rangle_{\mathcal{H}} \langle 
 \mathbf{1}_{[\frac{nv}{T},  {k'} ]},  \mathbf{1}_{[   k'+ \frac{j+1 }{\ell}, k'+1]}
 \rangle_{\mathcal{H}}.
\end{eqnarray*}
 It is easy to see that, for all $i,j\leq \ell-1$, we have $|\tilde{d}_{ij}|\leq K$. Therefore, taking into account the fact that $d_{ij}<0$ for $k,k':|k-k'|>2$, we obtain
 \begin{eqnarray}
J_{2}(k,k',u,v) &\leq& \frac{K}{ n^{6H}}  \lim_{\ell\to \infty} \sum_{i,j=1}^{\ell-1}   |d_{ij}| = \frac{K}{ n^{6H}} \lim_{\ell\to \infty} \sum_{i,j=1}^{\ell-1}    d_{ij} 
\nonumber
\\
&=&
\frac{K}{ n^{6H}}  \langle \mathbf{1}_{[k,k+1]}, \mathbf{1}_{[k', k'+1]} \rangle_{\mathcal{H}} \leq 
K n^{-6H} |k-k'|^{2H-2}.
\label{e9.6}
\end{eqnarray}
 The estimate \eref{e9.6} also holds true for $J_{3}$, and the proof is similar. 
In addition, for $|k - k'|\leq 2$ the relation \eref{eq11.18}  shows that 
\begin{eqnarray}\label{e9.7}
|J_{e} (k,k',u,v)| &\leq& K n^{-6H} .
\end{eqnarray}
We now invoke   \eref{e9.6} and \eref{e9.7}, together with the fact  that $\sum_{k>1} |k|^{2H-2}<\infty$ and 
\begin{eqnarray*}
\#\Big( \bigcup_{l=0}^{\frac{\nu r}{T} -1} \tilde{D}_{l}\Big)  = \# \{ k: t_{k}\leq     t \text{ \ and \ } t_{k}<r\}\leq \frac{nr}{T},
\end{eqnarray*}
 which yields for $e=2,3$: 
\begin{eqnarray}\label{e9.27}
\Big|\sum_{l,l' = 0}^{\frac{\nu r}{T} -1 } \sum_{  t_{k} \in \tilde{D}_{l}  }    \sum_{ t_{k'} \in \tilde{D}_{l'}  }  J_{e} (k,k',t_{k_{l}},t_{k_{l'}})\Big|
 &\leq& K    n^{1 -6H} r. 
\end{eqnarray}
 Gathering our bounds \eref{eq9.24} and \eref{e9.27} for $J_{1}$, $J_{2}$ and $J_{3}$, it is readily checked from our decompositions 
 \eref{e9.18i} and \eref{eq9.18} that for  $i=j\neq j'$ we have:
\begin{eqnarray}\label{e9.13}
\mE\Big(\Big| \sum_{l=0}^{  \frac{\nu r}{T} -1 }   \zeta^{n}_{l} \Big|^{2}\Big) 
& \leq &   K \nu^{-2H}  n^{1 -4H} r
 .
\end{eqnarray}
Furthermore, using the stationarity of the increments of $F$ and $B$, plus the equivalence of $L_{p}$-norms in finite chaos,  we obtain from \eref{e9.13} the desired estimate \eref{e9.32}.   
 Moreover, the estimate \eref{e9.13} holds true for other $i,j,j'$. The proof is similar and is omitted. 
\end{proof}
\begin{lemma}\label{lem:sum-delta-B-F}
Let $F$ be defined in \eref{e4.1}. 
Then the following estimate holds true for $H>\frac14$:
\begin{eqnarray*}
\mE\Big( \Big|
\sum_{t_{{k }}=s}^{t-}   \delta B_{s t_{k }} \otimes F_{t_{{k }}t_{{k }+1}}
\Big|^{2}\Big) &\leq& K n^{1-4H}(t-s)^{2H+1} \,, \quad\quad (s,t) \in \cs_{2}(\ll0,T\rr).
\end{eqnarray*}
\end{lemma}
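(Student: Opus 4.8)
\textbf{Proof plan for Lemma \ref{lem:sum-delta-B-F}.} The quantity to be estimated is a random variable living in the second chaos of $B$, so by hypercontractivity it is enough to control its $L^2(\Omega)$ norm, which amounts to expanding the square. Writing $A_{st} = \sum_{t_k=s}^{t-} \delta B_{st_k} \otimes F_{t_k t_{k+1}}$, I would compute
\begin{eqnarray*}
\mE\big(|A_{st}|^2\big) = \sum_{t_k, t_{k'}} \mE\big( \delta B_{st_k}^{i}\, \delta F^{jj'}_{t_k t_{k+1}}\, \delta B_{st_{k'}}^{i}\, \delta F^{jj'}_{t_{k'}t_{k'+1}} \big),
\end{eqnarray*}
and the core of the argument is exactly the bound proved in Lemma \ref{lem11.4}. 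In fact, this is essentially the case $\nu = 1$ (or more precisely $u_0=s$, $u_1=t$) of that lemma: there $\delta \hat\zeta^n_{r,r'}$ is the sum $n^{2H-\frac12}\sum \delta B_{t_{k_l}t_k}\otimes \delta F_{t_kt_{k+1}}$ over a coarse grid, while here the reference point $s$ is fixed and we sum over all $t_k \in \ll s,t\rr$. The first step, then, is to observe that with the trivial coarse partition $\{s,t\}$ the quantity $\sum_{t_k=s}^{t-}\delta B_{st_k}\otimes F_{t_kt_{k+1}}$ coincides with $n^{\frac12-2H}\,\hat\zeta^n$ evaluated between $s$ and $t$, so the estimate \eqref{e9.32} of Lemma \ref{lem11.4} directly gives $\|A_{st}\|_p \le K n^{\frac12-2H}(t-s)^{\frac12}$ — but that is weaker than what we want, since we need the exponent $H+\frac12$ on $(t-s)$ rather than $\frac12$.

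To get the sharper power of $(t-s)$ I would instead redo the chaos expansion directly, following the lines of the proof of Lemma \ref{lem11.4}. After conditioning on $B^{j'}$ (when the indices force it), applying the Gaussian product formula and the isometry of Definition \ref{def3.2}, the sum $\mE(|A_{st}|^2)$ decomposes as $J_1 + J_2 + J_3$ exactly as in \eqref{eq9.18}, with the single-interval factor $\langle \mathbf 1_{[s,t_k]}, \mathbf 1_{[s,t_{k'}]}\rangle_{\mathcal H}$ in place of $\langle \mathbf 1_{[t_{k_l},t_k]}, \mathbf 1_{[t_{k_{l'}},t_{k'}]}\rangle_{\mathcal H}$. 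For the $J_1$ term the Cauchy--Schwarz bound \eqref{e9.15} now yields the factor $|t_k - s|^H |t_{k'}-s|^H \le (t-s)^{2H}$ uniformly, so that
\begin{eqnarray*}
\Big|\sum_{t_k,t_{k'}} J_1\Big| \le K(t-s)^{2H}\sum_{t_k,t_{k'}}\langle \beta_k,\beta_{k'}\rangle_{\mathcal H^{\otimes 2}} = K(t-s)^{2H}\,\mE\big[|F_{\eta(t)} - F_{\eta(s)}|^2\big] \le K(t-s)^{2H}\, n^{1-4H}(t-s),
\end{eqnarray*}
using Lemma \ref{lem4.1} for the last step. For $J_2$ and $J_3$ one uses the bounds \eqref{e9.6}–\eqref{e9.7}: the self-similar rescaling gives $|J_e(k,k',s,s)| \le K n^{-6H}|k-k'|^{2H-2}$ for $|k-k'|>2$ and $\le K n^{-6H}$ otherwise, and summing over the roughly $n(t-s)/T$ indices $t_k,t_{k'}$ in $\ll s,t\rr$, using $\sum_{|k|\ge 1}|k|^{2H-2}<\infty$, gives $|\sum J_e| \le K n^{1-6H}(t-s)$, which is of smaller order than the $J_1$ bound. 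Combining the three contributions gives $\mE(|A_{st}|^2) \le K n^{1-4H}(t-s)^{2H+1}$, which is the claim (for general indices the argument is identical, as noted at the end of the proof of Lemma \ref{lem11.4}).

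The main obstacle, and the only place where care is genuinely needed, is the sign analysis that makes the Cauchy--Schwarz and absolute-value steps legitimate, namely the positivity $\langle \beta_k,\beta_{k'}\rangle_{\mathcal H^{\otimes 2}}\ge 0$ and the negativity $d_{ij}<0$ for $|k-k'|>2$; both are inherited verbatim from the proof of Lemma \ref{lem11.4}, since they depend only on the structure of the measure $\mu$ in \eqref{eq:def-msr-mu} (negative off the diagonal, with products giving positive contributions) and not on the choice of reference points. Consequently, rather than reproving these facts I would simply cite the relevant passages of Lemma \ref{lem11.4}, and present this lemma as a direct adaptation: the only substantive change is replacing the coarse-grid increments $\delta B_{t_{k_l}t_k}$ by $\delta B_{st_k}$, which improves the $J_1$ estimate from $\nu^{-2H}$ to $(t-s)^{2H}$ and yields the extra power of $(t-s)$ claimed in the statement.
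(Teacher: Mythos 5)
Your proposal is correct and follows essentially the same route as the paper's own proof: the paper likewise expands the variance via the decomposition $J_1+J_2+J_3$ from the proof of Lemma \ref{lem11.4}, bounds the $J_1$ block by the Cauchy--Schwarz estimate \eqref{e9.15} combined with Lemma \ref{lem4.1} to extract the factor $(t-s)^{2H}\cdot n^{1-4H}(t-s)$, and controls $J_2,J_3$ by \eqref{e9.6}--\eqref{e9.7} together with $t-s\ge T/n$. The only cosmetic differences are that the paper first reduces to $s=0$ by stationarity and that your opening appeal to hypercontractivity is unnecessary since the claim is already an $L^2$ bound.
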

\begin{proof} Since $B$ has stationary increment, it suffices to prove the lemma for $s=0$. As in the proof of Proposition \ref{prop9.2}, 
we consider the sum
\begin{eqnarray*}
M^{ijj'}_{t} = \sum_{t_{k}=0}^{t-} B^{i}_{t_{k}} F^{jj'}_{t_{k}t_{k+1}},
\end{eqnarray*}
for   $i=j\neq j'$. The other cases can be considered similarly. Let $J_{e}$, $e=1,2,3$ be the quantities defined in \eref{e9.11}, \eref{e9.12}, \eref{e9.13i}. 
By \eref{e9.11i} we have
\begin{eqnarray}\label{e11.5i}
\mE \Big( \Big|
\sum_{t_{k}=0}^{t-}    B^{i}_{t_{k } } F^{jj'}_{t_{k}t_{k+1}} 
\Big|^{2}
\Big)
&=& 
\sum_{t_{k_{1}}, t_{k_{2}}=0}^{t-}  \sum_{e=1}^{3} J_{e}(k_{1},k_{2}, 0,0)
 \,.
\end{eqnarray} 
Applying \eref{e9.15} and taking into account Lemma \ref{lem4.1} yields
\begin{eqnarray}\label{e11.5}
\sum_{t_{k_{1}}, t_{k_{2}}=0}^{t-}  J_{1}(k_{1},k_{2}, 0,0) &\leq& n^{1-4H} t^{1+2H}.
\end{eqnarray}
On the other hand, it follows from \eref{e9.6} and \eref{e9.7} that  for $e=2,3$ we have
\begin{eqnarray}\label{e11.6}
\sum_{t_{k_{1}}, t_{k_{2}}=0}^{t-}  J_{e}(k_{1},k_{2}, 0,0)  \leq  n^{1-6H} t 
 \leq  n^{1-4H}t^{1+2H}.
\end{eqnarray}
The lemma then follows by applying \eref{e11.5} and \eref{e11.6} to \eref{e11.5i}.
\end{proof}


\end{document}